\newtheorem{theorem}{Theorem}[section]
\newtheorem{lemma}[theorem]{Lemma}
\newtheorem{corollary}[theorem]{Corollary}
\newtheorem{definition}[theorem]{Definition}
\newtheorem{proposition}[theorem]{Proposition}
\newtheorem{remark}[theorem]{Remark}
\def\cM{{\mathcal M}}\def\cP{{\mathcal P}} 
\def\cC{{\mathcal C}} \def\cD{{\mathcal D}}    \def\cL{{\mathcal L}} \def\cF{{\mathcal F}} 
\def\cH{{\mathcal H}}      \def\cU{{\mathcal U}} \def\cV{{\mathcal V}}
 \def\cO{{\mathcal O}} 
\def\cI{{\mathcal I}}      \def\cR{{\mathcal R}} \def\cG{{\mathcal G}} \def\cN{{\mathcal N}}
\def\cA{{\mathcal A}}    
\def\cK{{\mathcal K}} \def\rE{{\rm E}} \def\rG{{\rm G}}\def\rH{{\textrm H}} \def\rC{{\rm C}}  \def\rP{{\rm P}}
\def\bbN{{\mathbb N}}  \def\bbZ{{\mathbb Z}}  
    \def\bbF{{\mathbb F}}
\def\bbC{{\mathbb C}}    \def\bbP{{\mathbb P}}
\def\fh{\mathfrak h}
\def\Hom{\mbox{\rm Hom}}  
       \def\im{\mbox{\rm Im}\,}
\def\Ext{\mbox{\rm Ext}\,}   \def\Ker{\mbox{\rm Ker}\,}
\def\dim{\mbox{\rm dim}\,}   \def\End{\mbox{\rm End}}
\def\Aut{\mbox{\rm Aut}}  \def\Stab{\mbox{\rm Stab}} \def\tw{{\rm tw}}  \def\red{{\rm red}} \def\Htp{\mbox{\rm Htp}}
\def\red{{\rm red}}
\def\udim{{\mathbf dim\,}}  \def\oline{\overline}  \def\uline{\underline}
\def\mod{\mbox{\rm mod}\,}  
\def\rad{{\rm rad}\,}    \def\ind{{\rm ind}\,}
 \def\udim{\mbox{\underline{\rm dim}}\,}
\def\rep{\mbox{\rm rep}\,}   
 \def\Spec{\mbox{\rm Spec}\,}
\def\St{\mbox{\rm St}}
\def\Var{\mbox{\rm Var}}
\def\supp{{\rm supp}}
\newcommand \bd{{\bf d}}
\newcommand \ualpha{{\uline{\alpha}}}
\newcommand\fg{\mathfrak g}
\newcommand\fn{\mathfrak n}
\newcommand\ue{{\underline{e}}}
\begin{document}

\title[]{Lie algebras arising from two-periodic projective complex and derived categories}

\author{Jiepeng Fang, Yixin Lan, Jie Xiao}

\address{School of Mathematical Sciences, Peking University, Beijing 100871, P.R. China}
\email{fangjp@math.pku.edu.cn (J. Fang)}
\address{Academy of Mathematics and Systems Science, Chinese Academy of Sciences, Beijing 100190, P.R. China}
\email{lanyixin@amss.ac.cn (Y. Lan)}
\address{School of Mathematical Sciences, Beijing Normal University, Beijing 100875, P.R. China}
\email{jxiao@bnu.edu.cn (J. Xiao)}

\renewcommand{\subjclassname}{\textup{2020} Mathematics Subject Classification}
\subjclass[2020]{16G20, 17B37, 17B65, 18G80}

\keywords{two-periodic projective complex category, motivic Hall algebra, two-periodic triangulated category, infinite-dimensional Lie algebra}

\bibliographystyle{abbrv}

\maketitle 

\begin{center}
\small\emph{Dedicated to Professor Claus Michael Ringel for his 80th birthday}
\end{center}

\begin{abstract}
Let $A$ be a finite-dimensional $\mathbb{C}$-algebra of finite global dimension and $\mathcal{A}$ be the category of finitely generated right $A$-modules. By using of the category of two-periodic projective complexes $\mathcal{C}_2(\mathcal{P})$, we construct the motivic Bridgeland's Hall algebra for $\mathcal{A}$, where structure constants are given by Poincar\'{e} polynomials in $t$, then construct a $\mathbb{C}$-Lie subalgebra $\mathfrak{g}=\mathfrak{n}\oplus \mathfrak{h}$ at $t=-1$, where $\mathfrak{n}$ is constructed by stack functions about indecomposable radical complexes, and $\mathfrak{h}$ is by contractible complexes. For the stable category $\mathcal{K}_2(\mathcal{P})$ of $\mathcal{C}_2(\mathcal{P})$, we construct its moduli spaces and a $\mathbb{C}$-Lie algebra $\tilde{\mathfrak{g}}=\tilde{\mathfrak{n}}\oplus \tilde{\mathfrak{h}}$, where $\tilde{\mathfrak{n}}$ is constructed by support-indecomposable constructible functions, and $\tilde{\mathfrak{h}}$ is by the Grothendieck group of $\mathcal{K}_2(\mathcal{P})$. We prove that the natural functor $\mathcal{C}_2(\mathcal{P})\rightarrow \mathcal{K}_2(\mathcal{P})$ together with the natural isomorphism between Grothendieck groups of $\mathcal{A}$ and $\mathcal{K}_2(\mathcal{P})$ induces a Lie algebra isomorphism $\mathfrak{g}\cong\tilde{\mathfrak{g}}$. This makes clear that the structure constants at $t=-1$ provided by Bridgeland in \cite{Bridgeland-2013} in terms of exact structure of $\mathcal{C}_2(\mathcal{P})$ precisely equal to that given in \cite{Peng-Xiao-2000} in terms of triangulated category structure of $\mathcal{K}_2(\mathcal{P})$.
\end{abstract}

\setcounter{tocdepth}{1}\tableofcontents

\section{Introduction}

\subsection{Background}\

Let $Q=(I,H,s,t)$ be a finite acyclic quiver, then $(2\delta_{ij}-n_{ij})_{i,j\in I}$ is a symmetric generalized Cartan matrix, where $n_{ij}$ is the number of edges connecting $i,j\in I$ in the underline graph of $Q$. The corresponding Kac-Moody Lie algebra $\cG$ is closely related to the category of finite-dimensional representations of $Q$. Since Ringel introduced Hall algebra for quiver representations to realize the nilpotent part $U^+_v(\fn)$ and $\fn$ of quantum group $U_v(\cG)$ and $\cG$, respectively in \cite{Ringel-1990,Ringel-1990-1,Ringel-1992}, there are two important results realizing $\cG$ globally by representation theory of $Q$. 

In \cite{Peng-Xiao-2000}, Peng-Xiao constructed a Lie algebra $\tilde{\fg}_{(q-1)}$ over $\bbZ/(q-1)\bbZ$ for any Hom-finite, two-periodic triangulated category over a finite field $\bbF_q$. In particular, for the root category of finite-dimensional representations of $Q$ over $\bbF_q$, they used the resulting Lie algebra to realize $\cG$ globally. Later, in \cite{Bridgeland-2013}, Bridgeland constructed his Hall algebra $\cD\cH^{\red}_q$ for any finitary abelian category of finite global dimension over $\bbF_q$. In particular, in the hereditary case, for the category of finite-dimensional quiver representations, he used the resulting algebra to realize $U_{v=\sqrt{q}}(\cG)$ globally. By the canonical process recovering $\cG$ from $U_v(\cG)$, Bridgeland also provided a method to realize $\cG$ globally.

It is interesting that Peng-Xiao's method and Bridgeland's method reach the same goal from two different ways, where Peng-Xiao used the root category, equivalently, the stable category of two-periodic projective complexes of quiver representations, while Bridgeland relied on the category of two-periodic projective complexes of quiver representations. Obviously, only for the realizations of Lie algebras, we can change the finite field $\bbF_q$ to the complex field $\bbC$, in analogue to \cite{Riedtmann-1994,Xiao-Xu-Zhang-2006}. In present paper, we reveal the connections of Peng-Xiao \cite{Peng-Xiao-2000} and Bridgeland \cite{Bridgeland-2013} intrinsically. More generally, our main concern is about the triangulated categories which are derived from abelian category of finite global dimension, since for such an abelian category $\cA$, Bridgeland's construction is valid and Peng-Xiao's construction also works for the stable category of two-periodic projective complexes of $\cA$. We not only consider the hereditary case.

\subsection{Main result}\

For any finite-dimensional $\bbC$-algebra $A$ of finite global dimension, let $\cA$ be the category of finitely generated right $A$-module, $\cC_2(\cP)$ be the category of two-periodic projective complexes, and $\cK_2(\cP)$ be its stable category. We denote by $K(\cA)$ the Grothendieck group of $\cA$ and denote by $K_0$ the Grothendieck group of $\cK_2(\cP)$. Our work and main results are divided into three parts as follows.

\subsubsection{} We construct a $\bbC$-Lie algebra $\fg$ from $\cC_2(\cP)$.

Firstly, we define a moduli stack $\cM$ to parametrize isomorphism classes of two-periodic complexes of $\cA$, and define two locally closed substacks $\cN^{\rad}\subset \cN\subset \cM$ corresponding to radical projective and projective complexes, respectively. 

Then, we generalize Bridgeland's construction in the framework of motivic Hall algebra defined by Joyce in \cite{Joyce-2007}, and construct a motivic form of Bridgeland's Hall algebra $\cD\cH_t^{\red}(\cA)$ over $\bbC(t)$ whose structure constants are given by (virtual) Poincar\'{e} polynomials in $t$. Inspired by the canonical process recovering $U(\cG)$ and $\cG$ from $U_v(\cG)$, see \cite[Section 3.4]{Jin-Jin-2002} or \cite{Murphy-2018}, we define a $\bbC_{-1}$-subalgebra $\cD\cH_t(\cA)_{\bbC_{-1}}$ of $\cD\cH_t^{\red}(\cA)$ generated by elements of the form
$$[[\cO/\rG_{\ue}]\hookrightarrow \cN^{\rad}\hookrightarrow \cM],\ b_{\alpha},\ \frac{b_{\alpha}-1}{-t-1},$$
where $\bbC_{-1}=\bbC[t]_{(t+1)}=\{\frac{f(t)}{g(t)}\in \bbC(t)|g(-1)\not=0\}$ is a local ring with residue field $\bbC$, $[\cO/\rG_{\ue}]\subset  \cN^{\rad}$ is a constructible substack and the elements $b_\alpha, \frac{b_{\alpha}-1}{-t-1}$ are constructed by contractible complexes for $\alpha\in K(\cA)$, see Subsection \ref{Classical limit} for details. Then we define the classical limit to be the $\bbC$-algebra $\cD\cH_{-1}(\cA)=\bbC_{-1}/(t+1)\bbC_{-1}\otimes_{\bbC_{-1}}\cD\cH_t(\cA)_{\bbC_{-1}}$ which has a natural Lie bracket given by commutator as an associative algebra. 

Finally, we define a $\bbC$-subspace $\fg=\fn\oplus \fh\subset \cD\cH_{-1}(\cA)$, where $\fn$ is spanned by elements of the form 
$$1\otimes [[\cO/\rG_{\ue}]\hookrightarrow \cN^{\rad}\hookrightarrow \cM],$$
where $[\cO/\rG_{\ue}]\subset  \cN^{\rad}$ is a constructible substack consisting of geometric points corresponding to indecomposable radical complexes, and $\fh$ is spanned by $1\otimes \frac{b_{\alpha}-1}{-t-1}$ for $\alpha\in K(\cA)$. We denote by $h_\alpha=1\otimes \frac{b_{\alpha}-1}{-t-1}$ for $\alpha\in K(\cA)$, then we prove these elements satisfy $[h_\alpha,h_\beta]=0$ and $h_\alpha+h_\beta=h_{\alpha+\beta}$. As a consequence, $\alpha\mapsto h_\alpha$ extends to an isomorphism $\bbC\otimes_{\bbZ}K(\cA)\cong \fh$. Moreover, we prove the following theorem.

\begin{theorem}\label{Theorem 1}
The $\bbC$-subspace $\fg\subset \cD\cH_{-1}(\cA)$ is closed under the Lie bracket, and so it is a Lie subalgebra.
\end{theorem}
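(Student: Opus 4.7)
Since $\fg=\fn\oplus\fh$, closure under the bracket reduces to the three containments $[\fh,\fh]\subset\fg$, $[\fh,\fn]\subset\fg$, and $[\fn,\fn]\subset\fg$. The identity $[h_\alpha,h_\beta]=0$ has already been recorded just above, so $[\fh,\fh]=0$ is automatic.

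For $[\fh,\fn]\subset\fn$, the plan is to exploit the fact that in the motivic Bridgeland product $\ast$ on $\cD\cH_t^{\red}(\cA)$, the contractible generators $b_\alpha$ quasi-commute with classes of two-periodic complexes through an Euler-form character. Concretely, for a generator $y=1\otimes[[\cO/\rG_{\ue}]\hookrightarrow\cN^{\rad}\hookrightarrow\cM]$ whose underlying indecomposable radical complex has Grothendieck class $\beta\in K(\cA)$, a Hall computation on the stack of short exact sequences in $\cC_2(\cP)$ with a contractible outside term should yield a relation of the shape
\begin{equation*}
b_\alpha\ast y=t^{(\alpha,\beta)}\,y\ast b_\alpha
\end{equation*}
for an explicit integral symmetric bilinear form $(\,,\,)$ on $K(\cA)$ induced by $\cC_2(\cP)$. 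Consequently $b_\alpha\ast y-y\ast b_\alpha=(t^{(\alpha,\beta)}-1)\,y\ast b_\alpha$ is divisible by $-t-1$ in $\bbC_{-1}$, and after dividing and passing to the classical limit one reads off $[h_\alpha,y]=(\alpha,\beta)\,y$ in $\fn$. Linearity in $\alpha$ then gives $[\fh,\fn]\subset\fn$.

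The substantive step is $[\fn,\fn]\subset\fg$. For two generators $x,y\in\fn$ associated with indecomposable radical complexes $M$ and $N$, the product $x\ast y\in\cD\cH_t^{\red}(\cA)$ is, by construction of the motivic Hall product, the pushforward under the middle-term projection of the stack of short exact sequences $0\to M'\to F\to N'\to 0$ in $\cC_2(\cP)$ with $M'$ and $N'$ ranging over the prescribed constructible substacks of indecomposables. Invoking Krull--Schmidt $F\cong F^{\rad}\oplus F^{c}$ in $\cC_2(\cP)$, I would stratify according to the isomorphism type of the radical summand and write
\begin{equation*}
x\ast y=\Sigma_{\mathrm{ind}}+\Sigma_{\mathrm{contr}}+\Sigma_{\mathrm{dec}},
\end{equation*}
where $\Sigma_{\mathrm{ind}}$ collects the strata with indecomposable $F^{\rad}$, $\Sigma_{\mathrm{contr}}$ those with $F^{\rad}=0$ (so $F$ is wholly contractible), and $\Sigma_{\mathrm{dec}}$ the remaining strata whose radical part is genuinely decomposable. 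Direct inspection identifies the classical limit of $\Sigma_{\mathrm{ind}}$ with an element of $\fn$, and after rewriting the $b_\alpha$-type factors in $\Sigma_{\mathrm{contr}}$ as $1+(-t-1)h_\alpha$ its classical limit lies in $\fh$.

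The main obstacle is the cancellation $\Sigma_{\mathrm{dec}}(x\ast y)-\Sigma_{\mathrm{dec}}(y\ast x)=0$ in $\cD\cH_{-1}(\cA)$. This is the motivic counterpart of the Peng--Xiao/Riedtmann cancellation principle: for each decomposable radical $F^{\rad}=F_1\oplus F_2$ with both summands non-zero, the Poincar\'e polynomial of the groupoid fibre over $F$ of the middle-term map, computed via the dimensions of $\Hom$ and $\Ext^1$-groups in $\cC_2(\cP)$, should carry a symmetry under $M\leftrightarrow N$ whose defect is divisible by $t+1$. The finiteness of global dimension of $\cA$ is essential here, as it forces the relevant Euler and higher-Ext data to be additive invariants on the Grothendieck group, making the bookkeeping actually close up. Consequently the $\Sigma_{\mathrm{dec}}$-difference lies in $(t+1)\cdot\cD\cH_t(\cA)_{\bbC_{-1}}$ and so vanishes under $\bbC_{-1}\to\bbC_{-1}/(t+1)\bbC_{-1}$. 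Combining the three pieces places $[x,y]$ in $\fn\oplus\fh=\fg$, completing the proof.
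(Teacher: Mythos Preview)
Your overall architecture matches the paper's: reduce to $[\fh,\fh]$, $[\fh,\fn]$, $[\fn,\fn]$, and for the last one stratify $\Ext^1_{\cC_2(\cP)}(X,Y)$ by the Krull--Schmidt type of the middle term's radical part. The paper carries this out in Lemma~\ref{simple} and the proof of Theorem~\ref{Lie algebra g}, with the quasi-commutation you describe being Corollary~\ref{property of b_alpha} (the base is $(-t)$, not $t$, but that is cosmetic). However, two of your three substeps are not correct as stated.

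\textbf{Contractible stratum.} You claim that the classical limit of $\Sigma_{\mathrm{contr}}$ itself lies in $\fh$. It does not: the paper's computation (Case $i=0$ in Lemma~\ref{simple}) shows this piece contributes $\delta_{[X][Y^*]}\otimes\frac{b_{\hat X^1-\hat X^0}}{t^2-1}$, which carries a genuine pole at $t=-1$ and is not an element of $\cD\cH_t(\cA)_{\bbC_{-1}}$ on its own. Only the \emph{commutator} of these contributions, namely $\frac{b_{\hat X^1-\hat X^0}-b_{\hat X^0-\hat X^1}}{t^2-1}$, lies in $\bbC_{-1}$ and specializes to $-h_{\hat X^0-\hat X^1}\in\fh$. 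So you must pass to the commutator before invoking the classical limit here; your ``rewrite $b_\alpha=1+(-t-1)h_\alpha$'' does not remove the $1/(t^2-1)$.

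\textbf{Decomposable stratum.} Lumping all decomposable radical middle terms into one $\Sigma_{\mathrm{dec}}$ and appealing to ``additivity of Euler data on $K(\cA)$'' misses the actual mechanism and conflates two distinct phenomena. The paper separates this stratum further: the \emph{split} locus $(Z_\xi)_r\cong X\oplus Y$ (Case $i=2$) contributes a nonzero term $(1+\delta_{[X][Y]})\otimes\delta_{[X\oplus Y]}$ which is manifestly symmetric in $X,Y$ and cancels in the commutator; whereas on the \emph{non-split} decomposable locus (Case $i=3$) each term vanishes individually at $t=-1$. The latter vanishing has nothing to do with Euler-form additivity. It comes from Lemma~\ref{Upsilon(Aut)}: for an indecomposable $X$ one has $\Upsilon(\Aut_{\cC_2(\cP)}(X))=(t^2-1)t^{2d}$, so $(t^2-1)^2\mid\Upsilon(\Aut_{\cC_2(\cP)}(X)\times\Aut_{\cC_2(\cP)}(Y))$; likewise $(t^2-1)^2\mid\Upsilon(\Aut_{\cC_2(\cP)}((Z_\xi)_r))$ whenever $(Z_\xi)_r$ has at least two summands; and a free $\bbC^*$-action on $\Ext^1_{\cC_2(\cP)}(X,Y)_{Z_\xi}$ gives one more factor of $(t^2-1)$. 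These combine to force $(t^2-1)$ to divide the Case $i=3$ contribution (claim $(\star)$ in the paper), whence it dies at $t=-1$. Without isolating the split case and without these Poincar\'e-polynomial divisibility facts, your cancellation argument for $\Sigma_{\mathrm{dec}}$ has no content.
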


We remark why we need to construct the motivic Bridgeland's Hall algebra. For constructing a $\bbC$-Lie algebra from $\cC_2(\cP)$, the most natural option maybe generalizing Bridgeland's Hall algebra via constructible functions on varieties or stacks, which is similar to the generalization of Ringel-Hall algebra for quiver representations via constructible functions in \cite{Riedtmann-1994, Joyce-2007}. During our study, we realize that it is difficult to construct desired elements $h_\alpha$ for $\alpha\in K(\cA)$ via constructible functions, and we realize that the twist $\sqrt{q}^{\langle X^1,Y^1\rangle+\langle X^0,Y^0\rangle}$ given by Bridgeland, see \cite[(15)]{Bridgeland-2013}, plays an important role in his construction over $\bbF_q$. In order to overcome these difficulties, we construct the motivic Bridgeland's Hall algebra, where we have to choose Poincar\'{e} polynomial $\Upsilon$ to define the structure constants because of the fact that $\Upsilon(\bbC^n)=t^{2n}$ and Bridgeland's twist.

\subsubsection{} We construct a $\bbC$-Lie algebra $\tilde{\fg}$ from $\cK_2(\cP)$, which is a generalization of Peng-Xiao's Lie algebra $\tilde{\fg}_{(q-1)}$ over $\bbZ/(q-1)\bbZ$.

Firstly, for any $\bd\in K_0$, we define an ind-constructible stack $\rP_2(A,\bd)$ together with a group $\rG_{\bd}$-action to parametrize the homotopy equivalence classes of objects in $\cK_2(\cP)$ whose images in the Grothendieck group are $\bd$. We prove that $\rP_2(A,\bd)$ together with $\rG_{\bd}$ is determined by the derived category $\cD^b(A)$.

Then, on the $\bbC$-vector space $\tilde{\fg}=\tilde{\fn}\oplus\tilde{\fh}$, where $\tilde{\fn}=\bigoplus_{\bd\in K_0}\tilde{\fn}_{\bd}$, each $\tilde{\fn}_{\bd}$ is the $\bbC$-vector space of $\rG_{\bd}$-invariant support-indecomposable constructible functions on $\rP_2(A,\bd)$, and $\tilde{\fh}=\bbC\otimes_{\bbZ}K_0$, we define an anti-symmetric bracket $[-,-]$ by the triangulated category structure of $\cK_2(\cP)$.

Finally, we prove the following theorem.

\begin{theorem}\label{Theorem 2}
The $\bbC$-vector space $\tilde{\fg}$ is a Lie algebra with the bracket $[-,-]$.
\end{theorem}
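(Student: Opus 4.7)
The plan is to mirror Peng--Xiao's strategy from \cite{Peng-Xiao-2000} but replace counting over $\bbF_q$ with integration of constructible functions on the ind-constructible stack $\rP_2(A,\bd)$, exactly as the passage from Ringel's Hall algebra to Riedtmann's \cite{Riedtmann-1994} and Joyce's \cite{Joyce-2007} versions. Concretely, for $\bd_1,\bd_2\in K_0$, I would define the bracket on $\tilde{\fn}_{\bd_1}\times\tilde{\fn}_{\bd_2}\to\tilde{\fn}_{\bd_1+\bd_2}$ by an alternating sum of pushforwards along the diagrams parametrizing the two possible triangles connecting objects of classes $\bd_1,\bd_2$ to an object of class $\bd_1+\bd_2$ in $\cK_2(\cP)$, weighted by Euler-characteristic contributions coming from the Hom/Ext spaces; for $\alpha\in \tilde\fh=\bbC\otimes_\bbZ K_0$ and $f\in\tilde{\fn}_\bd$, set $[\alpha,f]=\langle\alpha,\bd\rangle_{\mathrm{sym}} f$ using the symmetrized Euler form, and $[\tilde\fh,\tilde\fh]=0$. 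Antisymmetry is built in by construction, so the real content is (i) well-definedness and closure on $\tilde\fg$, and (ii) the Jacobi identity.

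Step one is well-definedness. I would verify that the bracket of two $\rG$-invariant constructible functions is again constructible and $\rG$-invariant by checking that the relevant moduli of triangles in $\cK_2(\cP)$ is itself ind-constructible (using the results established earlier in the paper about $\rP_2(A,\bd)$ and its dependence only on $\cD^b(A)$). Then, using the fact that the bracket on two characteristic functions $1_{[M]},1_{[N]}$ of single homotopy classes is supported on objects $L$ appearing in a triangle $M\to L\to N\to M[1]$ or $N\to L\to M\to N[1]$, and that any such $L$ which is decomposable contributes with multiplicity controlled by the automorphism group of its direct summands, I would show the bracket projects into the subspace of support-indecomposable functions plus a multiple of an element of $\tilde\fh$; the alternating nature of the definition ensures the diagonal (decomposable with summands $M,N$) contributions are absorbed into $\tilde{\fh}$ via the formula $[f,g]_{\tilde{\fh}}=\chi(M,N)\cdot ([M]+[N])$ or similar, so that $\tilde\fg$ is closed.

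Step two is the Jacobi identity. I would reduce to the case of three characteristic functions $1_{[L]},1_{[M]},1_{[N]}$ and expand $[[1_{[L]},1_{[M]}],1_{[N]}]$ plus cyclic permutations. Each cyclic term becomes a finite sum, indexed by isomorphism classes $X$ in $\cK_2(\cP)$ of class $\bd_L+\bd_M+\bd_N$, of Euler-characteristic weighted counts of filtrations $0\subset Y\subset X$ with prescribed triangles on sub- and quotient. The octahedral axiom in the triangulated category $\cK_2(\cP)$ produces a bijection between these data across cyclic permutations, matching contributions as in the proof of \cite[Main Theorem]{Peng-Xiao-2000}. The translation of counting identities over $\bbF_q$ to Euler-characteristic identities over $\bbC$ uses that the fibers of the forgetful maps between moduli of triangles are affine spaces (the $\mathrm{Hom}$-space fibers inside $\mathrm{Ext}^1$-torsors), whose Euler characteristics equal the corresponding cardinalities at $q=1$, so the combinatorial matching goes through verbatim.

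The main obstacle is precisely this last matching. Over $\bbF_q$ the Peng--Xiao proof uses in an essential way the cancellation of Hom-counts via the identity $|\mathrm{Hom}(A,B)|\cdot |\mathrm{Ext}^1(A,B)|^{-1}$ built into the bracket; over $\bbC$ one must replace this with constructible-function versions of the push--pull formula and verify that the resulting Euler-characteristic identities still cancel in the cyclic sum, and moreover that the $\tilde\fh$-contributions produced by diagonal triangles across the three cyclic terms cancel as well. Working in $\cK_2(\cP)$, where objects are $2$-periodic so that $[1]^2=\mathrm{id}$ and $\mathrm{Ext}^i=\mathrm{Hom}(-,-[i])$ for $i\in\bbZ/2$, one has the symmetry $\dim\mathrm{Ext}^1(A,B)+\dim\mathrm{Ext}^1(B,A)-\dim\mathrm{Hom}(A,B)-\dim\mathrm{Hom}(B,A)$ is controlled by the symmetrized Euler form $\langle\bd_A,\bd_B\rangle_{\mathrm{sym}}$, which is exactly what is needed for the $\tilde\fh$-contributions to match, so the proof concludes by a bookkeeping argument patterned on the original Peng--Xiao computation.
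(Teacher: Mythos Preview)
Your high-level strategy is the same as the paper's: mirror Peng--Xiao, replace counting by Euler characteristics of quotient stacks, and invoke the octahedral axiom. But the sentence ``the fibers of the forgetful maps between moduli of triangles are affine spaces \ldots\ so the combinatorial matching goes through verbatim'' is where the argument breaks, and it hides essentially all of the real work.

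The issue is this. The octahedral axiom gives you an isomorphism of stacks $V^{\tilde{M}}_{(\hat{\cO}_1\hat{\cO}_2)\hat{\cO}_3}\cong V^{\tilde{M}}_{\hat{\cO}_1(\hat{\cO}_2\hat{\cO}_3)}$, where each side is a \emph{single} quotient of a space of pairs of composable triangles by a product of four groups (Proposition~\ref{octahedral Euler characteristic}). But the iterated bracket $[[1_{\hat{\cO}_1},1_{\hat{\cO}_2}],1_{\hat{\cO}_3}]$ is controlled by a \emph{product} of two separate quotient Euler characteristics $F^{L}_{\cO_1\cO_2}\cdot F^{M}_{\langle L\rangle\cO_3}$ (what the paper calls $\chi(\tilde{V}^{\ue}_{(12)3})$). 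These two quantities are \emph{not} equal in general, and the fibers of the comparison map are not affine spaces: they are orbits $S.(f,g,h)$ under stabilizer groups $S$ that are a priori complicated. The paper must stratify into four cases according to whether $L\simeq M\oplus Z^*$ and whether $L\simeq X\oplus Y$, and in each case prove by hand that the relevant stabilizers are bijective to vector spaces. This uses in an essential way that $X,Y,Z$ are \emph{indecomposable} in $\cK_2(\cP)$, via the local-ring dichotomy ``$\gamma-1$ is either nilpotent or invertible'' combined with non-splitting of the triangle to rule out the invertible case (see the arguments marked $(\ast)$, $(\ast\ast)$ in Propositions~\ref{Case 1}, \ref{Case 2}). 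In the split case $t=3$ (Proposition~\ref{Case 3}) the two sides genuinely differ, and the discrepancy is precisely $\dim_{\bbC}\Hom_{\cK_2(\cP)}(Y,X)$; these leftover terms are what assemble into the $\tilde{\fh}$-contribution and make the Jacobi identity close. Your formula ``$[f,g]_{\tilde{\fh}}=\chi(M,N)\cdot([M]+[N])$'' is not the paper's definition (it is $-\chi([(\hat\cO\cap\hat\cO'^*)/\rG_{\bd}])\tilde{h}_{\bd}$), and getting this exactly right is what makes the $t=3$ bookkeeping work.

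So: right skeleton, but the claim that the $\bbF_q$ proof transfers ``verbatim'' is false. The substitute for the cardinality identities over $\bbF_q$ is not that fibers are affine, but a case-by-case verification that certain stabilizers in $\Aut_{\cK_2(\cP)}$ are unipotent (hence have $\chi=1$), and this verification depends on indecomposability in a way you have not engaged with.
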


We emphasize that this part is actually a revise of Xiao-Xu-Zhang's work \cite{Xiao-Xu-Zhang-2006}. We add the $*$-saturated condition to refine their definition for $\rP_2(A,\bd)$, see Proposition \ref{saturated isomorphism} for the reason, and rewrite a self-contained proof for Theorem \ref{Theorem 2} which uses the octahedral axiom to prove the Jacobi identity. Our proof is a reformulation and a refinement of Xiao-Xu-Zhang \cite{Xiao-Xu-Zhang-2006}, and the basic idea is of course the same as Peng-Xiao \cite{Peng-Xiao-2000} with some improvement by Hubery \cite{Hubery-2006}.

\subsubsection{}

There is a natural functor $\Phi:\cC_2(\cP)\rightarrow \cK_2(\cP)$ which preserves objects and maps morphisms in $\cC_2(\cP)$ to their homotopy classes. For any indecomposable radical complex $X$ in $\cC_2(\cP)$, the image $\Phi(X)=X$ is an indecomposable object in $\cK_2(\cP)$. Hence $\Phi$ naturally induces a linear map $\varphi_{\fn}:\fn\rightarrow \tilde{\fn}$. The Grothendieck groups $K(\cA)$ of $\cA$ and $K_0$ of $\cK_2(\cP)$ are naturally isomorphic, see \cite[Proposition 2.11]{Fu-2012}, thus there is a linear isomorphism 
$\varphi_{\fh}:\fh\cong \bbC\otimes_{\bbZ}K(\cA)\rightarrow \bbC\otimes_{\bbZ}K_0=\tilde{\fh}$. We prove the following theorem.

\begin{theorem}\label{Theorem 3}
The linear map $\varphi=\begin{pmatrix}\varphi_{\fn}&\\&\varphi_{\fh}\end{pmatrix}:\fg=\fn\oplus \fh\rightarrow \tilde{\fn}\oplus \tilde{\fh}=\tilde{\fg}$ is a Lie algebra isomorphism.
\end{theorem}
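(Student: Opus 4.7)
The plan is to verify in turn that $\varphi$ is a linear isomorphism and that it transports the bracket on $\fg$ to the bracket on $\tilde{\fg}$. The linearity of both $\varphi_{\fn}$ and $\varphi_{\fh}$ is built into the definitions; the bijectivity of $\varphi_{\fh}$ is just the natural isomorphism $K(\cA)\cong K_0$. For $\varphi_{\fn}$, the key observation is that the natural functor $\Phi:\cC_2(\cP)\rightarrow \cK_2(\cP)$ restricts to a bijection between isomorphism classes of indecomposable radical two-periodic projective complexes and isomorphism classes of indecomposable objects of $\cK_2(\cP)$: every object of $\cK_2(\cP)$ admits a radical projective representative unique up to isomorphism in $\cC_2(\cP)$, and indecomposability is preserved in both directions. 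This matches the indexing substacks $[\cO/\rG_{\ue}]\subset \cN^{\rad}$ with the supports of support-indecomposable $\rG_{\bd}$-invariant constructible functions on $\rP_2(A,\bd)$, so $\varphi_{\fn}$ is a bijection on natural bases.

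For bracket preservation, split into three cases. On $\fh\times \fh$ the bracket is zero on both sides. On $\fh\times \fn$, one uses the explicit commutation of $b_\alpha$ with the generators $[[\cO/\rG_{\ue}]\hookrightarrow \cN^{\rad}\hookrightarrow \cM]$ coming from Bridgeland's twist: after dividing by $-t-1$ and specializing at $t=-1$, one finds $[h_\alpha,x]=\langle\alpha,\bfd_x\rangle x$, where $\bfd_x\in K(\cA)$ is the class of the complex underlying $x$. On the triangulated side, $\tilde{\fh}=\bbC\otimes_{\bbZ}K_0$ acts on each component $\tilde{\fn}_{\bd}$ by the same scalar $\langle\alpha,\bd\rangle$, so the two actions agree under $\varphi$.

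The heart of the proof is the case $[\fn,\fn]$. The strategy is to compute both commutators as linear combinations indexed by (classes of) middle objects and to match the coefficients. On the $\fg$ side, the commutator of two indecomposable generators in $\cD\cH_t^{\red}(\cA)$ is a sum over isomorphism classes $[Z]$ in $\cC_2(\cP)$ with coefficients built from Poincar\'e polynomials of strata of the moduli of extensions, twisted by $t^{\langle X^1,Y^1\rangle+\langle X^0,Y^0\rangle}$; passing to the classical limit $t=-1$ replaces Poincar\'e polynomials by Euler characteristics and the twist by a sign. Since every short exact sequence $0\to X\to Z\to Y\to 0$ in $\cC_2(\cP)$ yields a triangle $X\to Z\to Y\to X[1]$ in $\cK_2(\cP)$, and conversely every such triangle lifts (up to adding contractible summands) to such a short exact sequence, one can stratify the extension moduli by the homotopy class of $Z$ and compute the Euler characteristic of each stratum as a torsor over the relevant Peng--Xiao/Xiao--Xu--Zhang variety; the $\bbC^\ast$-fibers coming from contractible summands contribute Euler characteristic zero, so only radical indecomposable middle terms survive modulo $\fh$, with the correct combinatorial multiplicity.

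The main obstacle is the last combinatorial comparison: one must check that, after the sign twist and the specialization at $t=-1$, the Euler-characteristic count of extensions in $\cC_2(\cP)$ with prescribed radical middle term agrees on the nose with the constructible-function structure constant given by the octahedral-axiom formula used in the proof of Theorem \ref{Theorem 2}. This requires (a) a careful bookkeeping of the Bridgeland twist versus the Euler form appearing in Peng--Xiao's bracket, (b) verifying that the Cartan contributions $h_\alpha$ absorbing the non-radical parts match the contributions of contractible direct summands on the triangulated side, and (c) a torsor/fibration argument showing that the moduli of $\cC_2(\cP)$-extensions fibers over the moduli of $\cK_2(\cP)$-triangles with $(\bbC^\ast)^k$-type fibers whose Euler characteristic vanishes exactly when non-radical contractible factors appear. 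Once this matching is established, $\varphi$ is a Lie algebra homomorphism, and bijectivity of both $\varphi_{\fn}$ and $\varphi_{\fh}$ upgrades it to an isomorphism.
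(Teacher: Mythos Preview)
Your overall architecture matches the paper's: both prove bijectivity of $\varphi_{\fn}$ and $\varphi_{\fh}$ separately and then verify bracket preservation in the three cases $\fh\times\fh$, $\fh\times\fn$, $\fn\times\fn$. Your treatment of the first two cases is essentially correct.

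However, your account of the $\fn\times\fn$ comparison has a genuine gap in the mechanism. You propose that the moduli of $\cC_2(\cP)$-extensions fibers over the moduli of $\cK_2(\cP)$-triangles with ``$(\bbC^*)^k$-type fibers whose Euler characteristic vanishes exactly when non-radical contractible factors appear.'' This is not how the comparison works. The paper's argument hinges on two ingredients you do not identify: first, the bijection $\Ext^1_{\cC_2(\cP)}(X,Y)_{Z_r\oplus K_P\oplus K_Q^*}\cong \Hom_{\cK_2(\cP)}(X,Y^*)_{Z_r^*}$ (Lemma~\ref{bijection between Ext and Hom}), which is what actually lines up the strata on the two sides; second, the fact that the relevant stabilizers --- both $\Stab_{\Aut_{\cK_2(\cP)}(Z_r)}(f,g)$ for the transitive $\Aut_{\cK_2(\cP)}(Z_r)$-action on $W(X,Y;Z_r)_h$, and $\Stab(f,g,h)$ for the $\rG_{\ue}\times\rG_{\ue'}$-action on $W(\cO,\cO';Z_r)$ --- are bijective to \emph{vector spaces}, hence have Euler characteristic $1$ (not $0$). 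The fibers of the comparison map $\overline{\zeta}$ between the two quotient stacks are therefore affine spaces, and this is what forces the two Euler-characteristic counts to agree. A $(\bbC^*)^k$-torsor argument with vanishing Euler characteristic would give the wrong answer here.

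A second, smaller confusion: you refer to ``the octahedral-axiom formula used in the proof of Theorem~\ref{Theorem 2}'' as the source of the structure constants on the $\tilde{\fg}$ side. The structure constants $F^Z_{\cO\cO'}$ are defined directly in~(\ref{constant F}) via the set $W(X,Y;Z)$ of triangles and its group quotient; the octahedral axiom enters only in the proof of the Jacobi identity, not in the definition of the bracket. Indeed the paper stresses that the proof of Theorem~\ref{Theorem 3} is independent of Theorems~\ref{Theorem 1} and~\ref{Theorem 2}. Your sketch would need to be rewritten to use the correct description of $F^Z_{\cO\cO'}$ and then carry out the stabilizer-is-a-vector-space verification (the arguments marked $(***)$ and $(***\ *)$ in the paper) to close the gap.
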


We prove above theorem by verifying the structure constants of $\fg$ coincide with the structure constants of $\tilde{\fg}$. Our idea comes from the calculation over $\bbZ/(q-1)\bbZ$, for example, let $X,Y,Z$ be indecomposable radical two-periodic projective complexes over $\bbF_q$, consider
\begin{align*}
g^Z_{XY}&=|\{Z'\subset Z|Z/Z'\cong X,Z'\cong Y\}|,\\
F^Z_{XY}&=|W(X,Y;Z)/\Aut_{\cK_2(\cP_{\bbF_q})}(X)\times \Aut_{\cK_2(\cP_{\bbF_q})}(Y)|,
\end{align*}
the structure constants appearing in Bridgeland's Hall algebra \cite[Section 2.3]{Bridgeland-2013} and Peng-Xiao's Lie algebra \cite[Section 3.2]{Peng-Xiao-2000}, respectively, where $W(X,Y;Z)$ is a subset of $\Hom_{\cK_2(\cP)}(Y,Z)\times\Hom_{\cK_2(\cP)}(Z,X)\times\Hom_{\cK_2(\cP)}(X,Y^*)$ consisting of $(f,g,h)$ such that $Y\xrightarrow{f}Z\xrightarrow{g}X\xrightarrow{h}Y^*$ is a distinguished triangle, and the group $\Aut_{\cK_2(\cP_{\bbF_q})}(X)\times \Aut_{\cK_2(\cP_{\bbF_q})}(Y)$ acts on it via $(\alpha, \beta).(f,g,h)=(f\beta^{-1},\alpha g,\beta^*h\alpha^{-1})$. On the one hand, by Riedtmann-Peng's formula, we have
$$g^Z_{XY}=\frac{|\Ext^1_{\cC_2(\cP_{\bbF_q})}(X,Y)_Z||\Aut_{\cC_2(\cP_{\bbF_q})}(Z)|}{|\Hom_{\cC_2(\cP_{\bbF_q})}(X,Y)||\Aut_{\cC_2(\cP_{\bbF_q})}(X)||\Aut_{\cC_2(\cP_{\bbF_q})}(Y)|},$$
where $\Ext^1_{\cC_2(\cP_{\bbF_q})}(X,Y)_Z\subset \Ext^1_{\cC_2(\cP_{\bbF_q})}(X,Y)$ is a subset consisting of equivalence classes of short exact sequences whose middle terms are isomorphic to $Z$, see \cite{Ringel-1996, Schiffmann-2012-1}. On the other hand, we divide $W(X,Y;Z)=\bigsqcup_{h\in \Hom_{\cK_2(\cP_{\bbF_q})}(X,Y^*)_{Z^*}}W(X,Y;Z)_h$, where $\Hom_{\cK_2(\cP_{\bbF_q})}(X,Y^*)_{Z^*}\subset  \Hom_{\cK_2(\cP_{\bbF_q})}(X,Y^*)$ is a subset consisting of morphisms whose mapping cones are homotopy equivalent to $Z^*$, and $W(X,Y;Z)_h$ is a subset of $\Hom_{\cK_2(\cP)}(Y,Z)\times\Hom_{\cK_2(\cP)}(Z,X)$ consisting of $(f,g)$ such that $Y\xrightarrow{f}Z\xrightarrow{g}X\xrightarrow{h}Y^*$ is a distinguished triangle. The group $\Aut_{\cK_2(\cP_{\bbF_q})}(Z)$ acts transitively on $W(X,Y;Z)_h$ via $\gamma.(f,g)=(\gamma f,g\gamma^{-1})$ and the stabilizer is bijective to a vector space, see $(***)$ in the proof of Theorem \ref{two Lie algebra isomorphism}, hence
\begin{align*}
|W(X,Y;Z)|&=\!\!\!\!\sum_{h\in \Hom_{\cK_2(\cP_{\bbF_q})}(X,Y^*)_{Z^*}}\!\!\!\!|W(X,Y;Z)_h|
\equiv \!\!\!\!\sum_{h\in \Hom_{\cK_2(\cP_{\bbF_q})}(X,Y^*)_{Z^*}}\!\!\!\!|\Aut_{\cK_2(\cP_{\bbF_q})}(Z)|\\
&\equiv |\Hom_{\cK_2(\cP_{\bbF_q})}(X,Y^*)_{Z^*}||\Aut_{\cK_2(\cP_{\bbF_q})}(Z)|\ \mod (q-1).
\end{align*}
Moreover, the stabilizers of any $\Aut_{\cK_2(\cP_{\bbF_q})}(X)\times \Aut_{\cK_2(\cP_{\bbF_q})}(Y)$-orbits in $W(X,Y;Z)$ are bijective to vector spaces,  see $(***\ *)$ in the proof of Theorem \ref{two Lie algebra isomorphism}, hence 
\begin{align*}
F^Z_{XY}&\equiv\frac{|W(X,Y;Z)|}{|\Aut_{\cK_2(\cP_{\bbF_q})}(X)|| \Aut_{\cK_2(\cP_{\bbF_q})}(Y)|}\\
&\equiv\frac{|\Hom_{\cK_2(\cP_{\bbF_q})}(X,Y^*)_{Z^*}||\Aut_{\cK_2(\cP_{\bbF_q})}(Z)|}{|\Aut_{\cK_2(\cP_{\bbF_q})}(X)|| \Aut_{\cK_2(\cP_{\bbF_q})}(Y)|}\ \mod (q-1).
\end{align*}
If $\Ext^1_{\cC_2(\cP_{\bbF_q})}(X,Y)_Z\not=\varnothing$, then $\Ext^1_{\cC_2(\cP_{\bbF_q})}(X,Y)_Z\cong\! \Hom_{\cK_2(\cP_{\bbF_q})}(X,Y^*)_{Z^*}$, by Lemma \ref{bijection between Ext and Hom}. For any radical complex $L$, we have $|\Aut_{\cC_2(\cP_{\bbF_q})}(L)|\equiv|\Aut_{\cK_2(\cP_{\bbF_q})}(L)|\ \mod (q-1)$, by Corollary \ref{automorphism groups coincide}. Therefore, 
$$g^Z_{XY}\equiv F^Z_{XY}\ \mod (q-1).$$
This is one of (the most important) relations between the structure constants of  Bridgeland's Hall algebra and of Peng-Xiao's Lie algebra over $\bbZ/(q-1)\bbZ$. Compare with (\ref{g=F}) in the proof of Theorem \ref{two Lie algebra isomorphism}, which is an analogue equation over $\bbC$.

It is worth remarking that the proof of Theorem \ref{Theorem 3} does not depend on Theorem \ref{Theorem 1} nor Theorem \ref{Theorem 2}. We point out two applications of our main theorems. 

(I) From Theorem \ref{Theorem 2} and Theorem \ref{Theorem 3}, since the Lie algebra $\tilde{\fg}$ is completely defined by $\cK_2(\cP)$ which is determined by the derived category $\cD^b(A)$, we intrinsically prove that Bridgeland's Hall algebra contains a Lie algebra $\fg$ determined by $\cD^b(A)$. This partially reflects the relation between Bridgeland's Hall algebra and the derived category, although it has been revealed by earlier study. In the hereditary case, Yanagida \cite{Yanagida-2016} proved that Bridgeland's Hall algebra is isomorphic to the Drinfeld double Ringel-Hall algebra which is derived invariant by Cramer \cite{Cramer-2010}. In general, Gorsky \cite{Gorsky-2016} related Bridgeland's Hall algebra as a kind of semi-derived Hall algebra with derived Hall algebra by To\"{e}n \cite{Toen-2006}, also Xiao-Xu \cite{Xiao-Xu-2008}.

(II) From Theorem \ref{Theorem 1} and Theorem \ref{Theorem 3}, the $\bbC$-space $\tilde{\fg}$ is a Lie algebra can be regarded as a corollary. In other word, we give a much simpler proof of Theorem \ref{Theorem 2} (main theorems in \cite{Peng-Xiao-2000,Xiao-Xu-Zhang-2006}) than which uses the octahedral axiom to prove the Jacobi identity.

\subsection{Assumption}\

Throughout this paper, we assume that $A$ is a finite-dimensional $\bbC$-algebra of finite global dimension. Up to Morita equivalence, we may assume $A=\bbC Q/J$ is the quotient of a path algebra of a finite quiver $Q=(I, H, s,t)$ with relations $R$, where $J$ is the admissible ideal generated by $R$. We identify the category of finitely generated right $A$-modules with the category of finite-dimensional $(Q,J)$-representations
$$\cA=\mod A\simeq \rep(Q,J),$$
see \cite{Assem-Simson-Skowronski-2006}. We denote by $\cP$ the full subcategory consisting of projective modules, and fix a complete set of indecomposable projective modules $\{P_i\mid i\in I\}$ up to isomorphism.

\section{Categories of two-periodic projective complexes}\label{Categories of two-periodic projective complexes}

\subsection{The categories $\cC_2(\cP)$ and $\cK_2(\cP)$}\label{The categories cC_2(cP) and cK_2(cP)}\

Let $\cC_2(\cP)$ be the category of two-periodic projective complexes of $\cA$, whose objects are of the form 
\begin{diagram}[midshaft]
{X=(X^1,X^0,d^1,d^0)=X^1} &\pile{\rTo^{d^1}\\ \lTo_{d^0}} &X^0,
\end{diagram}
where $X^j\in \cP$ and $d^j\in \Hom_{\cA}(X^j,X^{j+1})$, such that $d^{j+1}d^j=0$ for $j\in \bbZ_2$. From now on, the upper symbol $j$ denotes an element in $\bbZ_2$, unless we illustrate explicitly.

A morphism $f\in \Hom_{\cC_2(\cP)}(X,Y)$ is a pair $f=(f^1,f^0)$, where $f^j\in \Hom_{\cA}(X^j,Y^j)$, such that $d^j_Yf^j=f^{j+1}d^j_X$, that is, there is a commutative diagram
\begin{diagram}[midshaft,size=2em]
X^1 &\pile{\rTo^{d^1_X}\\ \lTo_{d^0_X}} &X^0\\
\dTo^{f^1} & &\dTo_{f^0}\\
Y^1 &\pile{\rTo^{d^1_Y}\\ \lTo_{d^0_Y}} &Y^0.
\end{diagram}

Two morphisms $f,g\in \Hom_{\cC_2(\cP)}(X,Y)$ are said to be homotopic, denoted by $f\sim g$, if there exists $h^j\in \Hom_{\cA}(X^j,Y^{j+1})$ such that $f^j-g^j=d^{j+1}_Yh^j+h^{j+1}d^j_X$. 
\begin{diagram}[midshaft,size=2em]
X^1 &\pile{\rTo^{d^1_X}\\ \lTo_{d^0_X}} &X^0\\
\dTo^{f^1-g^1}_{\ \ \ h^1} &\rdDashto \ldDashto&\dTo^{h^0\ \ \ }_{f^0-g^0}\\
Y^1 &\pile{\rTo^{d^1_Y}\\ \lTo_{d^0_Y}} &Y^0.
\end{diagram}

Let $\cK_2(\cP)$ be the stable category of two-periodic projective complexes of $\cA$, whose objects are the same as $\cC_2(\cP)$ and 
$$\Hom_{\cK_2(\cP)}(X,Y)=\Hom_{\cC_2(\cP)}(X,Y)/\Htp(X,Y),$$
where $\Htp(X,Y)=\{f\in \Hom_{\cC_2(\cP)}(X,Y)|f\sim 0\}$. For any $f\in \Hom_{\cC_2(\cP)}(X,Y)$, we still denote its image in the quotient $\Hom_{\cK_2(\cP)}(X,Y)$ by $f$ rather than $\overline{f}$.

The shift functor on $\cC_2(\cP)$ and $\cK_2(\cP)$ are denoted by
\begin{diagram}[midshaft]
(-)^*:{(X=X^1} &\pile{\rTo^{d^1}\\ \lTo_{d^0}} &X^0) &\rMapsto &{(X^*=X^0} &\pile{\rTo^{-d^0}\\ \lTo_{-d^1}} &X^1)
\end{diagram}
which is an involution.

An object $X$ in $\cC_2(\cP)$ or $\cK_2(\cP)$ is said to be contractible, if $1_X\sim 0$.

An object $X$ in $\cC_2(\cP)$ or $\cK_2(\cP)$ is said to be radical, if $\im d^j\subseteq \rad X^{j+1}$.

A morphism $f\in \Hom_{\cC_2(\cP)}(X,Y)$ is called an isomorphism, if it is an isomorphism in $\cC_2(\cP)$. In this case, we write $X\cong Y$. And $f$ is called a homotopy equivalence, if its homotopy class in $\Hom_{\cK_2(\cP)}(X,Y)$ is an isomorphism in $\cK_2(\cP)$. In this case, we write $X\simeq Y$.

\begin{lemma}[\cite{Bridgeland-2013}, Lemma 3.2]\label{contractible complex}
If $X$ is contractible, then there exist unique $P,Q\in \cP$ up to isomorphism such that $X\cong K_P\oplus K_Q^*$, where 
\begin{diagram}
{K_P=P} &\pile{\rTo[midshaft]^1\\ \lTo[midshaft]_0} &P, &{K_Q^*=Q} &\pile{\rTo[midshaft]^0\\ \lTo[midshaft]_1} &Q.
\end{diagram}
\end{lemma}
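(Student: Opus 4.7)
The plan is to show that contractibility forces $X$ to be an exact sequence, then split each $X^j$ using an idempotent built from the contracting homotopy, and finally read off the decomposition and uniqueness directly.

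Assume $1_X \sim 0$ via $h^j \colon X^j \to X^{j+1}$ satisfying $1_{X^j} = d^{j+1} h^j + h^{j+1} d^j$. For $x \in \ker d^j$, one has $x = d^{j+1} h^j(x) \in \im d^{j+1}$, so combined with $d^{j+1} d^j = 0$ one obtains $\ker d^j = \im d^{j+1}$ for each $j \in \bbZ_2$. Setting $P := \im d^1 = \ker d^0 \subseteq X^0$ and $Q := \im d^0 = \ker d^1 \subseteq X^1$, I would next verify that $e^0 := h^1 d^0 \in \End_{\cA}(X^0)$ is idempotent, using $d^0 h^1 = 1_{X^1} - h^0 d^1$ (the homotopy relation in degree $1$) together with $d^1 d^0 = 0$. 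The complementary idempotent $1_{X^0} - e^0 = d^1 h^0$ has image inside $P$ and restricts to the identity on $P$: for $p = d^1 y$, expanding $h^0 d^1 = 1_{X^1} - d^0 h^1$ yields $d^1 h^0(p) = d^1 y - d^1 d^0 h^1(y) = d^1 y = p$. Hence $X^0 \cong P \oplus \im e^0$, and $\im e^0 \cong X^0 / \ker(1-e^0) = X^0/P \cong \im d^0 = Q$. The analogous argument with $e^1 := h^0 d^1$ gives $X^1 \cong P \oplus Q$, and since $P$ and $Q$ are direct summands of projective modules they are themselves projective.

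Transporting $d^0$ and $d^1$ through these decompositions, and identifying the two copies of $P$ inside $X^0$ and $X^1$ via $d^0$ (which is an isomorphism between them) and the two copies of $Q$ via $d^1$, a short matrix computation exhibits $X$ as an internal direct sum whose $P$-part has $d^1 = 1_P$, $d^0 = 0$ and whose $Q$-part has $d^0 = 1_Q$, $d^1 = 0$, i.e.\ $X \cong K_P \oplus K_Q^*$. Uniqueness of $P$ and $Q$ up to isomorphism is then immediate, since they are recovered intrinsically from $X$ as $\im d^1$ and $\im d^0$. The only point requiring care is the $\bbZ_2$-bookkeeping when matching the two summands to $K_P$ versus $K_Q^*$ (i.e.\ deciding which of $P, Q$ corresponds to the differential that becomes the identity in degree $1$ versus degree $0$); this is routine, and I do not anticipate a serious obstacle.
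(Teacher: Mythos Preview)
The paper does not prove this lemma; it is quoted from \cite{Bridgeland-2013} without argument, so there is no proof in the paper to compare against. Your approach via the idempotents $e^j$ built from the contracting homotopy is correct and standard.

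Two minor slips worth fixing. In the chain ``$\im e^0 \cong X^0 / \ker(1-e^0) = X^0/P$'' you mean $\ker e^0$, since for an idempotent $e$ one has $\ker e = \im(1-e)$, whence $\ker e^0 = P$ and then $\im e^0 \cong X^0/P \cong Q$ as you want. In the final assembly you have swapped the roles of $d^0$ and $d^1$: since $P = \ker d^0$, the map $d^0$ vanishes on $P$, so it is $d^1$ that identifies the copy of $P$ in $X^1$ (namely $\im e^1$) with $P \subset X^0$, and likewise $d^0$ identifies the copies of $Q$. Neither error affects the validity of the argument. Your uniqueness observation that $P = \im d^1$ and $Q = \im d^0$ are intrinsic to $X$ matches exactly how the paper itself uses this lemma later, in the proof of Lemma~\ref{middle term contractible}.
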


\begin{proposition}[\cite{Peng-Xiao-1997}, Proposition 7.1]
The category $\cC_2(\cP)$ is a Frobenius category whose projective-injective objects are contractible complexes, and its stable category is $\cK_2(\cP)$. Hence $\cK_2(\cP)$ is a triangulated category.
\end{proposition}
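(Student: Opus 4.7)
My plan is to verify the Quillen axioms for an exact category structure on $\cC_2(\cP)$, identify the projective-injective objects as exactly the contractibles, and then apply Happel's theorem that the stable category of a Frobenius category is canonically triangulated. I would endow $\cC_2(\cP)$ with the exact structure whose conflations are the sequences $0\to X\to Y\to Z\to 0$ that are split exact in each degree in $\cA$, i.e., $0\to X^j\to Y^j\to Z^j\to 0$ splits in $\cA$ for each $j\in \bbZ_2$. The exact category axioms are routine since degreewise split short exact sequences are stable under pushout and pullback along arbitrary chain maps. To recognize projective-injectives, by Lemma~\ref{contractible complex} it suffices to examine $K_P$ and $K_Q^*$: a direct calculation gives $\Hom_{\cC_2(\cP)}(K_P,Y)\cong\Hom_\cA(P,Y^1)$ and $\Hom_{\cC_2(\cP)}(K_P^*,Y)\cong\Hom_\cA(P,Y^0)$ together with dual identifications in the second variable, so projectivity of $P\in\cP$ in $\cA$ yields exactness on conflations, and every contractible complex is projective-injective.

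For the reverse inclusion and for enough projectives/injectives, I would use the explicit embedding $I(X):=K_{X^0}\oplus K_{X^1}^*$, whose differentials are $d^1_I=\mathrm{diag}(1,0)$ and $d^0_I=\mathrm{diag}(0,1)$, together with the chain map $\iota:X\hookrightarrow I(X)$ whose components
$$\iota^1=\binom{d^1_X}{1_{X^1}},\qquad\iota^0=\binom{1_{X^0}}{d^0_X}$$
are componentwise split monomorphisms. A short computation identifies the cokernel with $X^*$, yielding a conflation $0\to X\to I(X)\to X^*\to 0$ and hence enough injectives; applying the involution $(-)^*$ then gives enough projectives. If $X$ is projective-injective, $\iota$ splits, exhibiting $X$ as a direct summand of the contractible complex $I(X)$; since summands of contractibles are contractible, the projective-injective objects are exactly the contractible complexes, so $\cC_2(\cP)$ is a Frobenius category.

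It remains to identify the Frobenius stable category with $\cK_2(\cP)$, i.e., to show that a chain map $f:X\to Y$ factors through a contractible complex if and only if $f\sim 0$. If $f=gh$ factors through a contractible $C$ with contracting homotopy $k$ on $1_C$, then $H^j:=g^{j+1}k^jh^j$ defines a null-homotopy of $f$ after the standard chain-map manipulation using $g^{j}d^{j+1}_C=d^{j+1}_Yg^{j+1}$ and $d^j_Ch^j=h^{j+1}d^j_X$. Conversely, given a null-homotopy $h^j$ of $f$, the explicit formulas $\tilde g^1=(h^0,\ d^0_Yh^1)$ and $\tilde g^0=(d^1_Yh^0,\ h^1)$ define a chain map $\tilde g:I(X)\to Y$ with $f=\tilde g\circ\iota$, so $\Htp(X,Y)$ coincides with the ideal of morphisms factoring through projective-injectives. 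The Frobenius stable category then equals $\cK_2(\cP)$, and Happel's theorem supplies its canonical triangulated structure. The main obstacle I expect is this final matching: the intricate $\bbZ_2$-indexed bookkeeping is concentrated in guessing $\tilde g$ and verifying that it is a chain map with the correct factorization, although once the ansatz above is in place the verification is mechanical.
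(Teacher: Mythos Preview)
The paper does not supply its own proof of this proposition; it is quoted as \cite[Proposition 7.1]{Peng-Xiao-1997} and used as a black box. Your argument is correct and is the standard one: endow $\cC_2(\cP)$ with the degreewise-split exact structure (which, since every $Z^j$ is projective in $\cA$, coincides with the exact structure inherited from $\cC_2(\cA)$), use the functorial conflation $0\to X\xrightarrow{\iota} K_{X^0}\oplus K_{X^1}^*\to X^*\to 0$ to obtain enough projectives and injectives, and verify that factoring through a contractible is the same as being null-homotopic. Your explicit formulas for $\iota$ and $\tilde g$ check out, and the claim that summands of contractibles are contractible follows immediately from $1_X=r\,1_C\,i\sim 0$ for a retract $X$ of a contractible $C$.
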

The following two lemmas are well known for bounded projective complexes, a proof can be founded in \cite[Lemma 1]{Jensen-Su-Zimmermann-2005}. We write a proof for the two-periodic case.

\begin{lemma}\label{decomposition}
Any object $X\in\cC_2(\cP)$ can be uniquely decomposed as $X\cong X_r\oplus X_c$ up to isomorphism such that $X_r$ is radical and $X_c$ is contractible.
\end{lemma}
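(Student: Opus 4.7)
The plan is to prove existence by peeling off contractible direct summands of the two basic types $K_P$ and $K_Q^*$ provided by Lemma \ref{contractible complex}, and then to deduce uniqueness from Krull--Schmidt for $\cC_2(\cP)$.

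For existence, I would first split off a summand isomorphic to $K_{P_0}$ by analyzing the differential $d^1$. The induced map of semisimple modules $\bar d^1\colon X^1/\rad X^1\to X^0/\rad X^0$ restricts to an isomorphism from a chosen complement $\bar B^1$ of $\ker\bar d^1$ onto its image $\bar B^0$. Lifting these semisimple summands along the projective covers $X^j\twoheadrightarrow X^j/\rad X^j$ produces projective direct summands $B^1\subseteq X^1$ and $B^0\subseteq X^0$ with complements $A^1,A^0$; the composition $B^1\xrightarrow{d^1|_{B^1}}X^0\twoheadrightarrow B^0$ is then a map between projective covers inducing an isomorphism on tops, hence itself an isomorphism by Nakayama. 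Applying successively two graph-of-a-map changes of basis, first on $A^0$ (to absorb the $B^1\to A^0$ block of $d^1$) and then on $A^1$ (to absorb the $A^1\to B^0$ block), I arrive at
$$X^1=A^1\oplus B^1,\quad X^0=A^0\oplus B^0,\quad d^1=\begin{pmatrix}\mu & 0\\ 0 & \alpha\end{pmatrix},\quad \alpha\ \text{iso},\ \im\mu\subseteq\rad A^0.$$
The block structure then propagates to $d^0$ automatically from the relations $d^{j+1}d^j=0$: writing $d^0=\begin{pmatrix}p & r\\ q & s\end{pmatrix}$, the identity $d^0d^1=0$ forces $r\alpha=0$ and $s\alpha=0$, hence $r=s=0$; the identity $d^1d^0=0$ then forces $\alpha q=0$, hence $q=0$. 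This yields $X\cong(A^1,A^0,\mu,p)\oplus K_{B^1}$, and applying the symmetric argument to $d^0=p$ of the first summand splits off a copy of $K_{Q_0}^*$, leaving a complex whose two differentials both have image in the radical.

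For uniqueness, Krull--Schmidt holds in $\cC_2(\cP)$ because all its Hom-spaces are finite-dimensional $\bbC$-vector spaces. It therefore suffices to verify that every indecomposable in $\cC_2(\cP)$ is either radical or contractible (apply the existence statement to an indecomposable $X$ and use that one of $X_r,X_c$ must vanish) and that no nonzero complex is both: if $1_{X^j}=h^{j+1}d^j+d^{j+1}h^j$ and both $d^j,d^{j+1}$ have image in the radical, then $X^j=\rad X^j$, whence $X^j=0$ by Nakayama. Grouping the indecomposable summands accordingly produces the unique decomposition $X\cong X_r\oplus X_c$.

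The main obstacle is the bookkeeping in the existence step: one must perform the two graph-of-a-map manipulations in the correct order so that $d^1$ genuinely block-diagonalizes, and then verify that the commutation relations $d^{j+1}d^j=0$ automatically kill the off-diagonal blocks of $d^0$ without any further adjustment. Once this matrix calculus is set up cleanly, the second splitting against $d^0$ and the uniqueness are essentially formal.
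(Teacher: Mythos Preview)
Your proposal is correct and follows essentially the same route as the paper's proof for existence: both analyze the induced map $\bar d^1$ on tops, lift a complement of its kernel to a projective summand, use that the resulting block is an isomorphism to split off a $K_P$-summand (the paper phrases this via projective covers of $\im\bar d^1$, you via complements of $\ker\bar d^1$, but these are the same summand), and then iterate/symmetrize for $d^0$. Your write-up is more explicit about the two row/column operations and about why the relations $d^{j+1}d^j=0$ force the off-diagonal blocks of $d^0$ to vanish; the paper simply asserts the block form of $d^0$.

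The one substantive addition in your proposal is the uniqueness argument: the paper states uniqueness in the lemma but gives no proof of it, whereas you deduce it cleanly from Krull--Schmidt together with the observations that every indecomposable is radical or contractible and that no nonzero complex is both. That argument is correct (Hom-spaces in $\cC_2(\cP)$ are finite-dimensional over $\bbC$, so Krull--Schmidt applies), and it genuinely completes the statement.
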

\begin{proof}
For any $X=(X^1,X^0,d^1,d^0)\in\cC_2(\cP)$, since $d^j(\rad X^j)\subseteq \rad X^{j+1}$, we have a two-periodic complex $(\oline{X^1},\oline{X^0},\oline{d^1},\oline{d^0})$ of semisimple modules, where $\oline{X^j}=X^j/\rad X^j$, and $\oline{d^j}$ is induced by $d^j$. If both $\oline{d^j}=0$, then $X$ is radical. Otherwise, without loss of generality, we assume that $\oline{d^1}\not=0$, then $\im \oline{d^1}$ is a non-zero direct summand of the semisimple module $\oline{X^0}$. Let $\pi'^0:X'^0\rightarrow \im \oline{d^1}$ be the projective cover of $\im \oline{d^1}$, we claim that $X'^0$ is isomorphic to a direct summand of $X^0$. Indeed, consider the diagram
\begin{diagram}[midshaft,size=2em]
X^0 &\rDashto^{\varphi^0}   &X'^0\\
\dOnto^{\pi^0} & &\dOnto_{\pi'^0}\\
\oline{X^0} &\rOnto^{\oline{\pi^0}} &{\im \oline{d^1}},
\end{diagram}
where $\pi^0,\oline{\pi^0}$ are natural projections. Since $X^0$ is projective and $\pi'^0$ is surjective, there exists $\varphi^0:X^0\rightarrow {X^0}'$ such that above diagram commutes. Since $\pi'^0$ is a projective cover, the composition $\pi'^0\varphi^0=\oline{\pi^0}\pi^0$ is surjective implies that $\varphi^0$ is surjective. Hence $\varphi^0$ splits, since $X'^0$ is projective. This finishes the proof of our claim.

By similar argument, we have the following commutative diagram
\begin{diagram}[midshaft,size=2em]
X^1 &\rDashto^{\varphi^1}   &X'^0\\
\dOnto^{\pi^1} & &\dOnto_{\pi'^0}\\
\oline{X^1} &\rOnto^{\oline{d^1}} &{\im \oline{d^1}},
\end{diagram}
where $\varphi^1$ is a splitting surjective morphism, so $X'^0$ is also isomorphic to a direct summand of $X^1$. We assume that $X^1\cong X''^1\oplus X'^0$ and $X^0\cong X''^0\oplus X'^0$, then $d^1$ is transformed into 
$\begin{pmatrix}\begin{smallmatrix}
d''^1 &0\\
0    &1_{X'^0}
\end{smallmatrix}\end{pmatrix}$ and $d^0$ is transformed into 
$\begin{pmatrix}\begin{smallmatrix}
d''^0 &0\\
0 &0
\end{smallmatrix}\end{pmatrix}$ satisfying $d''^{j+1}d''^j=0$ such that  
\begin{diagram}[midshaft]
X\cong(X''^1 &\pile{\rTo^{d''^1}\\ \lTo_{d''^0}} &X''^0) \oplus (X'^0 &\pile{\rTo^{1_{X'^0}}\\ \lTo_0} &X'^0),
\end{diagram}
where $\im d''^1\subseteq \rad X''^0$. If $\im d''^0\subseteq \rad X''^1$, it is as desired. Otherwise, we may repeat above progress until the conclusion holds. 
\end{proof}

\begin{lemma}\label{isomorphism and homotopy equivalence}
Let $X,Y$ be radical complexes, then any morphism $f\in \Hom_{\cC_2(\cP)}(X,Y)$ is an isomorphism if and only if it is a homotopy equivalence.
\end{lemma}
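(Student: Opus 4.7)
The plan is to prove the two directions separately, with the forward direction being trivial and all the work in the reverse direction.

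For the forward direction, if $f$ is an isomorphism in $\cC_2(\cP)$ with inverse $g$, then $gf=1_X$ and $fg=1_Y$ as actual equalities, so in particular $gf\sim 1_X$ and $fg\sim 1_Y$, meaning the homotopy class of $f$ is invertible in $\cK_2(\cP)$.

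For the reverse direction, suppose $f\in \Hom_{\cC_2(\cP)}(X,Y)$ is a homotopy equivalence, so there exists $g\in \Hom_{\cC_2(\cP)}(Y,X)$ together with homotopies $gf-1_X = d_X^{j+1}s^j + s^{j+1}d_X^j$ and $fg-1_Y = d_Y^{j+1}t^j + t^{j+1}d_Y^j$ for suitable $s^j\in\Hom_\cA(X^j,X^{j+1})$ and $t^j\in\Hom_\cA(Y^j,Y^{j+1})$. The key observation is that since $X$ and $Y$ are radical, we have $\im d_X^j\subseteq \rad X^{j+1}$ and $\im d_Y^j\subseteq \rad Y^{j+1}$, and every morphism of $A$-modules sends radical into radical. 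Therefore each of the four terms $d_X^{j+1}s^j$, $s^{j+1}d_X^j$, $d_Y^{j+1}t^j$, $t^{j+1}d_Y^j$ has image contained in the radical of its target. Passing to the semisimple quotients $\oline{X^j}=X^j/\rad X^j$ and $\oline{Y^j}=Y^j/\rad Y^j$, the homotopy terms vanish and we obtain
$$\oline{g^j}\,\oline{f^j} = 1_{\oline{X^j}}, \qquad \oline{f^j}\,\oline{g^j} = 1_{\oline{Y^j}},$$
so $\oline{f^j}$ is an isomorphism of semisimple $A$-modules for each $j\in\bbZ_2$.

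It remains to upgrade this to the statement that $f^j:X^j\to Y^j$ itself is an isomorphism. This is a standard fact about projective modules over the finite-dimensional algebra $A$: a homomorphism between finitely generated projectives is an isomorphism iff it is an isomorphism modulo the radicals (equivalently, iff it induces an isomorphism on tops). The argument is short: surjectivity of $\oline{f^j}$ gives $\im f^j + \rad Y^j = Y^j$, so $\im f^j = Y^j$ by Nakayama's lemma; projectivity of $Y^j$ then splits the surjection, giving $X^j\cong Y^j\oplus \Ker f^j$; reducing modulo radicals and using that $\oline{f^j}$ is already an isomorphism forces $\oline{\Ker f^j}=0$, and Nakayama once more yields $\Ker f^j=0$. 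Therefore each $f^j$ is an isomorphism of $A$-modules, and since $f$ is already a chain map, $f$ is an isomorphism in $\cC_2(\cP)$.

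The only potentially delicate step is making sure that the chain-map condition $d_Y^j f^j = f^{j+1} d_X^j$ combines with the componentwise inverses $(f^j)^{-1}$ to give an actual morphism in $\cC_2(\cP)$; but this is automatic, since compatibility of the $f^j$ with the differentials translates directly into compatibility of the $(f^j)^{-1}$. There is no genuine obstacle beyond keeping track of the $\bbZ_2$-indexing.
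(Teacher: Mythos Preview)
Your proof is correct and follows essentially the same approach as the paper: both hinge on the observation that for radical complexes the homotopy terms $d^{j+1}s^j + s^{j+1}d^j$ have image contained in the radical, so $g^jf^j$ and $f^jg^j$ become identities modulo radicals. The only cosmetic difference is that the paper concludes by showing $g^jf^j$ and $f^jg^j$ are surjective (hence automorphisms) and then explicitly writes down and verifies the inverse chain map, whereas you deduce directly via Nakayama that each $f^j$ is an isomorphism and observe that the componentwise inverse is automatically a chain map.
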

\begin{proof}
If $f$ is an isomorphism, it is clear that $f$ is a homotopy equivalence.

Conversely, if $f$ is a homotopy equivalence, that is, there exists  $g:Y\rightarrow X$ such that $gf\sim 1_X$ and $fg\sim 1_Y$, that is, there exist morphisms $h^j,k^j$ in $\cA$ such that
\begin{align*}
g^j f^j-1_{X^j}&=d^{j+1}_X h^j+h^{j+1} d^j_X,\\
f^j g^j-1_{Y^j}&=d^{j+1}_Y k^j+k^{j+1} d^j_Y.
\end{align*}
Since $X,Y$ are radical complexes, that is, $\im d^j_X\subseteq \rad X^{j+1}, \im d^j_Y\subseteq \rad Y^{j+1}$, we have $\im (g^j f^j-1_{X^j})\subseteq \rad X^j,\im (f^j g^j-1_{Y^j})\subseteq \rad Y^j$. By \cite[Proposition 3.7(e)]{Assem-Simson-Skowronski-2006},
\begin{align*}
\im (g^jf^j-1_{X^j})+\im (g^jf^j)=X^j &\Rightarrow \im (g^jf^j)=X^j.\\
\im (f^jg^j-1_{Y^j})+\im (f^jg^j)=Y^j&\Rightarrow \im(f^jg^j)=Y^j.
\end{align*}
Hence the endomorphisms $g^jf^j\in \End_{\cA}(X^j), f^jg^j\in \End_{\cA}(Y^j)$ are isomorphisms. We assume that $g^jf^jk^j=k^jg^jf^j=1_{X^j},f^jg^jk'^j=k'^jf^jg^j=1_{Y^j}$, then
\begin{align*}
k^jg^j&=k^jg^jf^jg^jk'^j=g^jk'^j,\\
d_X^jk^jg^j=k^{j+1}g^{j+1}f^{j+1}d_X^jk^jg^j&=k^{j+1}d_X^jg^jf^jk^jg^j=k^{j+1}d_X^jg^j=k^{j+1}g^{j+1}d_Y^j,
\end{align*}
where the second equation implies that $(k^1g^1,k^0g^0)\in \Hom_{\cC_2(\cP)}(Y,X)$, and then in addition the first equation implies that $(k^1g^1,k^0g^0)$ is the inverse of $f$.
\end{proof}

\begin{corollary}\label{automorphism groups coincide}
For any radical complex $X$, the natural projection from $\End_{\cC_2(\cP)}(X)$ to $\End_{\cK_2(\cP)}(X)$ restricts to a surjective group homomorphism 
$$p:\Aut_{\cC_2(\cP)}(X)\twoheadrightarrow \Aut_{\cK_2(\cP)}(X)$$
with kernel $\{1_X+f|f\in \Htp(X,X)\}$ which is bijective to the vector space $\Htp(X,X)$.
\end{corollary}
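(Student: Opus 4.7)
The plan is to deduce everything from Lemma \ref{isomorphism and homotopy equivalence}. First I would record the structural observation that the natural map $p:\End_{\cC_2(\cP)}(X)\to \End_{\cK_2(\cP)}(X)$ is a ring homomorphism whose kernel is exactly $\Htp(X,X)$ by definition of $\cK_2(\cP)$. Any ring homomorphism sends units to units, so $p$ restricts to a group homomorphism $\Aut_{\cC_2(\cP)}(X)\to \Aut_{\cK_2(\cP)}(X)$, and it then remains only to verify surjectivity and to identify the kernel of this restriction.

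For surjectivity, I would pick any $[g]\in \Aut_{\cK_2(\cP)}(X)$ and lift to a representative $g\in \End_{\cC_2(\cP)}(X)$. The invertibility of $[g]$ in $\cK_2(\cP)$ means precisely that $g$ is a homotopy equivalence $X\to X$; since $X$ is radical, Lemma \ref{isomorphism and homotopy equivalence} (applied with $X=Y$) then upgrades $g$ to an honest isomorphism in $\cC_2(\cP)$. Hence $g\in \Aut_{\cC_2(\cP)}(X)$ and $p(g)=[g]$, giving surjectivity.

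For the kernel, an automorphism $u\in \Aut_{\cC_2(\cP)}(X)$ satisfies $p(u)=[1_X]$ iff $u\sim 1_X$, iff $u-1_X\in \Htp(X,X)$, iff $u=1_X+f$ for some $f\in \Htp(X,X)$. Conversely, for any $f\in \Htp(X,X)$ the morphism $1_X+f$ is homotopic to $1_X$, hence is a homotopy equivalence, so Lemma \ref{isomorphism and homotopy equivalence} again promotes it to an element of $\Aut_{\cC_2(\cP)}(X)$. This identifies $\ker(p|_{\Aut})=\{1_X+f\mid f\in \Htp(X,X)\}$, and the map $f\mapsto 1_X+f$ is visibly a bijection from the $\bbC$-vector space $\Htp(X,X)$ onto this kernel, with inverse $u\mapsto u-1_X$.

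There is no real obstacle: once Lemma \ref{isomorphism and homotopy equivalence} is available, the corollary is essentially a one-line consequence. The only subtlety worth flagging is that the bijection with $\Htp(X,X)$ is asserted as a bijection of sets only; the group law on the kernel, transported to $\Htp(X,X)$ via $f\mapsto 1_X+f$, is given by $f\star f'=f+f'+ff'$ rather than ordinary addition, so one should not mistake the bijection for a vector-space or group isomorphism.
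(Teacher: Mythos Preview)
Your proof is correct and follows essentially the same approach as the paper: both use Lemma \ref{isomorphism and homotopy equivalence} to obtain surjectivity and to show that each $1_X+f$ with $f\in\Htp(X,X)$ is an actual automorphism, and both identify the kernel via the equivalence $u\sim 1_X \Leftrightarrow u-1_X\in\Htp(X,X)$. Your additional remark that the bijection with $\Htp(X,X)$ is only set-theoretic (the transported group law being $f\star f'=f+f'+ff'$) is a useful clarification that the paper leaves implicit.
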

\begin{proof}
By Lemma \ref{isomorphism and homotopy equivalence}, the surjectivity is clear. Moreover, for any $f\in \Htp(X,X)$, the morphism $1_X+f\sim 1_X$ is a homotopy equivalence, then it is a isomorphism and contained in the kernel $\Ker$. Conversely, for any $g\in \Ker$, we have $g\sim 1_X$, and so $g-1_X\sim 0$, that is, $g-1_X\in \Htp(X,X)$.
\end{proof}

\begin{definition}
A complex $X$ is said to be $*$-saturated, if $X_c^*\cong X_c$.
\end{definition}

The projective dimension vector pair $\ue$ of an object $X=(X^1,X^0,d^1,d^0)$ in $\cC_2(\cP)$ or $\cK_2(\cP)$ is defined to be $\ue=(e^1,e^0)\in \bbN I\times \bbN I$ such that $X^j\cong \bigoplus_{i\in I} e^j_iP_i$, where $e^j_iP_i$ is the direct sum of $e^j_i$-copies of $P_i$. 

\begin{proposition}\label{saturated isomorphism}
Let $X,Y$ be two $*$-saturated complexes having the same projective dimension vector pair, then $X$ is isomorphic to $Y$ if and only if $X$ is homotopy equivalent to $Y$.
\end{proposition}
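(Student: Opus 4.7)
The forward implication is immediate, since any isomorphism in $\cC_2(\cP)$ descends to a homotopy equivalence. For the converse, my plan is to separately compare the radical parts and the contractible parts of $X$ and $Y$ under the assumption $X\simeq Y$.

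First I would apply Lemma \ref{decomposition} to write $X\cong X_r\oplus X_c$ and $Y\cong Y_r\oplus Y_c$ with $X_r,Y_r$ radical and $X_c,Y_c$ contractible. Since contractible complexes are the projective-injective objects of the Frobenius category $\cC_2(\cP)$, they become zero in $\cK_2(\cP)$, so the homotopy equivalence $X\simeq Y$ restricts to a homotopy equivalence $X_r\simeq Y_r$. Because $X_r$ and $Y_r$ are both radical, Lemma \ref{isomorphism and homotopy equivalence} promotes this to an honest isomorphism $X_r\cong Y_r$ in $\cC_2(\cP)$; in particular $X_r^1\cong Y_r^1$ and $X_r^0\cong Y_r^0$ in $\cP$.

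Next I would pin down the shape of the contractible parts. By Lemma \ref{contractible complex}, write $X_c\cong K_P\oplus K_Q^*$ for unique $P,Q\in\cP$. The $*$-saturated hypothesis $X_c^*\cong X_c$ gives $K_P^*\oplus K_Q\cong K_P\oplus K_Q^*$, and the uniqueness clause of Lemma \ref{contractible complex} forces $P\cong Q$; thus $X_c\cong K_P\oplus K_P^*$, so $X_c^1\cong P\oplus P\cong X_c^0$. Writing $Y_c\cong K_{P'}\oplus K_{P'}^*$ similarly, the hypothesis that $X$ and $Y$ share the same projective dimension vector pair then gives $X_r^j\oplus P^{\oplus 2}\cong Y_r^j\oplus (P')^{\oplus 2}$ for $j\in\bbZ_2$. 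Combined with $X_r^j\cong Y_r^j$ obtained above, the Krull–Schmidt theorem in $\cP$ yields $P\cong P'$, hence $X_c\cong Y_c$, and assembling the pieces gives $X\cong Y$.

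I do not anticipate any serious obstacle: the argument is essentially a bookkeeping exercise once Lemmas \ref{decomposition}, \ref{contractible complex}, and \ref{isomorphism and homotopy equivalence} are in hand. The only subtle point — and the reason the $*$-saturated hypothesis appears in the statement — is that without it one could have $X_c=K_P$ and $Y_c=K_P^*$, which share the same projective dimension vector pair and are both zero in $\cK_2(\cP)$, yet are not isomorphic in $\cC_2(\cP)$; the $*$-saturated condition is precisely what rules this out by forcing the contractible part to be self-dual of the form $K_P\oplus K_P^*$.
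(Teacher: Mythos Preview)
Your proposal is correct and follows essentially the same route as the paper: decompose via Lemma~\ref{decomposition}, use Lemma~\ref{isomorphism and homotopy equivalence} to get $X_r\cong Y_r$, then invoke the $*$-saturated hypothesis together with Lemma~\ref{contractible complex} and Krull--Schmidt to conclude $X_c\cong Y_c$. The paper's write-up is slightly terser (it simply asserts $X_c\cong K_P\oplus K_P^*$ directly from $*$-saturation and then ``compares projective dimension vector pairs''), but the underlying argument is identical to yours.
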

\begin{proof}
If $X\cong Y$, then it is clear that $X\simeq Y$. 

Conversely, if $X\simeq Y$, by Lemma \ref{decomposition}, we assume that $X\cong X_r\oplus X_c$ and $Y\cong Y_r\oplus Y_c$, then $X_r\simeq Y_r$. By Lemma \ref{isomorphism and homotopy equivalence}, $X_r\cong Y_r$, in particular, $X_r^j\cong Y_r^j$, and so $X_c^j\cong Y_c^j$. Since $X,Y$ are $*$-saturated, by Lemma \ref{contractible complex}, $X_c\cong K_P\oplus K_P^*$ and $Y_c\cong K_Q\oplus K_Q^*$ for some $P,Q\in \cP$. Comparing their projective dimension vector pairs, we have $P\cong Q$, and so $X_c\cong Y_c$. Therefore, $X\cong Y$.
\end{proof}

For $*$-saturated complexes having fixed projective dimension vector pair, their isomorphism classes are the same as the homotopy equivalence classes. We remark that the conclusion fails without the $*$-saturated condition, for example, $K_P\simeq K_P^*$, but $K_P\ncong K_P^*$

\subsection{Derived invariance}\label{derived invariance}\

Let $\cD^b(A)$ be the bounded derived category of $\cA$ with the shift functor denoted by $[1]$. Then $\cK_2(\cP)$ is determined by $\cD^b(A)$, see \cite{Fu-2012,Stai-2018,Zhao-2014}. We sketch the main results in this subsection.

Since $\cA$ has enough projective objects and finite-global dimension, there exists a triangulated equivalence $\cD^b(A)\simeq \cK^b(\cP)$ by taking projective resolutions, where $\cK^b(\cP)$ is the bounded homotopy category of projective complexes. Let $\cK^b(\cP)/[2]$ be the orbit category of $\cK^b(\cP)$ with respect to the functor $[2]$, whose objects are the same as $\cK^b(\cP)$ and 
$$\Hom_{\cK^b(\cP)/[2]}(X,Y)=\bigoplus_{s\in \bbZ}\Hom_{\cK^b(\cP)}(X,Y[2s]).$$
There is a fully-faithful functor
\begin{align*}
\pi:\cK^b(\cP)/[2]&\rightarrow \cK_2(\cP)\\
(X^n,d^n)_{n\in \bbZ}&\mapsto (\oplus_{s\in \bbZ}X^{2s+1},\oplus_{s\in \bbZ}X^{2s},\oline{d^1},\oline{d^0}),
\end{align*}
where $\oline{d^1},\oline{d^0}$ are determined by $(d^n)_{n\in\bbZ}$ in a natural way, see \cite[Lemma 3.1]{Bridgeland-2013}. In the case $\cA$ is hereditary, $\pi$ is dense and then is an equivalence, see \cite[Section 7]{Peng-Xiao-1997}. But in general, $\pi$ is not dense or $\cK^b(\cP)/[2]$ is not a triangulated category.

Let $\cR_A$ be the triangulated hull of $\cK^b(\cP)/[2]$ defined by Keller in \cite{Keller-2005}, which is a triangulated category admitting a triangulated functor $\pi_A:\cK^b(\cP)\rightarrow \cR_A$ satisfying a universal property, see \cite[Subsection 2.3]{Fu-2012}. The relation between $\cR_A$ and $\cK_2(\cP)$ was studied by \cite{Fu-2012,Stai-2018,Zhao-2014}. We summarize the result in the following proposition.
\begin{proposition}
(a) If $B$ is another algebra which is derived equivalent to $A$, that is, $\cD^b(B)\simeq \cD^b(A)$, then there is a triangulated equivalence $\cR_B\simeq \cR_A$.\\
(b) There is a triangulated equivalence $\cR_A\simeq \cK_2(\cP)$.
\end{proposition}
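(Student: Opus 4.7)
The plan is to prove both statements by appealing to the universal property of Keller's triangulated hull \cite{Keller-2005} together with standard features of Frobenius categories.

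For part (a), a triangulated equivalence $\cD^b(B)\simeq\cD^b(A)$ induces, via the equivalences $\cD^b(A)\simeq\cK^b(\cP_A)$ and $\cD^b(B)\simeq\cK^b(\cP_B)$ obtained by taking projective resolutions (here using the finite global dimension hypothesis), a triangulated equivalence $F:\cK^b(\cP_A)\simeq\cK^b(\cP_B)$. Since $F$ commutes with $[1]$ it commutes with $[2]$, and therefore descends to an equivalence of orbit categories $\cK^b(\cP_A)/[2]\simeq\cK^b(\cP_B)/[2]$. Lifting $F$ to a quasi-equivalence of the canonical dg enhancements (a standard consequence of derived Morita theory for algebras of finite global dimension), this orbit equivalence upgrades to a quasi-equivalence of the associated orbit dg categories. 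Applying Keller's hull construction to both sides then yields the triangulated equivalence $\cR_A\simeq\cR_B$.

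For part (b), I would first verify that $\pi:\cK^b(\cP)/[2]\to\cK_2(\cP)$ is a triangulated functor with respect to the Frobenius-stable triangulation on $\cK_2(\cP)$: the shift $[1]$ in $\cK^b(\cP)$ matches the involution $(-)^*$, and a distinguished triangle in $\cK^b(\cP)$, realized by a mapping cone, is sent to a conflation in $\cC_2(\cP)$ under the two-periodic folding of the mapping cone, which yields a distinguished triangle in the stable category. Since $\pi$ is already fully faithful, Keller's universal property produces a triangulated functor $\bar\pi:\cR_A\to\cK_2(\cP)$ extending $\pi$; this $\bar\pi$ is my candidate equivalence.

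It remains to show $\bar\pi$ is an equivalence. For fully faithfulness, the Hom-spaces already match on the orbit category by
$$\Hom_{\cK^b(\cP)/[2]}(X,Y)=\bigoplus_{s\in\bbZ}\Hom_{\cK^b(\cP)}(X,Y[2s])\cong\Hom_{\cK_2(\cP)}(\pi X,\pi Y),$$
and this should propagate to $\cR_A$ by comparing the dg-hull Hom complexes with the corresponding Hom complexes of two-periodic dg modules representing objects in $\cK_2(\cP)$. The main obstacle is density: in the hereditary case $\pi$ itself is essentially surjective by \cite{Peng-Xiao-1997}, but in general an arbitrary two-periodic projective complex need not lie in the image of $\pi$, and one must instead identify $\cR_A$ with the derived category of a two-periodic dg algebra and compare it with a dg model of $\cK_2(\cP)$. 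This model-theoretic comparison is the technical heart of the argument and is the step where I would invoke the machinery developed in \cite{Fu-2012,Stai-2018,Zhao-2014}.
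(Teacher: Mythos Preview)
The paper does not actually prove this proposition: it is stated as a summary of results from \cite{Fu-2012,Stai-2018,Zhao-2014}, with no argument given beyond the citation. So there is no ``paper's own proof'' to compare against here.

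Your outline is a reasonable sketch of how those references proceed, and you correctly identify the crux. A few remarks. For (a), the passage from a triangulated equivalence $\cK^b(\cP_A)\simeq\cK^b(\cP_B)$ to an equivalence of triangulated hulls genuinely requires working at the dg level, as you note; one cannot simply ``descend to orbit categories and then take hulls'' without first lifting to a quasi-equivalence of dg enhancements, since Keller's hull is defined via the dg orbit category. For (b), your observation that the universal property of the hull produces $\bar\pi:\cR_A\to\cK_2(\cP)$ is correct (using that $\cK_2(\cP)$, as the stable category of a Frobenius category, is algebraic), but the step ``fully faithfulness should propagate to $\cR_A$'' is not automatic: fully faithfulness of $\pi$ on the orbit category does not by itself imply fully faithfulness of $\bar\pi$ on the hull, because the hull may contain objects not in the essential image of the orbit category. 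What is really needed, and what the cited papers establish, is an identification of both $\cR_A$ and $\cK_2(\cP)$ with the perfect derived category of the same $2$-periodic dg algebra (or a direct comparison of dg models), from which both fully faithfulness and essential surjectivity follow simultaneously. Your final paragraph acknowledges exactly this, so your proposal is honest about where the real work lies and is consistent with how the paper handles the statement.
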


By above proposition, if $\cD^b(A)\simeq \cD^b(B)$, then $\cK_2(\cP_A)\simeq \cK_2(\cP_B)$. Moreover, we want to obtain a standard equivalence in the sense of Rickard.

\begin{proposition}[\cite{Rickard-1991}]
Let $F:\cD^b(A)\rightarrow\cD^b(B)$ be a derived equivalence, then $F$ induces two derived equivalences 
\begin{align*}
&F_1:\cD^b(A^{op}\otimes_{\bbC}A)\rightarrow \cD^b(B^{op}\otimes_{\bbC}A),\\
&F_2:\cD^b(B^{op}\otimes_{\bbC}B)\rightarrow \cD^b(A^{op}\otimes_{\bbC}B).
\end{align*}
Let $\Delta=F_1(_AA_A)\in \cD^b(B^{op}\otimes_{\bbC}A), \Delta'=F_2(_BB_B)\in \cD^b(A^{op}\otimes_{\bbC}B)$, then the derived tensor functor 
$$-\otimes^L_B\Delta:\cD^b(B)\rightarrow \cD^b(A)$$
is an equivalence with the quasi-inverse
$$R\Hom_A(\Delta,-)\simeq -\otimes^L_A\Delta':D^b(A)\rightarrow D^b(B).$$
This equivalence is called a standard equivalence and $\Delta$ is called a two-sided tilting complex. Moreover, $\Delta\otimes^L_A\Delta'\cong R\Hom_A(\Delta,\Delta)\cong B.$
\end{proposition}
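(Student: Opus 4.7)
The plan is to follow Rickard's tilting-theoretic approach. Since $F$ is a derived equivalence, $T:=F(_AA_A)\in\cD^b(B)$ is a tilting complex: one has $\Hom_{\cD^b(B)}(T,T[n])=0$ for $n\neq 0$, $T$ classically generates the perfect derived category of $B$, and $\End_{\cD^b(B)}(T)\cong A$. The heart of the proof is to lift $T$ to a two-sided tilting complex $\Delta\in\cD^b(B^{\op}\otimes_\bbC A)$ whose underlying complex of left $B$-modules is $T$ and whose right $A$-action realizes the identification $\End_{\cD^b(B)}(T)\cong A$. This is the genuinely non-formal step and the main obstacle of the proof.

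I would construct $\Delta$ by Rickard's inductive lifting procedure: represent $T$ by a bounded complex of finitely generated projective left $B$-modules and lift its differentials one at a time to $(B,A)$-bimodule maps, the obstruction at each stage lying in a $\Hom$-group that vanishes by the orthogonality property of $T$. A conceptually cleaner alternative, available because $A$ and $B$ have finite global dimension, is to invoke the essential uniqueness of DG-enhancements of $\cD^b(A)$ and $\cD^b(B)$: then $F$ lifts to a quasi-equivalence of DG-categories, represented by a DG-bimodule whose cohomology is $\Delta$. Either way, one defines $F_1$ and $F_2$ as derived tensor product with $\Delta$ and $\Delta'$ in the appropriate bimodule slot, so that $F_1(_AA_A)\simeq\Delta$ and $F_2(_BB_B)\simeq\Delta'$ hold by construction.

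With $\Delta$ and $\Delta'$ at hand the remainder is formal. Finite global dimension of $A$ makes $\Delta$ perfect as a right $A$-module complex, so dualising gives $R\Hom_A(\Delta,-)\simeq -\otimes^L_A\Delta'$, and standard adjunction shows that this is both left and right adjoint to $-\otimes^L_B\Delta:\cD^b(B)\to\cD^b(A)$. To verify that these adjoints are mutually inverse it suffices to evaluate on the generator $_BB_B$: the composite yields $\Delta\otimes^L_A\Delta'\simeq R\Hom_A(\Delta,\Delta)$, which is the derived endomorphism algebra of $\Delta$ viewed as a right $A$-module complex. Because $-\otimes^L_B\Delta$ preserves derived endomorphism algebras (as a quasi-equivalence of DG-enhancements must), this equals $R\End_B(B)\cong B$, with the $B$-bimodule structure coming from the residual left $B$-action on $\Delta$. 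The symmetric computation yields $\Delta'\otimes^L_B\Delta\simeq A$, which finishes the argument. The entire non-formal content of the theorem is concentrated in the lifting of the first paragraph; passing from a triangle-equivalence to an actual bimodule complex $\Delta$ cannot be done purely at the triangulated level and requires either Rickard's explicit obstruction-killing or an external DG/model-theoretic enhancement.
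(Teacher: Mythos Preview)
The paper does not supply a proof of this proposition: it is quoted verbatim from Rickard's 1991 paper and used as a black box for the subsequent construction of the functor $-\boxtimes_B\Delta$ on $\cK_2(\cP)$. So there is nothing in the present paper to compare your proposal against.

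That said, your outline is a faithful sketch of Rickard's original argument --- the genuinely non-formal lifting of a one-sided tilting complex to a two-sided one by obstruction-killing against the vanishing of $\Hom(T,T[n])$, followed by the formal adjunction and evaluation-on-generators computation. One point to tidy: watch the left/right conventions. In the paper's setup $\cD^b(A)$ and $\cD^b(B)$ are categories of \emph{right} modules, so $T:=F(A)$ is a complex of right $B$-modules with $\End_{\cD^b(B)}(T)\cong A$; lifting this naturally produces a left-$A$, right-$B$ bimodule complex, i.e.\ an object of $\cD^b(A^{\op}\otimes_\bbC B)$, which is $\Delta'$ rather than $\Delta$. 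To obtain $\Delta\in\cD^b(B^{\op}\otimes_\bbC A)$ directly you should start from $F^{-1}(B)\in\cD^b(A)$, a tilting complex with endomorphism ring $B$, and lift that. Your sentence ``underlying complex of left $B$-modules is $T$'' conflates the two sides. This does not affect the substance of the argument, but it is worth straightening out before writing it up.
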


The standard equivalence $\cD^b(B)\xrightarrow{\simeq} \cD^b(A)$ can be translated to be an equivalence $\cK^b(\cP_B)\xrightarrow{\simeq} \cK^b(\cP_A)$ as follows. Since $\cD^b(B^{op}\otimes_{\bbC}A)\simeq \cK^b(\cP_{B^{op}\otimes_{\bbC}A})$, we can regard $\Delta=(\Delta^n,d^n_\Delta)_{n\in \bbZ}$ as an object of $\cK^b(\cP_{B^{op}\otimes_{\bbC}A})$ by taking projective resolutions. Then for any object $Y=(Y^n,d^n_Y)_{n\in\bbZ}\in \cK^b(\cP_B)$, we have $X=Y\otimes^L_B\Delta\in \cK^b(\cP_A)$, where $X^n=\prod_{s+t=n}Y^s\otimes_B\Delta^t\in \cP_A$, and $d_X^n|_{Y^s\otimes_B\Delta^t}=d^s_Y\otimes 1_{\Delta^t}+(-1)^s1_{Y^s}\otimes d_\Delta^t$.

We want to construct a triangulated equivalence $\cK_2(\cP_B)\rightarrow \cK_2(\cP_A)$.
Firstly, for any $\Theta\in \cK^b(\cP_{B^{op}\otimes_{\bbC}A})$, there is a functor 
\begin{align*}
-\boxtimes_B\Theta:\cC_2(\cP_B)&\rightarrow \cC_2(\cP_A)\\
Y=(Y^j,d^j_Y)_{j\in \bbZ_2}&\mapsto X=Y\boxtimes_B\Theta,
\end{align*}
where $X^j=\prod_{s+t\equiv j (\textrm{mod}\, 2)}Y^s\otimes_B\Theta^t$ and $d^j_X|_{Y^s\otimes_B\Theta^t}=d^s_Y\otimes 1_{\Theta^t}+(-1)^s1_{Y^s}\otimes d^t_{\Theta}$. Then it is routine to check that the functor is compatible with the homotopic relations, and so it induces a functor between stable categories
$$-\boxtimes_B\Theta: \cK_2(\cP_B)\rightarrow \cK_2(\cP_A).$$
Similarly, for any $\Theta'\in \cK^b(\cP_{A^{op}\otimes_{\bbC}B})$, there is a functor
$$-\boxtimes_A\Theta': \cK_2(\cP_A)\rightarrow \cK_2(\cP_B).$$
Taking $\Theta=\Delta$, we claim that $-\boxtimes_B\Delta: \cK_2(\cP_B)\rightarrow \cK_2(\cP_A)$ is an equivalence, which has been proven in \cite[Lemma 3.3]{Stai-2018} for the one-periodic case. We repeat the proof for the two-periodic case.
\begin{proposition}
The functor $-\boxtimes_B\Delta: \cK_2(\cP_B)\rightarrow \cK_2(\cP_A)$ is an equivalence.
\end{proposition}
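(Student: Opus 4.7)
The plan is to exhibit $-\boxtimes_A\Delta'$ as a quasi-inverse to $-\boxtimes_B\Delta$ by producing natural isomorphisms of functors $(-\boxtimes_B\Delta)\boxtimes_A\Delta'\cong 1_{\cK_2(\cP_B)}$ and $(-\boxtimes_A\Delta')\boxtimes_B\Delta\cong 1_{\cK_2(\cP_A)}$. This is the two-periodic analogue of \cite[Lemma 3.3]{Stai-2018}, driven by the identifications $\Delta\otimes_A\Delta'\cong B$ in $\cK^b(\cP_{B^{op}\otimes_{\bbC}B})$ and $\Delta'\otimes_B\Delta\cong A$ in $\cK^b(\cP_{A^{op}\otimes_{\bbC}A})$ supplied by the Rickard equivalence.

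First, I would prove an associativity formula. For $Y\in \cC_2(\cP_B)$, $\Theta\in \cK^b(\cP_{B^{op}\otimes_{\bbC}A})$, and $\Theta'\in \cK^b(\cP_{A^{op}\otimes_{\bbC}B})$, I expect a natural isomorphism $(Y\boxtimes_B\Theta)\boxtimes_A\Theta'\cong Y\boxtimes_B(\Theta\otimes_A\Theta')$ in $\cC_2(\cP_B)$, where $\Theta\otimes_A\Theta'$ denotes the total tensor product, viewed as an object of $\cK^b(\cP_{B^{op}\otimes_{\bbC}B})$. On underlying modules this amounts to the regrouping
$$\prod_{u+r\equiv j}\Big(\prod_{s+t\equiv u}Y^s\otimes_B\Theta^t\Big)\otimes_A\Theta'^r\;\cong\;\prod_{s+t+r\equiv j}Y^s\otimes_B\Theta^t\otimes_A\Theta'^r,$$
with indices taken modulo $2$ where appropriate; the products are finite thanks to boundedness of $\Theta,\Theta'$. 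The bulk of the verification is that the two induced total differentials agree after Koszul sign bookkeeping.

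Second, I would check that $Y\boxtimes_B-$ is compatible with the homotopy relation on bimodule complexes: a homotopy $h$ between maps $\alpha,\alpha':\Theta_1\rightarrow \Theta_2$ induces a homotopy between $Y\boxtimes_B\alpha$ and $Y\boxtimes_B\alpha'$, by applying $Y^s\otimes_B-$ termwise with the appropriate signs. Consequently $Y\boxtimes_B-$ sends homotopy equivalences in $\cK^b(\cP_{B^{op}\otimes_{\bbC}A})$ to homotopy equivalences in $\cK_2(\cP_A)$, and likewise on the $A$-side. Combined with the tautological identity $Y\boxtimes_B B\cong Y$ (and its $A$-analogue), the associativity isomorphism then yields
$$(Y\boxtimes_B\Delta)\boxtimes_A\Delta'\;\cong\; Y\boxtimes_B(\Delta\otimes_A\Delta')\;\simeq\; Y\boxtimes_B B\;\cong\; Y,$$
naturally in $Y$, and symmetrically for the opposite composition, which completes the proof.

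The main technical obstacle will be sign bookkeeping. The differential of $Y\boxtimes_B\Theta$ carries a Koszul sign $(-1)^s$ coming from the degree of $Y^s$; iterating the construction and matching the result against the total differential of $\Theta\otimes_A\Theta'$ requires tracking several such signs simultaneously and correctly identifying how they combine under the reindexing. An additional subtlety specific to the two-periodic setting is that the outer index $j$ lives in $\bbZ_2$ rather than $\bbZ$, so one must confirm that the resulting products are in fact finite for each $j\in\bbZ_2$; this is automatic from boundedness of $\Theta$ and $\Theta'$, but should be spelled out to justify that the definition of $-\boxtimes_B-$ applies to the bimodule complex $\Theta\otimes_A\Theta'$ on the right-hand side.
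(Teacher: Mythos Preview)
Your proposal is correct and follows essentially the same route as the paper: exhibit $-\boxtimes_A\Delta'$ as a quasi-inverse by proving an associativity isomorphism $(Y\boxtimes_B\Delta)\boxtimes_A\Delta'\cong Y\boxtimes_B(\Delta\otimes^L_A\Delta')$, invoke Rickard's identification $\Delta\otimes^L_A\Delta'\cong B$, and reduce to $Y\boxtimes_B B\cong Y$. The paper states these isomorphisms without elaborating on the sign checks or homotopy invariance that you (rightly) flag as the technical content.

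One small point worth tightening: you call $Y\boxtimes_B B\cong Y$ a ``tautological identity,'' which is literally true only if $B$ denotes the stalk complex. But the construction $-\boxtimes_B\Theta$ was set up for $\Theta\in\cK^b(\cP_{B^{op}\otimes_{\bbC}B})$, and ${}_BB_B$ is generally not a projective bimodule. The paper handles this by replacing $B$ with its projective bimodule resolution and then implicitly using exactly the homotopy-invariance step you describe; your argument already contains everything needed, just be explicit that the ``tautological'' identity for the stalk complex together with homotopy invariance of $Y\boxtimes_B-$ is what gives $Y\boxtimes_B B\cong Y$ in $\cK_2(\cP_B)$ once $B$ is read as its resolution.
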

\begin{proof}
By Rickard's theorem, the quasi-inverse of $-\otimes^L_B\Delta:\cD^b(B)\rightarrow \cD^b(A)$ is given by $R\Hom_A(\Delta,-)\simeq-\otimes^L_A\Delta':\cD^b(A)\rightarrow \cD^b(B)$, and we have 
$$\Delta\otimes^L_A\Delta'\cong R\Hom_A(\Delta,\Delta)\cong B.$$
Since $\cD^b(B^{op}\otimes_{\bbC}B)\simeq \cK^b(\cP_{B^{op}\otimes_{\bbC}B})$, we regard $_BB_B\in \cD^b(B^{op}\otimes_{\bbC}B)$ as the stalk complex, and then regard $_BB_B\in \cK^b(\cP_{B^{op}\otimes_{\bbC}B})$ by taking projective resolution, then we have the functor $-\boxtimes_BB:\cK_2(\cP_B)\rightarrow \cK_2(\cP_B)$ such that for any $Y\in \cK_2(\cP_B)$, there are natural isomorphisms
$$(Y\boxtimes_B\Delta)\boxtimes_A\Delta'\cong Y\boxtimes_B(\Delta\otimes^L_A\Delta')\cong Y\boxtimes_BB\cong Y.$$
Similarly, for any $X\in \cK_2(\cP_A)$, there is a natural isomorphism 
$$(X\boxtimes_A\Delta')\boxtimes_B\Delta\cong X.$$
Hence, the functor $-\boxtimes_B\Delta$ is an equivalence with the quasi-inverse $-\boxtimes_A\Delta'$.
\end{proof}

\section {Moduli varieties of two-periodic projective complexes}

With the same notations in Section \ref{Categories of two-periodic projective complexes}. In this section, we construct the moduli spaces for categories $\cA,\cC_2(\cP)$ and $\cK_2(\cP)$.

\subsection{Moduli variety of $\cA$}\

For any $\alpha\in \bbN I$, we define an affine space 
$$\rE_\alpha(Q)=\bigoplus_{h\in H}\Hom_{\bbC}(\bbC^{\alpha_{s(h)}},\bbC^{\alpha_{t(h)}}).$$
The algebraic group 
$$\rG_\alpha(Q)=\prod_{i\in I}\textrm{GL}_{\alpha_i}(\bbC)$$
acts on $\rE_\alpha(Q)$ by
$$g.x=(g_{t(h)}x_h g_{s(h)}^{-1})_{h\in H}$$ 
for $g=(g_i)_{i\in I}\in \rG_{\alpha}(Q), x=(x_h)_{h\in H}\in \rE_\alpha(Q)$. 

Recall that $J$ is the admissible ideal of $\bbC Q$ generated by a set of relations $R$. A relation is a linear combination of the form $\sum_{j=1}^r\lambda_j\omega_j$, where $\lambda_j\in \bbC$ and $\omega_j$ are paths of length at least two having the same source and target. Given a point $x=(x_h)_{h\in H}\in \rE_{\alpha}(Q)$ and a path $\omega=h_1...h_l$ in $Q$, we set $x_\omega=x_{h_1}...x_{h_l}$. Then $x$ is said to satisfy the relation $\sum_{j=1}^r\lambda_j\omega_j$, if $\sum_{j=1}^r\lambda_jx_{\omega_j}=0$.

We define a $\rG_\alpha(Q)$-invariant closed subvariety 
$$\rE_\alpha(Q,R)\subset\rE_\alpha(Q)$$
consisting of points satisfying all relations in $R$. Then the quotient stack $$[\rE_\alpha(Q,R)/\rG_\alpha(Q)]$$ parametrizes the isomorphism classes of $(Q,R)$-representations of dimension vector $\alpha$, that is, there exists a bijection between the set of $\rG_\alpha$-orbits in $\rE_\alpha(Q,R)$ and the set of isomorphism classes of $(Q,R)$-representations of dimension vector $\alpha$, denoted by $\cO_x\mapsto[M(x)]$. Moreover, the stabilizer of $x$ is isomorphic to the automorphism group of $M(x)$ in $\cA$ which is special and affine, since it consists of invertible elements in the endomorphism algebra $\End_{\cA}(M(x))$, see Subsection \ref{Relative Grothendieck group of stacks}.

\subsection{Moduli variety of $\cC_2(\cP)$}\label{moduli C2(P)}\

Firstly, we construct the moduli variety for $\cC_2(\cA)$. For any $\ualpha=(\alpha^1,\alpha^0)\in \bbN I\times \bbN I$, we define a closed subvariety 
$$\rC_2(A,\ualpha)\subset\rE_{\alpha^1}(Q,R)\times \rE_{\alpha^0}(Q,R)\times \bigoplus_{i\in I}\Hom_{\bbC}(\bbC^{\alpha^1_i},\bbC^{\alpha^0_i})\times \bigoplus_{i\in I}\Hom_{\bbC}(\bbC^{\alpha^0_i},\bbC^{\alpha^1_i})$$
consisting of $(x^1,x^0,d^1,d^0)=((x^1_h)_{h\in H},(x^0_h)_{h\in H},(d^1_i)_{i\in I},(d^0_i)_{i\in I})$ satisfying 
\begin{align*}
d^j_{t(h)} x^j_h=x^{j+1}_h d^j_{s(h)}, \ d^{j+1}_i d^j_i=0
\end{align*}
for $i\in I,h\in H,j\in \bbZ_2$. Note that the first condition implies that $d^j$ can be regarded as a morphism in $\Hom_{\cA}(M(x^j),M(x^{j+1}))$, and the second condition implies that $d^{j+1}d^j=0$. The algebraic group 
$$\rG_{\ualpha}=\rG_{\alpha^1}(Q)\times \rG_{\alpha^0}(Q)$$
acts on $\rC_2(A,\ualpha)$ by
\begin{align*}
(g^1,g^0).(x^1,x^0,d^1,d^0)=(g^1.x^1,g^0.x^0,(g^0_i d^1_i (g^1_i)^{-1})_{i\in I},(g^1_i d^0_i (g^0_i)^{-1})_{i\in I})
\end{align*}
for $(g^1,g^0)\in \rG_{\ualpha},(x^1,x^0,d^1,d^0)\in \rC_2(A,\alpha)$. Then the quotient stack 
$$[\rC_2(A,\ualpha)/\rG_{\ualpha}]$$
parametrizes the isomorphism classes of two-periodic complexes of dimension vector pair $\ualpha$, that is, there is a bijection between the set of $\rG_{\ualpha}$-orbits in $\rC_2(A,\ualpha)$ and the set of isomorphism classes of two periodic complexes of dimension vector pair $\ualpha$, denoted by $\cO_{(x^1,x^0,d^1,d^0)}\mapsto [(M(x^1),M(x^0),d^1,d^0)]$. Moreover, the stabilizer of $(x^1,x^0,d^1,d^0)$ is isomorphic to the automorphism group of $(M(x^1),M(x^0),d^1,d^0)$ in $\cC_2(\cA)$ which is special and affine, since it consists of invertible elements in the endomorphism algebra $\End_{\cC_2(\cA)}(M(x^1),M(x^0),d^1,d^0)$, see Subsection \ref{Relative Grothendieck group of stacks}.

Next, we construct the moduli variety for $\cC_2(\cP)$. For any $\ue=(e^1,e^0)\in \bbN I\times \bbN I$, we set $\ualpha(\ue)=(\alpha^1(\ue),\alpha^0(\ue))\in \bbN I\times \bbN I$, where $\alpha^j(\ue)$ is the dimension vector of $\bigoplus_{i\in I}e^j_iP_i$, and set $\rG_{\ue}=\rG_{\ualpha(\ue)}$. Then there is a $\rG_\ue$-equivariant natural projection
\begin{align*} 
\pi:\rC_2(A,\ualpha(\ue))&\rightarrow \rE_{\alpha^1(\ue)}(Q,R)\times \rE_{\alpha^0(\ue)}(Q,R)\\
(x^1,x^0,d^1,d^0)&\mapsto (x^1,x^0).
\end{align*}
We define a $\rG_{\ue}$-invariant locally closed subset
$$\rP_2(A,\ue)=\pi^{-1}(\cO_{p_{e^1}},\cO_{p_{e^0}})\subset \rC_2(A,\ualpha(\ue)),$$
 where $\cO_{p_{e^j}}$ is the $\rG_{\alpha^j(\ue)}$-orbit in $\rE_{\alpha^j(\ue)}(Q,R)$ corresponding to the isomorphism class $[\bigoplus_{i\in I}e^j_iP_i]$.
Then the quotient stack 
$$[\rP_2(A,\ue)/\rG_{\ue}]$$ 
parametrizes the isomorphism classes of two-periodic projective complexes having projective dimension vector pair $\ue$.

Finally, we construct the subvariety of radical complexes in $\cC_2(\cP)$ in preparation for next subsection. Recall a basic property of radical of modules.
\begin{proposition}[\cite{Assem-Simson-Skowronski-2006}, Proposition 3.7(a)]\label{radical}
Let $M$ be a right $A$-module, then an element $m\in M$ belongs to $\rad M$ if and only if $f(m)=0$ for any $f\in \Hom_A(M,S)$ and any simple right $A$-module $S$.
\end{proposition}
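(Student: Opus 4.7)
The plan is to work directly from the definition $\rad M = \bigcap_N N$, where $N$ ranges over maximal submodules of $M$, and to exploit the standard correspondence between maximal submodules of $M$ and (scalar classes of) surjections from $M$ onto simple modules. Since $A$ is a finite-dimensional $\bbC$-algebra and all modules in $\cA$ are finitely generated, every $M \in \cA$ is a Noetherian module, so every proper submodule is contained in some maximal submodule.

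For the forward direction I will take $m \in \rad M$ together with an arbitrary $f \in \Hom_A(M,S)$ with $S$ simple. If $f = 0$ the conclusion is immediate, so assume $f \neq 0$. Since $S$ is simple, $\im f = S$, hence $f$ is surjective and $\Ker f$ is a maximal submodule of $M$. Then $m \in \rad M \subseteq \Ker f$, so $f(m) = 0$.

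For the backward direction I would argue the contrapositive. If $m \notin \rad M$, then by definition of $\rad M$ as the intersection of all maximal submodules, there exists a maximal submodule $N \subset M$ with $m \notin N$; the existence of such an $N$ relies only on the Noetherian property above. The canonical projection $\pi \colon M \twoheadrightarrow M/N$ is then an element of $\Hom_A(M, M/N)$ with $M/N$ simple and $\pi(m) \neq 0$, which exhibits a morphism violating the hypothesis $f(m) = 0$ for all simple $S$ and all $f \in \Hom_A(M,S)$.

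There is no real obstacle here: the proposition is essentially a restatement of the definition of $\rad M$ via the submodule/quotient correspondence, and the only nontrivial ingredient is the existence of maximal submodules, which is automatic in the finite-dimensional setting. Since the paper quotes this verbatim from Assem--Simson--Skowronski, I expect the authors to simply cite the reference rather than reproduce this short argument.
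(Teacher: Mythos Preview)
Your proof is correct and standard; as you anticipated, the paper does not reproduce the argument but simply cites Assem--Simson--Skowronski for this proposition.
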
 
Note that for any projective module $P$ of dimension vector $\alpha\in \bbN I$ and any simple representation $S_i$ corresponding to $i\in I$, we have $\Hom_A(P,S_i)=\Hom_{\bbC}(\bbC^{\alpha_i},\bbC)$. For any $\ue\in \bbN I\times \bbN I$, we have set $\ualpha(\ue)=(\alpha^1(\ue),\alpha^0(\ue))\in \bbN I\times \bbN I$ such that $\alpha^j(\ue)$ is the dimension vector of $\bigoplus_{i\in I}e^j_iP_i$,  For any $i\in I, j\in \bbZ_2$, we fix a basis $\{f^j_{i,k}\mid k=1,...,\alpha^j(\ue)_i\}$ of $\Hom_\bbC(\bbC^{\alpha^j(\ue)_i},\bbC)$, then define a closed subset
$$\rP_2^{\rad}(A,\ue)\subset \rP_2(A,\ue)$$ 
consisting of points $(x^1,x^0,d^1,d^0)$ satisfying 
$$f^j_{i,k}d_i^{j+1}=0,$$
for $i\in I,j\in \bbZ_2, k=1,...,\alpha^j(\ue)_i$. By Proposition \ref{radical}, $\rP_2^{\rad}(A,\ue)$ parametrizes radical complexes in $\cC_2(\cP)$ having projective dimension vector pair $\ue$. We claim that $\rP_2^{\rad}(A,\ue)$ is $\rG_{\ue}$-invariant. Indeed, for $(x^1,x^0,d^1,d^0)\in \rP_2^{\rad}(A,\ue)$ and $(g^1,g^0)\in \rG_{\ue}$, since $\{f^j_{i,k}\mid k=1,...,\alpha^j(\ue)_i\}$ is a basis of $\Hom_\bbC(\bbC^{\alpha^j(\ue)_i},\bbC)$, we have $fd^{j+1}_i=0$ for any $f\in \Hom_\bbC(\bbC^{\alpha^j(\ue)_i},\bbC)$. In particular, for $f^j_{i,k}g^j_i\in \Hom_\bbC(\bbC^{\alpha^j(\ue)_i},\bbC)$, we have $f^j_{i,k}g^j_id^{j+1}_i=0$ and so $f^j_{i,k}g^j_id^{j+1}_i(g^{j+1}_i)^{-1}=0$, as desired. Then the quotient stack 
$$[\rP_2^{\rad}(A,\ue)/\rG_{\ue}]$$
parametrizes the isomorphism classes of radical complexes in $\cC_2(\cP)$ having projective dimension vector pair $\ue$.

\subsection{Moduli space of $\cK_2(\cP)$}\label{Moduli K_2(P)}\

Firstly, we construct the moduli variety of $*$-saturated complexes in $\cC_2(\cP)$. There is a partial order on $\bbN I\times \bbN I$ defined by $\ue'\leqslant \ue$ if and only if there exists a contractible complex $K_P\oplus K^*_P$ having projective dimension vector pair $\ue-\ue'$. 

For any $\ue\in \bbN I\times \bbN I$, we define a finite disjoint union 
$$\rP_2^*(A,\ue)=\bigsqcup_{\ue'\leqslant \ue}\rP_2^{\rad}(A,\ue')\oplus \cO_{(p_{\frac{e^1-e'^1}{2}},p_{\frac{e^0-e'^0}{2}},1,0)}\oplus \cO_{(p_{\frac{e^1-e'^1}{2}},p_{\frac{e^0-e'^0}{2}},0,1)},$$
where each component $\rP_2^{\rad}(A,\ue')\oplus \cO_{(p_{\frac{e^1-e'^1}{2}},p_{\frac{e^0-e'^0}{2}},1,0)}\oplus \cO_{(p_{\frac{e^1-e'^1}{2}},p_{\frac{e^0-e'^0}{2}},0,1)}$ is the locally closed subset of $\rP_2(A,\ue)$ consisting of $(x^1,x^0,d^1,d^0)$ satisfying 
$$(M(x^1),M(x^0),d^1,d^0)\cong (M(x'^1),M(x'^0),d'^1,d'^0)\oplus K_P\oplus K_P^*,$$
for some $(x'^1,x'^0,d'^1,d'^0)\in \rP_2^{\rad}(A,\ue')$ and $P=M(p_{\frac{e^1-e'^1}{2}})=M(p_{\frac{e^0-e'^0}{2}})$. We claim that $\rP_2^*(A,\ue)\subset\rP_2(A,\ue)$ is constructible. Indeed, each component is constructible as the image of the following algebraic morphism
\begin{align*}
&\rG_{\ue}\times \rP_2^{\rad}(A,\ue')\times \cO_{(p_{\frac{e^1-e'^1}{2}},p_{\frac{e^0-e'^0}{2}},1,0)}\times \cO_{(p_{\frac{e^1-e'^1}{2}},p_{\frac{e^0-e'^0}{2}},0,1)}\rightarrow \rP_2(A,\ue)\\
&((g^1,g^0),(x'^1,x'^0,d'^1,d'^0),(p^1_{\frac{\ue-\ue'}{2}},p^0_{\frac{\ue-\ue'}{2}},1,0),(p^1_{\frac{\ue-\ue'}{2}},p^0_{\frac{\ue-\ue'}{2}},0,1))\mapsto\\
&(g^1,g^0).(\begin{pmatrix}\begin{smallmatrix}
x'^1 & &\\
       &p_{\frac{e^1-e'^1}{2}} &\\
       &  &p_{\frac{e^1-e'^1}{2}}
\end{smallmatrix}\end{pmatrix},
\begin{pmatrix}\begin{smallmatrix}
x'^0 & &\\
       &p_{\frac{e^0-e'^0}{2}} &\\
       &  &p_{\frac{e^0-e'^0}{2}}
\end{smallmatrix}\end{pmatrix},
\begin{pmatrix}\begin{smallmatrix}
d'^1 & &\\
       &1 &\\
       &  &0
\end{smallmatrix}\end{pmatrix},
\begin{pmatrix}\begin{smallmatrix}
d'^0 & &\\
       &0 &\\
       &  &1
\end{smallmatrix}\end{pmatrix}),
\end{align*}
and so is their finite disjoint union $\rP_2^*(A,\ue)$. By Proposition \ref{saturated isomorphism}, the quotient stack 
$$[\rP_2^*(A,\ue)/\rG_{\ue}]$$
parametrizes the isomorphism classes of $*$-saturated complexes in $\cC_2(\cP)$ having projective dimension vector pair $\ue$, as well as the homotopy equivalence classes in $\cK_2(\cP)$.

Let $K_0$ be the Grothendieck group of the triangulated category $\cK_2(\cP)$, then the canonical surjection $\cK_2(\cP)\rightarrow K_0$ induces a surjection 
$\udim:\textrm{pdvp}^*\rightarrow K_0$, where $\textrm{pdvp}^*\subset \bbN I\times\bbN I$ is the subset consisting of projective dimension vector pairs of $*$-saturated complexes.

For any $\bd\in K_0$, the set $\udim^{-1}(\bd)$ is a directed partial order set, that is, for any $\ue',\ue''\in \udim^{-1}(\bd)$, there exists $\ue\in \udim^{-1}(\bd)$ such that $\ue'\leqslant \ue,\ue''\leqslant \ue$. For any $\ue,\ue'\in\udim^{-1}(\bd)$ satisfying $\ue'\leqslant \ue$, there is a constructible map 
\begin{align*}
&t_{\ue'\ue}:\rP_2^*(A,\ue')\rightarrow \rP_2^*(A,\ue)\\
&(x^1,x^0,d^1,d^0)\mapsto\\
&(\begin{pmatrix}\begin{smallmatrix}
x^1 & &\\
       &p_{\frac{e^1-e'^1}{2}} &\\
       &  &p_{\frac{e^1-e'^1}{2}}
\end{smallmatrix}\end{pmatrix},
\begin{pmatrix}\begin{smallmatrix}
x^0 & &\\
       &p_{\frac{e^0-e'^0}{2}} &\\
       &  &p_{\frac{e^0-e'^0}{2}}
\end{smallmatrix}\end{pmatrix},
\begin{pmatrix}\begin{smallmatrix}
d^1 & &\\
       &1 &\\
       &  &0
\end{smallmatrix}\end{pmatrix},
\begin{pmatrix}\begin{smallmatrix}
d^0 & &\\
       &0 &\\
       &  &1
\end{smallmatrix}\end{pmatrix}),
\end{align*}
corresponding to the morphism $$(M(x^1),M(x^0),d^1,d^0)\mapsto (M(x^1),M(x^0),d^1,d^0)\oplus K_P\oplus K_P^*,$$ 
where $P=M(p_{\frac{e^1-e'^1}{2}})=M(p_{\frac{e^0-e'^0}{2}})$, such that $\{\rP^*_2(A,\ue)\}_{\ue\in \udim^{-1}(\bd)}$ is a direct system. We define the ind-limit
$$\rP_2(A,\bd)=``\varinjlim"_{\ue\in \udim^{-1}(\bd)}\rP^*_2(A,\ue).$$
For any $\ue\in \udim^{-1}(\bd)$, the natural morphism $t_{\ue}:\rP_2^*(A,\ue)\rightarrow \rP_2(A,\bd)$ corresponds to the map which send a complex to its homotopy equivalence class
$$X\mapsto \tilde{X}=\{X_r\oplus K_P\oplus K_P^*|P\in \cP\}.$$

\begin{remark}\label{remark2}
(a) The definition of ind-lim $``\varinjlim"$ is given in \cite[Section 2.6]{Kashiwara-Schapira-2006}. From now on, we write $\varinjlim$ instead of $``\varinjlim"$ for convenience.\\
(b) The limit stick two homotopy equivalent $*$-saturated complexes having different projective dimension vector pairs. More precisely, let $X_r$ be a radical complex, then $X_r\oplus K_P\oplus K^*_P$ and $X_r\oplus K_Q\oplus K^*_Q$ are homotopy equivalent, but not isomorphic. Notice that under some suitable maps $t_{\ue'\ue}$ respectively, they have the same image $X_r\oplus K_P\oplus K^*_P\oplus K_Q\oplus K^*_Q$, which implies that they can be regarded as the same element in the limit.
\end{remark}

The algebraic group action is compatible with the limit. Indeed, for any $\ue,\ue'\in\udim^{-1}(\bd)$ satisfying $\ue'\leqslant \ue$, there is a morphism between algebraic groups 
\begin{align*}
f_{\ue'\ue}:\rG_{\ue'}&\rightarrow \rG_{\ue}\\
(g^1,g^0)&\mapsto (\begin{pmatrix}\begin{smallmatrix}
g^1 & &\\
 &1 &\\
 &  &1
\end{smallmatrix}\end{pmatrix},\begin{pmatrix}\begin{smallmatrix}
g^0 & &\\
 &1 &\\
 & &1
\end{smallmatrix}\end{pmatrix}),
\end{align*}
such that $\{\rG_{\ue}\}_{\ue\in \udim^{-1}(\bd)}$ is a direct system. Its ind-limit 
$$\rG_{\bd}=\varinjlim_{\ue\in \udim^{-1}(\bd)}\rG_{\ue}$$
acts on $\rP_2(A,\bd)$ as follows, for any $(g^1,g^0)\in \rG_{\ue'},(x^1,x^0,d^1,d^0)\in \rP_2^*(A,\ue'')$, let $f_{\ue'}:\rG_{\ue'}\rightarrow \rG_{\bd},t_{\ue''}:\rP_2^*(A,\ue'')\rightarrow \rP_2(A,\bd),t_{\ue}:\rP_2^*(A,\ue)\rightarrow \rP_2(A,\bd)$ be natural morphisms, where $\ue\in \udim^{-1}(\bd)$ satisfying $\ue'\leqslant \ue,\ue''\leqslant \ue$, then 
$$f_{\ue'}(g^1,g^0).t_{\ue''}(x^1,x^0,d^1,d^0)=t_{\ue}(f_{\ue'\ue}(g^1,g^0).t_{\ue''\ue}(x^1,x^0,d^1,d^0)).$$

\begin{remark}\label{ind-constructible}
The ind-limit $[\rP_2(A,\bd)/\rG_{\bd}]$ is an ind-constructible stack in the sense of \cite[Section 3.1, 3.2, 4.2]{Kontsevich-Soibelman-2008}, that is, it is a countable union of non-intersecting constructible sets, and each constructible set is endowed with an affine algebraic group action. Indeed, there is a bijection $\rP_2(A,\bd)\rightarrow \bigsqcup_{\ue\in \udim^{-1}(\bd)}\rP_2^{\rad}(A,\ue)$ corresponding to the morphism $X\mapsto X_r$, and each $\rP_2^{\rad}(A,\ue)$ has an algebraic group $\rG_{\ue}$-action.
\end{remark}

By Proposition \ref{saturated isomorphism}, Remark \ref{remark2}, we have the following corollary.
\begin{corollary}\label{homotopy class and orbit}
The quotient 
$$[\rP_2(A,\bd)/\rG_{\bd}]$$
parametrizes the homotopy equivalence classes of complexes in $\cK_2(\cP)$ whose images in the Grothendieck group are $\bd$, that is, there is a bijection between the set of $\rG_{\bd}$-orbits in $\rP_2(A,\bd)$ and the set of homotopy equivalence classes of complexes in $\cK_2(\cP)$ having images $\bd$ in the Grothendieck group $K_0$.
\end{corollary}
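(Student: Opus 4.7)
The plan is to glue together the per-$\ue$ parametrizations already established at the end of the preceding paragraph: for each $\ue \in \udim^{-1}(\bd)$ the $\rG_\ue$-orbits in $\rP_2^*(A,\ue)$ are in bijection with homotopy equivalence classes of $*$-saturated complexes of projective dimension vector pair $\ue$, via Proposition \ref{saturated isomorphism}. Passing to the ind-limit should then precisely identify those $*$-saturated complexes that differ only by trivial contractible summands $K_P \oplus K_P^*$, which is exactly what is needed to pass from isomorphism of $*$-saturated complexes to homotopy equivalence in $\cK_2(\cP)$. First I would check that the transition morphisms $t_{\ue'\ue}$ together with the group morphisms $f_{\ue'\ue}$ are compatible with the per-$\ue$ bijections: on the level of complexes $t_{\ue'\ue}$ sends a $*$-saturated $X$ to $X \oplus K_P \oplus K_P^*$, which is homotopy equivalent to $X$, and $f_{\ue'\ue}$ inserts identities on the new contractible summand. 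This assembles into a well-defined map $\Psi$ from $\rG_\bd$-orbits in $\rP_2(A,\bd)$ to homotopy equivalence classes in $\cK_2(\cP)$ with image $\bd$ in $K_0$.

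For surjectivity of $\Psi$ I would take an arbitrary homotopy class $\tilde X$ with image $\bd$, use Lemma \ref{decomposition} to write a representative as $X_r \oplus X_c$, and then replace $X_c$ by a $*$-saturated contractible $K_P \oplus K_P^*$ for suitable $P$: the resulting complex is $*$-saturated, homotopy equivalent to $X$, and has projective dimension vector pair lying in $\udim^{-1}(\bd)$, so its image in $\rP_2(A,\bd)$ hits $\tilde X$. For injectivity, given two orbits whose image classes in $\cK_2(\cP)$ agree, I would use directedness of $\udim^{-1}(\bd)$ to find a common $\ue$ above both initial projective dimension vector pairs, push both representatives into $\rP_2^*(A,\ue)$ via the appropriate $t_{\ue'\ue}$'s, and apply Proposition \ref{saturated isomorphism} to conclude that the two images in $\rP_2^*(A,\ue)$ are isomorphic, hence $\rG_\ue$-conjugate, hence in the same $\rG_\bd$-orbit in the limit.

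The main delicate point I expect is tracking the equivalence relation across the ind-limit, ensuring that the $\rG_\bd$-orbit relation is precisely the transitive closure of the per-stratum $\rG_\ue$-orbit relations under the transition maps; this is essentially the content of Remark \ref{remark2}(b), which observes that two $*$-saturated complexes of different projective dimension vector pairs representing the same homotopy class admit a common enlargement identifying them in the limit. Once this is pinned down, and directedness of $\udim^{-1}(\bd)$ is verified (which is immediate from Lemma \ref{contractible complex} since arbitrary $K_P \oplus K_P^*$ summands may be added), the corollary reduces to assembling Proposition \ref{saturated isomorphism} across the directed system.
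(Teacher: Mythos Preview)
Your proposal is correct and follows exactly the approach the paper indicates: the paper's own proof is simply the one-line reference ``By Proposition \ref{saturated isomorphism}, Remark \ref{remark2}, we have the following corollary,'' and your argument is a careful unpacking of precisely those two ingredients (the per-$\ue$ bijection from $*$-saturated isomorphism classes to homotopy classes, and the identification across the directed system via common enlargements as in Remark \ref{remark2}(b)).
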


\subsection{Derived invariance}\label{Derived invariance 2}\

By Subsection \ref{derived invariance}, we know that if $\cD^b(B)\simeq \cD^b(A)$, then there is a standard equivalence $-\boxtimes^L_B\Delta:\cK_2(\cP_B)\rightarrow \cK_2(\cP_A)$. It induces an isomorphism between Grothendieck groups $K_0(\cK_2(\cP_B))\xrightarrow{\cong} K_0(\cK_2(\cP_A))$. For any $\bd_B\in K_0(\cK_2(\cP_B))$, letting $\bd_A\in K_0(\cK_2(\cP_A))$ be its image under the isomorphism, we construct a morphism $\rP_2(B,\bd_B)\rightarrow \rP_2(A,\bd_A)$ as follows.

For any $\ue_B\in\udim^{-1}(\bd_B)$, by the definition of the functor $-\boxtimes_B^L\Delta$, there exists some $\ue_A\in \bbN I\times \bbN I$ and a constructible map 
$$\varphi_{\ue_B}:\rP^*_2(B,\ue_B)\rightarrow \rP_2(A,\ue_A)$$
corresponding to the morphism $Y\mapsto Y\boxtimes^L_B\Delta$.
By Lemma \ref{decomposition}, any complex can be decomposed as the direct sum of a radical complex and a contractible complex, and so we may divide $\rP_2(A,\ue_A)$ into a finite disjoint union
\begin{align*}
\rP_2(A,\ue_A)=\bigsqcup_{\ue_1,\ue_2\leqslant \ue_A}\rP_2^*(A,\ue_A-\ue_1-\ue_2)\oplus \cO_{(p_{e^1_1},p_{e^0_1},1,0)}\oplus \cO_{(p_{e^1_2},p_{e^0_2},0,1)},
\end{align*}
where each component $\rP_2(\ue_A,\ue_1,\ue_2)=\rP_2^*(A,\ue_A-\ue_1-\ue_2)\oplus \cO_{(p_{e^1_1},p_{e^0_1},1,0)}\oplus \cO_{(p_{e^1_2},p_{e^0_2},0,1)}$ consists of points corresponding to complexes isomorphic to $X\oplus K_P\oplus K_Q^*$, where $X$ is $*$-saturated and $K_P, K_Q$ are contractible having projective dimension pairs $\ue_1,\ue_2$, respectively. Then for each component, there is a constructible map 
$$f_{\ue_A,\ue_1,\ue_2}:\rP_2(\ue_A,\ue_1,\ue_2)\rightarrow \bigsqcup_{\ue'_A\in \udim^{-1}(\bd_A)}\rP_2^*(A,\ue'_A)$$
corresponding to the morphism $X\oplus K_P\oplus K^*_Q\mapsto X\oplus K_P\oplus K^*_P\oplus K_Q\oplus K^*_Q$, and so we obtain constructible maps
\begin{align*}
f=(f_{\ue_A,\ue_1,\ue_2})&:\rP_2(A,\ue_A)\rightarrow \bigsqcup_{\ue_A'\in \udim^{-1}(\bd_A)}\rP_2^*(A,\ue_A'),\\
\Phi_{\ue_B}=f\varphi_{\ue_B}&:\rP_2^*(B,\ue_B)\rightarrow \bigsqcup_{\ue_A'\in \udim^{-1}(\bd_A)}\rP_2^*(A,\ue_A').
\end{align*}
It is routine to check that $\{\Phi_{\ue_B}\}_{\ue_B\in\udim^{-1}(\bd_B)}$ are compatible with the limit. Hence they induce a morphism $\Phi:\rP_2(B,\bd_B)\rightarrow \rP_2(A,\bd_A)$. Similarly, by using of $-\boxtimes_A^L\Delta'$ the quasi-inverse of $-\boxtimes_B^L\Delta$, we can construct a morphism $\Psi:\rP_2(A,\bd_A)\rightarrow \rP_2(B,\bd_B)$ which induces an isomorphism
$$[\rP_2(A,\bd_A)/\rG_{\bd_A}]\cong [\rP_2(B,\bd_B)/\rG_{\bd_B}].$$

\section{Motivic form of Bridgeland's Hall algebra}

In this section, we define the motivic form of Bridgeland's Hall algebra \cite{Bridgeland-2013}. As far as we know, there is no study involving motivic Bridgeland's Hall algebra before. We remark that motivic Hall algebra for abelian category is defined by Joyce in \cite{Joyce-2007}, and we also refer \cite{Bridgeland-2012,Nagao-2013,Bridgeland-2017,Behrend-Ronagh-2019} which involve motivic Hall algebra for details.

\subsection{Moduli stacks of objects in $\cC_2(\cA)$}\label{Moduli stacks of objects}\

In Subsection \ref{moduli C2(P)}, we define an affine variety $\rC_2(A,\ualpha)$ together with an algebraic group $\rG_{\ualpha}$-action for any $\ualpha\in \bbN I\times \bbN I$. We denote by $\cM_{\ualpha}=[\rC_2(A,\ualpha)/\rG_{\ualpha}]$ the corresponding quotient stack and 
$$\cM=\bigsqcup_{\ualpha \in \bbN I\times \bbN I}\cM_{\ualpha},$$
then $\cM$ is an Artin stack with affine stabilizers which is locally of finite type over $\bbC$.

Let $\cM(\bbC)$ be the set of $\bbC$-valued points (also called geometric points) of $\cM$, that is, it is the set of $2$-isomorphism classes of $1$-morphisms $\Spec\bbC\rightarrow \cM$. We denote by
$$[X]\mapsto \delta_{[X]}=[\Spec\bbC\xrightarrow{i_{[X]}} \cM]$$
the bijection between the set of isomorphism classes of objects in $\cC_2(\cA)$ and $\cM(\bbC)$.

\subsection{Moduli stacks of filtrations in $\cC_2(\cA)$}\label{Moduli stacks of filtrations}\

For any $\ualpha,\ualpha'\in \bbN I\times \bbN I$, we define a closed subvariety 
$$\textbf{Hom}(\ualpha, \ualpha')\subset\rC_2(A,\ualpha)\times \rC_2(A,\ualpha')\times \bigoplus_{i\in I}\Hom_{\bbC}(\bbC^{\alpha^1_i},\bbC^{\alpha'^1_i})\times \bigoplus_{i\in I}\Hom_{\bbC}(\bbC^{\alpha^0_i},\bbC^{\alpha'^0_i})$$
consisting of $((x^1,x^0,d^1,d^0),(x'^1,x'^0,d'^1,d'^0),(f^1_i)_{i\in I},(f^0_i)_{i\in I})$ satisfying 
\begin{align*}
f^j_{t(h)} x^j_h=x'^j_h f^j_{s(h)},\ d'^j_if^j_i=f^{j+1}_id^j_i
\end{align*}
for any $i\in I,h\in H,j\in \bbZ_2$. Note that the first condition implies that $f^j$ can be regarded as a morphism in $\Hom_{\cA}(M(x^j),M(x'^j))$, and then the second condition implies that $(f^1,f^0)$ can be regarded as a morphism form $(M(x^1),M(x^0),d^1,d^0)$ to $(M(x'^1),M(x'^0),d'^1,d'^0)$ in $\cC_2(\cA)$.

For any $\ualpha=\ualpha'+\ualpha''\in \bbN I\times \bbN I$, we define a constructible subset 
$$\rm{W}_{\ualpha'\ualpha''}^{\ualpha}\subset\textbf{Hom}(\ualpha'', \ualpha)\times \textbf{Hom}(\ualpha, \ualpha')$$ 
consisting of 
$$(((x''^1,x''^0,d''^1,d''^0),(x^1,x^0,d^1,d^0),f''^1,f''^0),((x^1,x^0,d^1,d^0),(x'^1,x'^0,d'^1,d'^0),f'^1,f'^0))$$
such that 
$$
0\rightarrow X''\xrightarrow{(f''^1,f''^0)}X\xrightarrow{(f'^1,f'^0)}X'\rightarrow 0$$
is a short exact sequence in $\cC_2(\cA)$, where $X=(M(x^1),M(x^0),d^1,d^0)$ and $X',X''$ are similar. We simply denote elements in $\rm{W}^{\ualpha}_{\ualpha'\ualpha''}$ by 
$$((x''^1,x''^0,d''^1,d''^0),(x^1,x^0,d^1,d^0),(x'^1,x'^0,d'^1,d'^0),(f''^1,f''^0),(f'^1,f'^0)).$$
The algebraic group $\rG_{\ualpha'}\times \rG_{\ualpha''}$ acts on $\rm{W}^{\ualpha}_{\ualpha'\ualpha''}$ by
\begin{align*}
&(g',g'').((x''^1,x''^0,d''^1,d''^0),(x^1,x^0,d^1,d^0),(x'^1,x'^0,d'^1,d'^0),(f''^1,f''^0),(f'^1,f'^0))\\
=&(g''.(x''^1,x''^0,d''^1,d''^0),(x^1,x^0,d^1,d^0),g'.(x'^1,x'^0,d'^1,d'^0),\\
&(f''^1(g''^1)^{-1},f''^0(g''^0)^{-1}),(g'^1f'^1,g'^0f'^0)),
\end{align*}
where $(g',g'')=((g'^1,g'^0),(g''^1,g''^0))\in \rG_{\ualpha'}\times \rG_{\ualpha''}$, that is, there is a commutative diagram about short exact sequences
\begin{diagram}[midshaft,size=2em]
0 &\rTo &X'' &\rTo^{(f''^1,f''^0)} &X &\rTo^{(f'^1,f'^0)} &X'&\rTo  &0\\
 & &\dTo^{g'} & &\vEq & &\dTo_{g''}\\
0 &\rTo &g''.X'' &\rTo^{(f''^1(g''^1)^{-1},f''^0(g''^0)^{-1})} &X &\rTo^{(g'^1f'^1,g'^0f'^0)} &g'.X'&\rTo  &0.
\end{diagram}
Note that the action is free. We denote by ${\cM^{(2)}}^{\ualpha}_{\ualpha'\ualpha''}=[W^{\ualpha}_{\ualpha'\ualpha''}/\rG_{\ualpha'}\times \rG_{\ualpha''}]$ the quotient stack and 
$$\cM^{(2)}=\bigsqcup_{\ualpha=\ualpha'+\ualpha''}{\cM^{(2)}}^{\ualpha}_{\ualpha'\ualpha''}$$
which parametrizes filtrations of obejcts in $\cC_2(\cA)$ via 
\begin{align*}
&((x''^1,x''^0,d''^1,d''^0),(x^1,x^0,d^1,d^0),(x'^1,x'^0,d'^1,d'^0),f''^1,f''^0,f'^1,f'^0)\\
\mapsto&(\im (f''^1,f''^0)\subset X).
\end{align*}

Consider the diagram of morphisms of stacks
\begin{diagram}[midshaft,size=2em]
\cM^{(2)} &\rTo^b &\cM\\
\dTo^{(a_1,a_2)}\\
\cM\times \cM,
\end{diagram}
where the morphism $(a_1,a_2)$ corresponds to the map $(X''\subset X)\mapsto (X/X'',X'')$ and the morphism $b$ corresponds to the map $(X''\subset X)\mapsto X$.

\begin{lemma}\label{convolution diagram}
(a) The morphism $b$ is representable and proper.\\
(b) The morphism $(a_1,a_2)$ is of finite type.
\end{lemma}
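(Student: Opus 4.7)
For (a), the plan is to identify the quotient stack ${\cM^{(2)}}^{\ualpha}_{\ualpha'\ualpha''}$ with a more tractable Grassmannian presentation in which $b$ becomes transparent. Define
\[
U^{\ualpha}_{\ualpha''} = \Bigl\{\bigl((x,d),(V^j_i)\bigr)\in \rC_2(A,\ualpha)\times\!\!\!\!\prod_{i\in I,\,j\in\bbZ_2}\!\!\!\!\mathrm{Gr}((\alpha'')^j_i,\alpha^j_i)\,\Bigm|\,x^j_h(V^j_{s(h)})\subset V^j_{t(h)},\ d^j(V^j)\subset V^{j+1}\Bigr\},
\]
a closed $\rG_{\ualpha}$-invariant subvariety of $\rC_2(A,\ualpha)\times\prod\mathrm{Gr}$. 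The map $W^{\ualpha}_{\ualpha'\ualpha''}\to U^{\ualpha}_{\ualpha''}$ sending a filtration datum $((x'',d''),(x,d),(x',d'),f'',f')$ to $((x,d),\im f'')$ is a Zariski-locally trivial $(\rG_{\ualpha'}\times\rG_{\ualpha''})$-torsor (both factors being products of general linear groups, hence special), so it induces an isomorphism of stacks ${\cM^{(2)}}^{\ualpha}_{\ualpha'\ualpha''}\cong[U^{\ualpha}_{\ualpha''}/\rG_{\ualpha}]$. Under this identification, $b$ is the map on quotient stacks induced by the $\rG_{\ualpha}$-equivariant projection $U^{\ualpha}_{\ualpha''}\to\rC_2(A,\ualpha)$; the latter factors as the closed immersion $U^{\ualpha}_{\ualpha''}\hookrightarrow\rC_2(A,\ualpha)\times\prod\mathrm{Gr}$ followed by the proper projection onto $\rC_2(A,\ualpha)$ (proper since $\prod\mathrm{Gr}$ is projective), hence is itself proper and representable. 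Representability and properness descend along the faithfully flat quotient presentations to the stack map $b$, giving~(a).

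For (b), the morphism $(a_1,a_2)$ is induced by the $(\rG_{\ualpha'}\times\rG_{\ualpha''})$-equivariant projection $W^{\ualpha}_{\ualpha'\ualpha''}\to\rC_2(A,\ualpha')\times\rC_2(A,\ualpha'')$, $((x'',d''),(x,d),(x',d'),f'',f')\mapsto((x',d'),(x'',d''))$. This is a morphism between constructible subsets of finite-type affine varieties, hence of finite type, and the induced morphism ${\cM^{(2)}}^{\ualpha}_{\ualpha'\ualpha''}\to\cM_{\ualpha'}\times\cM_{\ualpha''}$ inherits finite type from this presentation. Summing over the finitely many decompositions $\ualpha=\ualpha'+\ualpha''$ with fixed $\ualpha$ completes the argument.

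The main obstacle is the torsor assertion in (a). Over a standard affine chart of $\prod\mathrm{Gr}$ on which the tautological subbundle and quotient bundle both trivialize, one builds a local section of $W^{\ualpha}_{\ualpha'\ualpha''}\to U^{\ualpha}_{\ualpha''}$ by taking $f''$ to be the canonical inclusion of the chosen frame of $V$ into $\bbC^{\alpha}$ and $f'$ the canonical surjection of $\bbC^{\alpha}$ onto $\bbC^{\alpha}/V$; the restriction and quotient of $(x,d)$ along these maps then determine $(x'',d'')$ and $(x',d')$, and the verification that the resulting sequence is short exact is immediate from the preservation conditions defining $U^{\ualpha}_{\ualpha''}$. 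Once this local triviality is recorded, properness of $b$ reduces to properness of a Grassmannian bundle, and part (b) is an immediate consequence of the explicit description of the projection.
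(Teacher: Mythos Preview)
Your proof is correct and, for part (a), follows essentially the same idea as the paper: identify the source of $b$ over $\cM_{\ualpha}$ with a closed substack of a Grassmannian bundle. You make this identification more explicit via the $(\rG_{\ualpha'}\times\rG_{\ualpha''})$-torsor argument, whereas the paper simply observes that the fiber of $b$ over a $\bbC$-point $\delta_{[X]}$ is the closed subvariety of $\prod_{i,j}\mathrm{Gr}((\alpha'')^j_i,\alpha^j_i)$ cut out by the subrepresentation conditions.

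For part (b) your route differs slightly from the paper's. You argue directly that the atlas-level projection $W^{\ualpha}_{\ualpha'\ualpha''}\to\rC_2(A,\ualpha')\times\rC_2(A,\ualpha'')$ is of finite type and descend. The paper instead leverages part (a): since $b$ is of finite type and $\cM_{\ualpha}$ is of finite type, the preimage $b^{-1}(\cM_{\ualpha})=\bigsqcup_{\ualpha'+\ualpha''=\ualpha}(a_1,a_2)^{-1}(\cM_{\ualpha'}\times\cM_{\ualpha''})$ is of finite type, which immediately gives that $(a_1,a_2)$ is of finite type. The paper's argument is slightly slicker in that it avoids revisiting the atlas, but your direct approach is equally valid and arguably more self-contained.
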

\begin{proof}
(a) Consider the restriction $b|_{{\cM^{(2)}}^{\ualpha}_{\ualpha'\ualpha''}}:{\cM^{(2)}}^{\ualpha}_{\ualpha'\ualpha''}\rightarrow \cM_{\ualpha}$,
its fiber at a $\bbC$-valued point $\delta_{[X]}$, where $X=(M(x^1),M(x^0),d^1,d^0)$, is represented by a closed subvariety of a product of Grassmannians $\prod_{i\in I}\textrm{Gr}(\alpha^1_i,\alpha''^1_i)\times \textrm{Gr}(\alpha^0_i,\alpha''^0_i)$ which is projective. More precisely, the closed subvariety consisting of a family of subspaces $V^j_i\subset \bbC^{\alpha^j_i}$ satisfying $x^j_h(V^j_{s(h)})\subset V^j_{t(h)}, d^j_i(V^j_i)\subset V^{j+1}_i$ for $i\in I,h\in H,j\in \bbZ_2$. Thus the restriction $b|_{{\cM^{(2)}}^{\ualpha}_{\ualpha'\ualpha''}}$ is representable and proper, and so is $b$.

(b) Since the stack $\cM_{\ualpha}$ and the morphism $b$ are of finite type, the substack
$$b^{-1}(\cM_{\ualpha})=\bigsqcup_{\ualpha'+\ualpha''=\ualpha}(a_1,a_2)^{-1}(\cM_{\ualpha'}\times \cM_{\ualpha''})\subset \cM^{(2)}$$
is of finite type, and so the morphism $(a_1,a_2)$ is of finite type.
\end{proof}

By above lemma, since pulling back of an atlas for $\cM$ along the morphism $b$ gives an atlas for $\cM^{(2)}$, we have the following corollary.

\begin{corollary}
The stack $\cM^{(2)}$ is an algebraic stack.
\end{corollary}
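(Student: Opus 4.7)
The plan is to leverage Lemma \ref{convolution diagram}(a) to transport an atlas of $\cM$ to one of $\cM^{(2)}$, exactly as the hint preceding the corollary suggests. Since $\cM=\bigsqcup_{\ualpha}[\rC_2(A,\ualpha)/\rG_{\ualpha}]$ is already presented as a disjoint union of quotient stacks of affine varieties by affine algebraic groups, the obvious atlas is the smooth surjection $U:=\bigsqcup_{\ualpha}\rC_2(A,\ualpha)\to \cM$ from a scheme locally of finite type over $\bbC$. The task is then to verify that the base change $U\times_{\cM}\cM^{(2)}\to\cM^{(2)}$ along $b$ is an atlas for $\cM^{(2)}$.

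First I would invoke representability of $b$: by Lemma \ref{convolution diagram}(a), for any scheme $T$ and morphism $T\to\cM$ the fiber product $T\times_{\cM}\cM^{(2)}$ is a scheme, so in particular $V:=U\times_{\cM}\cM^{(2)}$ is a scheme. Second, since $U\to\cM$ is smooth and surjective, and both properties are preserved under arbitrary base change along a morphism of stacks, the projection $V\to\cM^{(2)}$ is smooth and surjective. Hence $V\to\cM^{(2)}$ is a smooth surjection from a scheme, which is precisely an atlas in the sense of Artin stacks. Together with the fact that the diagonal of $\cM^{(2)}$ is representable (a formal consequence of $b$ being representable and the diagonal of $\cM$ being representable, applied to the morphism $(a_1,a_2)\times b$ into $\cM\times\cM\times\cM$), this makes $\cM^{(2)}$ an algebraic stack.

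Finally, I would record the finite-type structure: by Lemma \ref{convolution diagram}(b), each $b^{-1}(\cM_{\ualpha})=\bigsqcup_{\ualpha'+\ualpha''=\ualpha}{\cM^{(2)}}^{\ualpha}_{\ualpha'\ualpha''}$ is of finite type, so the restriction $V\times_{\cM^{(2)}}b^{-1}(\cM_\ualpha)$ is of finite type over $\bbC$. Summing over $\ualpha$ shows $\cM^{(2)}$ is locally of finite type; the stabilizers remain affine because the $\rG_{\ualpha'}\times\rG_{\ualpha''}$-action on $\rm{W}^{\ualpha}_{\ualpha'\ualpha''}$ was free, so the stabilizer at a point of ${\cM^{(2)}}^{\ualpha}_{\ualpha'\ualpha''}$ embeds into the affine stabilizer at the image point of $\cM$.

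There is no real obstacle here; the content of the corollary is entirely absorbed into Lemma \ref{convolution diagram}(a). The only point requiring mild care is the standard fact that smoothness and surjectivity descend across the 2-categorical fiber square along a representable morphism, which I would cite rather than reprove.
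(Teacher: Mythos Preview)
Your proposal is correct and follows precisely the approach the paper indicates: pull back the smooth surjective atlas $\bigsqcup_{\ualpha}\rC_2(A,\ualpha)\to\cM$ along the representable morphism $b$ to obtain an atlas for $\cM^{(2)}$. The paper states this in a single sentence, whereas you have supplied the routine verifications (representability of the diagonal, local finite type, affine stabilizers) that the paper leaves implicit.
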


\subsection{Relative Grothendieck group of stacks}\label{Relative Grothendieck group of stacks}\

Let $\cM$ be an Artin stack with affine stabilizers which is locally of finite type over $\bbC$, then there is a $2$-category of algebraic stacks over $\cM$. Let $\St/\cM$ be the full subcategory consisting of objects of the form $f:\cF\rightarrow \cM$, where $\cF$ is of finite type over $\bbC$ and has affine stabilizers.

A morphism of stacks is said to be a locally trivial fibration in the Zariski topology, if it is representable and its each pullback to a scheme is a locally trivial fibration of schemes in the Zariski topology.

\begin{definition}\label{Grothendieck group of stacks}
Define $K(\St/\cM)$ to be the free abelian group with a basis given by isomorphism classes of objects in $\St/\cM$, subject to following relations:\\
(i) for any object $f:\cF\rightarrow \cM$ and any closed substack $\cV\subset \cF$ with open complement $\cU=\cF\setminus \cV$, we have
$$[\cF\xrightarrow{f} \cM]=[\cV\xrightarrow{f|_{\cV}} \cM]+[\cU\xrightarrow{f|_{\cU}} \cM];$$
(ii) for any object $f:\cF\rightarrow \cM$ and any morphisms $h_1:\cG_1\rightarrow \cF,h_2:\cG_2\rightarrow \cF$ which are locally trivial fibrations in the Zariski topology with the same fibers, we have 
$$[\cG_1\xrightarrow{fh_1} \cM]=[\cG_2\xrightarrow{fh_2} \cM].$$
We denote by $K^0(\St/\cM)$ the subgroup generated by elements of the form $[\cF\xrightarrow{f}\cM]$, where $f$ is representable.
\end{definition}

Viewing $\bbC$ as a stack, the abelian group $K(\St/\bbC)$ has been defined as above. In this case, we denote its element by $[\cF]$ instead of $[\cF\rightarrow \bbC]$. Viewing varieties as stacks, $K(\St/\bbC)$ contains a subgroup $K(\Var/\bbC)$ spanned by elements of the form $[X]$, where $X$ is a variety over $\bbC$. Moreover, $K(\St/\bbC)$ is a commutative ring with the multiplication given by $[\cF_1]\cdot [\cF_2]=[\cF_1\times \cF_2]$, and $K(\Var/\bbC)$ is a subring.

An algebraic group $G$ is said to be special, if any morphism of schemes $f:X\rightarrow Y$ which is a principal $G$-bundle in the \'{e}tale topology is a Zariski fibration. By \cite{Chevalley-1958}, also see \cite[Section 2.1]{Joyce-2007.2}, we have\\
(a) the multiplication group $\bbC^*$ and the general linear groups $\textrm{GL}_n(\bbC)$ are special;\\
(b) if $A$ is a finite-dimensional $\bbC$-algebra, then $A^*$ the group of invertible elements is special;\\
(c) products of special groups are special;\\
(d) special groups are affine and connected.

\begin{lemma}[\cite{Bridgeland-2012}, Lemma 3.8, 3.9]
Localizing the ring $K(\Var/\bbC)$ with respect to any of the following three sets of elements gives the same result:\\
(i) $\{[G]|G\ \textrm{is a special algebraic group}\}$;\\
(ii) $\{[\textrm{GL}_d(\bbC)]| d\geqslant 1\}$;\\
(iii) $\{[\bbC]\}\cup\{[\bbC]^i-1|i\geqslant 1\},$\\
which is isomorphic to $K(\St/\bbC)$ as commutative rings.
\end{lemma}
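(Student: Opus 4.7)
The plan is to break the lemma into two independent assertions: first, the three localizations of $K(\Var/\bbC)$ coincide as rings; second, the common localization is canonically isomorphic to $K(\St/\bbC)$.

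For the first assertion, the central computation is the explicit formula
$$[\textrm{GL}_d(\bbC)] = \prod_{i=0}^{d-1}([\bbC]^d - [\bbC]^i) = [\bbC]^{d(d-1)/2}\prod_{i=1}^{d}([\bbC]^i - 1),$$
obtained by the standard stratification in which one picks successive columns of an invertible matrix in general position. This immediately shows that inverting set (iii) inverts set (ii); conversely, an induction on $d$ using the same identity extracts both $[\bbC]$ and each $[\bbC]^i-1$ from the classes $[\textrm{GL}_{d'}]$ with $d' \leq d$, so inverting set (ii) inverts set (iii). The equivalence with set (i) follows from two observations: set (i) contains set (ii) because the groups $\textrm{GL}_d(\bbC)$ are themselves special; conversely, for any special algebraic group $G$ the class $[G]$ can be expressed as a product of $[\textrm{GL}_{n_i}]$'s and powers of $[\bbC]$, via the classification of special groups as iterated Zariski-locally trivial extensions by general linear and unipotent factors, hence already invertible after inverting set (ii).

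For the second assertion, I would construct mutually inverse maps. The forward map $\Phi\colon K(\Var/\bbC)[(\text{ii})^{-1}]\to K(\St/\bbC)$ sends $[X]\mapsto[X\to \Spec\bbC]$. It is well defined because in $K(\St/\bbC)$ one has $[\textrm{GL}_d]\cdot[B\textrm{GL}_d]=1$: both the universal torsor $\Spec\bbC \to B\textrm{GL}_d$ and the projection $\textrm{GL}_d \times B\textrm{GL}_d \to B\textrm{GL}_d$ are Zariski-locally trivial $\textrm{GL}_d$-torsors (by specialness), so relation (ii) of Definition \ref{Grothendieck group of stacks} identifies their classes over $\Spec\bbC$, forcing $1=[\textrm{GL}_d]\cdot[B\textrm{GL}_d]$. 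The inverse $\Psi$ is built by stratification: any finite-type Artin stack over $\bbC$ with affine stabilizers admits a finite stratification into locally closed substacks each of the form $[X_i/\textrm{GL}_{n_i}]$ with $X_i$ a quasi-projective variety (obtained by embedding the stabilizers faithfully into some general linear group and choosing a presentation), and one declares $\Psi([\cF\to\Spec\bbC]):=\sum_i [X_i]/[\textrm{GL}_{n_i}]$.

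The main obstacle is the well-definedness of $\Psi$. Independence of the chosen stratification follows from relation (i) together with the existence of common refinements. The harder point is independence of the presentation: if two presentations $[X/G]\cong[Y/H]$ with $G,H$ special describe the same stack, the fibred product $Z:=X\times_{[X/G]} Y$ is simultaneously a Zariski-locally trivial $G$-bundle over $Y$ and an $H$-bundle over $X$, hence $[Z]=[X][H]=[Y][G]$ in the localization, yielding $[X]/[G]=[Y]/[H]$. Once well-definedness is secured, checking $\Phi\circ\Psi=\mathrm{id}$ and $\Psi\circ\Phi=\mathrm{id}$ on generators is routine, and the fact that the localization is a commutative ring (compatible with the product on stacks given by fibre product over $\Spec\bbC$) completes the proof.
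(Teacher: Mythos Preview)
The paper does not give its own proof of this lemma; it is quoted verbatim as \cite{Bridgeland-2012}, Lemma 3.8, 3.9, with no argument supplied. So there is nothing in the paper to compare your proposal against.

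That said, your outline is essentially the standard argument from the cited reference and is correct in substance. One small point: for the implication (i)$\Rightarrow$(ii) you invoke a ``classification of special groups as iterated Zariski-locally trivial extensions by general linear and unipotent factors.'' This is more than you need and not quite the usual phrasing. The cleaner argument is to choose any embedding $G\hookrightarrow\textrm{GL}_n(\bbC)$; since $G$ is special, the quotient map $\textrm{GL}_n(\bbC)\to\textrm{GL}_n(\bbC)/G$ is a Zariski-locally trivial $G$-bundle, giving $[\textrm{GL}_n(\bbC)]=[G]\cdot[\textrm{GL}_n(\bbC)/G]$ in $K(\Var/\bbC)$. Hence $[G]$ divides a unit in the localization by set (ii), and divisors of units are units. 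Everything else in your proposal (the $[\textrm{GL}_d]$ formula, the invertibility $[\textrm{GL}_d]\cdot[B\textrm{GL}_d]=1$ via relation (ii) of Definition \ref{Grothendieck group of stacks}, the stratification into global quotients, and the fibre-product argument for presentation-independence of $\Psi$) matches the argument in \cite{Bridgeland-2012}.
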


The abelian group $K(\St/\cM)$ has a $K(\St/\bbC)$-module structure given by 
$$[\cG].[\cF\xrightarrow{f}\cM]=[\cG\times\cF\xrightarrow{f\pi_2}\cM],$$
where $\pi_2:\cG\times \cF\rightarrow\cF$ is the second projection, and the subgroup $K^0(\St/\cM)$ is a submodule.

There is a unique ring homomorphism $\Upsilon:K(\St/\bbC)\rightarrow \bbC(t)$ such that for any smooth projective variety $X$ over $\bbC$, regarded as a representable stack, 
$$\Upsilon([X])=\sum^{2\textrm{dim}\, X}_{i=0}\dim\rH^i_c(X_{\textrm{an}},\bbC)t^i\in \bbC[t]$$
is the Poincar\'{e} polynomial of $X$, where $X_{\textrm{an}}=X$ regarded as a compact complex manifold with analytic topology, and for any projective variety $X$ over $\bbC$ together with a special algebraic group $G$-action,
$$\Upsilon([X/G])=\Upsilon([X])\Upsilon([G])^{-1},$$
see \cite[Example 2.12, Theorem 2.14]{Joyce-2007} and \cite[Section 5.3]{Bridgeland-2017}. The following facts $\Upsilon(\bbC^*)=t^2-1$ and $\Upsilon(\bbC^n)=t^{2n}$ for any $n\in \bbZ_{\geqslant 0}$ are frequently used.

\begin{definition}\label{relation3}
Define $K_\Upsilon(\St/\cM)=\bbC(t)\otimes_{K(\St/\bbC)}K(\St/\cM)$ which is a $\bbC(t)$-vector space with a basis given by isomorphism classes of objects in $\St/\cM$, subject to the relations (i),(ii) in Definition \ref{Grothendieck group of stacks} and the following relations:\\
(iii) for any object $f:\cF\rightarrow \cM$ in $\St/\cM$ and object $\cG$ in $\St/\bbC$, we have 
$$[\cG\times \cF\xrightarrow{f\pi_2}\cM]=\Upsilon([\cG])[\cF\xrightarrow{f}\cM].$$
We deonte by $K^0_\Upsilon(\St/\cM)=\bbC(t)\otimes_{K(\St/\bbC)}K^0(\St/\cM)$ the $\bbC(t)$-subspace spanned by elements of the form $[\cF\xrightarrow{f}\cM]$, where $f$ is representable.
\end{definition}

\begin{remark}
The $\bbC(t)$-vector space $K^0_\Upsilon(\St/\cM)$ is denoted by $\textrm{SF}(\cM,\Upsilon,\bbC(t))$ in \cite[Section 2.5]{Joyce-2007}.
\end{remark}

\begin{proposition}[\cite{Joyce-2007}, Definition 2.8, \cite{Bridgeland-2012}, Section 3.5]\label{functorial}
Let $\cM_1,\cM_2$ be Artin stacks with affine stabilizers which are locally of finite type over $\bbC$ and $\varphi:\cM_1\rightarrow \cM_2$ be a morphism of stacks, then\\ 
(a) the morphism $\varphi$ induces a morphism of $K(\St/\bbC)$-modules
\begin{align*}
\varphi_*:K(\St/\cM_1)&\rightarrow K(\St/\cM_2)\\
[\cF\xrightarrow{f}\cM_1]&\mapsto [\cF\xrightarrow{\varphi f}\cM_2];
\end{align*}
moreover, if $\varphi$ is representable, then $\varphi_*:K^0(\St/\cM_1)\rightarrow K^0(\St/\cM_2)$;\\
(b) if $\varphi$ is of finite type, then it induces a morphism of $K(\St/\bbC)$-modules
\begin{align*}
\varphi^*:K(\St/\cM_2)&\rightarrow K(\St/\cM_1)\\
[\cF\xrightarrow{f}\cM_2]&\mapsto [\cG\xrightarrow{f'}\cM_1],
\end{align*}
where $f':\cG\rightarrow \cM_1$ is given by the cartesian diagram
\begin{diagram}[midshaft,size=2em]
\cG &\rTo^{f'} &\cM_1\\
\dTo &\square &\dTo_{\varphi}\\
\cF &\rTo^{f} &\cM_2;
\end{diagram}
moreover, $\varphi^*:K^0(\St/\cM_2)\rightarrow K^0(\St/\cM_1)$;\\
(c) there is a morphism $K(\St/\bbC)$-module ($K\ddot{u}nneth$ map) given by
\begin{align*}
K:K(\St/\cM_1)\otimes K(\St/\cM_2)&\rightarrow K(\St/\cM_1\times \cM_2)\\
[\cF_1\xrightarrow{f_1}\cM_1]\otimes[\cF_2\xrightarrow{f_2}\cM_2]&\mapsto [\cF_1\times\cF_2\xrightarrow{f_1\times f_2}\cM_1\times \cM_2];
\end{align*}
moreover, $K:K^0(\St/\cM_1)\otimes K^0(\St/\cM_2)\rightarrow K^0(\St/\cM_1\times \cM_2)$.
\end{proposition}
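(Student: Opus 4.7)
The plan is to verify each of (a), (b), (c) by checking that the indicated assignment on generators respects the two defining relations of $K(\St/\cM_i)$ from Definition \ref{Grothendieck group of stacks}, and then checking compatibility with the $K(\St/\bbC)$-module structure and with the representability condition used to cut out $K^0(\St/\cM_i)$. Since each side is presented by generators and relations, it suffices to work at the level of basis elements $[\cF\xrightarrow{f}\cM_i]$.

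For (a), the assignment $\varphi_*[\cF\xrightarrow{f}\cM_1]=[\cF\xrightarrow{\varphi f}\cM_2]$ obviously sends the scissor relation (i) for $\cV\subset\cF$ with complement $\cU$ to the same scissor relation for $\varphi f$, because the underlying closed/open decomposition of $\cF$ is unchanged. For relation (ii), any pair of Zariski locally trivial fibrations $h_1,h_2$ over $\cF$ with the same fibres remain Zariski locally trivial fibrations with the same fibres after post-composition with $\varphi$. Compatibility with the module action is immediate from $[\cG].[\cF\xrightarrow{f}\cM_1]=[\cG\times\cF\xrightarrow{f\pi_2}\cM_1]$, whose image under $\varphi_*$ is $[\cG\times\cF\xrightarrow{\varphi f\pi_2}\cM_2]=[\cG].\varphi_*[\cF\xrightarrow{f}\cM_1]$. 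Finally, if $\varphi$ is representable and $f$ is representable, then $\varphi f$ is representable, which yields the restriction to $K^0$.

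For (b), well-definedness is a matter of base-change being exact in the relevant sense. Forming the cartesian square with $\varphi$ of finite type preserves finite-type objects over $\cM_1$, and the base change of a closed immersion (resp. open immersion) is a closed immersion (resp. open immersion) with complementary open (resp. closed) image equal to the base change of the complement; this gives relation (i). The base change of a Zariski locally trivial fibration with given fibre is a Zariski locally trivial fibration with the same fibre, giving relation (ii). Compatibility with the $K(\St/\bbC)$-action follows from the fact that the cartesian square for $\cG\times\cF\xrightarrow{f\pi_2}\cM_2$ splits as the product of the identity on $\cG$ with the cartesian square for $\cF\xrightarrow{f}\cM_2$. Representability is preserved under base change, which handles $K^0$.

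For (c), the K\"unneth assignment $[\cF_1\xrightarrow{f_1}\cM_1]\otimes[\cF_2\xrightarrow{f_2}\cM_2]\mapsto[\cF_1\times\cF_2\xrightarrow{f_1\times f_2}\cM_1\times\cM_2]$ clearly sends scissor decompositions on either tensor factor to scissor decompositions on the product (e.g.\ $(\cV_1\subset\cF_1)$ induces $(\cV_1\times\cF_2\subset\cF_1\times\cF_2)$); relation (ii) is preserved because a product of a Zariski locally trivial fibration with a fixed stack is again Zariski locally trivial with the same fibre; and the balancing over $K(\St/\bbC)$ holds because $[\cG].[\cF_1\xrightarrow{f_1}\cM_1]\otimes[\cF_2\xrightarrow{f_2}\cM_2]$ and $[\cF_1\xrightarrow{f_1}\cM_1]\otimes[\cG].[\cF_2\xrightarrow{f_2}\cM_2]$ both map to $[\cG\times\cF_1\times\cF_2\xrightarrow{(f_1\times f_2)\pi_{23}}\cM_1\times\cM_2]$ up to the K\"unneth relation reshuffle. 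Representability is stable under fibre product, giving the $K^0$ statement.

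The only delicate point is the compatibility of (b) and (c) with the module action, which requires choosing the cartesian square carefully so that base change along a projection factors through the projection again; this is routine but the main place where one could slip. All other verifications are formal manipulations with cartesian squares and Zariski-local triviality, and follow the blueprint of \cite[Definition 2.8]{Joyce-2007} and \cite[Section 3.5]{Bridgeland-2012}.
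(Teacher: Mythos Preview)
The paper does not supply its own proof of this proposition: it is stated with a citation to \cite[Definition 2.8]{Joyce-2007} and \cite[Section 3.5]{Bridgeland-2012} and then used as a black box. Your sketch is the standard verification that the three maps are well-defined with respect to the defining relations of $K(\St/\cM)$ and compatible with the $K(\St/\bbC)$-module structure, and it is correct; this is exactly the content of the cited references, so there is nothing to compare.
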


\subsection{Motivic Hall algebra for $\cC_2(\cA)$}\

Let $\cM,\cM^{(2)}$ be the moduli stacks of objects and filtrations in $\cC_2(\cA)$ defined in Subsections \ref{Moduli stacks of objects}, \ref{Moduli stacks of filtrations}.

By Lemma \ref{convolution diagram} and Proposition \ref{functorial}, there are morphisms $b_*$ and $(a_1,a_2)^*$. Then the convolution products on $K(\St/\cM)$ and $K^0(\St/\cM)$ are defined by 
\begin{align*}
-\Diamond-&=b_*(a_1,a_2)^*K:K(\St/\cM)\otimes K(\St/\cM)\rightarrow K(\St/\cM),\\
-\Diamond-&=b_*(a_1,a_2)^*K:K^0(\St/\cM)\otimes K^0(\St/\cM)\rightarrow K^0(\St/\cM).
\end{align*}
Explicitly, the product is given by the rule
$$[\cF_1\xrightarrow{f_1}\cM]\Diamond[\cF_2\xrightarrow{f_2}\cM]=[\cG\xrightarrow{bg}\cM],$$
where $g:\cM\rightarrow \cM^{(2)}$ is given by the cartesian diagram
\begin{diagram}[midshaft,size=2em]
\cG &\rTo^{g} &\cM^{(2)} &\rTo^b &\cM\\
\dTo &\square &\dTo_{(a_1,a_2)}\\
\cF_1\times \cF_2 &\rTo^{f_1\times f_2} &\cM\times \cM.
\end{diagram}

\begin{theorem}[\cite{Joyce-2007}, Theorem 5.2, \cite{Bridgeland-2013}, Theorem 4.3]
The convolution product $-\Diamond-$ defines a multiplication such that $K(\St/\cM)$ is an associative algebra over $K(\St/\bbC)$ with the unit element 
$$1=[\cM_0\hookrightarrow \cM],$$
where $\cM_0\cong \Spec\bbC$ is the substack of the zero complex in $\cC_2(\cA)$, and $K^0(\St/\cM)$ is a $K(\St/\bbC)$-subalgebra.
\end{theorem}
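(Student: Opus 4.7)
The plan is to follow the standard strategy of Joyce \cite{Joyce-2007} and Bridgeland \cite{Bridgeland-2013}: introduce a third moduli stack $\cM^{(3)}$ parametrising two-step filtrations $(X_1\subset X_2\subset X)$ in $\cC_2(\cA)$, and use it as a common refinement that computes both associators. Concretely, I would construct $\cM^{(3)}$ by the same quotient-stack procedure as $\cM^{(2)}$ in Subsection \ref{Moduli stacks of filtrations}, equipping it with a morphism $(a_1,a_2,a_3):\cM^{(3)}\to \cM\times \cM\times \cM$ sending a filtration to the successive subquotients $(X/X_2,X_2/X_1,X_1)$, together with $b^{(3)}:\cM^{(3)}\to \cM$ sending it to the total object $X$; the analogue of Lemma \ref{convolution diagram} for these morphisms should follow from the same projective-Grassmannian argument.

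Next I would exhibit two forgetful morphisms $p_{12},p_{23}:\cM^{(3)}\to \cM^{(2)}$, remembering either the subfiltration $(X_2\subset X)$ or the shorter one $(X_1\subset X_2)$. The crucial diagrammatic observation is the canonical identification
\begin{align*}
\cM^{(2)}\times^{b,a_1}_{\cM}\cM^{(2)}\ \cong\ \cM^{(3)}\ \cong\ \cM^{(2)}\times^{a_2,b}_{\cM}\cM^{(2)},
\end{align*}
which reflects the fact that giving a short exact sequence whose subobject is itself extended is the same data as specifying a two-step filtration, and symmetrically on the other side. Combining these cartesian presentations with the base-change and K\"unneth formalism of Proposition \ref{functorial}, a direct diagram chase using the definition of $-\Diamond-$ shows that both $(\alpha\Diamond\beta)\Diamond\gamma$ and $\alpha\Diamond(\beta\Diamond\gamma)$ equal $b^{(3)}_*(a_1,a_2,a_3)^*K^{(3)}(\alpha\otimes\beta\otimes\gamma)$, yielding associativity.

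For the unit, I would observe that the locus of filtrations $(X''\subset X)$ with $X''=0$ (respectively $X''=X$) is isomorphic via $b$ to $\cM$, because such filtrations are in bijection with objects of $\cC_2(\cA)$; tracing through the cartesian square computing $[\cM_0\hookrightarrow\cM]\Diamond[\cF\xrightarrow{f}\cM]$ then yields $1\Diamond[\cF\xrightarrow{f}\cM]=[\cF\xrightarrow{f}\cM]$, and symmetrically for the right unit. Finally, for the subalgebra claim, representability is stable under both composition and base change, and $b$ is representable by Lemma \ref{convolution diagram}(a), so if $f_1,f_2$ are representable then so is the map $bg$ produced by the convolution, placing $K^0(\St/\cM)$ under $\Diamond$. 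I expect the main obstacle to be the associativity step: the identification of the two iterated fibre products with $\cM^{(3)}$ must be checked at the level of stacks rather than coarse moduli, which requires carefully unpacking the quotient presentations $[W^{\ualpha}_{\ualpha'\ualpha''}/\rG_{\ualpha'}\times \rG_{\ualpha''}]$ and verifying that the resulting groupoid structures agree.
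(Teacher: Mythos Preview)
The paper does not supply its own proof of this theorem: it is stated with attribution to \cite{Joyce-2007}, Theorem 5.2 and \cite{Bridgeland-2013}, Theorem 4.3, and no argument is given in the text. Your proposal reproduces precisely the standard strategy of those references---introduce the stack $\cM^{(3)}$ of two-step filtrations, identify it with both iterated fibre products $\cM^{(2)}\times_{\cM}\cM^{(2)}$, and invoke base change and the K\"unneth map to see that both associators compute $b^{(3)}_*(a_1,a_2,a_3)^*K^{(3)}$---so there is nothing to compare. Your concern about verifying the fibre-product identification at the level of stacks rather than coarse spaces is well placed, but this is exactly what Joyce and Bridgeland carry out, and in the present quotient-stack setting it is routine.
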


\begin{corollary}\label{associative}
The convolution product gives an associative algebra structure for $K_\Upsilon(\St/\cM)$ over $\bbC(t)$ with the unit element $1$, and $K^0_\Upsilon(\St/\cM)$ is a $\bbC(t)$-subalgebra.
\end{corollary}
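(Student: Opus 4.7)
The plan is to deduce the corollary as a routine base change from the preceding theorem. Since $K(\St/\cM)$ has already been shown to be an associative unital $K(\St/\bbC)$-algebra under $\Diamond$, the associative $\bbC(t)$-algebra structure on $K_\Upsilon(\St/\cM) = \bbC(t) \otimes_{K(\St/\bbC)} K(\St/\cM)$ will arise by extending scalars along the ring homomorphism $\Upsilon : K(\St/\bbC) \to \bbC(t)$.

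Concretely, the convolution is $K(\St/\bbC)$-bilinear, which is already encoded in being a $K(\St/\bbC)$-algebra: the scalar action $[\cG] \cdot [\cF \xrightarrow{f} \cM] = [\cG \times \cF \xrightarrow{f\pi_2} \cM]$ commutes with $\Diamond$, as one checks by unpacking the cartesian diagram defining $\Diamond$ and observing that taking a product with $\cG$ in one slot commutes with the Künneth map $K$, with the pullback $(a_1,a_2)^*$, and with the pushforward along the representable morphism $b$ (the relevant base change identities are all contained in Proposition \ref{functorial}). This bilinearity ensures that the formula
$$(\lambda_1 \otimes x_1) \Diamond (\lambda_2 \otimes x_2) = \lambda_1 \lambda_2 \otimes (x_1 \Diamond x_2)$$
gives a well-defined product on the tensor product, and associativity and the unit $1 = 1 \otimes [\cM_0 \hookrightarrow \cM]$ descend from $K(\St/\cM)$. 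Relation (iii) of Definition \ref{relation3} is precisely the identity that $[\cG] \cdot x$ in $K_\Upsilon$ equals $\Upsilon([\cG]) x$, which is automatic from the tensor-product presentation.

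For the subalgebra statement I would check closure of $K^0_\Upsilon(\St/\cM)$ under $\Diamond$ by observing that representability is preserved under pullback and composition: if $f_1, f_2$ are representable then so is $f_1 \times f_2$, its base change $g$ along $(a_1,a_2)$ remains representable, and composition with the representable morphism $b$ of Lemma \ref{convolution diagram}(a) yields a representable $bg$. The unit $[\cM_0 \hookrightarrow \cM]$ is a representable closed immersion. Extending by $\bbC(t)$-linearity preserves the subspace spanned by such classes. The only potential obstacle is a careful diagrammatic verification of the $K(\St/\bbC)$-bilinearity of $\Diamond$, but this is forced by the construction of $\Diamond$ from $b_*$, $(a_1,a_2)^*$ and $K$; once it is in hand, the corollary is formal.
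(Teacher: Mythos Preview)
Your proposal is correct and is precisely the routine base-change argument the paper has in mind: the corollary is stated without proof immediately after the theorem on $K(\St/\cM)$, as it follows formally by tensoring along $\Upsilon$ and noting that representability is preserved under the operations defining $\Diamond$.
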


The decomposition 
$\cM=\bigsqcup_{\ualpha\in \bbN I\times \bbN I}\cM_{\ualpha}$
induces a $\bbN I\times \bbN I$-grading on the algebra
$$K^0_\Upsilon(\St/\cM)=\bigoplus_{\ualpha\in \bbN I\times \bbN I}K^0_\Upsilon(\St/\cM)_{\ualpha},$$
where $K^0_\Upsilon(\St/\cM)_{\ualpha}=K^0_\Upsilon(\St/\cM_{\ualpha})$.

\subsection{Twisted subalgebra for $\cC_2(\cP)$}\

In Subsection \ref{moduli C2(P)}, we define a locally closed subset $\rP_2(A,\ue)\subset \rC_2(A,\ualpha(\ue))$ which is $\rG_{\ue}$-invariant for any $\ue\in \bbN I\times \bbN I$. We denote by $\cN_{\ue}=[\rP_2(A,\ue)/\rG_{\ue}]$ the quotient stack and 
$$\cN=\bigsqcup_{\ue\in \bbN I\times \bbN I}\cN_{\ue},$$
then $\cN_{\ue}$ and $\cN$ are locally closed substacks of $\cM_{\ualpha(\ue)}$ and $\cM$, respectively.

Let $\cH_t(\cC_2(\cP))$ be the $\bbC(t)$-subspace of $K^0_{\Upsilon}(\St/\cM)$ spanned by elements of the form 
$[\cF\xrightarrow{f}\cN\hookrightarrow \cM]$, then $\cH_t(\cC_2(\cP))$ is a subalgebra of $K^0_{\Upsilon}(\St/\cM)$, since the middle terms of extensions of projective complexes are projective complexes. Moreover, since 
$b((a_1,a_2)^{-1}(\cN_{\ue'}\times \cN_{\ue''}))\subset \cN_{\ue'+\ue''},$
the $\bbC(t)$-algebra $\cH_t(\cC_2(\cP))$ is also $\bbN I\times \bbN I$-graded,
$$\cH_t(\cC_2(\cP))=\bigoplus_{\ue\in \bbN I\times \bbN I}\cH_t(\cC_2(\cP))_{\ue},$$
where $\cH_t(\cC_2(\cP))_{\ue}$ is spanned by elements of the form 
$[\cF\xrightarrow{f}\cN_{\ue}\hookrightarrow \cM].$

Recall that the Euler form and the symmetric Euler form of $\cA$ are bilinear forms on the Grothendieck group $K(\cA)$ defined by
\begin{equation}
\begin{aligned}\label{Euler form and the symmetric Euler form}
K(\cA)\times K(\cA)&\rightarrow \bbZ\\
\langle \hat{M},\hat{N}\rangle&=\sum_{s\in \bbZ}(-1)^i\dim_{\bbC}\Ext^s_{\cA}(M,N)\\
(\hat{M},\hat{N})&=\langle\hat{M},\hat{N}\rangle+\langle\hat{N},\hat{M}\rangle,
\end{aligned}
\end{equation}
where $\hat{M},\hat{N}\in K(\cA)$ are the images of $M,N\in \cA$ in $K(\cA)$, respectively. Note that (\ref{Euler form and the symmetric Euler form}) is a finite sum, since $\cA$ is of finite global dimension.

We define the twist form $\cH^{\tw}_t(\cC_2(\cP))$ to be a $\bbC(t)$-algebra, where $\cH^{\tw}_t(\cC_2(\cP))=\cH_t(\cC_2(\cP))$ as $\bbC(t)$-vector spaces and the multiplication is replaced by  
\begin{align*}
&[\cF\xrightarrow{f'}\cN_{\ue'}\hookrightarrow \cM]*[\cF\xrightarrow{f''}\cN_{\ue''}\hookrightarrow \cM]\\
=&(-t)^{\langle \hat{P}'^1,\hat{P}''^1\rangle+\langle \hat{P}'^0,\hat{P}''^0\rangle}[\cF\xrightarrow{f'}\cN_{\ue'}\hookrightarrow \cM]\Diamond[\cF\xrightarrow{f''}\cN_{\ue''}\hookrightarrow \cM],
\end{align*}
where $P'^j=\bigoplus_{i\in I}e'^j_iP_i,P''^j=\bigoplus_{i\in I}e''^j_iP_i$. By Corollary \ref{associative} and the bilinearity of the Euler form, $\cH^{\tw}_t(\cC_2(\cP))$ is a $\bbN I\times \bbN I$-graded associative $\bbC(t)$-algebra with the unit element $1$.

\subsection{Localization}\

The following proposition is given by \cite[Proposition 5.15, Corollary 5.16]{Joyce-2007}, also see \cite[Proposition 6.2]{Bridgeland-2012}.

\begin{proposition}\label{Motivic Riedtmann-Peng}
For any $[\cF\xrightarrow{f}\cM],[\cG\xrightarrow{g}\cM]\in K^0(\St/\cM)$, assume that
$$\cF\times\cG\xrightarrow{f\times g}\cV\hookrightarrow\cM\times \cM,$$
where $\cV$ is a contractible substack of $\cM\times \cM$. Then there exists a finite decomposition of $\cV$ into quotient stacks
$$\cV=\bigsqcup_{m\in M}[V_m/G_m],$$
where $V_m$ is a quasi-projective variety over $\bbC$, $G_m$ is a special algebraic group acting on $V_m$ and $E^0_m,E^1_m$ are two finite-dimensional representations of $G_m$ such that\\ 
(a) if $v\in V_m$ projects to $([X],[Y])\in \cV(\bbC)\subset \cM(\bbC)\times \cM(\bbC)$ corresponding to two isomorphism classes of objects in $\cC_2(\cA)$, we have 
\begin{equation}
\begin{aligned}\label{identification}
&\Stab_{G_m}(v)\cong \Aut_{\cC_2(\cA)} (X)\times \Aut_{\cC_2(\cA)} (Y),\\
E^0_m&\cong \Hom_{\cC_2(\cA)}(X,Y),\  E^1_m\cong \Ext^1_{\cC_2(\cA)}(X,Y),
\end{aligned}
\end{equation}
and the action of $\Stab_{G_m}(v)$ on $E^0_m,E^1_m$ coincides with the action of $\Aut (X)\times \Aut (Y)$ on $\Hom_{\cC_2(\cA)}(X,Y),\Ext_{\cC_2(\cA)}(X,Y)$ respectively;\\
(b) there is a cartesian diagram
\begin{diagram}[midshaft,size=2em]
[V_m\times E^1_m/G_m\ltimes E^0_m] &\rTo^{\pi_m} &\cM^{(2)}\\
\dTo &\square &\dTo_{(a_1,a_2)}\\
[V_m/G_m] &\rInto &\cM\times \cM,
\end{diagram}
where the multiplication on $G_m\ltimes E^0_m$ is given by $(\gamma,\theta)(\gamma',\theta')=(\gamma\gamma',\theta+\gamma.\theta')$ for $\gamma,\gamma'\in G_m,\theta,\theta'\in E^0_m$, and $E^0_m$ acts trivially on $V_m\times E^1_m$, and if $v\in V_m$ projects to $([X],[Y])\in \cV(\bbC)$, then under the identifications $(\ref{identification})$, the morphism $\pi_m$ corresponds to the map $(v,\xi)\mapsto [\im a_\xi\subset Z_\xi]$, where 
$$0\rightarrow Y\xrightarrow{a_\xi}Z_\xi\xrightarrow{b_\xi} X\rightarrow 0$$
is the short exact sequence determined by $\xi\in E_m^1\cong\Ext^1_{\cC_2(\cA)}(X,Y)$.\\
(c) the element $[\cF\times\cG\xrightarrow{f\times g}\cV\hookrightarrow\cM\times \cM]$ can be written as 
$$\sum_{m\in M,n\in N_m}c_{mn}[[W_{mn}/G_m]\xrightarrow{\overline{\tau}_{mn}}[V_m/G_m]\hookrightarrow \cV\hookrightarrow \cM\times \cM],$$
where $N_m$ is a finite set, $W_{mn}$ is a quasi-projective variety over $\bbC$ together with a $G_m$-action, and $\overline{\tau}_{mn}:[W_{mn}/G_m]\rightarrow [V_m/G_m]$ is induced by a $G_m$-equivariant morphism of varieties $\tau_{mn}:W_{mn}\rightarrow V_m$, such that there is a cartesian diagram
\begin{diagram}[midshaft,size=2em]
[W_{mn}\times E^1_m/G_m\ltimes E^0_m] &\rTo^{\tilde{\tau}_{mn}} &[V_m\times E^1_m/G_m\ltimes E^0_m] &\rTo^{\pi_m} &\cM^{(2)}\\
\dTo & &\square & &\dTo_{(a_1,a_2)}\\
[W_{mn}/G_m] &\rTo^{\overline{\tau}_{mn}} &[V_m/G_m] &\rInto &\cM\times \cM,
\end{diagram}
where $\tilde{\tau}_{mn}:[W_{mn}\times E^1_m/G_m\ltimes E^0_m]\rightarrow [V_m\times E^1_m/G_m\ltimes E^0_m]$ is induced by $\tau_{mn}$, and so 
\begin{align*}
&[\cF\xrightarrow{f}\cM]\Diamond[\cG\xrightarrow{g}\cM]\\
=&\sum_{m\in M,n\in N_m}c_{mn}[[W_{mn}\times E^1_m/G_m\ltimes E^0_m]\xrightarrow{\tilde{\tau}_{mn}}[V_m\times E^1_m/G_m\ltimes E^0_m]\xrightarrow{b\pi_m}\cM].
\end{align*}
\end{proposition}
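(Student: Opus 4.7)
The plan is to follow the stratification approach used by Joyce in his motivic Hall algebra framework, treating the proposition as the motivic incarnation of the classical Riedtmann--Peng formula for Hall numbers. First, I would establish the decomposition $\cV=\bigsqcup_{m\in M}[V_m/G_m]$ using standard stratification results for Artin stacks with affine stabilizers that are locally of finite type over $\bbC$. Because $\cV$ is contractible and of finite type, one can, after a finite locally closed stratification, find for each piece a quasi-projective $V_m$ on which a special algebraic group $G_m$ acts so that $[V_m/G_m]$ is the corresponding quotient stack and the stabilizers match the automorphism groups in $\cC_2(\cA)$ via the identification of points with isomorphism classes of pairs $(X,Y)$. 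This uses the fact, recalled in Subsection~\ref{Relative Grothendieck group of stacks}, that $\Aut_{\cC_2(\cA)}(X)\times\Aut_{\cC_2(\cA)}(Y)$ consists of invertible elements of a finite-dimensional algebra and is therefore special.

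The core step is part~(b), the identification of the fiber of $(a_1,a_2)$ over $[V_m/G_m]$. Here I would use the universal property of $\Ext^1_{\cC_2(\cA)}(X,Y)$: isomorphism classes of extensions $0\to Y\to Z\to X\to 0$ relative to fixed ends are parametrized by $\Ext^1_{\cC_2(\cA)}(X,Y)$, and two extensions define the same filtered object $[Y\subset Z]$ in $\cM^{(2)}$ precisely when they differ by an element of $\Hom_{\cC_2(\cA)}(X,Y)$ (acting trivially on the underlying short exact sequence, but identifying different splittings). Universalizing this over the stratum $V_m$ produces a $G_m$-equivariant family of $\Hom$ and $\Ext^1$ groups, which by the contractibility of the stratum can be trivialized to representations $E^0_m,E^1_m$ of $G_m$. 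Building the semi-direct product $G_m\ltimes E^0_m$ acting on $V_m\times E^1_m$ (trivially on the $E^1_m$-factor through $E^0_m$) precisely produces $[V_m\times E^1_m/G_m\ltimes E^0_m]$ as the stack of filtrations over $[V_m/G_m]$; verifying the cartesian diagram reduces to a check on $\bbC$-points via Corollary~\ref{associative}-style bookkeeping together with an atlas computation. The description of $\pi_m$ on $\bbC$-points then falls out of the construction of the extension from $\xi\in E^1_m$.

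For part~(c), I would pull back $\cF\times\cG\xrightarrow{f\times g}\cV$ to each stratum $[V_m/G_m]$. Since $f\times g$ lands in the contractible $\cV$ and the pullback stack $\cF_m=(f\times g)^{-1}([V_m/G_m])$ is of finite type, one can use generic flatness and further locally closed stratification of $\cF_m$ to write $[\cF_m\to [V_m/G_m]]$ as a finite $\bbZ$-linear combination $\sum_n c_{mn}[[W_{mn}/G_m]\xrightarrow{\oline{\tau}_{mn}}[V_m/G_m]]$ with $W_{mn}$ quasi-projective, each $\oline{\tau}_{mn}$ representable and induced by a $G_m$-equivariant morphism of varieties; the representability is preserved because $f\times g$ is representable (being a pullback consistent with elements of $K^0$). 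The final formula for $-\Diamond-$ is then a formal consequence of base change around the two stacked cartesian squares together with Proposition~\ref{functorial}(b),(c).

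The hardest step will be part~(b): confirming that the stacky fiber is genuinely $[V_m\times E^1_m/G_m\ltimes E^0_m]$ rather than simply $[V_m\times E^1_m/G_m]$. The subtle point is that a homotopy between two splittings of the same short exact sequence is parametrized by $\Hom_{\cC_2(\cA)}(X,Y)$, and this must appear as automorphisms of the $\bbC$-points of $\cM^{(2)}$ even though the action on the underlying variety $V_m\times E^1_m$ is trivial. Getting the $2$-categorical structure correct — so that the stabilizer of $(v,\xi)$ in the fiber stack is the automorphism group of the resulting filtered object, matching $\Aut_{\cC_2(\cA)}(Z_\xi)$ as an extension of $\Stab_{G_m}(v)$ by $E^0_m$ — is the technical heart of the statement, and is precisely what turns the set-theoretic Riedtmann--Peng formula into a motivic identity of stacks.
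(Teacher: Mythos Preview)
The paper does not supply its own proof of this proposition: it is stated with the attribution ``given by \cite[Proposition 5.15, Corollary 5.16]{Joyce-2007}, also see \cite[Proposition 6.2]{Bridgeland-2012}'' and no further argument. Your outline is therefore not being compared against an in-paper proof but against Joyce's original, and in broad strokes it matches: stratify $\cV$ into global quotients by special groups, trivialize $\Hom$ and $\Ext^1$ fiberwise after refining the stratification so that their dimensions are constant, and then identify the fiber of $(a_1,a_2)$ as the quotient $[V_m\times E^1_m/G_m\ltimes E^0_m]$.

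One imprecision worth flagging: your sentence ``two extensions define the same filtered object $[Y\subset Z]$ in $\cM^{(2)}$ precisely when they differ by an element of $\Hom_{\cC_2(\cA)}(X,Y)$'' is not the correct role of $E^0_m$. Equivalence classes in $\Ext^1$ already classify filtered objects up to isomorphism fixing the ends; the $\Hom$ group enters not as an identification on points but as extra automorphisms of each point of $\cM^{(2)}$. Concretely, the automorphism group of a filtered object $(Y\subset Z)$ with fixed ends sits in an exact sequence $1\to \Hom_{\cC_2(\cA)}(X,Y)\to \Aut_{\cM^{(2)}}(Y\subset Z)\to \Aut(X)\times\Aut(Y)$, and this is why $E^0_m$ must act trivially on $V_m\times E^1_m$ yet still appear in the stabilizer. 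You correct yourself in the final paragraph, but the earlier phrasing would lead a reader astray. Otherwise the plan is sound, and the decomposition in (c) is indeed a formal consequence of Joyce's machinery for writing stack functions in terms of global quotients, exactly as you indicate.
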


\begin{remark}\label{special case}
In particular, for $\delta_{[X]}=[\Spec\bbC\xrightarrow{i_{[X]}} \cM], \delta_{[Y]}=[\Spec\bbC\xrightarrow{i_{[Y]}} \cM]$ corresponding to isomorphism classes $[X],[Y]$ in $\cC_2(\cA)$, there is a cartesian diagram
\begin{diagram}[midshaft,size=2em]
[\Ext^1_{\cC_2(\cA)}(X,Y)/(\Aut_{\cC_2(\cA)} (X)\!\times\! \Aut_{\cC_2(\cA)} (Y))\!\ltimes\! \Hom_{\cC_2(\cA)}(X,Y)] &\rTo^{\pi} &\cM^{(2)}\\
\dTo &\square &\dTo^{(a_1,a_2)}\\
\Spec\bbC\!\times\! \Spec\bbC &\rTo^{i_{[X]}\times i_{[Y]}} &\cM\!\times\! \cM
\end{diagram} 
such that 
\begin{align*}
\delta_{[X]}\Diamond\delta_{[Y]}
\!=\![[\Ext^1_{\cC_2(\cA)}(X,Y)/(\Aut_{\cC_2(\cA)} (X)\!\times\! \Aut_{\cC_2(\cA)} (Y))\!\ltimes\! \Hom_{\cC_2(\cA)}(X,Y)]\xrightarrow{b\pi}\cM].
\end{align*}
It is analogue to the Riedtmann-Peng formula for Ringel-Hall algebra over a finite field $\bbF_q$. More precisely, consider $\cC_2(\cA_{\bbF_q})$, the category of two-periodic complexes over $\bbF_q$, for any $X, Y, Z\in \cC_2(\cA_{\bbF_q})$, the structure constant for Ringel-Hall algebra is given by the filtration number $g^Z_{XY}$ which is defined to be the number of subobjects $Z'\cong Z$ satisfying $Z'\cong Y, Z/Z'\cong X$, then the Riedtmann-Peng formula is
$$g^Z_{XY}=\frac{|\Ext^1_{\cC_2(\cA_{\bbF_q})}(X,Y)_Z||\Aut_{\cC_2(\cA_{\bbF_q})}(Z)|}{|\Hom_{\cC_2(\cA_{\bbF_q})}(X,Y)||\Aut_{\cC_2(\cA_{\bbF_q})}(X)||\Aut_{\cC_2(\cA_{\bbF_q})}(Y)|},$$
where $\Ext^1_{\cC_2(\cA_{\bbF_q})}(X,Y)_Z\subset \Ext^1_{\cC_2(\cA_{\bbF_q})}(X,Y)$ is the subset consisting of equivalence classes of short exact sequences whose middle terms are isomorphic to $Z$, see \cite{Ringel-1996,Schiffmann-2012-1} for details.
\end{remark}

\begin{proposition}
For any $[\cF\xrightarrow{f}\cN_{\ue}\hookrightarrow \cM]\in \cH^{\tw}_t(\cC_2(\cP))$ and $P\in \cP$, we have
\begin{align*}
&\delta_{[K_P]}*[\cF\xrightarrow{f}\cN_{\ue}\hookrightarrow \cM]=(-t)^{(\hat{P},\hat{P}^0-\hat{P}^1)}[\cF\xrightarrow{f}\cN_{\ue}\hookrightarrow \cM]*\delta_{[K_P]},\\
&\delta_{[K_P^*]}*[\cF\xrightarrow{f}\cN_{\ue}\hookrightarrow \cM]=(-t)^{-(\hat{P},\hat{P}^0-\hat{P}^1)}[\cF\xrightarrow{f}\cN_{\ue}\hookrightarrow \cM]*\delta_{[K_P^*]},
\end{align*}
where $P^j=\bigoplus_{i\in I}e_i^jP_i$.
\end{proposition}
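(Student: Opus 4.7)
The plan is to reduce both equations to the corresponding identity for the untwisted motivic convolution $\Diamond$, then exploit the fact that $K_P$ and $K_P^*$ are projective-injective in the Frobenius category $\cC_2(\cA)$ (by Proposition \ref{contractible complex} and the Frobenius property of $\cC_2(\cP)$), and finally identify the remaining weight as a ratio of Hom-motives computable via the virtual Poincar\'e polynomial $\Upsilon$.

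First I would unwind the twist. Expanding the definition of $*$ and using the identity $(\hat P,\hat P^0-\hat P^1)=\langle\hat P,\hat P^0\rangle-\langle\hat P,\hat P^1\rangle+\langle\hat P^0,\hat P\rangle-\langle\hat P^1,\hat P\rangle$, the first claim becomes
\[
\delta_{[K_P]}\Diamond[\cF\xrightarrow{f}\cN_{\ue}\hookrightarrow\cM]
=t^{2(\langle\hat P^0,\hat P\rangle-\langle\hat P,\hat P^1\rangle)}\,
[\cF\xrightarrow{f}\cN_{\ue}\hookrightarrow\cM]\Diamond\delta_{[K_P]},
\]
the sign having disappeared in the even power of $(-t)$. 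By the functoriality statements of Proposition \ref{functorial} and the fact that the relevant Hom and Ext groups depend only on the $K(\cA)$-classes $\hat P^1,\hat P^0$ of the components of a complex in $\cN_{\ue}$, this further reduces to the pointwise identity obtained by replacing $[\cF\to\cN_{\ue}\to\cM]$ with $\delta_{[X]}$ for each individual $X\in\cC_2(\cP)$ of pdvp $\ue$.

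Next I would apply the Frobenius projective-injective property: $\Ext^1_{\cC_2(\cA)}(K_P,X)=0=\Ext^1_{\cC_2(\cA)}(X,K_P)$ for every $X\in\cC_2(\cP)$, so every extension splits and by Remark \ref{special case} both motivic products are supported at the single geometric point $\delta_{[X\oplus K_P]}$ and equal
\[
[[\Spec\bbC/(\Aut_{\cC_2(\cA)}(K_P)\times\Aut_{\cC_2(\cA)}(X))\ltimes\Hom_{\cC_2(\cA)}(K_P,X)]\to\cM]
\]
and the analogous expression with $\Hom_{\cC_2(\cA)}(X,K_P)$, respectively. Direct inspection of the diagrams defining morphisms in or out of $K_P=(P,P,1_P,0)$ gives
\[
\Hom_{\cC_2(\cA)}(K_P,X)\cong\Hom_{\cA}(P,X^1),\qquad \Hom_{\cC_2(\cA)}(X,K_P)\cong\Hom_{\cA}(X^0,P),
\]
of respective dimensions $\langle\hat P,\hat P^1\rangle$ and $\langle\hat P^0,\hat P\rangle$, since $P,X^0,X^1$ are all projective and hence higher Ext's vanish. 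The two motivic classes differ only in this Hom-factor of the semidirect product, and relation (iii) of Definition \ref{relation3} together with $\Upsilon(\bbC^n)=t^{2n}$ produces exactly the required factor $t^{2(\langle\hat P^0,\hat P\rangle-\langle\hat P,\hat P^1\rangle)}$. The argument for $K_P^*=(P,P,0,1_P)$ is entirely parallel, with Hom spaces $\Hom_{\cA}(P,X^0)$ and $\Hom_{\cA}(X^1,P)$ interchanged, yielding the opposite exponent $-(\hat P,\hat P^0-\hat P^1)$.

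The principal difficulty I anticipate is the precise accounting of the motivic weights arising from the classifying stacks $[\Spec\bbC/\Stab\ltimes\Hom]$: one must verify that the contribution of the common stabilizer $\Aut_{\cC_2(\cA)}(K_P)\times\Aut_{\cC_2(\cA)}(X)$ cancels between the two sides, so that only the ratio of Hom-motives survives. This rests on the formula $\Upsilon([\Spec\bbC/H])=\Upsilon(H)^{-1}$ for special algebraic groups $H$ (both the stabilizer and the abelian group $\Hom$ being of this form), together with the compatibility of $\Upsilon$ with semidirect products of special groups. Once this bookkeeping is clean, the $K_P^*$ analogue is obtained by a mechanical swap of the two components.
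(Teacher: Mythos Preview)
Your proposal is correct and follows essentially the same route as the paper: unwind the twist, use that $\Ext^1_{\cC_2(\cA)}(K_P,-)=\Ext^1_{\cC_2(\cA)}(-,K_P)=0$, identify $\Hom_{\cC_2(\cA)}(K_P,X)\cong\Hom_{\cA}(P,X^1)$ and $\Hom_{\cC_2(\cA)}(X,K_P)\cong\Hom_{\cA}(X^0,P)$, and compare the resulting $\Upsilon$-weights. The one place where the paper is more careful is your ``reduction to pointwise $\delta_{[X]}$'': elements of $K^0_\Upsilon(\St/\cM)$ are not literally spanned by the $\delta_{[X]}$, so this step is made rigorous in the paper by invoking Proposition~\ref{Motivic Riedtmann-Peng} (the motivic Riedtmann--Peng decomposition), which supplies the uniform stratification with constant $E^0_m,E^1_m$ over $\cN_{\ue}$ and thereby justifies exactly the cancellation of the common stabilizer factor you flagged at the end.
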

\begin{proof}
The proof is essentially the same as \cite[Lemma 3.4, 3.5]{Bridgeland-2013}. By Proposition \ref{Motivic Riedtmann-Peng}, we can calculate the product $\delta_{[K_P]}*[\cF\xrightarrow{f}\cN_{\ue}\hookrightarrow \cM]$ as follows,
\begin{align*}
&\delta_{[K_P]}\Diamond[\cF\xrightarrow{f}\cN_{\ue}\hookrightarrow \cM]\\
=&\sum_{m\in M,n\in N_m}c_{mn}[[W_{mn}\times E^1_m/G_m\ltimes E^0_m]\xrightarrow{\tilde{\tau}_{mn}}[V_m\times E^1_m/G_m\ltimes E^0_m]\xrightarrow{b\pi_m}\cM]
\end{align*}
where if $v\in V_m$ projects to $([K_P],[X])$, then
$$E^1_m\cong \Ext_{\cC_2(\cA)}(K_P,X)\!=\!0,\ 
E^0_m\cong \Hom_{\cC_2(\cA)}(K_P,X)\!=\!\Hom_{\cA}(P,X^1)\!=\!\Hom_{\cA}(P,P^1)$$
see \cite[Lemma 3.3]{Bridgeland-2013}, and $b\pi_m$ corresponds to the map $(v,0)\mapsto [K_P\oplus X]$. Then by $\Upsilon(\bbC^n)=t^{2n}$, we obtain
\begin{equation}
\begin{aligned}\label{K_P*-}
&\delta_{[K_P]}*[\cF\xrightarrow{f}\cN_{\ue}\hookrightarrow \cM]=(-t)^{\langle \hat{P},\hat{P}^1\rangle+\langle \hat{P},\hat{P}^0\rangle}\delta_{[K_P]}\Diamond[\cF\xrightarrow{f}\cN_{\ue}\hookrightarrow \cM]\\
=&(-t)^{\langle \hat{P},\hat{P}^1\rangle+\langle \hat{P},\hat{P}^0\rangle}\sum_{m\in M,n\in N_m}\Upsilon(G_m\ltimes E^0_m)^{-1}c_{mn}[W_{mn}\times E^1_m\xrightarrow{b\pi_m\tilde{\tau}_{mn}\pi_{mn}}\cM]\\
=&(-t)^{\langle \hat{P},\hat{P}^0\rangle-\langle \hat{P},\hat{P}^1\rangle}\sum_{m\in M,n\in N_m}\Upsilon(G_m)^{-1}c_{mn}[W_{mn}\times E^1_m\xrightarrow{b\pi_m\tilde{\tau}_{mn}\pi_{mn}}\cM],
\end{aligned}
\end{equation}
where $\pi_{mn}:W_{mn}\times E^1_m\rightarrow [W_{mn}\times E^1_m/G_m\ltimes E^0_m]$ is the natural morphism. The calculation of $[\cF\xrightarrow{f}\cN_{\ue}\hookrightarrow \cM]*\delta_{[K_P]}$ is similar. The only difference is 
\begin{align*}
\Ext^1_{\cC_2(\cA)}(X,K_P)=0,\ \Hom_{\cC_2(\cA)}(X,K_P)=\Hom_{\cA}(X^0,P).
\end{align*} 
Note that 
$[\Spec\bbC\times \cF\xrightarrow{i_{[K_P]}\times f}\cM]=[\cF\times \Spec\bbC\xrightarrow{f\times i_{[K_P]}}\cM]$
via the natural switching isomorphism $\Spec\bbC\times \cF\cong \cF\times \Spec\bbC$. Thus we only need to change $E_m^0$ and the twist in (\ref{K_P*-}) and then obtain
\begin{align*}
&[\cF\xrightarrow{f}\cN_{\ue}\hookrightarrow \cM]*\delta_{[K_P]}\\
=&(-t)^{\langle \hat{P}^1,\hat{P}\rangle-\langle \hat{P}^0,\hat{P}\rangle}\sum_{m\in M,n\in N_m}\Upsilon(G_m)^{-1}c_{mn}[W_{mn}\times E^1_m\xrightarrow{b\pi_m\tilde{\tau}_{mn}\pi_{mn}}\cM].
\end{align*}
Therefore,
$$\delta_{[K_P]}*[\cF\xrightarrow{f}\cN_{\ue}\hookrightarrow \cM]=(-t)^{(\hat{P},\hat{P}^0-\hat{P}^1)}[\cF\xrightarrow{f}\cN_{\ue}\hookrightarrow \cM]*\delta_{[K_P]},$$
as desired. The second equation can be proved similarly.
\end{proof}

For any $P\in \cP$, the automorphism group $\Aut_{\cA}(P)$ consists of invertible elements in the endomorphism algebra $\End_{\cA}(P)$, so the automorphism groups 
$$\Aut_{\cC_2(\cA)} (K_P)=\Aut_{\cC_2(\cA)}(K_P^*)=\Aut_{\cA}(P)$$
are special, see Subsection \ref{Relative Grothendieck group of stacks}.

\begin{definition}
For each $P\in \cP$, define two elements
$$b_P=\Upsilon(\Aut_{\cC_2(\cA)} (K_P))\delta_{[K_P]},\ b_P^*=\Upsilon(\Aut_{\cC_2(\cA)} (K_P^*))\delta_{[K_P^*]}\in\cH^{\tw}_t(\cC_2(\cP)).$$
\end{definition}

\begin{corollary}\label{Ore conditions}
For any $[\cF\xrightarrow{f}\cN_{\ue}\hookrightarrow \cM]\in \cH^{\tw}_t(\cC_2(\cP))$ and $P\in \cP$, we have
\begin{align*}
&b_P*[\cF\xrightarrow{f}\cN_{\ue}\hookrightarrow \cM]=(-t)^{(\hat{P},\hat{P}^0-\hat{P}^1)}[\cF\xrightarrow{f}\cN_{\ue}\hookrightarrow \cM]*b_P,\\
&b_P^**[\cF\xrightarrow{f}\cN_{\ue}\hookrightarrow \cM]=(-t)^{-(\hat{P},\hat{P}^0-\hat{P}^1)}[\cF\xrightarrow{f}\cN_{\ue}\hookrightarrow \cM]*b_P^*,
\end{align*}
where $P^j=\bigoplus_{i\in I}e_i^jP_i$. Hence $\{b_P,b_P^*|P\in \cP\}$ satisfies Ore conditions. 
\end{corollary}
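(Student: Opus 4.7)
The plan is to deduce the two displayed commutation relations directly from the preceding proposition and then observe that Ore conditions are an almost formal consequence, because the commutation scalars are units in the ground field $\bbC(t)$.

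First I would rewrite the two displayed identities using the definitions
$b_P = \Upsilon(\Aut_{\cC_2(\cA)}(K_P))\,\delta_{[K_P]}$ and
$b_P^* = \Upsilon(\Aut_{\cC_2(\cA)}(K_P^*))\,\delta_{[K_P^*]}$.
Since $\Aut_{\cC_2(\cA)}(K_P)=\Aut_{\cC_2(\cA)}(K_P^*)=\Aut_{\cA}(P)$ is the special affine group of invertible elements of $\End_{\cA}(P)$, its Poincar\'{e} polynomial $\Upsilon(\Aut_{\cA}(P))$ is a well-defined nonzero element of $\bbC(t)$. The two identities in the corollary are therefore obtained from the identities in the previous proposition by multiplying both sides by this scalar, which is a unit in $\bbC(t)$ and hence central in the $\bbC(t)$-algebra $\cH_t^{\tw}(\cC_2(\cP))$. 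So both commutation identities are immediate.

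For the Ore conditions, the point is that each $b_P$ and each $b_P^*$ is a \emph{normal} element of $\cH_t^{\tw}(\cC_2(\cP))$ up to a unit: for every generator $y=[\cF\xrightarrow{f}\cN_{\ue}\hookrightarrow\cM]$ one has $b_P*y=\lambda_{P,\ue}\,y*b_P$ and $b_P^**y=\lambda_{P,\ue}^{-1}\,y*b_P^*$ with $\lambda_{P,\ue}=(-t)^{(\hat{P},\hat{P}^0-\hat{P}^1)}\in\bbC(t)^\times$. Given an arbitrary $y\in\cH_t^{\tw}(\cC_2(\cP))$, decomposing $y$ along the $\bbN I\times\bbN I$-grading and applying the identity on each homogeneous piece yields $b_P*y=y'*b_P$ and $y*b_P=b_P*y''$ for suitable $y',y''\in\cH_t^{\tw}(\cC_2(\cP))$; that is, both the right and left Ore conditions hold at each generator $b_P$ (and likewise for $b_P^*$), taking the auxiliary denominator to be $b_P$ itself.

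To extend from single generators to the multiplicative set generated by $\{b_P, b_P^*\mid P\in\cP\}$, I would argue by induction on word length: if $s_1,s_2$ are each normal-up-to-a-unit in the sense above, then so is the product $s_1 s_2$, because composing the two commutation relations just multiplies the two unit scalars. Hence the whole multiplicative closure of $\{b_P,b_P^*\mid P\in\cP\}$ consists of elements that skew-commute with every element of $\cH_t^{\tw}(\cC_2(\cP))$ by a $\bbC(t)^\times$-valued scalar, and the left and right Ore axioms are verified in both cases by choosing the denominator equal to the given element. The only thing to check carefully is the bookkeeping of the scalars $(-t)^{(\hat{P},\hat{P}^0-\hat{P}^1)}$, but since the Euler form $(-,-)$ is bilinear and each commutation contributes an invertible scalar, there is no real obstacle; the argument is essentially a direct corollary of the previous proposition together with the normality mechanism just described.
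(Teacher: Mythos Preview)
Your proposal is correct and matches the paper's approach: the paper states the corollary without proof, treating both the commutation identities and the Ore conditions as immediate from the preceding proposition after scaling by the unit $\Upsilon(\Aut_{\cA}(P))\in\bbC(t)^\times$. Your explicit verification that skew-commutation by units yields the Ore conditions is the expected (and only) argument here.
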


\begin{lemma}\label{regular}
For any $P,Q\in \cP$, we have
\begin{align*}
&b_P*b_Q=b_{P\oplus Q},\\
&b_P*b_Q^*=\Upsilon(\Aut_{\cC_2(\cA)}(K_P\oplus K_Q^*))\delta_{[K_P\oplus K_Q^*]},\\
&[b_P,b_Q]=[b_P, b_Q^*]=[b_P^*,b_Q^*]=0,
\end{align*}
where the Lie bracket is the commutator $[a,b]=a*b-b*a$.
\end{lemma}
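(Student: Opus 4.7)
The plan is to reduce each identity to a direct application of Proposition~\ref{Motivic Riedtmann-Peng} to the relevant convolution of motivic deltas, using throughout that both $K_P$ and $K_P^*$ are projective in the abelian category $\cC_2(\cA)$. Indeed, the identifications $\Hom_{\cC_2(\cA)}(K_P,X)=\Hom_{\cA}(P,X^1)$ and $\Hom_{\cC_2(\cA)}(K_P^*,X)=\Hom_{\cA}(P,X^0)$ from \cite[Lemma 3.3]{Bridgeland-2013} exhibit these functors as exact in $X$ because $P\in\cP$, so $\Ext^1_{\cC_2(\cA)}(K_P,K_Q)=\Ext^1_{\cC_2(\cA)}(K_P,K_Q^*)=0$ together with the analogous vanishings obtained by swapping the two factors.

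First I would compute $\delta_{[K_P]}\Diamond\delta_{[K_Q]}$ via Proposition~\ref{Motivic Riedtmann-Peng}: the vanishing of $\Ext^1$ forces $E^1_m=0$, so the extension stack $[V_m\times E^1_m/G_m\ltimes E^0_m]$ collapses to a classifying stack with $E^0_m\cong\Hom_{\cA}(P,Q)$ and $\Stab_{G_m}(v)\cong\Aut_{\cC_2(\cA)}(K_P)\times\Aut_{\cC_2(\cA)}(K_Q)$, and its image in $\cM$ is supported at the split extension $K_P\oplus K_Q\cong K_{P\oplus Q}$. Unwinding using the relations (ii)--(iii) of $K_\Upsilon(\St/\cM)$, exactly as in the derivation of (\ref{K_P*-}) in the proof of the preceding proposition, expresses $\delta_{[K_P]}\Diamond\delta_{[K_Q]}$ as an explicit scalar multiple of $\delta_{[K_{P\oplus Q}]}$ whose coefficient carries the factor $\Upsilon(\Hom_{\cA}(P,Q))^{-1}=t^{-2\langle\hat{P},\hat{Q}\rangle}$ alongside the pertinent $\Upsilon(\Aut)$-ratios.

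Multiplying by the twist $(-t)^{\langle\hat{P},\hat{Q}\rangle+\langle\hat{P},\hat{Q}\rangle}=t^{2\langle\hat{P},\hat{Q}\rangle}$ to pass from $\Diamond$ to $*$ cancels the $\Upsilon(\Hom)$-factor; multiplying further by $\Upsilon(\Aut_{\cC_2(\cA)}(K_P))\Upsilon(\Aut_{\cC_2(\cA)}(K_Q))$ to pass from $\delta_{[K_P]}*\delta_{[K_Q]}$ to $b_P*b_Q$ cancels the end-stabilizer factors and leaves exactly $\Upsilon(\Aut_{\cC_2(\cA)}(K_{P\oplus Q}))\delta_{[K_{P\oplus Q}]}=b_{P\oplus Q}$. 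The same calculation with $K_Q$ replaced by $K_Q^*$ uses $\Hom_{\cC_2(\cA)}(K_P,K_Q^*)=\Hom_{\cA}(P,Q)$ and yields $b_P*b_Q^*=\Upsilon(\Aut_{\cC_2(\cA)}(K_P\oplus K_Q^*))\delta_{[K_P\oplus K_Q^*]}$; this answer is not repackaged as a single $b_R$ because $K_P\oplus K_Q^*$ is not of the form $K_R$ for any $R\in\cP$.

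For the three commutator identities I would run each calculation in the reverse order. The isomorphisms $K_P\oplus K_Q\cong K_Q\oplus K_P$, $K_P\oplus K_Q^*\cong K_Q^*\oplus K_P$, and $K_P^*\oplus K_Q^*\cong K_Q^*\oplus K_P^*$ identify the $\delta$-supports, while the scalar coefficients coincide because the twist $(-t)^{2\langle\hat{P},\hat{Q}\rangle}$ and the corresponding $\Upsilon(\Hom)$-factor cancel in both orderings, and the surviving $\Upsilon(\Aut)$-factor depends only on the (symmetric) isomorphism class of the direct sum. Hence all three commutators vanish. The main obstacle is the careful $\Upsilon$-bookkeeping through Proposition~\ref{Motivic Riedtmann-Peng}, the twist, and the $b_P$-normalization; conceptually there is nothing beyond the point that the $*$-twist and the $\Upsilon(\Aut K_P)$ prefactor defining $b_P$ are engineered precisely so that, when $\Ext^1$ vanishes, every correction factor cancels cleanly.
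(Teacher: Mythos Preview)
Your proposal is correct and follows essentially the same approach as the paper: both compute $\delta_{[K_P]}\Diamond\delta_{[K_Q]}$ via the special case of Proposition~\ref{Motivic Riedtmann-Peng} (Remark~\ref{special case}), use the vanishing of $\Ext^1$ from \cite[Lemma~3.3]{Bridgeland-2013}, and then track the twist and $\Upsilon(\Aut)$-normalizations to obtain $b_P*b_Q=b_{P\oplus Q}$, with the second identity handled analogously. The only cosmetic difference is in the commutator step: the paper deduces $[b_P^*,b_Q^*]=0$ by applying the $*$-involution to the first identity, whereas you argue all three vanishings directly by the symmetry of the direct sum and the cancellation of the twist against $\Upsilon(\Hom)$ in each ordering; these are equivalent verifications.
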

\begin{proof}
By \cite[Lemma 3.3]{Bridgeland-2013} and direct checking, we have
$$\Ext^1_{\cC_2(\cA)}(K_P,K_Q)=0,\ \Hom_{\cC_2(\cA)}(K_P,K_Q)=\Hom_\cA(P,Q),$$
and so $\Upsilon(\Hom_{\cC_2(\cA)}(K_P,K_Q))=t^{2\langle \hat{P},\hat{Q}\rangle}$. By Remark \ref{special case}, that is, a special case of Proposition \ref{Motivic Riedtmann-Peng}, we have 
\begin{align*}
&\delta_{[K_P]}\Diamond \delta_{[K_Q]}\\
=&[[\Ext^1_{\cC_2(\cA)}(K_P,K_Q)/(\Aut_{\cC_2(\cA)}(K_P)\times \Aut_{\cC_2(\cA)}(K_Q))\ltimes \Hom_{\cC_2(\cA)}(K_P,K_Q)]\xrightarrow{b\pi_m}\cM]\\
=&\Upsilon(\Aut_{\cC_2(\cA)}(K_P))^{-1}\Upsilon(\Aut_{\cC_2(\cA)}(K_Q))^{-1}t^{-2\langle \hat{P},\hat{Q}\rangle}[\Ext^1_{\cC_2(\cA)}(K_P,K_Q)\xrightarrow{b\pi_m\pi}\cM],
\end{align*}
where $b\pi_m\pi$ corresponds to the map $0\mapsto [K_P\oplus K_Q]$, thus 
\begin{align*}
b_P*b_Q=&(-t)^{2\langle \hat{P},\hat{Q}\rangle}\Upsilon(\Aut_{\cC_2(\cA)}(K_P))\Upsilon(\Aut_{\cC_2(\cA)}(K_Q))\delta_{[K_P]}\Diamond \delta_{[K_Q]}\\
=&[\Ext^1_{\cC_2(\cA)}(K_P,K_Q)\xrightarrow{b\pi_m\pi}\cM]\\
=&\Upsilon(\Aut_{\cC_2(\cA)}(K_P\oplus K_Q))[\Spec\bbC\xrightarrow{i_{[K_P\oplus K_Q]}}\cM]=b_{P\oplus Q}.
\end{align*}
The proof of the second equation is similar, and then the third equation follows directly, by applying the $*$-involution. 
\end{proof}

As in \cite[Section 3.6]{Bridgeland-2013}, the localization of $\cH_t^{\tw}(\cC_2(\cP))$ with respect to $\{b_P,b_P^*|P\in \cP\}$ is well-defined by Corollary \ref{Ore conditions} and Lemma \ref{regular}.

\begin{definition}
Define the $\bbC(t)$-algebra
$$\cD\cH_t(\cA)=\cH_t^{\tw}(\cC_2(\cP))[b_P^{-1},{b_P^*}^{-1}|P\in \cP]$$
to be the localization of $\cH_t^{\tw}(\cC_2(\cP))$, and define its reduced quotient 
$$\cD\cH_t^{\red}(\cA)=\cD\cH_t(\cA)/\langle b_P*b_P^*-1|P\in \cP\rangle.$$
For any element in $\cD\cH_t(\cA)$, we denote by the same symbol its image in $\cD\cH_t^{\red}(\cA)$.
\end{definition}

Any element in $K(\cA)$ can be written in the form $\hat{P}-\hat{Q}$ for some $P,Q\in \cP$. Indeed, for $M\in \cA$, by taking its projective resolution, $\hat{M}=\sum(-1)^i\hat{P}_i$ for some $P_i\in \cP$, since $\cA$ is of finite global dimension. Thus $\hat{M}=\hat{P}-\hat{Q}$ for some $P,Q\in \cP$ and then so is any element in $K(\cA)$. For any $\alpha\in K(\cA)$, we assume that  $\alpha=\hat{P}-\hat{Q}$ for some $P,Q\in \cP$, by Lemme \ref{regular}, there is a well-defined element
$$b_{\alpha}=b_P*b_Q^*=b_P*b_Q^{-1}\in \cD\cH_t^{\red}(\cA).$$
By Corollary \ref{Ore conditions} and Lemma \ref{regular}, we have the following corollary. 

\begin{corollary}\label{property of b_alpha}
For any $[\cF\xrightarrow{f}\cN_{\ue}\hookrightarrow\cM]$ and $\alpha,\beta\in K(\cA)$, we have 
\begin{align*}
&b_{\alpha}*b_{\beta}=b_{\alpha+\beta},\ [b_{\alpha},b_{\beta}]=0,\\
&b_{\alpha}*[\cF\xrightarrow{f}\cN_{\ue}\hookrightarrow\cM]=(-t)^{(\alpha,\hat{P}^0-\hat{P}^1)}[\cF\xrightarrow{f}\cN_{\ue}\hookrightarrow\cM]*b_{\alpha},
\end{align*}
where $P^j=\bigoplus_{i\in I}e_i^jP_i$.
\end{corollary}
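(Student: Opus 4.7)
The three identities reduce, once $b_\alpha$ is well-defined, to straightforward manipulations with the commutation and multiplicativity relations already established in Lemma \ref{regular} and Corollary \ref{Ore conditions}. For well-definedness, suppose $\alpha=\hat{R}-\hat{S}=\hat{R}'-\hat{S}'$ with $R,S,R',S'\in\cP$, so $\widehat{R\oplus S'}=\widehat{R'\oplus S}$ in $K(\cA)$. Because $A$ has finite global dimension, the natural map $K(\cP)\to K(\cA)$ from the split Grothendieck group (with basis $\{\hat{P}_i\}_{i\in I}$) is an isomorphism, so $\{\hat{P}_i\}_{i\in I}$ is linearly independent in $K(\cA)$; equivalently, equality of classes forces $R\oplus S'\cong R'\oplus S$. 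By Lemma \ref{regular}, $b_R\ast b_{S'}=b_{R\oplus S'}=b_{R'\oplus S}=b_{R'}\ast b_S$. Lemma \ref{regular} also yields pairwise commutativity of $\{b_T,b_T^\ast\mid T\in\cP\}$, which passes to their inverses in $\cD\cH_t^{\red}(\cA)$ since there $b_T^\ast=b_T^{-1}$. Multiplying on the right by $b_{S'}^{-1}b_S^{-1}$ gives $b_R\ast b_S^{-1}=b_{R'}\ast b_{S'}^{-1}$, showing that $b_\alpha$ depends only on $\alpha$.

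Given decompositions $\alpha=\hat{R}-\hat{S}$ and $\beta=\hat{R}'-\hat{S}'$, pairwise commutativity together with Lemma \ref{regular} gives
\[
b_\alpha\ast b_\beta = b_R\ast b_S^{-1}\ast b_{R'}\ast b_{S'}^{-1} = b_{R\oplus R'}\ast b_{S\oplus S'}^{-1} = b_{\alpha+\beta},
\]
and the symmetric formula $b_{\alpha+\beta}=b_{\beta+\alpha}$ forces $[b_\alpha,b_\beta]=0$. For the twisted commutation relation applied to a generator $[\cF\xrightarrow{f}\cN_{\ue}\hookrightarrow\cM]$ with $P^j=\bigoplus_{i\in I}e^j_iP_i$, I apply Corollary \ref{Ore conditions} to $b_R$ and to $b_S^\ast=b_S^{-1}$ in turn and use the bilinearity of the symmetric Euler form:
\[
\begin{aligned}
b_\alpha\ast[\cF\to\cN_{\ue}\hookrightarrow\cM]
&= b_R\ast b_S^{-1}\ast[\cF\to\cN_{\ue}\hookrightarrow\cM]\\
&= (-t)^{-(\hat{S},\hat{P}^0-\hat{P}^1)}\,b_R\ast[\cF\to\cN_{\ue}\hookrightarrow\cM]\ast b_S^{-1}\\
&= (-t)^{(\hat{R}-\hat{S},\hat{P}^0-\hat{P}^1)}[\cF\to\cN_{\ue}\hookrightarrow\cM]\ast b_R\ast b_S^{-1}\\
&= (-t)^{(\alpha,\hat{P}^0-\hat{P}^1)}[\cF\to\cN_{\ue}\hookrightarrow\cM]\ast b_\alpha.
\end{aligned}
\]

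The only nontrivial point in this outline is the well-definedness of $b_\alpha$, and the key ingredient there is the isomorphism $K(\cP)\cong K(\cA)$ supplied by the hypothesis that $A$ has finite global dimension (equivalently, invertibility of the Cartan matrix), which is exactly what promotes equality of classes of projectives to genuine isomorphism. Once this is in place, each of the three assertions is a direct consequence of the relations in Lemma \ref{regular} and Corollary \ref{Ore conditions}.
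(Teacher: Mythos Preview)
Your proof is correct and follows essentially the same route as the paper, which simply records the corollary as an immediate consequence of Corollary~\ref{Ore conditions} and Lemma~\ref{regular} without writing out details. You supply more care on the well-definedness of $b_\alpha$ (via $K(\cP)\cong K(\cA)$ from finite global dimension) than the paper makes explicit, but this is exactly what the paper's phrase ``by Lemma~\ref{regular}, there is a well-defined element'' is tacitly invoking.
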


\section{Euler characteristic and constructible function}

In this section, we review Euler characteristic for algebraic variety $X$, as well as the naive  Euler characteristic for algebraic stack, especially for quotient stack $[X/G]$, where $G$ is an algebraic group acting on $X$. We refer \cite{Joyce-2006,Joyce-2007} for details.

\subsection{Euler characteristic and constructible function}\label{pushforward functor}\

Let $X$ be an algebraic variety over $\bbC$, the Euler characteristic of $X$ is defined by 
$$\chi(X)=\sum_{i=0}^{2\textrm{dim}\, X}(-1)^i\dim \rH^i_c(X_{\textrm{an}},\bbC),$$
where $X_{\textrm{an}}=X$ regarded as a compact complex manifold with analytic topology. Note that $\Upsilon(X)|_{t=-1}=\chi(X)$, where $\Upsilon(X)\in \bbC[t]$ is the Poincar\'{e} polynomial.

\begin{proposition}[\cite{Joyce-2006}, Proposition 3.6]\label{Euler characteristic}
Let $X,Y$ be algebraic varieties over $\bbC$, \\
(a) if $Z$ is a closed subvariety of $X$, then $\chi(X)=\chi(Z)+\chi(X\setminus Z)$;\\
(b) if $X$ is the disjoint union of finitely many subvarieties $X_1,...,X_n$, then 
$$\chi(X)=\sum^n_{i=1}\chi(X_i);$$
(c) if $\varphi:X\rightarrow Y$ is a morphism of varieties over $\bbC$ whose fibers have the same Euler characteristic $\chi$, then $\chi(X)=\chi.\chi(Y)$;\\
(d) $\chi(\bbC^n)=1,\chi(\bbC\bbP^n)=n+1$ for any $n\in \bbZ_{\geqslant 0}$.
\end{proposition}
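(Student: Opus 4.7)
The plan is to prove the four parts in a slightly different order, building from the computational base case up to the fibration statement. The natural setup is to work with compactly supported cohomology throughout and to exploit its functorial properties.

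First I would dispatch (d). For $X=\bbC^n$, the standard computation gives $\rH^i_c(\bbC^n_{\textrm{an}},\bbC)=\bbC$ when $i=2n$ and $0$ otherwise, so $\chi(\bbC^n)=(-1)^{2n}=1$. For $X=\bbC\bbP^n$, since it is a smooth compact complex manifold, compactly supported cohomology coincides with ordinary cohomology, and the classical cell decomposition $\bbC\bbP^n=\bbC^0\sqcup\bbC^1\sqcup\cdots\sqcup\bbC^n$ gives $\rH^{2i}=\bbC$ for $0\leq i\leq n$ and $0$ in odd degrees, so $\chi(\bbC\bbP^n)=n+1$.

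Next I would prove (a), which is the main tool. For a closed subvariety $Z\subset X$ with open complement $U=X\setminus Z$, there is the long exact sequence of compactly supported cohomology
\begin{equation*}
\cdots\to \rH^i_c(U_{\textrm{an}},\bbC)\to \rH^i_c(X_{\textrm{an}},\bbC)\to \rH^i_c(Z_{\textrm{an}},\bbC)\to \rH^{i+1}_c(U_{\textrm{an}},\bbC)\to\cdots
\end{equation*}
Taking the alternating sum of dimensions (a finite sum since $X,Z,U$ have finite cohomological dimension) yields $\chi(X)=\chi(Z)+\chi(U)$. Part (b) follows by induction on $n$: apply (a) to one of the strata (after taking its closure inside the union, using that each $X_i$ can be taken locally closed and refining into finitely many pieces that are successively closed in the complement of the previous ones).

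The main obstacle is (c), which is not purely topological since the hypothesis is only on the Euler characteristics of fibers, not on the topological triviality of $\varphi$. My approach would be a stratification argument: by generic smoothness and Noetherian induction on $Y$, there exists a finite stratification $Y=\bigsqcup_{k}Y_k$ into locally closed smooth subvarieties such that each restriction $\varphi^{-1}(Y_k)\to Y_k$ is a Zariski locally trivial fibration with fiber $F_k$ (after possibly refining further using constructibility of $\varphi$). For a Zariski locally trivial fibration, multiplicativity $\chi(\varphi^{-1}(Y_k))=\chi(F_k)\chi(Y_k)$ follows from (a) applied to a Zariski open cover that trivializes $\varphi|_{Y_k}$, combined with the Künneth formula for compactly supported cohomology on products. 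Under the hypothesis that all fibers have the same Euler characteristic $\chi$, we have $\chi(F_k)=\chi$ for every $k$, hence
\begin{equation*}
\chi(X)=\sum_k\chi(\varphi^{-1}(Y_k))=\sum_k\chi\cdot\chi(Y_k)=\chi\cdot\chi(Y),
\end{equation*}
using (b) in both the first and last equalities. The delicate point is producing the stratification so that the fibration is Zariski, not merely étale, locally trivial; one can circumvent this by invoking that Euler characteristics are preserved under étale base change, so étale local triviality suffices for the multiplicativity step, and such a stratification is always available for morphisms of varieties over $\bbC$ by generic smoothness together with trivialization results for smooth morphisms with smooth fibers.
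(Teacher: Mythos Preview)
The paper does not give its own proof of this proposition; it is simply quoted from Joyce \cite{Joyce-2006}, Proposition 3.6, as a standard fact about Euler characteristics and is used as a black box throughout. So there is no paper proof to compare against.

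Your argument is largely sound and follows the standard route. Parts (a), (b), (d) are fine. For (c), your instinct to stratify $Y$ is correct, but the justification is slightly off: generic smoothness does not by itself produce Zariski (or even \'etale) locally trivial fibrations over the strata, and invoking ``\'etale base change preserves Euler characteristics'' does not directly give multiplicativity either. The clean way over $\bbC$ is to note that $R\varphi_!\,\bbC$ is a constructible complex on $Y$ (Verdier), so there is a finite stratification of $Y$ into locally closed pieces over which the higher direct images with compact support are local systems; on each stratum the stalkwise Euler characteristic is constant and the Leray spectral sequence (or simply additivity of ranks of local systems) gives multiplicativity. Alternatively, stratify so that $\varphi$ is topologically locally trivial over each stratum in the analytic topology (Thom's first isotopy lemma / Ehresmann), and then apply K\"unneth for $\rH^*_c$. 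Either fix makes your outline into a complete proof.
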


A subset $\cO\subset X$ is called constructible, if $\cO=\bigsqcup_{i=1}^n U_i$, where $U_i\subset X$ are locally closed subsets. For such a constructible subset, we define $\chi(\cO)=\sum^n_{i=1}\chi(U_i)$. It is well-defined by using of property (b) in above proposition, see \cite[Definition 3.7]{Joyce-2006} and \cite[Section 2.4]{Riedtmann-1994}. Moreover,\\
\emph{(b') if $\cO$ is the disjoint union of finitely many constructible subsets $\cO_1,...,\cO_n$, then}
$$\chi(\cO)=\sum^n_{i=1}\chi(\cO_i).$$

A function $f:X\rightarrow \bbC$ is called constructible, if the image $f(X)\subset \bbC$ is a finite subset, and $f^{-1}(c)\subset X$ is constructible subset for any $c\in f(X)$. We denote by $M(X)$ the $\bbC$-vector spaces of constructible functions on $X$.

\subsection{Geometric quotient and naive Euler characteristic}\label{naive Euler characteristic}\
                              
Let $X$ be a smooth algebraic variety over $\bbC$ together with an algebraic group $G$-action given by $\sigma:G\times X\rightarrow X$.

\begin{definition}
A geometric quotient of $X$ by $G$ is an algebraic variety $Y$ over $\bbC$ together with a morphism $\pi:X\rightarrow Y$ of varieties over $\bbC$ such that\\
(i) $\pi$ is constant on $G$-orbits;\\
(ii) $\pi$ is surjective and the image of $(\pi,\textrm{pr}_2):G\times X\rightarrow X\times X$ is identified with $X\times_YX$, where $\textrm{pr}_2:G\times X\rightarrow X$ is the second projection;\\
(iii) a subset $U\subset Y$ is open if and only if $\pi^{-1}(U)\subset X$ is open;\\
(iv) for any open subset $U\subset Y$, the ring of regular functions $\cO_Y(U)$ is identified with the ring of $G$-invariant functions $\cO_X(\pi^{-1}(U))^G$ via the morphism induced by $\pi$.
\end{definition}

\begin{theorem}[\cite{Rosenlicht-1963}]\label{Rosenlicht}
Let $X$ be an algebraic variety over $\bbC$, and $G$ be an algebraic group acting on $X$. Then there exists an open dense $G$-invariant subset of $X$ which has a geometric quotient by $G$.
\end{theorem}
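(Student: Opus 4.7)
The plan is to follow Rosenlicht's original field-theoretic argument. First I would reduce to the case where $X$ is irreducible: the union of the interiors of the irreducible components of $X$ is a $G$-stable dense open subset, and $G$ permutes the components, so working on one $G$-orbit of components at a time and then further restricting to a single component via the connected identity component $G^0$, one reduces to a single irreducible variety acted on by $G^0$; the outer $G/G^0$ action can be glued back in at the end. Once $X$ is irreducible, the candidate quotient is built from $K = \bbC(X)^G$, the field of $G$-invariant rational functions. Since $\bbC(X)$ is finitely generated over $\bbC$, the subfield $K$ is finitely generated as well, so there is an irreducible variety $Y$ with $\bbC(Y) = K$ together with a dominant rational map $\varphi: X \dashrightarrow Y$, regular on some dense open subset $U_0 \subseteq X$.

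The technical heart of the proof is to show that after shrinking to a dense open $G$-invariant subset $U \subseteq U_0$, the fibers of $\varphi|_U$ coincide with the $G$-orbits on $U$. The two key ingredients are the transcendence-degree formula $\textrm{tr.deg}_{\bbC} K = \dim X - \dim G \cdot x$ for generic $x$, which forces a generic fiber of $\varphi$ and a generic orbit to have the same dimension, and the separation statement that two distinct generic orbit-closures are separated by some element of $K$. Combining these with Chevalley's theorem on constructible images and generic flatness, I would deduce that over a dense open of $Y$ the fibers of $\varphi$ are irreducible of the expected dimension and equal to orbit closures, after which a further shrinking removes boundary strata so that closure coincides with the orbit itself. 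Replacing the resulting locus $U_1$ by its $G$-saturation, which is open by continuity of the action map, produces a $G$-invariant $U$ with the desired orbit-equals-fiber property.

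With this in hand, the four axioms of a geometric quotient are straightforward to verify. Constancy of $\varphi$ on orbits is built into the construction; surjectivity together with the identification of the image of $(\varphi, \textrm{pr}_2) : G \times U \to U \times U$ with $U \times_{\varphi(U)} U$ is exactly the orbit-equals-fiber property; condition (iii) on open sets follows from $\varphi|_U$ being an open map, which can be arranged by further shrinking via generic flatness; and (iv), the identification $\cO_{\varphi(U)}(V) \cong \cO_U(\varphi^{-1}(V))^G$, reduces on affine charts to the defining property $K = \bbC(X)^G$ once $\varphi$ restricts to a quotient morphism. The principal obstacle is the orbit-equals-fiber lemma, because a priori $K$ separates orbits only generically, so a single fiber could in principle contain several distinct orbits; controlling this globally on $U$ is exactly what forces the careful use of the transcendence-degree formula and Chevalley's theorem, and is where Rosenlicht's ingenuity lies.
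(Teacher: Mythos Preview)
The paper does not supply a proof of this theorem; it is stated with a citation to Rosenlicht's 1963 paper and used as a black box in the construction of the naive Euler characteristic. Your sketch is the classical Rosenlicht argument and is essentially correct as an outline: reduce to the irreducible case, take $Y$ to be a model of the invariant function field $K=\bbC(X)^G$, and then show that on a suitable dense open the fibers of the induced rational map coincide with $G$-orbits, after which the geometric-quotient axioms follow by generic flatness and openness. So there is nothing to compare against in the paper itself, and your approach matches the original source being cited.
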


The definition of the naive Euler characteristic for algebraic $\bbC$-stacks with affine geometric stabilizers is given by \cite[Definition 4.8]{Joyce-2006}. Roughly speaking, for such a stack $\cF$, each constructible subset $C\subset \cF(\bbC)$ is pseudoisomorphic to $Y(\bbC)$ for some algebraic variety $Y$ over $\bbC$, by \cite[Proposition 4.7]{Joyce-2006}. The naive Euler characteristic of $C$ is defined by $\chi^{\textrm{na}}(C)=\chi(Y)$, which is independent of the choices of $Y$. The proof of \cite[Proposition 4.7]{Joyce-2006} is essentially based on Theorem \ref{Rosenlicht}. In order to avoid introducing the definition of pseudoisomorphism, we restate the definition of naive Euler characteristic for the quotient stack $[X/G]$ as follows.

Firstly, we construct a finite stratification of $X$. By Theorem \ref{Rosenlicht}, there exists an open dense $G$-invariant subset $U_1\subset X$ with a geometric quotient $\pi_1:U_1\rightarrow Y_1$. For $X\setminus U_1$ whose dimension is strictly smaller than $X$, by Theorem \ref{Rosenlicht}, there exists an open dense $G$-invariant subset $U_2\subset X\setminus U_1$ with a geometric quotient $\pi_2:U_2\rightarrow Y_2$. Since $\dim X$ is finite, inductively, we obtain a finite stratification $X=\bigsqcup_{i=1}^n U_i$, where $U_i$ are locally closed $G$-invariant subsets with geometric quotients $\pi_i:U_i\rightarrow Y_i$. Let $Y$ be the abstract disjoint union of all $Y_i$, then we define
$$\chi^{\textrm{na}}([X/G](\bbC))=\chi(Y)=\sum^n_{i=1}\chi(Y_i).$$
In general, for each finite type algebraic stack $\cF$ with affine geometric stabilizers, by \cite[Theorem 2.7]{Joyce-2006}, it can be stratified by global quotient stack, that is, its associated reduced stack can be stratified as $\cF^{\textrm{red}}=\bigsqcup_{i=1}^m\cF_i$ and $\cF(\bbC)=\cF^{\textrm{red}}(\bbC)=\bigsqcup^m_{i=1}\cF_i(\bbC)$, where $\cF_i$ is $1$-isomorphic to some quotient stack $[X_i/G_i]$, then we define
$$\chi^{\textrm{na}}(\cF(\bbC))=\sum^m_{i=1}\chi^{\textrm{na}}([X_i/G_i](\bbC)).$$
A constructible subset of $[X/G](\bbC)$ is a finite disjoint union $C=\bigsqcup_{i=1}^m\cF_i(\bbC)$, where each $\cF_i$ is a finite type algebraic substacks of $[X/G]$, then we define
$$\chi^{\textrm{na}}(C)=\sum^m_{i=1}\chi^{\textrm{na}}(\cF_i(\bbC)).$$
Above definitions are well-defined by a routine argument as \cite[Definition 3.7]{Joyce-2006}. 

For convenience, we simply denote by $\chi([X/G])=\chi^{\textrm{na}}([X/G](\bbC))$.

If $X'$ is another algebraic variety over $\bbC$ together with another algebraic group $G'$-action, then each $1$-morphism between quotient stacks $\varphi:[X/G]\rightarrow [X'/G']$ induces a morphism $\varphi(\bbC):[X/G](\bbC)\rightarrow [X'/G'](\bbC)$ between the sets of $\bbC$-valued points. By properties (b') and (c) in Proposition \ref{Euler characteristic}, the naive Euler characteristic has analogue properties.

\begin{corollary}\label{fibre naive Euler characteristic}
Let $X ,X'$ be algebraic varieties over $\bbC$ together with algebraic group $G$-action, $G'$-action, respectively, and $\varphi:[X/G]\rightarrow [X'/G']$ be a $1$-morphism between quotient stacks. Then\\
(a) if $[X/G](\bbC)$ is the disjoint union of finitely many constructible subsets $C_1,..,C_n$, then $\chi([X/G])=\sum^n_{i=1}\chi^{\textrm{na}}(C_i)$;\\
(b) if $\varphi(\bbC):[X/G](\bbC)\rightarrow [X'/G'](\bbC)$ is surjective and all fibers have the same naive Euler characteristic $\chi^{\textrm{na}}$, then $\chi([X/G])=\chi^{\textrm{na}}\cdot\chi([X'/G'])$.
\end{corollary}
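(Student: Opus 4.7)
My plan is to reduce both statements to the corresponding properties of the usual Euler characteristic for varieties, namely Proposition \ref{Euler characteristic}(b'),(c), by unwinding the definition of the naive Euler characteristic through finite stratifications admitting geometric quotients.

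For part (a), recall from Subsection \ref{naive Euler characteristic} that $\chi([X/G])=\sum_{i=1}^n\chi(Y_i)$, where $X=\bigsqcup_{i=1}^n U_i$ is a finite $G$-invariant stratification with geometric quotients $\pi_i:U_i\to Y_i$ produced by iterated application of Rosenlicht's theorem (Theorem \ref{Rosenlicht}). Each constructible $C_j\subset[X/G](\bbC)$ is by definition a finite disjoint union of $\cF_{jk}(\bbC)$ for algebraic substacks $\cF_{jk}\subset[X/G]$, each presentable as a global quotient. Taking intersections of the relevant $G$-invariant locally closed subsets and reapplying Theorem \ref{Rosenlicht} to the refined pieces, I obtain a common refinement of the strata which is compatible with the decomposition into the $C_j$'s. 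Additivity of Euler characteristic on varieties (Proposition \ref{Euler characteristic}(b')) applied to the abstract disjoint union of the refined geometric quotients then yields $\chi([X/G])=\sum_{j=1}^n\chi^{\textrm{na}}(C_j)$.

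For part (b), I would first stratify $[X'/G'](\bbC)=\bigsqcup_k [X'_k/G'](\bbC)$ into global quotients with geometric quotients $\pi'_k:X'_k\to Z'_k$, guaranteed by \cite[Theorem 2.7]{Joyce-2006} combined with Theorem \ref{Rosenlicht}. Pulling these strata back through $\varphi(\bbC)$ to constructible subsets of $[X/G](\bbC)$ and refining further using Theorem \ref{Rosenlicht}, I obtain a compatible stratification $[X/G](\bbC)=\bigsqcup_{k,\ell}[V_{k\ell}/G](\bbC)$ with geometric quotients $Y_{k\ell}$ and induced morphisms of varieties $\psi_{k\ell}:Y_{k\ell}\to Z'_k$ whose fibers represent fibers of $\varphi(\bbC)$. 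Since all such fibers share the common naive Euler characteristic $\chi^{\textrm{na}}$, Proposition \ref{Euler characteristic}(c) yields $\chi(Y_{k\ell})=\chi^{\textrm{na}}\cdot\chi(\psi_{k\ell}(Y_{k\ell}))$, and summing over $k,\ell$ while invoking part (a) for $[X'/G'](\bbC)$ produces the claimed multiplicativity.

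The main obstacle is in (b): a $1$-morphism $\varphi:[X/G]\to[X'/G']$ need not arise from a $G$-equivariant morphism $X\to X'$, but rather is given by a principal $G$-bundle $P\to X$ together with a $(G,G')$-equivariant map $P\to X'$, so the pulled-back stratification of $[X/G]$ cannot be constructed naively by taking preimages in $X$. To circumvent this, I would work entirely at the level of $\bbC$-valued points and constructible subsets, where restratification into global quotients via \cite[Theorem 2.7]{Joyce-2006} is always available and Theorem \ref{Rosenlicht} can be reapplied on each stratum. With this flexibility the identification of the fibers of $\psi_{k\ell}$ with pseudoisomorphic representatives of the fibers of $\varphi(\bbC)$ becomes a routine check, and the reduction to Proposition \ref{Euler characteristic}(c) goes through.
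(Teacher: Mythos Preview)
Your proposal is correct and takes essentially the same approach as the paper. The paper does not give a detailed proof of this corollary at all: it simply remarks, just before stating it, that ``By properties (b') and (c) in Proposition \ref{Euler characteristic}, the naive Euler characteristic has analogue properties,'' and your elaboration through compatible stratifications via Theorem \ref{Rosenlicht} and \cite[Theorem 2.7]{Joyce-2006} is a faithful unpacking of what the paper leaves implicit.
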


\begin{definition}\label{integration}
Let $f$ be a $G$-invariant constructible function on $X$, we define the integration by
$$\int_{[X/G]}f(x)=\sum_{c\in \bbC}c\chi([f^{-1}(c)/G]).$$
\end{definition}

Recall that in Subsection \ref{Relative Grothendieck group of stacks}, we use the ring homomorphism $\Upsilon:K(\St/\bbC)\rightarrow \bbC(t)$ to define $K^0_\Upsilon(\St/\cM)$. Let 
$$\bbC_{-1}=\bbC[t]_{(t+1)}=\{\frac{f(t)}{g(t)}\in \bbC(t)|f(t),g(t)\in \bbC[t],g(-1)\not=0\}$$
be the localization of $\bbC[t]$ with respect to the prime ideal $(t+1)$. It is a local ring with the unique maximal ideal $(t+1)\bbC_{-1}$ and residue field $\bbC$. The evaluation morphism 
\begin{align*}
\pi:\bbC_{-1}&\rightarrow\bbC\\
\frac{f(t)}{g(t)}&\mapsto \frac{f(-1)}{g(-1)},
\end{align*}
induces an isomorphism $\bbC_{-1}/(t+1)\bbC_{-1}\cong \bbC$. Moreover, for any Artin stack $\cF$ with affine stabilizers which is of finite type over $\bbC$, we have
$$\chi^{\textrm{na}}(\cF)=\pi\Upsilon(\cF),$$
see \cite[Example 2.12]{Joyce-2007} for details.

\section{Lie algebra arising from $\cC_2(\cP)$}

\subsection{Classical limit}\label{Classical limit}\

In Subsection \ref{moduli C2(P)}, we define a closed subset $\rP_2^{\rad}(A,\ue)\subset \rP_2(A,\ue)$ which is $\rG_{\ue}$-invariant for any $\ue\in \bbN I\times \bbN I$. We denote by $\cN^{\rad}_{\ue}=[\rP_2^{\rad}(A,\ue)/\rG_{\ue}]$ the quotient stack and denote by
$$\cN^{\rad}=\bigsqcup_{\ue\in \bbN I\times \bbN I}\cN^{\rad}_{\ue},$$
then $\cN^{\rad}_{\ue}$ and $\cN^{\rad}$ are closed substacks of $\cN_{\ue}$ and $\cN$ respectively.

\begin{definition}\label{C_-1}
(a) Define the $\bbC_{-1}$-form $\cD\cH_t(\cA)_{\bbC_{-1}}$ to be the $\bbC_{-1}$-subalgebra of $\cD\cH^{\red}_t(\cA)$ generated by elements of the forms 
$$[[\cO/\rG_{\ue}]\hookrightarrow\cN^{\rad}\hookrightarrow \cM],\ b_{\alpha},\ \frac{b_{\alpha}-1}{-t-1},$$
for any $\rG_{\ue}$-invariant constructible subset $\cO\subset \rP_2^{\rad}(A,\ue),\ue\in \bbN I\times \bbN I$ and $\alpha\in K(\cA)$. \\
(b) Define the classical limit of $\cD\cH_t(\cA)_{\bbC_{-1}}$ to be the $\bbC$-algebra
$$\cD\cH_{-1}(\cA)=\bbC_{-1}/(t+1)\bbC_{-1}\otimes_{\bbC_{-1}}\cD\cH_t(\cA)_{\bbC_{-1}}.$$
We still denote by $1$ the unit $1\otimes 1\in \cD\cH_{-1}(\cA)$.
\end{definition}

Note that elements in $\cD\cH_t(\cA)_{\bbC_{-1}}$ are finite sums $\sum\frac{f(t)}{g(t)}a_1*...*a_n,$
where $\frac{f(t)}{g(t)}\in \bbC_{-1}$ and $a_i$ is one of the generators in Definition \ref{C_-1}. We denote by the $\bbC$-linear map
\begin{align*}
\ ^-:\cD\cH_t(\cA)_{\bbC_{-1}}&\rightarrow \cD\cH_{-1}(\cA)\\
\sum\frac{f(t)}{g(t)}a_1*...*a_s&\mapsto 1\otimes \sum\frac{f(t)}{g(t)}a_1*...*a_s=\sum\frac{f(-1)}{g(-1)}\otimes a_1*...*a_s
\end{align*}
which is a ring homomorphism. 

\begin{definition}
(a) For any $\alpha\in K(\cA)$, define the element $$h_\alpha=(\frac{b_\alpha-1}{-t-1})^-=1\otimes \frac{b_\alpha-1}{-t-1}\in \cD\cH_{-1}(\cA).$$
(b) Define $\fh$ to be the $\bbC$-subspace of $\cD\cH_{-1}(\cA)$ spanned by $\{h_\alpha|\alpha\in K(\cA)\}$.
\end{definition}

\begin{lemma}\label{property of h}
(a) For any $\alpha,\beta\in K(\cA)$ and $x=[\cF\xrightarrow{f} \cN^{\rad}_{\ue}\hookrightarrow\cM]$, we have
\begin{align*}
\overline{b_\alpha}=1,\ [h_\alpha,h_\beta]=0,\ h_{\alpha+\beta}=h_{\alpha}+h_{\beta}.
\end{align*}
Moreover, the map $\alpha\mapsto h_{\alpha}$ extends to an isomorphism $\bbC\otimes_{\bbZ}K(\cA)\cong \fh$.\\
(b) For any $\alpha\in K(\cA)$ and $x=[\cF\xrightarrow{f} \cN^{\rad}_{\ue}\hookrightarrow\cM]$, we have
\begin{align*}
[h_\alpha,\overline{x}]=(\alpha,\hat{P}^0-\hat{P}^1)\overline{x},
\end{align*}
where $P^j=\bigoplus_{i\in I}e^j_iP_i$.
\end{lemma}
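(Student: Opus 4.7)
The plan is to derive every identity from the corresponding identity for the $b_\alpha$'s in Corollary \ref{property of b_alpha} by writing $b_\alpha = 1+(-t-1)g_\alpha$ with $g_\alpha=\frac{b_\alpha-1}{-t-1}\in \cD\cH_t(\cA)_{\bbC_{-1}}$, performing the manipulations inside the $\bbC(t)$-algebra $\cD\cH_t^{\red}(\cA)$ (where the scalar $-t-1$ is invertible), and then applying the specialization $\overline{(\cdot)}$ at $t=-1$. By construction $h_\alpha = \overline{g_\alpha}$ and $\overline{-t-1}=0$, so one-factor multiples of $-t-1$ survive only through cancellation with the denominator built into $g_\alpha$.

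The identity $\overline{b_\alpha}=1$ is then immediate from $b_\alpha-1=(-t-1)g_\alpha$. For $h_{\alpha+\beta}=h_\alpha+h_\beta$ I would start from $b_\alpha*b_\beta = b_{\alpha+\beta}$ (Corollary \ref{property of b_alpha}), substitute the expansion into both sides, cancel one factor $-t-1$ in $\cD\cH_t^{\red}(\cA)$, and specialize; the residual quadratic term $(-t-1)\, g_\alpha*g_\beta$ dies at $t=-1$. Similarly $[h_\alpha,h_\beta]=0$ follows from $[b_\alpha,b_\beta]=0$, which yields $(-t-1)^2[g_\alpha,g_\beta]=0$; cancel both factors and specialize.

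For part (b), Corollary \ref{property of b_alpha} gives $b_\alpha*x=(-t)^n\,x*b_\alpha$ with $n=(\alpha,\hat{P}^0-\hat{P}^1)$, so $(-t-1)[g_\alpha,x]=[b_\alpha,x]=((-t)^n-1)\,x*b_\alpha$. Since $\frac{(-t)^n-1}{-t-1}\in\bbC[t]$ (it equals $\sum_{k=0}^{n-1}(-t)^k$ for $n\geqslant 0$ and the obvious analogue for $n<0$), cancel $-t-1$ to obtain
\[ [g_\alpha,x]=\frac{(-t)^n-1}{-t-1}\,x*b_\alpha \quad\text{in }\; \cD\cH_t(\cA)_{\bbC_{-1}}. \]
At $t=-1$ the polynomial coefficient evaluates to $n$ (by L'H\^{o}pital, or by summing $(-(-1))^k=1$ a total of $n$ times), and $\overline{b_\alpha}=1$ from part (a); applying $\overline{(\cdot)}$ yields $[h_\alpha,\overline{x}]=n\,\overline{x}=(\alpha,\hat{P}^0-\hat{P}^1)\,\overline{x}$.

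It remains to promote $\alpha\mapsto h_\alpha$ to an isomorphism. Additivity provides a $\bbC$-linear surjection $\bbC\otimes_{\bbZ}K(\cA)\twoheadrightarrow\fh$, and for injectivity I would feed part (b) into a weight argument: if $\sum c_i h_{\alpha_i}=0$ then commuting with any $\overline{x}$ forces $(\sum c_i\alpha_i,\hat{P}^0-\hat{P}^1)\,\overline{x}=0$. Running $x$ over the one-term radical complexes $(P_i,0,0,0)$ and $(0,P_i,0,0)$ makes $\hat{P}^0-\hat{P}^1$ range over $\pm\hat{P}_i$, which generates $K(\cA)$ under the finite global dimension hypothesis; this reduces injectivity to (i) showing the classical-limit classes $\overline{\delta_{[(P_i,0,0,0)]}}$ and $\overline{\delta_{[(0,P_i,0,0)]}}$ are nonzero in $\cD\cH_{-1}(\cA)$ and (ii) a non-degeneracy property of the symmetric Euler form on the relevant span. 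I expect this last step to be the main obstacle, since it requires either structural input on the weight decomposition of $\cD\cH_{-1}(\cA)$ beyond the lemmas quoted above, or a direct argument producing enough nonzero classical-limit stack functions of each weight; by contrast, the first three identities of part (a) and the formula in part (b) are purely formal consequences of Corollary \ref{property of b_alpha}.
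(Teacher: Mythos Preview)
Your treatment of the three identities in (a) and the formula in (b) is correct and essentially identical to the paper's: both write $b_\alpha = 1+(-t-1)g_\alpha$, manipulate inside $\cD\cH_t^{\red}(\cA)$ using Corollary~\ref{property of b_alpha}, and then specialize.

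The injectivity argument, however, differs and your route does not go through. Your plan reduces to the non-degeneracy of the \emph{symmetric} Euler form $(-,-)$ on $K(\cA)$, but this form is degenerate whenever the underlying graph of $Q$ is not a finite-type Dynkin diagram. For instance, for the Kronecker quiver the matrix of $(-,-)$ in the simple basis is $\begin{pmatrix}2&-2\\-2&2\end{pmatrix}$; more generally any affine or wild quiver has a nonzero radical for $(-,-)$, so your weight argument cannot separate $h_\alpha$ from $0$ for $\alpha$ in that radical. The obstacle you flagged as ``the main obstacle'' is therefore a genuine obstruction, not a technicality.

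The paper avoids this entirely by arguing at the level of the $b_\alpha$'s rather than through commutators: for $\alpha=\hat P-\hat Q$, the condition $h_\alpha=0$ is traced back to $b_\alpha=1$ in $\cD\cH_t^{\red}(\cA)$, hence $b_P=b_Q$, hence $K_P\cong K_Q$ and $P\cong Q$, so $\alpha=0$. The point is that the elements $b_P$ distinguish isomorphism classes of projectives directly, without any appeal to the bilinear form. You should replace your weight argument with this direct comparison.
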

\begin{proof}
(a) Applying map $^-$ to the following equations
\begin{align*}
&b_{\alpha}-1=-(t+1)\frac{b_{\alpha}-1}{-t-1},\\
&\frac{b_{\alpha}-1}{-t-1}*\frac{b_{\beta}-1}{-t-1}=\frac{b_{\beta}-1}{-t-1}*\frac{b_{\alpha}-1}{-t-1},\\
&\frac{b_{\alpha+\beta}-1}{-t-1}=\frac{b_{\alpha}-1}{-t-1}*b_{\beta}+\frac{b_{\beta}-1}{-t-1},
\end{align*}
where the second equation is given by Corollary \ref{property of b_alpha}, we obtain
$$\overline{b_\alpha}-1=0,\ h_\alpha*h_\beta=h_\beta*h_\alpha,\ h_{\alpha+\beta}=h_{\alpha}*\overline{b_{\beta}}+h_{\beta}=h_{\alpha}+h_{\beta}.$$ 
Thus the map $\alpha\mapsto h_{\alpha}$ defines a linear surjection from $K(\cA)$ to the $\bbZ$-lattice spanned by $\{h_\alpha|\alpha\in K(\cA)\}$. Suppose $\alpha=\hat{P}-\hat{Q}\in \cK(\cA)$, where $P,Q\in \cP$ such that $h_{\alpha}=0$, then we have $b_{\alpha}=1$ in $\cD\cH_t^{\red}(\cA)$, that is, $b_P*b_Q^{-1}=1=b_Q*b_Q^{-1}$, and so $b_P=b_Q, P=Q,\alpha=0$. Therefore, the map $\alpha\mapsto h_{\alpha}$ is also injective, and extends to an isomorphism $\bbC\otimes_{\bbZ}K(\cA)\cong \fh$.

(b) By Corollary \ref{property of b_alpha}, we have
\begin{align*}
&\frac{b_{\alpha}-1}{-t-1}*x-x*\frac{b_{\alpha}-1}{-t-1}
=\frac{(-t)^{(\alpha,\hat{P}^0-\hat{P}^1)}-1}{-t-1}x*b_{\alpha}\\
=&((-t)^{(\alpha,\hat{P}^0-\hat{P}^1)-1}+...+(-t)+1)x*b_{\alpha}.
\end{align*}
Applying map $^-$, we obtain $h_\alpha*\overline{x}-\overline{x}*h_\alpha=(\alpha,\hat{P}^0-\hat{P}^1)\overline{x}$, as desired.
\end{proof}

\subsection{Two Lemmas about extensions of two-periodic projective complexes.}\

For any $X,Y\in\cC_2(\cP)$, we have $\Hom_{\cC_2(\cA)}(X,Y)=\Hom_{\cC_2(\cP)}(X,Y)$ and $$\Ext^1_{\cC_2(\cA)}(X,Y)\cong\Ext^1_{\cC_2(\cP)}(X,Y),$$
where $\Ext^1_{\cC_2(\cP)}(-,-)$ is the extension space of $\cC_2(\cP)$ as an exact category. Since we only concern about projective complexes, from now on, we replace $\Hom_{\cC_2(\cA)}(-,-)$ and $\Ext^1_{\cC_2(\cA)}(-,-)$ by $\Hom_{\cC_2(\cP)}(-,-)$ and $\Ext^1_{\cC_2(\cP)}(-,-)$, respectively.

For any objects $X,Y,Z\in \cC_2(\cP)$, let $\Ext^1_{\cC_2(\cP)}(X,Y)_Z\subset \Ext^1_{\cC_2(\cP)}(X,Y)$ be the subset consisting of equivalence classes of short exact sequences whose middle terms are isomorphic to $Z$, and let $\Hom_{\cK_2(\cP)}(X,Y^*)_{Z^*}\subset \Hom_{\cK_2(\cP)}(X,Y^*)$ be the subset consisting of morphisms whose mapping cones are homotopy equivalent to $Z^*$.

By Lemma \cite[Lemma 3.3]{Bridgeland-2013}, there is a bijection $\Ext^1_{\cC_2(\cP)}(X,Y)\cong \Hom_{\cK_2(\cP)}(X,Y^*)$. The following lemma is a refinement.

\begin{lemma}[{\cite[Lemma 2.2]{Fang-Lan-Xiao-2023}}]\label{bijection between Ext and Hom}
For any $X,Y,Z\in\cC_2(\cP)$, if $\Ext^1_{\cC_2(\cP)}(X,Y)_Z \neq\varnothing$, then 
$$\Ext^1_{\cC_2(\cP)}(X,Y)_Z\cong \Hom_{\cK_2(\cP)}(X,Y^*)_{Z^*}.$$
\end{lemma}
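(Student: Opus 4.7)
The plan is to show that Bridgeland's bijection $\Psi \colon \Ext^1_{\cC_2(\cP)}(X,Y) \xrightarrow{\sim} \Hom_{\cK_2(\cP)}(X,Y^*)$ from \cite[Lemma 3.3]{Bridgeland-2013} restricts to a bijection between the indicated subsets. The map $\Psi$ sends a short exact sequence $\xi \colon 0 \to Y \to M \to X \to 0$ in the Frobenius exact category $\cC_2(\cP)$ to the connecting morphism in the distinguished triangle $Y \to M \to X \xrightarrow{\Psi(\xi)} Y^*$ of $\cK_2(\cP)$. Rotating this triangle to $X \xrightarrow{\Psi(\xi)} Y^* \to M^* \to X^*$ identifies the mapping cone of $\Psi(\xi)$ in $\cK_2(\cP)$ with $M^*$.

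For the forward inclusion, if $\xi \in \Ext^1_{\cC_2(\cP)}(X,Y)_Z$, then $M \cong Z$ in $\cC_2(\cP)$, so $M^* \cong Z^*$ and $\Psi(\xi) \in \Hom_{\cK_2(\cP)}(X,Y^*)_{Z^*}$. Conversely, for $f \in \Hom_{\cK_2(\cP)}(X,Y^*)_{Z^*}$ with preimage $\xi = \Psi^{-1}(f)$ and middle term $M$, the same rotation gives $M^* \simeq \mathrm{Cone}(f) \simeq Z^*$, hence $M \simeq Z$ in $\cK_2(\cP)$. The remaining task is to upgrade this homotopy equivalence to an isomorphism $M \cong Z$ in $\cC_2(\cP)$, and here the hypothesis $\Ext^1_{\cC_2(\cP)}(X,Y)_Z \neq \varnothing$ plays a crucial role.

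For the upgrade, I would decompose $M \cong M_r \oplus M_c$ and $Z \cong Z_r \oplus Z_c$ by Lemma~\ref{decomposition}, deduce $M_r \cong Z_r$ via Lemma~\ref{isomorphism and homotopy equivalence} applied to the radical parts, and observe that $M$ and $Z$ share projective dimension vector pair equal to $\mathrm{pdvp}(X) + \mathrm{pdvp}(Y)$, so $M_c$ and $Z_c$ have equal pdvp. Writing $M_c \cong K_P \oplus K_Q^*$ and $Z_c \cong K_{P'} \oplus K_{Q'}^*$ via Lemma~\ref{contractible complex} reduces matters to showing $P \cong P'$ and $Q \cong Q'$ individually; the pdvp match only yields $P \oplus Q \cong P' \oplus Q'$. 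The hard step is separating $P$ from $Q$: the plan is to recover $P$ intrinsically as the projective cover of the semisimple module $(\mathrm{im}\, d^1_M + \rad M^0)/\rad M^0$ and $Q$ analogously from $d^0_M$, which are invariants of the complex $M$ alone, and then to compare these invariants for $M$ and $Z$ using the shared short exact sequence structure together with the reference extension supplied by the nonempty hypothesis, forcing $P \cong P'$ and $Q \cong Q'$ and therefore $M_c \cong Z_c$ and $M \cong Z$.
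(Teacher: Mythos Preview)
Your overall strategy—restricting Bridgeland's bijection $\Psi$—is exactly what the paper does. The difference is that the paper does not treat $\Psi$ as a black box: it unpacks the construction and shows directly that, for any representative $f\in\Hom_{\cC_2(\cP)}(X,Y^*)$ of a class $[f]$, the middle term of the corresponding extension is \emph{isomorphic in $\cC_2(\cP)$} to the explicit complex
\[
\bigl(X^1\oplus Y^1,\ X^0\oplus Y^0,\ \begin{pmatrix}d^1_X&0\\-f^1&d^1_Y\end{pmatrix},\ \begin{pmatrix}d^0_X&0\\-f^0&d^0_Y\end{pmatrix}\bigr),
\]
which is precisely $\textrm{Cone}(f)^*$. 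This explicit identification is the crux, and it replaces your entire decomposition argument.

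Your backward direction has a genuine gap. You correctly observe that $M\simeq Z$, that $M_r\cong Z_r$, and that $M_c=K_P\oplus K_Q^*$ and $Z_c=K_{P'}\oplus K_{Q'}^*$ share the same projective dimension vector pair. You also correctly note that $P$ is recovered from $M$ as the projective cover of $\overline{\operatorname{im} d^1_M}=(\operatorname{im} d^1_M+\rad M^0)/\rad M^0$. But then you assert that this invariant agrees for $M$ and $Z$ ``using the shared short exact sequence structure together with the reference extension''—and this is not an argument. Writing the differentials in block form,
\[
\overline{d^1_M}=\begin{pmatrix}\overline{d^1_X}&0\\-\overline{f^1}&\overline{d^1_Y}\end{pmatrix},\qquad \overline{d^1_Z}=\begin{pmatrix}\overline{d^1_X}&0\\-\overline{f_0^1}&\overline{d^1_Y}\end{pmatrix},
\]
one sees that $\overline{\operatorname{im} d^1_M}$ genuinely depends on $\overline{f^1}$, which need not equal $\overline{f_0^1}$. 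Neither the homotopy equivalence $M\simeq Z$ nor the fact that both sit in short exact sequences $0\to Y\to\bullet\to X\to 0$ gives you control over this semisimple image, and you have not explained how the ``reference extension'' bridges the gap. The paper sidesteps this entirely by working with the explicit cone formula from the start.
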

\begin{proof}
For convenience, we sketch the construction of  
$$\Ext^1_{\cC_2(\cP)}(X,Y)\cong \Hom_{\cK_2(\cP)}(X,Y^*)$$
in \cite[Lemma 3.3]{Bridgeland-2013}. For any short exact sequence
\begin{align}\label{short exact sequence}
0\rightarrow Y\rightarrow Z\rightarrow X\rightarrow 0
\end{align}
in $\cC_2(\cP)$, since $X^j$ is projective, the short exact sequence $0\rightarrow Y^j\rightarrow Z^j\rightarrow X^j\rightarrow 0$ in $\cA$ splits. Selecting a splitting, we may assume that the short exact sequence (\ref{short exact sequence}) is of the form 
\begin{diagram}[midshaft,size=2em]
0 &\rTo &Y^1 &\rTo &X^1\oplus Y^1 &\rTo &X^1 &\rTo &0\\
 & &\dTo^{d^1_Y}\uTo_{d^0_Y}& &\dTo^{d^1_Z}\uTo_{d^0_Z}&&\dTo^{d^1_X}\uTo_{d^0_X}\\
0 &\rTo &Y^0 &\rTo &X^0\oplus Y^0 &\rTo &X^0 &\rTo &0,\\
\end{diagram}
where
$$d^1_Z=\begin{pmatrix}\begin{smallmatrix}
d^1_X &0\\
-f^1 &d^1_Y
\end{smallmatrix}\end{pmatrix}, \ d^0_Z=\begin{pmatrix}\begin{smallmatrix}
d^0_X &0\\
-f^0 &d^0_Y
\end{smallmatrix}\end{pmatrix},$$
for some $f^j\in \Hom_{\cA}(X^j,Y^{j+1})$, then the condition $d^{j+1}_Zd^j_Z=0$ is equivalent to condition $(f^1,f^0)\in \Hom_{\cC_2(\cA)}(X,Y^*)$. This gives a bijection between the set of short exact sequences of the form  (\ref{short exact sequence}) and the set $\Hom_{\cC_2(\cA)}(X,Y^*)$. Moreover, assume that two morphisms $f=(f^1,f^0),g=(g^1,g^0)\in \Hom_{\cC_2(\cP)}(X,Y^*)$ determine two equivalent short exact sequences, that is, there is a commutative diagram
\begin{diagram}[midshaft,size=2em]
(Y^1&\pile{\rTo^{d^1_Y}\\ \lTo_{d^0_Y}} &Y^0) &\rTo 
&(X^1\oplus Y^1&\pile{\rTo^{\begin{pmatrix}\begin{smallmatrix}
d^1_X &0\\
-f^1 &d^1_Y
\end{smallmatrix}\end{pmatrix}}\\ \lTo_{\begin{pmatrix}\begin{smallmatrix}
d^0_X &-0\\
-f^0 &d^0_Y
\end{smallmatrix}\end{pmatrix}}} &X^0\oplus Y^0) &\rTo &(X^1&\pile{\rTo^{d^1_X}\\ \lTo_{d^0_X}} &X^0)\\
 &\vEq & & &\dTo^{s^1} & &\dTo_{s^0} & & &\vEq\\
 (Y^1&\pile{\rTo^{d^1_Y}\\ \lTo_{d^0_Y}} &Y^0) &\rTo 
&(X^1\oplus Y^1&\pile{\rTo^{\begin{pmatrix}\begin{smallmatrix}
d^1_X &0\\
-g^1 &d^1_Y
\end{smallmatrix}\end{pmatrix}}\\ \lTo_{\begin{pmatrix}\begin{smallmatrix}
d^0_X &0\\
-g^0 &d^0_Y
\end{smallmatrix}\end{pmatrix}}} &X^0\oplus Y^0) &\rTo &(X^1&\pile{\rTo^{d^1_X}\\ \lTo_{d^0_X}} &X^0),
\end{diagram}
where $(s^1,s^0)$ is an isomorphism in $\cC_2(\cP)$, then the condition that the left and right squares commute is equivalent to the condition that $s^1,s^0$ are of the form 
$$s^1=\begin{pmatrix}
1 &h^1\\
0 &1
\end{pmatrix},\ s^0=\begin{pmatrix}
1 &h^0\\
0 &1
\end{pmatrix},$$
for some $h^j\in \Hom_{\cA}(X^j,Y^j)$, and the condition that the middle square commutes is equivalent to the condition that $h^1,h^0$ form a homotopy between $f$ and $g$,
\begin{diagram}[midshaft,size=2em]
X^1 &\pile{\rTo^{d^1_X}\\ \lTo_{d^0_X}} &X^0\\
\dTo^{f^1-g^1}_{\ \ \ h^1} &\rdDashto \ldDashto&\dTo^{h^0\ \ \ }_{f^0-g^0}\\
Y^0 &\pile{\rTo^{-d^0_Y}\\ \lTo_{-d^1_Y}} &Y^1.
\end{diagram}
Hence we obtain a bijection $\Ext^1_{\cC_2(\cP)}(X,Y)\cong \Hom_{\cK_2(\cP)}(X,Y^*)$.

For $f\in \Hom_{\cC_2(\cP)}(X,Y^*)$, by above construction, the middle term of the short exact sequence determined by the homotopy class of $f$ is isomorphic to the complex
\begin{diagram}[midshaft,size=2em]
X^1\oplus Y^1&\pile{\rTo^{\begin{pmatrix}\begin{smallmatrix}
d^1_X &0\\
-f^1 &d^1_Y
\end{smallmatrix}\end{pmatrix}}\\ \lTo_{\begin{pmatrix}\begin{smallmatrix}
d^0_X &0\\
-f^0 &d^0_Y
\end{smallmatrix}\end{pmatrix}}} &X^0\oplus Y^0,
\end{diagram}
which is the shift of the $\textrm{Cone}(f)$, as desired.
\end{proof}

\begin{lemma}\label{middle term contractible}
For any radical complexes $X,Y$ and contractible complex $Z$ in $\cC_2(\cP)$, if $\Ext^1_{\cC_2(\cP)}(X,Y)_Z\not=\varnothing$, there are isomorphisms $Y\cong X^*,Z\cong K_{X^1}\oplus K_{X^0}^*$ and 
$$\Ext^1_{\cC_2(\cP)}(X,X^*)_{K_{X^1}\oplus K_{X^0}^*}\cong \Aut_{\cK_2(\cP)}(X).$$
\end{lemma}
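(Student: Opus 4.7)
The plan is to prove $Y\cong X^*$ and $Z\cong K_{X^1}\oplus K_{X^0}^*$ separately, and then deduce the stated bijection by specializing Lemma \ref{bijection between Ext and Hom}. First, by Lemma \ref{bijection between Ext and Hom}, the hypothesis $\Ext^1_{\cC_2(\cP)}(X,Y)_Z\neq\varnothing$ translates to nonemptiness of $\Hom_{\cK_2(\cP)}(X,Y^*)_{Z^*}$. Since $Z$ is contractible, so is $Z^*$, and contractible complexes are the projective--injective objects of the Frobenius category $\cC_2(\cP)$, hence $Z^*\simeq 0$ in $\cK_2(\cP)$. Any $f$ in the above set therefore has cone homotopy equivalent to $0$, so $f$ is an isomorphism in $\cK_2(\cP)$, giving $X\simeq Y^*$. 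Both $X$ and $Y^*$ are radical (the shift $(-)^*$ preserves the radical property), so Lemma \ref{isomorphism and homotopy equivalence} upgrades this to an isomorphism $X\cong Y^*$ in $\cC_2(\cP)$; equivalently, $Y\cong X^*$.

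Next, I identify $Z$. Lift $f\in\Aut_{\cK_2(\cP)}(X)$ to $\tilde f\in\End_{\cC_2(\cP)}(X)$; by Lemma \ref{isomorphism and homotopy equivalence}, $\tilde f\in\Aut_{\cC_2(\cP)}(X)$. Substituting $Y=X^*$ into the explicit construction in the proof of Lemma \ref{bijection between Ext and Hom} gives $Z^1=X^1\oplus X^0$, $Z^0=X^0\oplus X^1$ with
\[
d^1_Z=\begin{pmatrix} d^1_X & 0\\ -\tilde f^1 & -d^0_X\end{pmatrix},\qquad d^0_Z=\begin{pmatrix} d^0_X & 0\\ -\tilde f^0 & -d^1_X\end{pmatrix}.
\]
By Lemma \ref{contractible complex}, $Z\cong K_P\oplus K_Q^*$ for uniquely determined $P,Q\in\cP$, and the defining formulas for $K_P$ and $K_Q^*$ show $\Ker d^1\cong Q$ as modules. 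A pair $(x,y)\in Z^1$ lies in $\Ker d^1_Z$ iff $d^1_X x=0$ and $\tilde f^1 x=-d^0_X y$. The chain-map identity $\tilde f^1 d^0_X=d^0_X\tilde f^0$ together with the invertibility of $\tilde f^0$ yields $(\tilde f^1)^{-1}d^0_X=d^0_X(\tilde f^0)^{-1}$, so the second equation forces $x=-d^0_X(\tilde f^0)^{-1}y$, and then the first is automatic from $d^1_X d^0_X=0$. Therefore $\Ker d^1_Z=\{(-d^0_X(\tilde f^0)^{-1}y,y):y\in X^0\}\cong X^0$ via the second projection, giving $Q\cong X^0$. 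Krull--Schmidt cancellation applied to $P\oplus Q\cong Z^1=X^1\oplus X^0$ then forces $P\cong X^1$, and we conclude $Z\cong K_{X^1}\oplus K_{X^0}^*$.

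Finally, applying Lemma \ref{bijection between Ext and Hom} with $Y=X^*$ and $Z=K_{X^1}\oplus K_{X^0}^*$ yields
\[
\Ext^1_{\cC_2(\cP)}(X,X^*)_{K_{X^1}\oplus K_{X^0}^*}\cong \Hom_{\cK_2(\cP)}(X,X)_{(K_{X^1}\oplus K_{X^0}^*)^*}.
\]
The target complex $(K_{X^1}\oplus K_{X^0}^*)^*$ is contractible, hence $\simeq 0$ in $\cK_2(\cP)$, and the subscript condition reduces to the cone being contractible; such morphisms are precisely the isomorphisms in $\cK_2(\cP)$. Hence the right-hand side equals $\Aut_{\cK_2(\cP)}(X)$, as required. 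The main obstacle is the middle step: one must pin down the indecomposable type of the contractible middle term (distinguishing which summand of $X^1\oplus X^0$ plays the role of $P$ versus $Q$ in the decomposition $K_P\oplus K_Q^*$), and this rests on the explicit kernel computation above together with Krull--Schmidt cancellation in $\cP$.
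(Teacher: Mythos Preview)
Your proof is correct and follows essentially the same approach as the paper's: both use Lemma \ref{bijection between Ext and Hom} to reduce to $\Hom_{\cK_2(\cP)}(X,Y^*)_0$, invoke Lemma \ref{isomorphism and homotopy equivalence} to get $Y\cong X^*$, and then identify $Z$ via the explicit cone formula and Lemma \ref{contractible complex}. The only minor variation is that the paper computes both kernels $\Ker d^0_Z\cong X^1$ and $\Ker d^1_Z\cong X^0$ directly, whereas you compute only $\Ker d^1_Z\cong X^0$ and obtain $P\cong X^1$ by Krull--Schmidt cancellation in $\cP$; both routes are valid and equally short.
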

\begin{proof}
Since $\Ext^1_{\cC_2(\cP)}(X,Y)_Z\not=\varnothing$, by Lemma \ref{bijection between Ext and Hom}, we have 
$$\Ext^1_{\cC_2(\cP)}(X,Y)_Z\cong \Hom_{\cK_2(\cP)}(X,Y^*)_{Z^*}=\Hom_{\cK_2(\cP)}(X,Y^*)_0,$$
where $0$ is the zero object in $\cK_2(\cP)$. Note that for any $f\in \Hom_{\cC_2(\cP)}(X,Y^*)$, its mapping cone is homotopy equivalent to $0$ if and only if $f$ is a homotopy equivalence, equivalently, $f$ is an isomorphism, by Lemma \ref{isomorphism and homotopy equivalence}. Hence $X\cong Y^*, Y\cong X^*$. 

For convenience, we assume $Y=X^*$. For any $\xi\in \Ext^1_{\cC_2(\cP)}(X,Y)_Z$, there exists $f\in \Aut_{\cC_2(\cP)}(X)$ such that the homotopy class of $f$, as an element in $\Aut_{\cK_2(\cP)}(X)$, corresponds to $\xi$. By the proof of Lemma \ref{bijection between Ext and Hom}, we have 
\begin{align}\label{kernel}
Z\cong
(\begin{diagram}[midshaft,size=2em]
X^1\oplus X^0&\pile{\rTo^{\begin{pmatrix}\begin{smallmatrix}
d^1_X &0\\
-f^1 &-d^0_X
\end{smallmatrix}\end{pmatrix}}\\ \lTo_{\begin{pmatrix}\begin{smallmatrix}
d^0_X &0\\
-f^0 &-d^1_X
\end{smallmatrix}\end{pmatrix}}} &X^0\oplus X^1).
\end{diagram}
\end{align}
By Lemma \ref{contractible complex}, the contractible complex $Z\cong K_P\oplus K_Q^*$ for some $P,Q\in \cP$. Notice that $P\cong \Ker d_Z^0$ and $Q\cong \Ker d_Z^1$. By (\ref{kernel}), we have 
$$\ker d^0_Z\cong \{\begin{pmatrix}\begin{smallmatrix} x^0\\x^1\end{smallmatrix}\end{pmatrix}\in X^0\oplus X^1|d^0_X(x^0)=0,-d^1_X(x^1)=f^0(x^0)\}$$ 
which is isomorphic to $X^1$ via
\begin{align*}
X^1&\xrightarrow{\cong}\{\begin{pmatrix}\begin{smallmatrix} x^0\\x^1\end{smallmatrix}\end{pmatrix}\in X^0\oplus X^1|d^0_X(x^0)=0,-d^1_X(x^1)=f^0(x^0)\}\\
x^1&\mapsto \begin{pmatrix}\begin{smallmatrix} -(f^0)^{-1}d^1_X(x^1)\\x^1\end{smallmatrix}\end{pmatrix}.
\end{align*}
Similarly, we have $\ker d^1_Z\cong X^0$. Therefore, we have $Z\cong K_{X^1}\oplus K_{X^0}^*$, and 
\begin{align*}
\Ext^1_{\cC_2(\cP)}(X,X^*)_{K_{X^1}\oplus K_{X^0}^*}\cong &\Hom_{\cK_2(\cP)}(X,X)_{K_{X^1}\oplus K_{X^0}^*}
=\Aut_{\cK_2(\cP)}(X),
\end{align*}
as desired.
\end{proof}

\subsection{Lie algebra $\fg$}\label{Lie algebra spanned by contractible complexes and indecomposable radical complexes}\

In Subsection \ref{Classical limit}, we define a $\bbC_{-1}$-algebra $\cD\cH_t(\cA)_{\bbC_{-1}}$ and its classical limit $\cD\cH_{-1}(\cA)$. As a $\bbC$-associative algebra, $\cD\cH_{-1}(\cA)$ has a natural Lie bracket given by commutator.

\begin{definition}
(a) Define $\cD\cH^{\ind}_t(\cA)_{\bbC_{-1}}\subset \cD\cH_t(\cA)_{\bbC_{-1}}$ to be the $\bbC_{-1}$-submodule spanned by elements of the forms 
$$[[\cO/\rG_{\ue}]\hookrightarrow\cN^{\rad}\hookrightarrow\cM],\ b_{\alpha},\ 
\frac{b_{\alpha}-1}{-t-1},$$
where $\cO\subset \rP_2^{\rad}(A,\ue)$ is a $\rG_{\ue}$-invariant constructible subset consisting of points corresponding to indecomposable radical complexes, $\ue\in \bbN I\times \bbN I$ and $\alpha\in K(\cA)$.\\
(b) Define $\fg$ to be the $\bbC$-subspace of $\bbC_{-1}/(t+1)\bbC_{-1}\otimes_{\bbC_{-1}}\cD\cH^{\ind}_t(\cA)_{\bbC_{-1}}\subset \cD\cH_{-1}(\cA)$ spanned by elements of the form 
$$1\otimes [[\cO/\rG_{\ue}]\hookrightarrow\cN^{\rad}\hookrightarrow\cM],\ h_{\alpha}=1\otimes \frac{b_{\alpha}-1}{-t-1},$$
where $\cO\subset \rP_2^{\rad}(A,\ue)$ is a $\rG_{\ue}$-invariant constructible subset consisting of points corresponding to indecomposable radical complexes, $\ue\in \bbN I\times \bbN I$ and $\alpha\in K(\cA)$.\\
(c) Define $\fn$ to be the $\bbC$-subspace of $\fg$ spanned by elements of the form 
$$1\otimes [[\cO/\rG_{\ue}]\hookrightarrow\cN^{\rad}\hookrightarrow\cM],$$
where $\cO\subset \rP_2^{\rad}(A,\ue)$ is a $\rG_{\ue}$-invariant constructible subset consisting of points corresponding to indecomposable radical complexes and $\ue\in \bbN I\times \bbN I$.
\end{definition}

By definition, we have the decomposition
$$\fg=\fn\oplus \fh,$$
where $\fh$ is the $\bbC$-subspace spanned by $\{h_\alpha=1\otimes \frac{b_{\alpha}-1}{-t-1}|\alpha\in K(\cA)\}$ and $\fh\cong \bbC\otimes_{\bbZ}K(\cA)$, see Subsection \ref{Classical limit}.

\begin{theorem}\label{Lie algebra g}
The $\bbC$-subspace $\fg\subset \cD\cH_{-1}(\cA)$ is closed under the Lie bracket, and so it is a Lie subalgebra.
\end{theorem}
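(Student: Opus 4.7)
Because the bracket on $\cD\cH_{-1}(\cA)$ is the commutator, closure of $\fg=\fn\oplus\fh$ under the bracket reduces to verifying the three pieces $[\fh,\fh]$, $[\fh,\fn]$ and $[\fn,\fn]$ separately. The first two are immediate: Lemma~\ref{property of h}(a) gives $[h_\alpha,h_\beta]=0\in\fh$, and Lemma~\ref{property of h}(b) gives, for any generator $\bar x=1\otimes[[\cO/\rG_{\ue}]\hookrightarrow\cN^{\rad}\hookrightarrow\cM]$ of $\fn$, the identity $[h_\alpha,\bar x]=(\alpha,\hat P^0-\hat P^1)\bar x$, which is a scalar multiple of $\bar x$ and therefore lies in $\fn$. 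The real content is to show $[\fn,\fn]\subseteq\fn\oplus\fh$.

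For $[\fn,\fn]$ I would first use linearity together with a stratification of each $\rG_{\ue}$-invariant constructible subset of $\rP_2^{\rad}(A,\ue)$ into $\rG_{\ue}$-orbits to reduce to the commutators of generators
$$\bar x = 1\otimes[[\cO_{[X]}/\rG_{\ue_X}]\hookrightarrow\cN^{\rad}\hookrightarrow\cM],\qquad \bar y = 1\otimes[[\cO_{[Y]}/\rG_{\ue_Y}]\hookrightarrow\cN^{\rad}\hookrightarrow\cM]$$
associated to single isomorphism classes of indecomposable radical complexes $X,Y$. Using the twist in $\cH_t^{\tw}(\cC_2(\cP))$ and the motivic Riedtmann--Peng formula (Proposition~\ref{Motivic Riedtmann-Peng}, in the explicit shape of Remark~\ref{special case}), I would rewrite $\bar x*\bar y$ as the classical limit of the pushforward of a quotient stack whose base is $\Ext^1_{\cC_2(\cP)}(X,Y)$, and then stratify that base by the isomorphism class $[Z]$ of the middle term of the corresponding short exact sequence. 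By Lemma~\ref{decomposition} each such $Z$ decomposes uniquely as $Z_r\oplus Z_c$ with $Z_r$ radical and $Z_c$ contractible.

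For strata with $Z_c=0$ the contribution is supported on $\cN^{\rad}$, so after further Krull--Schmidt decomposition of $Z_r$ into indecomposable radical summands it becomes a $\bbC$-linear combination of generators of $\fn$. For strata with $Z_c\neq 0$, Lemma~\ref{bijection between Ext and Hom} identifies $\Ext^1_{\cC_2(\cP)}(X,Y)_Z$ with $\Hom_{\cK_2(\cP)}(X,Y^*)_{Z^*}$, and Lemma~\ref{middle term contractible} applied to the contractible summand forces $Y\cong X^*$, $Z_c\cong K_{X^1}\oplus K_{X^0}^*$, and identifies the relevant piece of $\Ext^1_{\cC_2(\cP)}(X,X^*)$ with $\Aut_{\cK_2(\cP)}(X)$. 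Such strata produce $\bbC(t)$-scalar multiples of $b_\alpha$ for $\alpha=\hat P_X^1-\hat P_X^0\in K(\cA)$ coming from the contractible summand; writing $b_\alpha=1+(-t-1)\cdot\frac{b_\alpha-1}{-t-1}$, the constant $1$-parts in $\bar x*\bar y$ and $\bar y*\bar x$ cancel in the commutator (both pieces arise from the same contractible middle term, with the twist symmetric under $X\leftrightarrow X^*$), and what survives after passing to $\cD\cH_{-1}(\cA)$ is a nonzero $\bbC$-multiple of $h_\alpha\in\fh$.

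The hard part will be the fine bookkeeping of the twist $(-t)^{\langle\hat P_X^1,\hat P_Y^1\rangle+\langle\hat P_X^0,\hat P_Y^0\rangle}$ and of the $\Upsilon$-factors of the various stabilizers around the contractible stratum. Concretely, two things need checking: (i) that strata with mixed middle term $Z=Z_r\oplus Z_c$, $Z_r,Z_c\neq 0$, either do not occur for indecomposable $X,Y$ or cancel in the commutator, which uses the canonical splitting of contractible summands from Lemma~\ref{contractible complex} together with the indecomposability of $X,Y$; and (ii) that the product of the twist factor and the $\Upsilon$-weights of the stabilizer groups, after cancellation of the constant $1$-parts of the $b_\alpha$'s, yields a net factor divisible by $(-t-1)$ that is exactly absorbed by the denominator in the definition of $h_\alpha$. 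Once these two checks are carried out, $[\bar x,\bar y]$ is manifestly an element of $\fn\oplus\fh$, and the theorem follows.
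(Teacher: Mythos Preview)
Your outline correctly identifies the reduction to $[\fn,\fn]$ and the use of the motivic Riedtmann--Peng formula together with a stratification of $\Ext^1_{\cC_2(\cP)}(X,Y)$ by the isomorphism type of the middle term. However, there are two genuine gaps in your case analysis.

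First, your treatment of the stratum $Z_c=0$ is incorrect. When $Z=Z_r$ is radical but \emph{decomposable}, the element $\delta_{[Z]}$ is supported on $\cN^{\rad}$, but it is \emph{not} a $\bbC$-linear combination of generators of $\fn$: by definition $\fn$ is spanned by characteristic stack functions of constructible sets consisting of \emph{indecomposable} radical complexes, and there is no Krull--Schmidt identity in $K^0_\Upsilon(\St/\cM)$ that would let you rewrite $\delta_{[Z_1\oplus Z_2]}$ in terms of $\delta_{[Z_1]},\delta_{[Z_2]}$ in the way you suggest. The paper instead splits this stratum further: the piece with $Z_r\cong X\oplus Y$ (only $\xi=0$) cancels symmetrically in the commutator, while for the remaining piece with $Z_r$ decomposable but $\not\cong X\oplus Y$ one needs a genuinely new ingredient, namely the free $\bbC^*$-action on $\Ext^1_{\cC_2(\cP)}(X,Y)_Z$ (Hubery, or Chen--Deng) which forces $(t^2-1)\mid\Upsilon(\Ext^1_{\cC_2(\cP)}(X,Y)_Z)$. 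Combined with Lemma~\ref{Upsilon(Aut)} giving $(t^2-1)^2\mid\Upsilon(\Aut_{\cC_2(\cP)}(Z_r))$, this makes the contribution vanish at $t=-1$. This $\bbC^*$-action argument is absent from your plan and is not recoverable from the lemmas you cite.

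Second, your handling of the mixed strata $Z_r,Z_c\neq 0$ is also off. These strata neither fail to occur nor cancel in the commutator; rather, when $Z_r$ is indecomposable they contribute to $\fn$ after one absorbs the contractible summand using $b_{\hat P-\hat Q}*\delta_{[Z_r]}\propto\delta_{[Z]}$ and then $\overline{b_{\hat P-\hat Q}}=1$ in the classical limit. And Lemma~\ref{middle term contractible} only applies to the stratum $Z_r=0$, not to an arbitrary $Z_c\neq 0$. So the correct four-fold stratification is by whether $Z_r$ is zero, indecomposable, isomorphic to $X\oplus Y$, or otherwise decomposable; your two-fold split by $Z_c=0$ versus $Z_c\neq0$ does not match the argument that actually works.
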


We deal with the following simple case firstly.
\begin{lemma}\label{simple}
Let $X,Y\in \cC_2(\cP)$ be indecomposable radical complexes, then $$[\overline{\delta_{[X]}},\overline{\delta_{[Y]}}]\in \fg.$$
\end{lemma}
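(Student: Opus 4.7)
The plan is to apply Remark \ref{special case} to expand both $\delta_{[X]}*\delta_{[Y]}$ and $\delta_{[Y]}*\delta_{[X]}$, then stratify by isomorphism class $[Z]$ of the middle term and analyze each stratum in the classical limit. Explicitly,
$$\delta_{[X]}*\delta_{[Y]}=(-t)^{\langle\hat X^1,\hat Y^1\rangle+\langle\hat X^0,\hat Y^0\rangle}[[\Ext^1_{\cC_2(\cP)}(X,Y)/G]\xrightarrow{b\pi}\cM]$$
with $G=(\Aut_{\cC_2(\cP)}(X)\times\Aut_{\cC_2(\cP)}(Y))\ltimes\Hom_{\cC_2(\cP)}(X,Y)$, and analogously for $\delta_{[Y]}*\delta_{[X]}$. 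Stratifying $\Ext^1_{\cC_2(\cP)}(X,Y)=\bigsqcup_{[Z]}\Ext^1_{\cC_2(\cP)}(X,Y)_Z$ and using Lemma \ref{decomposition} to split each middle term as $Z\cong Z_r\oplus Z_c$, the commutator $[\overline{\delta_{[X]}},\overline{\delta_{[Y]}}]$ in $\cD\cH_{-1}(\cA)$ becomes a sum of contributions indexed by $[Z]$, with coefficients in $\bbC_{-1}$ built from motivic Riedtmann-Peng data.

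I would split into three cases. \emph{Case A:} $Z$ is radical and indecomposable. Collecting the strata for $[Z]$ with a fixed projective dimension vector pair $\ue$ produces a $\rG_{\ue}$-invariant constructible subset $\cO\subset\rP_2^{\rad}(A,\ue)$ of indecomposable points, so the contribution is a class $1\otimes[[\cO/\rG_{\ue}]\hookrightarrow\cN^{\rad}\hookrightarrow\cM]\in\fn$. \emph{Case B:} $Z$ is fully contractible. By Lemma \ref{middle term contractible} this only arises when $Y\cong X^*$ and $Z\cong K_{X^1}\oplus K_{X^0}^*$; using Lemma \ref{regular} to identify $\delta_{[K_{X^1}\oplus K_{X^0}^*]}$ with a scalar multiple of $b_{\hat X^1-\hat X^0}$ and expanding $b_\alpha=1-(t+1)\cdot\frac{b_\alpha-1}{-t-1}$ in $\cD\cH_t(\cA)_{\bbC_{-1}}$, the contribution to the commutator becomes a $\bbC$-multiple of $h_{\hat X^1-\hat X^0}\in\fh$. \emph{Case C:} $Z$ is decomposable but not fully contractible; then $\Aut_{\cC_2(\cP)}(Z)$ contains an extra central $\bbC^*$-factor rescaling one summand, and combining its Poincar\'e polynomial $\Upsilon(\bbC^*)=t^2-1$ with Bridgeland's twist should make the coefficient of $\delta_{[Z]}$ in the commutator divisible by $t+1$ in $\bbC_{-1}$, hence the contribution vanishes under the classical-limit evaluation $\pi:\bbC_{-1}\to\bbC$.

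The main obstacle is Case C: establishing the $(t+1)$-divisibility of the commutator coefficient for every decomposable middle term. This requires carefully tracking the interaction between the extra $\bbC^*$-automorphism of $Z$, the Riedtmann-Peng denominator $\Upsilon(\Aut_{\cC_2(\cP)}(Z))$ in the motivic setting, and the change in Bridgeland's twist under swapping the roles of $X$ and $Y$, which introduces a factor of $(-t)^{(\hat X,\hat Y)}$. The desired cancellation is the motivic characteristic-zero analogue of the mod-$(q-1)$ cancellation of $g^Z_{XY}-F^Z_{XY}$ sketched in the introduction, and ensures that commutators of classes of indecomposables reduce, modulo $\fh$, to classes of indecomposables again.
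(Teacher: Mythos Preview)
Your overall strategy matches the paper's, but the three-case split is too coarse and Case~C hides a genuine error.

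\textbf{The gap in Case~A/C.} The middle term $Z$ of an extension of radical complexes need not be radical: by Lemma~\ref{decomposition} one only knows $Z\cong Z_r\oplus Z_c$. When $Z_r$ is indecomposable but $Z_c\not=0$, this $Z$ is decomposable, so it falls into your Case~C --- yet its contribution to $\overline{\delta_{[X]}}*\overline{\delta_{[Y]}}$ does \emph{not} vanish. The paper handles this (its Case~$i=1$) by writing $\Upsilon(\Aut Z)\delta_{[Z]}$ as $t^{2n(P,Q)}\,b_{\hat P-\hat Q}\diamond \Upsilon(\Aut Z_r)\delta_{[Z_r]}$ using Lemma~\ref{regular}, and then invoking $\overline{b_{\hat P-\hat Q}}=1$ from Lemma~\ref{property of h}. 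After this absorption the contribution is an element whose image under $r:[Z]\mapsto[Z_r]$ lands in the indecomposable radical locus, hence lies in $\fn$. Your proposal never mentions this absorption; without it, you cannot conclude these strata contribute to $\fg$.

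\textbf{The mechanism in Case~C is wrong even where the conclusion is right.} For $Z=X\oplus Y$ (the split extension, $\xi=0$), the paper computes the coefficient of $\delta_{[X\oplus Y]}$ in $\overline{\delta_{[X]}}*\overline{\delta_{[Y]}}$ explicitly as $1+\delta_{[X][Y]}$, which is \emph{nonzero}; it cancels in the commutator only by symmetry with the $\overline{\delta_{[Y]}}*\overline{\delta_{[X]}}$ term, not by any $(t+1)$-divisibility. For the remaining case ($Z_r$ decomposable but $\not\cong X\oplus Y$), the paper does prove divisibility by $(t^2-1)$, but this requires two separate inputs: Lemma~\ref{Upsilon(Aut)} gives $(t^2-1)^2\mid\Upsilon(\Aut Z_r)$, which only cancels the $(t^2-1)^2$ in the denominator coming from $\Upsilon(\Aut X)\Upsilon(\Aut Y)$; the extra factor of $(t^2-1)$ comes from the free $\bbC^*$-action on $\Ext^1_{\cC_2(\cP)}(X,Y)_Z$ (citing Hubery and Chen--Deng), which you do not invoke. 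Your ``extra central $\bbC^*$ in $\Aut Z$'' argument alone is insufficient.

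In short, the paper uses a four-way split on $(Z_\xi)_r$ rather than on $Z$ itself, and the case $Z_r$ indecomposable with $Z_c\neq 0$ is not a vanishing case but an $\fn$-contribution case requiring the $b$-absorption trick.
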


\begin{lemma}\label{Upsilon(Aut)}
Let $X\in \cC_2(\cP)$ be an indecomposable radical complex, then 
$$\Upsilon(\Aut_{\cC_2(\cP)}(X))=(t^2-1)t^{2\dim \mbox{\rad}\End_{\cC_2(\cP)}(X)}.$$
Moreover, if $X_1,...,X_m\in \cC_2(\cP)$ are indecomposable radical complexes having $n$ many distinct isomorphism classes with sizes $m_1,...,m_n$ such that $m_1+...+m_n=m$, then 
$$\Upsilon(\Aut_{\cC_2(\cP)}(X_1\oplus...\oplus X_m))=t^{2l}\prod^m_{s=1}\Upsilon({\Aut_{\cC_2(\cP)}(X_s)})\cdot\prod^n_{i=1}\frac{t^{m_i(m_i-1)}\prod^{m_i}_{k=1}(t^{2k}-1)}{(t^2-1)^{m_i}}$$
for some $l\geqslant 0$. In particular, $(t^2-1)^2|\Upsilon(\Aut_{\cC_2(\cP)}(X_1\oplus...\oplus X_m))$ whenever $m\geqslant 2$.
\end{lemma}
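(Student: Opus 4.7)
The strategy is to exploit the Krull--Schmidt structure of $\cC_2(\cP)$ together with a Wedderburn--Malcev splitting of the endomorphism algebra, thereby reducing the computation of $\Upsilon$ on $\Aut$ to $\Upsilon$ of a product of general linear groups multiplied by an affine space. Throughout I use the standard values $\Upsilon(\bbC^*)=t^2-1$, $\Upsilon(\bbC^n)=t^{2n}$ and $\Upsilon(\mathrm{GL}_m(\bbC))=t^{m(m-1)}\prod_{k=1}^m(t^{2k}-1)$.

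For the first assertion, since $\cA$ is Hom-finite over $\bbC$ so is $\cC_2(\cP)$, and Fitting's lemma guarantees that the endomorphism algebra $R=\End_{\cC_2(\cP)}(X)$ of an indecomposable object is a finite-dimensional local $\bbC$-algebra with residue field $\bbC$. Hence $R=\bbC\cdot 1_X\oplus \rad R$ as $\bbC$-vector spaces, and $\Aut_{\cC_2(\cP)}(X)=R^{\times}$ is the preimage of $\bbC^{*}$ under the projection $R\twoheadrightarrow R/\rad R\cong \bbC$. This gives an isomorphism of algebraic varieties $\Aut_{\cC_2(\cP)}(X)\cong \bbC^{*}\times \rad R$, from which the formula follows at once by applying $\Upsilon$.

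For the second assertion, collect isomorphic summands so that $X_1\oplus\cdots\oplus X_m\cong Y_1^{m_1}\oplus\cdots\oplus Y_n^{m_n}$ with pairwise non-isomorphic indecomposable radical $Y_i$, and set $d_i=\dim\rad\End_{\cC_2(\cP)}(Y_i)$. In block form $R=\End_{\cC_2(\cP)}(\bigoplus_i Y_i^{m_i})=\bigoplus_{i,j}M_{m_j\times m_i}(\Hom_{\cC_2(\cP)}(Y_i,Y_j))$, and the Jacobson radical consists of all off-diagonal blocks together with the diagonal entries lying in $M_{m_i}(\rad\End_{\cC_2(\cP)}(Y_i))$, so that $R/\rad R\cong \prod_i M_{m_i}(\bbC)$. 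By Wedderburn--Malcev (applicable since the semisimple quotient is separable over $\bbC$) we can split $R=V\oplus \rad R$ as vector spaces with $V\cong \prod_i M_{m_i}(\bbC)$ a subalgebra; then $\Aut(\bigoplus_i Y_i^{m_i})$ is the preimage of $V^{\times}=\prod_i\mathrm{GL}_{m_i}(\bbC)$, giving an isomorphism of varieties $\Aut(\bigoplus_i Y_i^{m_i})\cong \prod_i\mathrm{GL}_{m_i}(\bbC)\times \rad R$. A direct count yields $\dim\rad R=\sum_{i\ne j}m_im_j\dim\Hom(Y_i,Y_j)+\sum_i m_i^2 d_i$.

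Plugging everything into $\Upsilon$ and separating the factor $\prod_s\Upsilon(\Aut(X_s))=(t^2-1)^m\,t^{2\sum_i m_id_i}$ together with the (polynomial) factor $\prod_i\frac{t^{m_i(m_i-1)}\prod_{k=1}^{m_i}(t^{2k}-1)}{(t^2-1)^{m_i}}$, the residual factor works out to $t^{2l}$ with
\[
l\;=\;\sum_{i\ne j}m_im_j\dim\Hom_{\cC_2(\cP)}(Y_i,Y_j)\;+\;\sum_i m_i(m_i-1)d_i\;\ge\;0,
\]
which gives the stated identity. The divisibility $(t^2-1)^2\mid \Upsilon(\Aut(X_1\oplus\cdots\oplus X_m))$ for $m\ge 2$ is then immediate, as it already appears in the factor $(t^2-1)^m$ carried by $\prod_s\Upsilon(\Aut(X_s))$. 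The only real obstacle is bookkeeping: verifying the non-negativity of $2l$ and matching exponents of $t$ on both sides, which requires no conceptual input beyond the Wedderburn--Malcev splitting.
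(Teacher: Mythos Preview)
Your proof is correct and follows essentially the same approach as the paper: both exploit that $\End_{\cC_2(\cP)}(X)$ is local for indecomposable $X$ and that, in general, the endomorphism algebra $R$ of a direct sum has semisimple quotient $\prod_i M_{m_i}(\bbC)$, so $\Aut=R^\times$ decomposes (as a variety) into $\prod_i\mathrm{GL}_{m_i}(\bbC)$ times an affine space. The only difference is presentational: you invoke Wedderburn--Malcev to split $R=V\oplus\rad R$ and compute $\Upsilon(\Aut)$ directly, whereas the paper cites \cite[Proposition~4.14]{Joyce-2007} and \cite[Lemma~2.6]{Bridgeland-2012} for the quotient description $\Aut(\bigoplus X_s)/\prod\Aut(X_s)\cong\bbC^l\times\prod_i(\mathrm{GL}_{m_i}(\bbC)/(\bbC^*)^{m_i})$ and reads off the same formula from there.
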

\begin{proof}
Since $X$ is indecomposable, its endomorphism algebra $\End_{\cC_2(\cP)}(X)$ is local such that the radical $\rad \End_{\cC_2(\cP)}(X)$ consisting of nilpotent morphisms is an ideal of codimension $1$ and so 
\begin{equation}\label{AutX=bbC^*}
\begin{aligned}
\Aut_{\cC_2(\cP)}(X)&=\bbC^*\ltimes(1+\rad \End_{\cC_2(\cP)}(X)), \\
\Upsilon(\Aut_{\cC_2(\cP)}(X))&=(t^2-1)t^{2\dim \mbox{\rad}\End_{\cC_2(\cP)}(X)}.
\end{aligned}
\end{equation}
Moreover, by the proof of \cite[Proposition 4.14]{Joyce-2007} and \cite[Lemma 2.6]{Bridgeland-2012}, we have 
\begin{align*}
\Aut_{\cC_2(\cP)}(X_1\oplus...\oplus &X_m)/\Aut_{\cC_2(\cP)}(X_1)\!\times\!\!...\!\!\times\! \Aut_{\cC_2(\cP)}(X_m)\cong \bbC^l\!\times\! \prod^n_{i=1}(\textrm{GL}_{m_i}(\bbC)/{\bbC^*}^{m_i}),\\
&\Upsilon(\textrm{GL}_{m_i}(\bbC))=t^{m_i(m_i-1)}\prod^{m_i}_{k=1}(t^{2k}-1).
\end{align*}
The remain conclusions follow directly.
\end{proof}

\begin{proof}[Proof of Lemma \ref{simple}]
By Remark \ref{special case}, we have 
\begin{align*}
\delta_{[X]}\diamond \delta_{[Y]}&\!=\![[\Ext_{\cC_2(\cP)}^1(X,Y)/(\Aut_{\cC_2(\cP)}(X)\!\times\! \Aut_{\cC_2(\cP)}(Y))\!\!\ltimes\!\!\Hom_{\cC_2(\cP)}(X,Y)]\!\xrightarrow{\varphi}\!\cM],\\
\overline{\delta_{[X]}}*\overline{\delta_{[Y]}}&\!=\!\!1\!\otimes \![[\Ext_{\cC_2(\cP)}^1(X,Y)/(\Aut_{\cC_2(\cP)}(X)\!\times\! \Aut_{\cC_2(\cP)}(Y))\!\!\ltimes\!\!\Hom_{\cC_2(\cP)}(X,Y)]\!\xrightarrow{\varphi}\!\!\cM],
\end{align*}
where $\varphi$ corresponds to the map $\xi\mapsto [Z_\xi]$ such that $[Z_\xi]$ is the isomorphism class of the middle term of $\xi\in \Ext_{\cC_2(\cA)}^1(X,Y)$. By Lemma \ref{decomposition}, each $Z_\xi$ can be decomposed as $Z_\xi\cong (Z_\xi)_r\oplus (Z_\xi)_c$, where $(Z_\xi)_r$ is radical and $(Z_\xi)_c$ is contractible. We divide 
$$\Ext_{\cC_2(\cP)}^1(X,Y)=\bigsqcup^3_{i=0}E^1_i$$
into the disjoint union of constructible subsets according to $(Z_\xi)_r$, where $E^1_0$ consists of $\xi$ such that $(Z_\xi)_r=0$, $E^1_1$ consists of $\xi$ such that $(Z_\xi)_r$ is indecomposable, $E^1_2$ consists of $\xi$ such that $(Z_\xi)_r\cong X\oplus Y$, and $E^1_3$ is their complement, which are all invariant under the $(\Aut_{\cC_2(\cP)}(X)\times \Aut_{\cC_2(\cP)}(Y))\ltimes\Hom_{\cC_2(\cA)}(X,Y)$-action, then
\begin{align*}
\delta_{[X]}\diamond \delta_{[Y]}&=\sum_{i=0}^3[[E^1_i/(\Aut_{\cC_2(\cP)}(X)\times \Aut_{\cC_2(\cP)}(Y))\ltimes\Hom_{\cC_2(\cP)}(X,Y)]\xrightarrow{\varphi_i}\cM],\\
\overline{\delta_{[X]}}*\overline{\delta_{[Y]}}&=\sum_{i=0}^31\otimes[[E^1_i/(\Aut_{\cC_2(\cP)}(X)\times \Aut_{\cC_2(\cP)}(Y))\ltimes\Hom_{\cC_2(\cP)}(X,Y)]\xrightarrow{\varphi_i}\cM],
\end{align*}
where $\varphi_i$ is the restriction of $\varphi$. We discuss case by case as follows. 

\textbf{Case $(i=0)$.} The radical part $(Z_\xi)_r=0$ means $Z_\xi$ is contractible. By Lemma \ref{middle term contractible}, we have $Y\cong X^*,Z_\xi\cong K_{X^1}\oplus K_{X^0}^*,\Ext^1_{\cC_2(\cP)}(X,Y)_{Z_\xi}\cong \Aut_{\cK_2(\cP)} (X)$. So $E^1_0\not=\varnothing$ if and only if $X\cong Y^*$, in this case, $E_0^1\cong \Aut_{\cK_2(\cP)}(X)$. By Lemma \ref{automorphism groups coincide} and \ref{Upsilon(Aut)},
\begin{align*}
&[[E^1_0/(\Aut_{\cC_2(\cP)}(X)\times \Aut_{\cC_2(\cP)}(Y))\ltimes\Hom_{\cC_2(\cP)}(X,Y)]\xrightarrow{\varphi_0}\cM]\\
=&\delta_{[X][Y^*]}\Upsilon(\Aut_{\cK_2(\cP)}(X))\Upsilon((\Aut_{\cC_2(\cP)}(X)\times \Aut_{\cC_2(\cP)}(Y))\ltimes\Hom_{\cC_2(\cP)}(X,Y))^{-1}\\
&\Upsilon(\Aut_{\cC_2(\cA)}(Z_\xi))\delta_{[Z_\xi]}\\
=&\delta_{[X][Y^*]}\frac{t^{-2\dim \Hom_{\cC_2(\cP)}(X,Y)}}{(t^2-1)t^{2\dim \mbox{\rad} \End_{\cC_2(\cP)}(X)+2\dim \Htp(X,X)}}b_{\hat{X}^1-\hat{X}^0},\\
\end{align*}
\begin{align*}
&1\otimes [[E^1_0/(\Aut_{\cC_2(\cP)}(X)\times \Aut_{\cC_2(\cP)}(Y))\ltimes\Hom_{\cC_2(\cP)}(X,Y)]\xrightarrow{\varphi_0}\cM]\\
=&\delta_{[X][Y^*]}\otimes\frac{b_{\hat{X}^1-\hat{X}^0}}{t^2-1},
\end{align*}
where $\delta_{[X][Y^*]}$ is the Kronecker symbol.

\textbf{Case $(i=1)$.} For any $\xi\in E^1_2$, suppose the contractible part $(Z_\xi)_c\cong K_P\oplus K_Q^*$ for some $P,Q\in \cP$, by Lemma \ref{regular} and Remark \ref{special case}, we have
\begin{align*}
&b_{\hat{P}-\hat{Q}}\diamond \Upsilon(\Aut_{\cC_2(\cP)}((Z_\xi)_r))\delta_{[(Z_\xi)_r]}\\=&b_P*b_Q^*\diamond \Upsilon(\Aut_{\cC_2(\cP)}((Z_\xi)_r))\delta_{[(Z_\xi)_r]}\\
=&\Upsilon(\Aut_{\cC_2(\cP)}((Z_\xi)_c))\delta_{[(Z_\xi)_c]}\diamond \Upsilon(\Aut_{\cC_2(\cP)}((Z_\xi)_r))\delta_{[(Z_\xi)_r]}\\
=&t^{-2\dim \Hom_{\cC_2(\cP)}((Z_\xi)_r,(Z_\xi)_c)}\Upsilon(\Aut_{\cC_2(\cP)}(Z_\xi))\delta_{[Z_\xi]}.
\end{align*}
We subdivide 
$$E^1_1=\bigsqcup_{P,Q\in \cP} E^1_{1,P,Q}$$
into disjoint union of constructible subsets, where $E^1_{1,P,Q}$ consists of $\xi\in E^1_1$ such that $(Z_\xi)_c\cong K_P\oplus K_Q^*$, which are invariant under  $(\Aut_{\cC_2(\cP)}(X)\times \Aut_{\cC_2(\cP)}(Y))\ltimes\Hom_{\cC_2(\cP)}(X,Y)$-action. Note that $(Z_\xi)_r^j\in \cP$ satisfies $(Z_\xi)_r^j\oplus P\oplus Q\cong X^j\oplus Y^j$, and so the dimension of $\Hom_{\cC_2(\cP)}((Z_\xi)_r,(Z_\xi)_c)\cong\Hom_{\cA}(P,(Z_\xi)_r^1)\oplus \Hom_{\cA}(Q,(Z_\xi)_r^0)$ is constant for $\xi\in E^1_{1,P,Q}$, denoted by $n(P,Q)$. By Lemma \ref{property of h}, we have $\overline{b_{\hat{P}-\hat{Q}}}=1$ and 
\begin{align*}
&[[E^1_1/(\Aut_{\cC_2(\cP)}(X)\times \Aut_{\cC_2(\cP)}(Y))\ltimes\Hom_{\cC_2(\cP)}(X,Y)]\xrightarrow{\varphi_1}\cM]\\
=&\!\!\sum_{P,Q\in \cP}\!\! t^{2n(P,Q)}b_{\hat{P}-\hat{Q}}\!\diamond\![[E^1_{1,P,Q}/(\Aut_{\cC_2(\cP)}(X)\!\!\times\!\! \Aut_{\cC_2(\cP)}(Y))\!\!\ltimes\!\!\Hom_{\cC_2(\cP)}(X,Y)]\!\xrightarrow{r\varphi_{1,P,Q}}\!\!\cM],\\
&1\otimes [[E^1_1/(\Aut_{\cC_2(\cP)}(X)\times \Aut_{\cC_2(\cP)}(Y))\ltimes\Hom_{\cC_2(\cP)}(X,Y)]\xrightarrow{\varphi_1}\cM]\\
=&\sum_{P,Q\in \cP}1\otimes [[E^1_{1,P,Q}/(\Aut_{\cC_2(\cP)}(X)\!\times \!\Aut_{\cC_2(\cP)}(Y))\ltimes\Hom_{\cC_2(\cP)}(X,Y)]\xrightarrow{r\varphi_{1,P,Q}}\cM]\in \fn,
\end{align*}
where $\varphi_{1,P,Q}$ is the restriction of $\varphi_1$, and $r$ corresponds to the map $[Z_\xi]\mapsto [(Z_\xi)_r]$.

\textbf{Case $(i=2)$.} In this case, we have $Z_\xi=(Z_\xi)_r\cong X\oplus Y$ and $E^1_2=\{0\}$. By Lemma \ref{Upsilon(Aut)}, we have
\begin{align*}
&[[E^1_2/(\Aut_{\cC_2(\cP)}(X)\times \Aut_{\cC_2(\cP)}(Y))\ltimes\Hom_{\cC_2(\cP)}(X,Y)]\xrightarrow{\varphi_2}\cM]\\
=&\!\Upsilon(\Aut_{\cC_2(\cP)}(X))^{-1}\Upsilon(\Aut_{\cC_2(\cP)}(Y))^{-1}t^{-2\dim \Hom_{\cC_2(\cP)}(X,Y)}\Upsilon(\Aut_{\cC_2(\cP)} (X\oplus Y))\delta_{[X\oplus Y]}\\
=&\begin{cases}
t^{2l-2\dim \Hom_{\cC_2(\cP)}(X,Y)}\delta_{[X\oplus Y]},\ X\not\cong Y,\\
t^{2l-2\dim \Hom_{\cC_2(\cP)}(X,Y)+2}(t^2+1)\delta_{[X\oplus Y]},\ X\cong Y,
\end{cases}\\
&1\otimes [[E^1_2/(\Aut_{\cC_2(\cP)}(X)\times \Aut_{\cC_2(\cP)}(Y))\ltimes\Hom_{\cC_2(\cP)}(X,Y)]\xrightarrow{\varphi_2}\cM]\\
=&(1+\delta_{[X][Y]})\otimes \delta_{[X\oplus Y]}.
\end{align*}

\textbf{Case $(i=3)$.} By similar argument as in \textbf{Case $(i=1)$}, we have
\begin{align*}
&[[E^1_3/(\Aut_{\cC_2(\cP)}(X)\times \Aut_{\cC_2(\cP)}(Y))\ltimes\Hom_{\cC_2(\cP)}(X,Y)]\xrightarrow{\varphi_3}\cM]\\
=&\sum_{P,Q\in \cP}\!\! t^{2n(P,Q)}b_{\hat{P}-\hat{Q}}\!\diamond\![[E^1_{3,P,Q}/(\Aut_{\cC_2(\cP)}(X)\!\!\times\!\! \Aut_{\cC_2(\cP)}(Y))\!\!\ltimes\!\!\Hom_{\cC_2(\cP)}(X,Y)]\!\!\xrightarrow{r\varphi_{3,P,Q}}\!\!\cM]\\
=&\!\!\!\!\sum_{P,Q\in \cP}\!\!\!t^{2n(P,Q)\!-\!2\dim\! \Hom_{\cC_2(\cP)}(X,Y)}b_{\hat{P}-\hat{Q}}\!\diamond\! \!\Upsilon(\!\Aut_{\cC_2(\cP)}(X)\!\!\times\!\!\Aut_{\cC_2(\cP)}(Y)\!)^{-1}[\!E^1_{3,P,Q}\!\!\xrightarrow{r\varphi_{3,P,Q}q}\!\!\cM]
\end{align*}
where $q:E^1_{3,P,Q}\rightarrow [E^1_{3,P,Q}/(\Aut_{\cC_2(\cP)}(X)\times \Aut_{\cC_2(\cP)}(Y))\ltimes\Hom_{\cC_2(\cP)}(X,Y)]$ is the natural morphism. We claim that
\begin{equation}
(t^2-1)|\Upsilon(\Aut_{\cC_2(\cP)}(X)\times\Aut_{\cC_2(\cP)}(Y))^{-1}[E^1_{3,P,Q} \xrightarrow{r\varphi_{3,P,Q}q}\cM] \tag{$\star$}
\end{equation}
Indeed, by Lemma \ref{Upsilon(Aut)}, we have 
$$\Upsilon(\Aut_{\cC_2(\cP)}(X)\times \Aut_{\cC_2(\cP)}(Y))\!=\!(t^2-1)^2t^{2(\dim \mbox{\rad} \End_{\cC_2(\cP)}(X)+\dim \mbox{\rad}\End_{\cC_2(\cP)}(Y))},$$
so $(t^2-1)^{-2}$ appears in the right hand side of ($\star$). For any $\xi\in E^1_{3,P,Q}$, by Lemma \ref{Upsilon(Aut)}, we know that $(t^2-1)^2|\Upsilon(\Aut_{\cC_2(\cP)}(Z_\xi)_r)$. 
Furthermore, there is a free $\bbC^*$-action on $\Ext^1_{\cC_2(\cP)}(X,Y)_{Z_\xi}$, see the proof of \cite[Theorem 4]{Hubery-2006} or \cite[Corollary 5.8]{Chen-Deng-2015}, which implies that $(t^2-1)|\Upsilon(\Ext^1_{\cC_2(\cP)}(X,Y)_{Z_\xi})$. This finishes the proof of ($\star$), and so
$$1\otimes [[E^1_3/(\Aut_{\cC_2(\cP)}(X)\times \Aut_{\cC_2(\cP)}(Y))\ltimes\Hom_{\cC_2(\cP)}(X,Y)]\xrightarrow{\varphi_3}\cM]=0.$$

Hence we have
$$\overline{\delta_{[X]}}*\overline{\delta_{[Y]}}=\delta_{[X][Y^*]}\otimes\frac{b_{\hat{X}^1-\hat{X}^0}}{t^2-1}+\theta+(1+\delta_{[X][Y]})\otimes \delta_{[X\oplus Y]},$$
where $\theta\in \fn$. Similarly, we can calculate 
$$\overline{\delta_{[Y]}}*\overline{\delta_{[X]}}=\delta_{[Y][X^*]}1\otimes\frac{b_{\hat{Y^1}-\hat{Y^0}}}{t^2-1}+\theta'+(1+\delta_{[Y][X]})1\otimes \delta_{[Y\oplus X]},$$
where $\theta'\in \fn$. Therefore, 
\begin{align*}
&\overline{\delta_{[X]}}*\overline{\delta_{[Y]}}-\overline{\delta_{[Y]}}*\overline{\delta_{[X]}}\\
=&\theta-\theta'+\delta_{[X][Y^*]}1\otimes \frac{b_{\hat{X}^1-\hat{X}^0}-b_{\hat{X}^0-\hat{X}^1}}{t^2-1}\\
=&\theta-\theta'+\delta_{[X][Y^*]}1\otimes (\frac{-b_{\hat{X}^1-\hat{X}^0}-1}{-t+1}*\frac{b_{\hat{X}^0-\hat{X}^1}-1}{-t-1})\\
=&\theta-\theta'+\delta_{[X][Y^*]}\frac{-1-1}{1+1}h_{\hat{X}^0-\hat{X}^1}\\
=&\theta-\theta'-\delta_{[X][Y^*]}h_{\hat{X}^0-\hat{X}^1}\in \fg,
\end{align*}
where we use $b_\alpha*b_{-\alpha}=1,\overline{b_\alpha}=1\otimes b_\alpha=1$ for any $\alpha\in K(\cA)$, see Lemma \ref{property of h}.
\end{proof}

\begin{proof}[Proof of Theorem \ref{Lie algebra g}]
For any $\alpha,\beta\in K(\cA)$ and any 
\begin{align*}
a=[[\cO/\rG_{\ue}]\hookrightarrow \cN^{\rad}\hookrightarrow \cM],\ b=[[\cO'/\rG_{\ue'}]\hookrightarrow \cN^{\rad}\hookrightarrow \cM],
\end{align*}
where $\cO\subset \rP_2^{\rad}(A,\ue),\cO'\subset \rP_2^{\rad}(A,\ue')$ are $\rG_{\ue}$-invariant, $\rG_{\ue'}$-invariant, respectively, constructible subsets consisting of points corresponding to indecomposable radical complexes, by Lemma \ref{property of h}, we have
$$[h_\alpha,h_\beta]=0\in \fg,\ [h_\alpha,\overline{a}]=(\alpha,\hat{P}^0-\hat{P}^1)\overline{a}\in \fg,$$
where $P^j=\bigoplus_{i\in I}e^j_iP_i$. It remains to prove $[\overline{a},\overline{b}]\in \fg$. Our proof is essentially the same as the proof of Lemma \ref{simple}. By Proposition \ref{Motivic Riedtmann-Peng}, we have
\begin{align*}
a\diamond b&=\sum_{m\in M}[[V_m\times E^1_m/G_m\ltimes E^0_m]\xrightarrow{\varphi_m}\cM],\\
\overline{a}*\overline{b}&=\sum_{m\in M}1\otimes [[V_m\times E^1_m/G_m\ltimes E^0_m]\xrightarrow{\varphi_m}\cM],
\end{align*}
where $V_m\subset \cO\times \cO'$ is a constructible subset such that if $v\in V_m$ corresponds to two indecomposable radical complexes $X,Y$, then $\Stab_{G_m}(v)\!\cong\! \Aut_{\cC_2(\cP)}(X)\!\times\! \Aut_{\cC_2(\cP)}(Y)$ and the dimensions of $E^1_m\cong \Ext^1_{\cC_2(\cP)}(X,Y), E^0_m\cong \Hom_{\cC_2(\cP)}(X,Y)$ are constant for various $v\in V_m$. Under these identifications, the morphism $\varphi_m$ corresponds to the map $(v,\xi)\mapsto [Z_\xi]$, where $[Z_\xi]$ is the isomorphism class of the middle term of $\xi\in E^1_m\cong\Ext^1_{\cC_2(\cP)}(X,Y)$. We divide 
$$V_m\times E^1_m=\bigsqcup^3_{i=0}(V_m\times E^1_m)_i$$
into disjoint union of constructible subsets, where $(V_m\times E^1_m)_0$ consists of $(v,\xi)$ such that $(Z_\xi)_r=0$, $(V_m\times E^1_m)_1$ consists of $(v,\xi)$ such that $(Z_\xi)_r$ is indecomposable, $(V_m\times E^1_m)_2$ consists of $(v,\xi)$ such that $(Z_\xi)_r\cong X\oplus Y$, and $(V_m\times E^1_m)_3$ is their complement, which are all invariant under the $G_m\ltimes E^0_m$-action, and then
\begin{align*}
a\diamond b&=\sum_{m\in M}\sum_{i=0}^3[[(V_m\times E^1_m)_i/G_m\ltimes E^0_m]\xrightarrow{(\varphi_m)_i}\cM],\\
\overline{a}*\overline{b}&=\sum_{m\in M}\sum_{i=0}^31\otimes [[(V_m\times E^1_m)_i/G_m\ltimes E^0_m]\xrightarrow{(\varphi_m)_i}\cM],
\end{align*}
where $(\varphi_m)_i$ is the restriction of $\varphi_m$. We discuss case by case as follows.

\textbf{Case $(i=0)$'.} The radical part $(Z_\xi)_r=0$ means $Z_\xi$ is contractible. By Lemma \ref{middle term contractible}, we have $Y\cong X^*$ and $\Ext^1_{\cC_2(\cA)}(X,Y)_{Z_\xi}\cong \Aut_{\cK_2(\cP)} (X)$. Moreover, the middle term $Z_\xi\cong K_{X^1}\oplus K_{X^0}^*= K_{P^1}\oplus K_{P^0}^*$ is fixed, where $P^j=\bigoplus_{i\in I}e^j_iP_i$. Thus, 
$$[(V_m\times E^1_m)_0/G_m\ltimes E^0_m]\cong [(\cO\cap\cO'^*)_m/\rG_{\ue}]\times [(E^1_m)_0/G_m\ltimes E^0_m],$$
where $\cO'^*=\{(y^0,y^1,-d^0,-d^1)|(y^1,y^0,d^1,d^0)\in \cO'\}$, $(\cO\cap\cO'^*)_m\subset \cO\cap\cO'^*$ is a constructible subset consisting of points corresponding to $X$ if there exists $v\in V_m$ corresponds to $(X,X^*)$, and $(E^1_m)_0\cong \Ext^1_{\cC_2(\cA)}(X,X^*)_{Z_\xi}$ is a constructible subset of $E^1_m\cong \Ext^1_{\cC_2(\cA)}(X,X^*)$. Moreover, by relation (iii) in Definition \ref{relation3}, we may reduce to the \textbf{Case $(i=0)$} in the proof of Lemma \ref{simple}, and then
\begin{align*}
&\sum_{m\in M}[[(V_m\times E^1_m)_0/G_m\ltimes E^0_m]\xrightarrow{(\varphi_m)_0}\cM]\\
=&\sum_{m\in M}[[(\cO\cap\cO'^*)_m/\rG_{\ue}]\times [(E^1_m)_0/G_m\ltimes E^0_m]\xrightarrow{\varphi_0\pi_2}\cM]\\
=&\sum_{m\in M}\Upsilon([(\cO\cap\cO'^*)_m/\rG_{\ue}]) [[(E^1_m)_0/G_m\ltimes E^0_m]\xrightarrow{\varphi_0}\cM],\\
&1\otimes \sum_{m\in M}[[(V_m\times E^1_m)_0/G_m\ltimes E^0_m]\xrightarrow{(\varphi_m)_0}\cM]\\
=&\sum_{m\in M}1\otimes \Upsilon([(\cO\cap\cO'^*)_m/\rG_{\ue}]) \frac{b_{\hat{P}^1-\hat{P}^0}}{t^2-1}\\
=&1\otimes \Upsilon([(\cO\cap\cO'^*)/\rG_{\ue}]) \frac{b_{\hat{P}^1-\hat{P}^0}}{t^2-1},
\end{align*}
where $\pi_2$ is the natural second projection.

\textbf{Case $(i=1)$'.} By the same argument as in \textbf{Case $(i=1)$} in the proof of Lemma \ref{simple}, we subdivide 
$$(V_m\times E^1_m)_1=\bigsqcup_{P,Q\in \cP}(V_m\times E^1_m)_{1,P,Q}$$ 
into constructible subsets, where $(V_m\times E^1_m)_{1,P,Q}$ consists of $(v,\xi)$ such that $(Z_\xi)_c\cong K_P\oplus K_Q^*$, which are invariant under $G_m\ltimes E^0_m$-action, then we have 
\begin{align*}
&\sum_{m\in M}[[(V_m\times E^1_m)_1/G_m\ltimes E^0_m]\xrightarrow{(\varphi_m)_1}\cM]\\
=&\sum_{m\in M}\sum_{P,Q\in \cP}t^{2n(P,Q)}b_{\hat{P}-\hat{Q}}\diamond [[(V_m\times E^1_m)_{1,P,Q}/G_m\ltimes E^0_m]\xrightarrow{r(\varphi_m)_{1,P,Q}}\cM],\\
&1\otimes \sum_{m\in M}[[(V_m\times E^1_m)_1/G_m\ltimes E^0_m]\xrightarrow{(\varphi_m)_1}\cM]\\
=&\sum_{m\in M}\sum_{P,Q\in \cP}1\otimes [[(V_m\times E^1_m)_{1,P,Q}/G_m\ltimes E^0_m]\xrightarrow{r(\varphi_m)_{1,P,Q}}\cM]\in \fn,
\end{align*}
where $(\varphi_m)_{1,P,Q}$ is the restriction of $(\varphi_m)_1$.

\textbf{Case $(i=2)$'.} In this case, $(V_m\times E^1_m)_2$ consists of $(v,\xi)$ such that $Z_\xi= (Z_\xi)_r\cong X\oplus Y$, and so $\xi=0$. Similarly, in the calculations of $b\diamond a$ and $\overline{b}*\overline{a}$, we also have $[(V_m'\times E'^1_m)_2/G'_m\ltimes E'^0_m\xrightarrow{(\varphi'_m)_2}\cM]$ and $1\otimes [(V_m'\times E'^1_m)_2/G'_m\ltimes E'^0_m\xrightarrow{(\varphi'_m)_2}\cM]$ in the summations. It is clear that 
\begin{align*}
[(V_m'\times E'^1_m)_2/G'_m\ltimes E'^0_m\xrightarrow{(\varphi'_m)_2}\cM]&=[(V_m\times E^1_m)_2/G_m\ltimes E^0_m\xrightarrow{(\varphi_m)_2}\cM],\\
1\otimes [(V_m'\times E'^1_m)_2/G'_m\ltimes E'^0_m\xrightarrow{(\varphi'_m)_2}\cM]&=1\otimes [(V_m\times E^1_m)_2/G_m\ltimes E^0_m\xrightarrow{(\varphi_m)_2}\cM].
\end{align*}
Hence the contributions of these terms cancel out in the commutator $[\overline{a},\overline{b}]$.

\textbf{Case $(i=3)$'.} By similar argument as in \textbf{Case $(i=1)$'}, we have
\begin{align*}
&\sum_{m\in M}[[(V_m\times E^1_m)_3/G_m\ltimes E^0_m]\xrightarrow{(\varphi_m)_3}\cM]\\
=&\sum_{m\in M}\sum_{P,Q\in \cP}t^{2n(P,Q)}b_{\hat{P}-\hat{Q}}\diamond [[[(V_m\times E^1_m)_{3,P,Q}/G_m\ltimes E^0_m]\xrightarrow{r(\varphi_m)_3}\cM]]
\end{align*}
For any $(v,\xi)\in (V_m\times E^1_m)_{3,P,Q}$, by similar argument as in \textbf{Case $(i=3)$} in the proof of Lemma \ref{simple}, we know that 
$$(t^2-1)|[[(E^1_m)_{3,P,Q}/(\Aut_{\cC_2(\cP)}(X)\times \Aut_{\cC_2(\cP)}(Y))\ltimes \Hom_{\cC_2(\cA)}(X,Y)]\xrightarrow{r(\varphi_m)_3}\cM],$$
and so 
$$1\otimes \sum_{m\in M}[[(V_m\times E^1_m)_3/G_m\ltimes E^0_m]\xrightarrow{(\varphi_m)_3}\cM]=0.$$

Similarly, we may calculate $b\diamond a$ and $\overline{b}*\overline{a}$, and then 
$$\overline{a}*\overline{b}-\overline{b}*\overline{a}=\Theta-\Theta'+\Upsilon([(\cO\cap\cO'^*)/\rG_{\ue}])1\otimes \frac{b_{\hat{P}^1-\hat{P}^0}-b_{\hat{P}^0-\hat{P}^1}}{t^2-1},$$
where
\begin{align*}
\Theta&=\sum_{m\in M}\sum_{P,Q\in \cP}1\otimes [[(V_m\times E^1_m)_{1,P,Q}/G_m\ltimes E^0_m]\xrightarrow{r(\varphi_m)_1}\cM]\in \fn,\\
\Theta'&=\sum_{m\in M}\sum_{P,Q\in \cP}1\otimes [[(V'_m\times E'^1_m)_{1,P,Q}/G'_m\ltimes E'^0_m]\xrightarrow{r(\varphi'_m)_1}\cM]\in \fn.
\end{align*}
By $\Upsilon([(\cO\cap\cO'^*)/\rG_{\ue}])|_{t=-1}=\chi([(\cO\cap\cO'^*)/\rG_{\ue}])$ and Lemma \ref{property of h}, we finally obtain
\begin{align*}
[\overline{a},\overline{b}]=&\Theta-\Theta'+\chi([(\cO\cap\cO'^*)/\rG_{\ue}])1\otimes (\frac{-b_{\hat{P}^1-\hat{P}^0}-1}{-t+1}*\frac{b_{\hat{P}^0-\hat{P}^1}}{-t-1})\\
=&\Theta-\Theta'-\chi([(\cO\cap\cO'^*)/\rG_{\ue}])h_{\hat{P}^0-\hat{P}^1}\in \fg,
\end{align*}
as desired.
\end{proof}

For any $\ue\in\bbN I\times \bbN I$ and $\rG_{\ue}$-invariant constructible subset  $\cO\subset \rP_2^{\rad}(A,\ue)$ which consists of points corresponding to indecomposable radical complexes, we denote by 
$$1_{[\cO/\rG_{\ue}]}=[[\cO/\rG_{\ue}]\hookrightarrow \cN\hookrightarrow \cM]\in \cD\cH^{\ind}_t(\cA)_{\bbC_{-1}},\ \overline{1}_{[\cO/\rG_{\ue}]}=1\otimes 1_{[\cO/\rG_{\ue}]}\in \fn$$
the characteristic stack function of the quotient stack $[\cO/\rG_{\ue}]$ and 
its image in the classical limit. In the end of this subsection, we summarize the formulas about the Lie bracket on $\fg$ from Lemma \ref{property of h} and the proof of Theorem \ref{Lie algebra g} as follows,
\begin{equation}
\begin{aligned}\label{Lie algebra g formula}
&[\overline{1}_{[\cO/\rG_{\ue}]},\overline{1}_{[\cO'/\rG_{\ue'}]}]=[\overline{1}_{[\cO/\rG_{\ue}]},\overline{1}_{[\cO'/\rG_{\ue'}]}]_{\fn}-\chi([(\cO\cap\cO'^*)/\rG_{\ue}])h_\alpha,\\
&[\overline{1}_{[\cO/\rG_{\ue}]},\overline{1}_{[\cO'/\rG_{\ue'}]}]_{\fn}\in \fn,\\
&[h_\alpha,\overline{1}_{[\cO'/\rG_{\ue'}]}]=(\alpha, \hat{P}'^0-\hat{P}'^1)\overline{1}_{[\cO'/\rG_{\ue'}]},\\
&[\overline{1}_{[\cO'/\rG_{\ue'}]},h_\alpha]=-[h_\alpha,\overline{1}_{[\cO'/\rG_{\ue'}]}],\\
&[h_\alpha,h_{\alpha'}]=0,
\end{aligned}
\end{equation}
where $P^j=\bigoplus_{i\in I}e^j_iP_i,P'^j=\bigoplus_{i\in I}e'^j_iP_i$ and $\alpha=\hat{P}^0-\hat{P}^1,\alpha'\in K(\cA)$.

\section{Lie algebra arising from $\cK_2(\cP)$}

This section is essentially a revise of \cite{Xiao-Xu-Zhang-2006}. We add the $*$-saturated condition to refine their definition for $\rP_2(A,\bd)$ in Subsection \ref{Moduli K_2(P)} and rewrite a self-contained proof for Theorem \ref{Lie algebra g_2} in this section. Our proof is a reformulation and a refinement of Xiao-Xu-Zhang \cite{Xiao-Xu-Zhang-2006}, and the basic idea is the same as Peng-Xiao \cite{Peng-Xiao-2000}.

\subsection{Support-bounded constructible function}\label{Support-bounded constructible function}\

In Subsection \ref{Moduli K_2(P)}, we define an ind-limit $\rP_2(A,\bd)=\varinjlim_{\ue\in \udim^{-1}(\bd)}\rP^*_2(A,\ue)$ with an ind-limit $\rG_{\bd}$-action for any $\bd\in K_0$.

For any $\bd\in K_0$ and $\ue\in \udim^{-1}(\bd)$, we denote by $M_{\rG_{\ue}}(\rP_2^*(A,\ue))$ the $\bbC$-vector space of $\rG_{\ue}$-invariant constructible functions on $\rP_2^*(A,\ue)$, and denote by $M_{\rG_{\bd}}(\rP_2(A,\bd))$ the $\bbC$-vector space of $\rG_{\bd}$-invariant constructible functions on $\rP_2(A,\bd)$. Then for any $\ue,\ue'\in \udim^{-1}(\bd)$ satisfying $\ue\leqslant \ue'$, the morphism $t_{\ue\ue'}:\rP_2^*(A,\ue)\rightarrow \rP_2^*(A,\ue')$, see Subsection \ref{Moduli K_2(P)}, induces a $\bbC$-linear map
\begin{align*}
t_{\ue\ue'}^*:M_{\rG_{\ue'}}(\rP_2^*(A,\ue'))\rightarrow M_{\rG_{\ue}}(\rP_2^*(A,\ue))\\
f\mapsto (x\mapsto f(t_{\ue\ue'}(x))),
\end{align*}
such that $\{M_{\rG_{\ue}}(\rP_2^*(A,\ue))\}_{\ue\in \udim^{-1}(\bd)}$ is an inverse system, and 
$$M_{\rG_{\bd}}(\rP_2(A,\bd))=\varprojlim_{\ue\in \udim^{-1}(\bd)}M_{\rG_{\ue}}(\rP_2^*(A,\ue)).$$

A subset $\hat\cO\subset \rP_2(A,\bd)$ is said to be $\rG_{\bd}$-invariant support-bounded constructible, if there exists a $\rG_{\ue}$-invariant constructible subset $\cO_{\ue}\subset\rP_2^*(A,\ue)$ for some $\ue\in \udim^{-1}(\bd)$ such that $\hat\cO=\rG_{\bd}.t_{\ue}(\cO_{\ue})$, where $t_{\ue}:\rP_2^*(A,\ue)\rightarrow \rP_2(A,\bd)$ is the natural morphism, see Subsection \ref{Moduli K_2(P)}. In this case, we denote by $\cO_{\ue'}=\rG_{\ue'}.t_{\ue\ue'}(\cO_{\ue})$ for any $\ue'\in \udim^{-1}(\bd)$ satisfying $\ue\leqslant \ue'$, then $\hat\cO=\rG_{\bd}.\varinjlim_{\ue'\in \udim^{-1}(\bd), \ue\leqslant \ue'}\cO_{\ue'}$.

For a $\rG_{\bd}$-invariant support-bounded constructible subset $\hat\cO\subset \rP_2(A,\bd)$, we denote by $1_{\hat\cO}$ the characteristic function of $\hat\cO$. Then each element in $M_{\rG_{\bd}}(\rP_2(A,\bd))$ is of the form $\sum^n_{i=1}c_i1_{\hat\cO_i}$, where $c_i\in \bbC$ and $\hat\cO_i\subset \rP_2(A,\bd)$ is a $\rG_{\bd}$-invariant support-bounded constructible subset.

Recall that in Subsection \ref{Derived invariance 2}, for another $\bbC$-algebra $B$ such that $\cD^b(A)\simeq\cD^b(B)$, we construct a morphism $\Psi:\rP_2(A,\bd_A)\rightarrow \rP_2(B,\bd_B)$ which induces an isomorphism $$[\rP_2(A,\bd_A)/\rG_{\bd_A}]\cong[\rP_2(B,\bd_B)/\rG_{\bd_B}].$$
By definition of $\Psi$, for any $\rG_{\bd_A}$-invariant support-bounded constructible subset $\hat\cO\subset \rP_2(A,\bd_A)$, its image $\Psi(\hat{\cO})\subset \rP_2(B,\bd_B)$ is a finite disjoint union of $\rG_{\bd_B}$-invariant support-bounded constructible subsets. Moreover, $\Psi$ induces an isomorphism
$$M_{\rG_{\bd_A}}(\rP_2(A,\bd_A))\cong M_{\rG_{\bd_B}}(\rP_2(B,\bd_B)).$$

\subsection{Convolution}\label{convolution}\

In this subsection, we define a bilinear map
$$-*-:M_{\rG_{\bd'}}(\rP_2(A,\bd'))\times M_{\rG_{\bd''}}(\rP_2(A,\bd''))\rightarrow M_{\rG_{\bd}}(\rP_2(A,\bd))$$
for any $\bd=\bd'+\bd''\in K_0$.

For convenience, from now on, we identify any point $(x^1,x^0,d^1,d^0)\in \rP_2^*(A,\ue)$ with the corresponding complex $X=(M(x^1),M(x^0),d^1,d^0)\in\cC_2(\cP)$. Moreover, for any $\ue\in \udim^{-1}(\bd)$, we identify the natural morphism $t_{\ue}:\rP_2^*(A,\ue)\rightarrow \rP_2(A,\bd)$ with the map $X\mapsto \tilde{X}=\{X\oplus K_P\oplus K_P^*|P\in \cP\}$.

For any $\rG_{\ue'}$-invariant constructible subset $\cO'\subset \rP_2^*(A,\ue')$ and any $\rG_{\ue''}$-invariant constructible subset $\cO''\subset \rP_2^*(A,\ue'')$, where $\ue'\in \udim^{-1}(\bd'),\ue''\in \udim^{-1}(\bd'')$, and for any $\tilde{Z}\in \rP_2(A,\bd)$, we take a representative $Z\in \tilde{Z}$ in the homotopy equivalence class, and  define a constructible set $W(\cO',\cO'';Z)$ consisting of triples of morphisms $(f,g,h)$ in $\cK_2(\cP)$ such that
\begin{equation}
Y\xrightarrow{f}Z\xrightarrow{g}X\xrightarrow{h}Y^* \tag{$\triangle$}
\end{equation}
is a distinguished triangle, where $X\in \cO',Y\in \cO''$. Indeed, it is a bundle over $\cO'\times \cO''$, whose fibers are constructible sets. More precisely, its fiber at $(X,Y)\in \cO'\times \cO''$ is a subset 
\begin{equation}\label{W(X,Y;Z)}
W(X,Y;Z)\subset\Hom_{\cK_2(\cP)}(Y,Z)\times \Hom_{\cK_2(\cP)}(Z,X)\times \Hom_{\cK_2(\cP)}(X,Y^*)
\end{equation}
consisting of $(f,g,h)$ such that $(\triangle)$ is a distinguished triangle. Since distinguished triangles of the form $(\triangle)$ are isomorphic to the rotation of the  standard triangle given by mapping cone of $h$, the constructibility of $W(X,Y;Z)$ is equivalent to the constructibility of the subset $\Hom_{\cK_2(\cP)}(X,Y^*)_{Z^*}\subset \Hom_{\cK_2(\cP)}(X,Y^*)$. The later is given by \cite[Proposition 9]{Caldero-Keller-2008}. 

The group $\rG_{\ue'}\times \rG_{\ue''}$ acts on $W(\cO',\cO'';Z)$ as follows. For any $(\alpha,\beta)\in \rG_{\ue'}\times \rG_{\ue''}$ and $(f,g,h)\in W(\cO',\cO'';Z)$, suppose $(f,g,h)\in W(X,Y;Z)$, then 
$$(\alpha,\beta).(f,g,h)=(f\beta^{-1},\alpha g,\beta^*h\alpha^{-1})\in W(\alpha.X,\beta.Y;Z),$$
where $\alpha,\beta$ induce isomorphisms $X\rightarrow\alpha.X,Y\rightarrow\beta.Y$ in $\cC_2(\cP)$ which are homotopy equivalences, that is, their homotopy classes are isomorphisms in $\cK_2(\cP)$, still denoted by $\alpha,\beta$, such that $(f\beta^{-1},\alpha g,\beta^*h\alpha^{-1})$ form a distinguished triangle. Indeed, there is a commutative diagram
\begin{diagram}[midshaft,size=2em]
Y &\rTo^{f} &Z &\rTo^{g} &X &\rTo^{h} &Y^*\\
\dTo^{\beta} & &\vEq &&\dTo^{\alpha} &&\dTo^{\beta^*}\\
\beta.Y &\rTo^{f\beta^{-1}} &Z &\rTo^{\alpha g} &\alpha.X &\rTo^{\beta^*h\alpha^{-1}} &(\beta.Y)^*.
\end{diagram}

By Subsection \ref{naive Euler characteristic}, the naive Euler characteristic of
$[W(\cO',\cO'';Z)/\rG_{\ue'}\times \rG_{\ue''}](\bbC)$
has been defined. We define $V(\cO',\cO'';Z)=[W(\cO',\cO'';Z)/\rG_{\ue'}\times \rG_{\ue''}]$ and 
\begin{equation}\label{constant F}
F^Z_{\cO'\cO''}=\chi(V(\cO',\cO'';Z))=\chi^{\textrm{na}}([W(\cO',\cO'';Z)/\rG_{\ue'}\times \rG_{\ue''}](\bbC)).
\end{equation}

Note that $W(\cO',\cO'';Z)$ consists of triples of morphisms in $\cK_2(\cP)$ and contractible complexes $K_P\oplus K_P^*$ are zero objects in $\cK_2(\cP)$ for any $P\in\cP$, thus 
\begin{align*}
W(\cO',\cO'';Z)&=W(\cO',\cO'';Z_r\oplus K_P\oplus K_P^*),\\
V(\cO',\cO'';Z)&=V(\cO',\cO'';Z_r\oplus K_P\oplus K_P),\\
F^Z_{\cO'\cO''}&=F^{Z_r\oplus K_P\oplus K_P^*}_{\cO'\cO''}
\end{align*}
are independent of the choices of $Z\in \tilde{Z}$. Thus there is a well-defined function 
\begin{align*}
1_{\cO'}*1_{\cO''}:\rP_2(A,\bd)&\rightarrow \bbC\\
\tilde{Z}&\mapsto F^Z_{\cO'\cO''}.
\end{align*}
Similarly, for $\ue'_1\in \udim^{-1}(\bd'),\ue''_1\in \udim^{-1}(\bd'')$ satisfying $\ue'\leqslant \ue'_1,\ue''\leqslant \ue''_1$, we have 
\begin{equation}
\begin{aligned}\label{well-defined}
W(\cO',\cO'';Z)&=W(t_{\ue'\ue'_1}(\cO'),t_{\ue''\ue''_1}(\cO'');Z),\\
V(\cO',\cO'';Z)&=V(t_{\ue'\ue'_1}(\cO'),t_{\ue''\ue''_1}(\cO'');Z),\\
F^Z_{\cO'\cO''}&=F^Z_{t_{\ue'\ue'_1}(\cO')t_{\ue''\ue''_1}(\cO'')}.
\end{aligned}
\end{equation}

Therefore, for $\rG_{\bd'}$-invariant support-bounded constructible subset $\hat\cO'\subset \rP_2(A,\bd')$ and $\rG_{\bd''}$-invariant support-bounded constructible subset $\hat\cO''\subset \rP_2(A,\bd'')$, suppose $\hat\cO'=\rG_{\bd'}.t_{\ue'}(\cO_{\ue'})$ and $\hat\cO''=\rG_{\bd''}.t_{\ue''}(\cO_{\ue''})$, where $\cO'\subset \rP^*_2(A,\ue')$ is a $\rG_{\ue}$-invariant constructible subset, $\cO''\subset \rP^*_2(A,\ue'')$ is a $\rG_{\ue'}$-invariant constructible subset, and $\ue'\in \udim^{-1}(\bd'),\ue''\in \udim^{-1}(\bd'')$, the following 
\begin{align*}
W(\hat{\cO}',\hat{\cO}'',\tilde{Z})&=W(\cO',\cO'';Z)\\
V(\hat{\cO}',\hat{\cO}'',\tilde{Z})&=V(\cO',\cO'';Z)
\end{align*}
are independent of the choices $\ue'\in \udim^{-1}(\bd'),\ue''\in \udim^{-1}(\bd'')$, and 
there is a well-defined function
\begin{align*}
1_{\hat\cO'}*1_{\hat\cO''}:\rP_2(A,\bd)&\rightarrow \bbC\\
\tilde{Z}&\mapsto F^Z_{\cO'\cO''}.
\end{align*}
Obviously, the definition completely depends on the triangulated category structure of $\cK_2(\cP)$ which is determined by $\cD^b(A)$, see Subsection \ref{derived invariance}.

\begin{remark}\label{reduce to radical}
We make more remark on $(\ref{well-defined})$. For any fixed $(f,g,h)\in W(\cO',\cO'';Z)$, suppose $(f,g,h)\in W(X,Y;Z)$ of the form $(\triangle)$, there is a distinguished triangle $Y_r\xrightarrow{f}Z\xrightarrow{g}X_r\xrightarrow{h}(Y_r)^*$ such that $(f,g,h)\in W(\cO'_r,\cO''_r;Z)$ for some constructible subsets $\cO'_r\subset \cP_2^*(A,\ue'_r),\cO''_r\subset\rP_2^*(A,\ue''_r)$, where $\ue'_r,\ue''_r$ are projective dimension vector pairs of radical complexes $X_r,Y_r$ respectively. The contribution of $\rG_{\ue'}\times \rG_{\ue''}.(f,g,h)$ is the same as the contribution of $\rG_{\ue'_r}\times \rG_{\ue''_r}.(f,g,h)$ in $1_{\hat\cO'}*1_{\hat\cO''}(\tilde{Z})$, since these two orbits are the same subsets of $W(\cO',\cO'';Z)$. In other word, when we calculate the orbit $\rG_{\ue'}\times \rG_{\ue''}.(f,g,h)$ or its stabilizer, we may assume $X,Y$ are radical complexes.
\end{remark}

\begin{lemma}\label{convolution constructible}
The function $1_{\hat\cO'}*1_{\hat\cO''}$ belongs to $M_{\rG_{\bd}}(\rP_2(A,\bd))$.
\end{lemma}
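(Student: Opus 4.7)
The goal has two parts: establishing $\rG_{\bd}$-invariance and establishing constructibility (i.e.\ membership in the inverse limit $\varprojlim_{\ue\in \udim^{-1}(\bd)}M_{\rG_{\ue}}(\rP_2^*(A,\ue))$). The invariance is immediate from the construction: the value $F^Z_{\cO'\cO''}$ depends only on the homotopy equivalence class $\tilde Z$, as explicitly noted right before the lemma, and two points of $\rP_2(A,\bd)$ in the same $\rG_{\bd}$-orbit represent the same homotopy equivalence class by Corollary \ref{homotopy class and orbit}. So the substantive work lies in constructibility.

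The plan is to fix an arbitrary $\ue\in\udim^{-1}(\bd)$ and show that the pullback of $1_{\hat\cO'}*1_{\hat\cO''}$ along $t_{\ue}:\rP_2^*(A,\ue)\to\rP_2(A,\bd)$ is a $\rG_{\ue}$-invariant constructible function on $\rP_2^*(A,\ue)$. Using (\ref{well-defined}), we may enlarge $\ue',\ue''$ so that computing the value at $Z\in\rP_2^*(A,\ue)$ uses the same representatives $\cO'\subset \rP_2^*(A,\ue')$, $\cO''\subset \rP_2^*(A,\ue'')$ throughout. Then I would assemble the fiberwise data into a single total space: define
$$\mathbf{W}\subset \cO'\times \cO''\times \rP_2^*(A,\ue)\times \mathbf{H}$$
where $\mathbf{H}$ is the constructible bundle over $\cO'\times\cO''\times\rP_2^*(A,\ue)$ whose fiber over $(X,Y,Z)$ is $\Hom_{\cK_2(\cP)}(Y,Z)\times\Hom_{\cK_2(\cP)}(Z,X)\times\Hom_{\cK_2(\cP)}(X,Y^*)$, and $\mathbf{W}$ is cut out by the condition that the triple forms a distinguished triangle. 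Constructibility of $\mathbf{W}$ reduces, via the standard triangle attached to a mapping cone, to the constructibility of the subsets $\Hom_{\cK_2(\cP)}(X,Y^*)_{Z^*}\subset \Hom_{\cK_2(\cP)}(X,Y^*)$, which is exactly \cite[Proposition 9]{Caldero-Keller-2008}; the Hom groups themselves are quotients of Hom groups in $\cC_2(\cP)$ and thus form algebraic vector bundles on $\cO'\times\cO''\times\rP_2^*(A,\ue)$ after stratifying by ranks.

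Next, let $\pi:\mathbf{W}\to \rP_2^*(A,\ue)$ be the projection and note $\rG_{\ue'}\times \rG_{\ue''}$ acts on $\mathbf{W}$ fiberwise over $\rP_2^*(A,\ue)$, with fiber over $Z$ equal to $W(\cO',\cO'';Z)$. By Corollary \ref{fibre naive Euler characteristic}(b), applied along the induced morphism $[\mathbf{W}/\rG_{\ue'}\times\rG_{\ue''}]\to \rP_2^*(A,\ue)$, the function
$$Z\mapsto \chi\bigl([W(\cO',\cO'';Z)/\rG_{\ue'}\times\rG_{\ue''}]\bigr)=F^Z_{\cO'\cO''}$$
takes only finitely many values, each on a constructible (automatically $\rG_\ue$-invariant) subset of $\rP_2^*(A,\ue)$, so it lies in $M_{\rG_{\ue}}(\rP_2^*(A,\ue))$. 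Finally, compatibility of these restrictions under $t^*_{\ue\ue_1}$ for $\ue\leq \ue_1$ in $\udim^{-1}(\bd)$ is immediate from the three equalities in (\ref{well-defined}): enlarging $\ue'$ to some $\ue'_1$ (and similarly for $\ue''$) does not change $W$, $V$, or $F^Z$. Thus the fiberwise functions paste to an element of the inverse limit $M_{\rG_{\bd}}(\rP_2(A,\bd))$, as required.

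The main obstacle is the geometric input in the middle paragraph: producing $\mathbf{W}$ as a genuinely constructible family rather than just a fiberwise-constructible one. The subtlety is that the Hom-spaces in $\cK_2(\cP)$ depend on $X,Y,Z$ through homotopy, so one must verify (after restricting to suitable strata on which the ranks of the relevant Hom groups in $\cC_2(\cP)$ are locally constant) that the quotients by the homotopy subspaces assemble into algebraic vector bundles, and that the ``distinguished triangle'' condition is Zariski-constructible on the total space. Once this is done, the Euler characteristic constructibility is a formal consequence of Joyce's framework reviewed in Section 5.
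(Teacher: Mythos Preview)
Your approach is essentially the same as the paper's: assemble the fiberwise sets $W(\cO',\cO'';Z)$ into a global constructible set $W(\cO',\cO'')$ carrying a $\rG_{\ue'}\times\rG_{\ue''}$-action, project to the $Z$-direction, and read off constructibility of $Z\mapsto F^Z_{\cO'\cO''}$ as the naive Euler characteristic of the fiber.

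There is one misstep worth fixing. Corollary~\ref{fibre naive Euler characteristic}(b) does \emph{not} yield constructibility of the function $Z\mapsto\chi([W(\cO',\cO'';Z)/\rG_{\ue'}\times\rG_{\ue''}])$: that corollary \emph{assumes} the fibers already have constant naive Euler characteristic and then computes the total. What you need is the converse direction, namely that the fiberwise naive Euler characteristic is a constructible function on the base. The paper handles this by invoking Joyce's pushforward operation directly: writing $\rho:[W(\cO',\cO'')/\rG_{\ue'}\times\rG_{\ue''}]\to\rP_2(A,\bd)$ for the projection, one has $1_{\hat\cO'}*1_{\hat\cO''}=\CF(\rho)(1_{[W(\cO',\cO'')/\rG_{\ue'}\times\rG_{\ue''}]})$, and \cite[Theorem~4.9]{Joyce-2006} asserts that $\CF(\rho)$ sends constructible functions to constructible functions. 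Replacing your appeal to Corollary~\ref{fibre naive Euler characteristic}(b) with this citation completes your argument and makes it coincide with the paper's.

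Your discussion of the ``main obstacle'' (assembling $\mathbf{W}$ as a genuinely constructible family) and your invariance argument via Corollary~\ref{homotopy class and orbit} are both fine; the paper treats invariance by the equivalent device of exhibiting an explicit isomorphism $W(\cO',\cO'';Z)\cong W(\cO',\cO'';\gamma.Z)$ for $\gamma\in\rG_{\ue}$.
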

\begin{proof}
Consider the constructible set $$W(\hat\cO',\hat\cO'')=\{(\tilde{Z},(f,g,h))|\tilde{Z}\in \rP_2(A,\bd),(f,g,h)\in W(\hat\cO',\hat\cO'';\tilde{Z})\}$$
which has a $\rG_{\ue'}\times \rG_{\ue''}$-action given by $(\alpha,\beta).(Z,(f,g,h))=(Z,(\alpha,\beta).(f,g,h))$, where $\ue'\in \udim^{-1}(\bd'),\ue''\in \udim^{-1}(\bd'')$, then the natural projection $W(\hat\cO',\hat\cO'')\rightarrow \rP_2(A,\bd)$ induces a morphism
\begin{align*}
\rho:[W(\cO',\cO'')/\rG_{\ue'}\times \rG_{\ue''}]\rightarrow \rP_2(A,\bd).
\end{align*}
By definition and \cite[Theorem 4.9]{Joyce-2006}, $1_{\hat\cO'}*1_{\hat\cO''}=\textrm{CF}(\rho)(1_{[W(\cO',\cO'')/\rG_{\ue'}\times \rG_{\ue''}]})$ is a constructible function on $\rP_2(A,\bd)$. For any $Z\in \rP_2^*(A,\ue)$, where $\ue\in \udim^{-1}(\bd)$, and any $\gamma\in \rG_{\ue}$, notice that $Y\xrightarrow{f}Z\xrightarrow{g}X\xrightarrow{h}Y^*$ is a distinguished triangle if and only if $Y\xrightarrow{\gamma f}\gamma.Z\xrightarrow{g\gamma^{-1}}X\xrightarrow{h}Y^*$ is a distinguished triangle, and so $W(\cO',\cO'';Z)\cong W(\cO',\cO'';\gamma.Z)$. Moreover, there is an isomorphism $V(\cO',\cO'';Z)\cong V(\cO',\cO'';\gamma.Z)$ such that $F^Z_{\cO'\cO''}=F^{\gamma.Z}_{\cO',\cO''}$. Therefore, $1_{\hat\cO'}*1_{\hat\cO''}$ is $\rG_{\bd}$-invariant.
\end{proof}

Extending the map $(1_{\hat\cO'},1_{\hat\cO''})\mapsto 1_{\hat\cO'}*1_{\hat\cO''}$ linearly, we obtain a bilinear map
$$-*-:M_{\rG_{\bd'}}(\rP_2(A,\bd'))\times M_{\rG_{\bd''}}(\rP_2(A,\bd''))\rightarrow M_{\rG_{\bd}}(\rP_2(A,\bd)).$$
By using of the integration notation, see Definition \ref{integration}, we can express it explicitly. For any $\hat{f}'\in M_{\rG_{\bd'}}(\rP_2(A,\bd')),\hat{f}''\in M_{\rG_{\bd''}}(\rP_2(A,\bd''))$ and $\tilde{Z}\in M_{\rG_{\bd}}(\rP_2(A,\bd))$ , there is a $\rG_{\bd'}\times \rG_{\bd''}$-invariant constructible function on $W(\supp \hat{f}',\supp \hat{f}'';\tilde{Z})$ defined by
$$(\hat{f}'\triangledown\hat{f}'')(\triangle)=(\hat{f}'\triangledown\hat{f}'')(Y\xrightarrow{f}Z\xrightarrow{g}X\xrightarrow{h}Y^*)=\hat{f}'(\tilde{X})\hat{f}''(\tilde{Y}),$$
then
\begin{equation}\label{integration form}
(\hat{f}'*\hat{f}'')(\tilde{Z})\!=\!\int_{V(\supp \hat{f}',\supp \hat{f}'';\tilde{Z})}\!
\!\!\!(\hat{f}'\triangledown\hat{f}'')(\triangle)\!=\!\!\sum_{c',c''\in \bbC}c'c''\chi(V(\hat{f}'^{-1}(c'),\hat{f}'^{-1}(c');\tilde{Z})).
\end{equation}

\subsection{Lie algebra $\tilde{\fg}$}\label{Lie algebra spanned by supported-indecomposable function and the Grothendieck group}\

\begin{definition}Define $\tilde{\fh}=\bbC\otimes_{\bbZ}K_0$. For any $\bd\in K_0$, we denote by $\tilde{h}_{\bd}=1\otimes \bd$. Then $\tilde{\fh}$ is a $\bbC$-vector space spanned by $\{\tilde{h}_{\bd}|\bd\in K_0\}$ subject to relations 
$$\tilde{h}_{\bd}=\tilde{h}_{\bd'}+\tilde{h}_{\bd''}$$
for any $\bd=\bd'+\bd''\in K_0$.
\end{definition}

There is a well-defined symmetric bilinear form $(-|-):\tilde{\fh}\times \tilde{\fh}\rightarrow \bbZ$ given by
\begin{equation}
\begin{aligned}\label{symmetric bilinear form}
(\tilde{h}_{\bd}|\tilde{h}_{\bd'})=\ \ &\dim_{\bbC}\Hom_{\cK_2(\cP)}(X,Y)-\dim_{\bbC}\Hom_{\cK_2(\cP)}(X,Y^*)\\
+&\dim_{\bbC}\Hom_{\cK_2(\cP)}(Y,X)-\dim_{\bbC}\Hom_{\cK_2(\cP)}(Y,X^*),
\end{aligned}
\end{equation}
where $X,Y\in \cK_2(\cP)$ whose images in $K_0$ are $\bd',\bd''$ respectively, see \cite[Section 3.3]{Peng-Xiao-2000}.

\begin{definition}
(a) A function $\hat{f}\in M_{\rG_{\bd}}(\rP_2(A,\bd))$ is called support-indecomposable, if for any $\tilde{X}\in \rP_2(A,\bd)$, the value $\hat{f}(\tilde{X})\not=0$ implies any $X\in \tilde{X}$ is indecomposable as an object in $\cK_2(\cP)$, equivalently, $X_r$ is an indecomposable radical complex.\\
(b) Define $\tilde{\fn}_{\bd}\subset M_{\rG_{\bd}}(\rP_2(A,\bd))$ to be the $\bbC$-subspace consisting of functions which are support-indecomposable, and define 
$$\tilde{\fn}=\bigoplus_{\bd\in K_0}\tilde{\fn}_{\bd}.$$
\end{definition}

\begin{lemma}\label{Lie bracket supported indecomposable}
Let $\hat\cO\subset \rP_2(A,\bd)$ be a $\rG_{\bd}$-invariant support-bounded constructible subset and $\hat\cO'\subset \rP_2(A,\bd')$ be a $\rG_{\bd'}$-invariant support-bounded constructible subset consisting of homotopy equivalence classes of indecomposable objects in $\cK_2(\cP)$, then 
$$1_{\hat\cO}*1_{\hat\cO'}-1_{\hat\cO'}*1_{\hat\cO}\in \tilde{\fn}_{\bd+\bd'}.$$
As a consequence, for any $\hat{f}\in \tilde{\fn}_{\bd}, \hat{g}\in \tilde{\fn}_{\bd'}$, we have 
$$\hat{f}*\hat{g}-\hat{g}*\hat{f}\in \tilde{\fn}_{\bd+\bd'}.$$
\end{lemma}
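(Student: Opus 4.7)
The plan is to prove pointwise that $(1_{\hat\cO}*1_{\hat\cO'})(\tilde{Z}) = (1_{\hat\cO'}*1_{\hat\cO})(\tilde{Z})$ for every $\tilde{Z} \in \rP_2(A,\bd+\bd')$ whose radical part $Z_r$ is not indecomposable. By Remark \ref{reduce to radical} together with the bilinearity of the convolution in the form \eqref{integration form}, I may assume $\cO \subset \rP_2^{\rad}(A,\ue)$ and $\cO' \subset \rP_2^{\rad}(A,\ue')$ both consist of points corresponding to indecomposable radical complexes. The task then reduces to comparing the naive Euler characteristics $\chi(V(\cO,\cO';Z))$ and $\chi(V(\cO',\cO;Z))$ in two cases: (A) $Z_r = 0$, and (B) $Z_r \cong Z_1 \oplus Z_2$ in $\cK_2(\cP)$ with both $Z_1, Z_2$ nonzero.

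In case (A), since $Z \simeq 0$ in $\cK_2(\cP)$, any distinguished triangle $Y \xrightarrow{f} Z \xrightarrow{g} X \xrightarrow{h} Y^*$ forces $h$ to be a homotopy equivalence, hence $X \simeq Y^*$. This gives a natural identification $V(\cO,\cO';Z) \cong [(\cO\cap\cO'^*)/\rG_\ue]$ via $[(f,g,h)]\mapsto X$, and similarly $V(\cO',\cO;Z) \cong [(\cO'\cap\cO^*)/\rG_{\ue'}]$. The vanishing $\bd+\bd'=0$ in $K_0$ yields $\ue'=\ue^*$ (swap of $\bbZ_2$-indices), and the shift functor $(-)^*$ intertwines these two quotient stacks, so their naive Euler characteristics coincide and the commutator vanishes at $\tilde{Z}$.

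In case (B), I would fix such a decomposition $Z_r \cong Z_1\oplus Z_2$ and introduce the automorphisms $\phi_\lambda := 1_{Z_1}\oplus\lambda\cdot 1_{Z_2} \in \Aut_{\cK_2(\cP)}(Z)$ for $\lambda \in \bbC^*$. These induce a $\bbC^*$-action on $V(\cO,\cO';Z)$ by $\lambda.[(f,g,h)] = [(\phi_\lambda f, g\phi_\lambda^{-1}, h)]$, and torus localization for the naive Euler characteristic of a constructible subset of a quotient stack (proved by stratifying into the fixed locus and its free $\bbC^*$-complement and using $\chi(\bbC^*) = 0$) gives $\chi(V(\cO,\cO';Z)) = \chi(V(\cO,\cO';Z)^{\bbC^*})$. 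Writing $f = \binom{f_1}{f_2}$ and $g=(g_1,g_2)$ relative to $Z = Z_1\oplus Z_2$ and using that $\End_{\cK_2(\cP)}(X), \End_{\cK_2(\cP)}(Y)$ are local rings, the fixed-orbit equations $\phi_\lambda f = f\beta^{-1}$, $g\phi_\lambda^{-1} = \alpha g$, $h = \beta^* h\alpha^{-1}$ (together with a mapping-cone computation and Krull-Schmidt in $\cK_2(\cP)$) force $h=0$ and the triangle to split as $Z_r \cong X\oplus Y$. Hence $V(\cO,\cO';Z)^{\bbC^*}$ is the finite discrete set of pairs $(X,Y)\in\cO\times\cO'$ with $X\oplus Y\cong Z_r$ (both summands indecomposable), and the swap $(X,Y)\leftrightarrow(Y,X)$ provides a bijection with $V(\cO',\cO;Z)^{\bbC^*}$, yielding the desired equality of Euler characteristics.

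I expect the main obstacle to be the fixed-point computation in case (B), specifically verifying that witnessing $(\alpha_\lambda,\beta_\lambda)$ may be chosen compatibly for varying $\lambda$, extracting scalar parts via the local endomorphism rings, and ruling out $h\ne 0$ by observing that a block-diagonal $f=\binom{f_1}{0}$ has mapping cone $\text{cone}(f_1)\oplus Z_2$, which is indecomposable only when $f_1$ is an isomorphism (forcing the triangle to split as well). A secondary but more routine issue is justifying the torus-localization identity $\chi(V)=\chi(V^{\bbC^*})$ in Joyce's framework of naive Euler characteristics for quotient stacks. The consequence for $\hat f\in\tilde\fn_\bd$ and $\hat g\in\tilde\fn_{\bd'}$ then follows by expressing each as a finite $\bbC$-linear combination of characteristic functions $1_{\hat\cO_i}, 1_{\hat\cO'_j}$ on indecomposable-supported constructible subsets and applying the first statement term by term.
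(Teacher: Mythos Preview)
Your case (A) argument coincides with the paper's. For case (B), your route genuinely differs: you use the $\bbC^*$-action $\phi_\lambda=1_{Z_1}\oplus\lambda\cdot 1_{Z_2}$ and torus localization, whereas the paper derives the lemma as a byproduct of the machinery built for the Jacobi identity. Concretely, the paper splits the decomposable case into (i) $L\not\simeq X\oplus Y$ for any $X\in\cO,\,Y\in\cO'$, where Proposition~\ref{Case 2} yields $F^L_{\cO\cO'}=0=F^L_{\cO'\cO}$ directly (the key input being $\chi(\Aut_{\cC_2(\cP)}(Z))=0$ for indecomposable $Z$, itself a $\bbC^*$-phenomenon but packaged through a stabilizer computation); and (ii) $L\simeq X\oplus Y$, where the identification $V(\cO_X,\cO_Y;X\oplus Y)\cong\Hom_{\cK_2(\cP)}(Y,X)$ in \eqref{V(Xoplus Y)=1} gives $F^L=1$ symmetrically. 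Your approach is more streamlined for this lemma in isolation; the paper's coarser case analysis, however, is reused wholesale in the Jacobi identity proof, so there it costs nothing extra.

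Two places in your sketch need tightening. First, the fixed locus $V(\cO,\cO';Z)^{\bbC^*}$ is not indexed by \emph{all} pairs $(X,Y)$ with $X\oplus Y\cong Z_r$: having fixed the decomposition $Z_r=Z_1\oplus Z_2$, the fixed orbits are at most two, corresponding to $(X,Y)\simeq(Z_2,Z_1)$ or $(Z_1,Z_2)$, and the bijection with $V(\cO',\cO;Z)^{\bbC^*}$ is by swapping which block of $f$ vanishes rather than by an abstract swap of the pair (this distinction matters when $Z_1\simeq Z_2$, where both sides have two fixed orbits). Second, the identity $\chi^{\textrm{na}}(V)=\chi^{\textrm{na}}(V^{\bbC^*})$ for the \emph{naive} Euler characteristic of a quotient stack is not immediate: Rosenlicht's stratification for the $\rG_{\ue}\times\rG_{\ue'}$-action need not be $\bbC^*$-equivariant, so one must either refine to a $\rG_{\ue}\times\rG_{\ue'}\times\bbC^*$-equivariant stratification and check that the $\bbC^*$-action descends to each geometric-quotient piece, or exhibit the non-fixed part as fibered in $\bbC^*$-homogeneous spaces with vanishing $\chi$. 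This is doable but is precisely the content the paper handles concretely inside Proposition~\ref{Case 2}.
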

The proof will be given in Subsection \ref{Preliminary result about the structure constants}.

\begin{definition}
Define the $\bbC$-vector space 
$$\tilde{\fg}=\tilde{\fn}\oplus \tilde{\fh},$$
and a bilinear map $[-,-]:\tilde{\fg}\times \tilde{\fg}\rightarrow \tilde{\fg}$ by
\begin{equation}
\begin{aligned}\label{Lie algebra g_2 formula}
&[1_{\hat\cO},1_{\hat\cO'}]=[1_{\hat\cO},1_{\hat\cO'}]_{\tilde{\fn}}-\chi([(\hat\cO\cap\hat\cO'^*)/\rG_{\bd}])\tilde{h}_{\bd},\\
&[1_{\hat\cO},1_{\hat\cO'}]_{\tilde{\fn}}=1_{\hat\cO}*1_{\hat\cO'}-1_{\hat\cO'}*1_{\hat\cO}\in \tilde{\fn},\\
&[\tilde{h}_{\bd},1_{\hat\cO'}]=(\tilde{h}_{\bd}|\tilde{h}_{\bd'})1_{\hat\cO'},\\
&[1_{\hat\cO'},\tilde{h}_{\bd}]=-[\tilde{h}_{\bd},1_{\hat\cO'}],\\
&[\tilde{h}_{\bd},\tilde{h}_{\bd'}]=0,
\end{aligned}
\end{equation}
where $\hat\cO\subset \rP_2(A,\bd)$ is a $\rG_{\bd}$-invariant support-bounded constructible subset and $\hat\cO'\subset \rP_2(A,\bd')$ is a $\rG_{\bd'}$-invariant support-bounded constructible subset consisting of homotopy equivalence classes of indecomposable objects in $\cK_2(\cP)$, and $\bd,\bd'\in K_0$. The notation $\hat{\cO}'^*\subset \rP_2(A,-\bd')$ is a $\rG_{-\bd'}$-invariant support-bounded constructible subset consisting of $\tilde{X^*}$ for any $\tilde{X}\in \hat\cO'$.
\end{definition}

\begin{remark}
(a) One may compare formulas (\ref{Lie algebra g formula}) with (\ref{Lie algebra g_2 formula}) roughly. Their specific relation will be given in Theorem \ref{two Lie algebra isomorphism}.\\
(b) In general, the bracket $[-,-]$ in (\ref{Lie algebra g_2 formula}) can be written as follows,
\begin{align*}
&[\hat{f},\hat{g}]=[\hat{f},\hat{g}]_{\tilde{\fn}}-\int_{[(\supp \hat{f}\cap (\supp \hat{g})^*)/\rG_{\bd}]}\hat{f}(\tilde{X})\hat{g}(\tilde{X}^*)\cdot\tilde{h}_{\bd},\\
&[\hat{f},\hat{g}]_{\tilde{\fn}}(\tilde{Z})=(\hat{f}*\hat{g})(\tilde{Z})-(\hat{g}*\hat{f})(\tilde{Z}),\\
&[\tilde{h}_{\bd},\hat{g}]=(\tilde{h}_{\bd}|\tilde{h}_{\bd'})\hat{g},\\
&[\hat{g},\tilde{h}_{\bd}]=-[\tilde{h}_{\bd},\hat{g}],\\
&[\tilde{h}_{\bd},\tilde{h}_{\bd'}]=0,
\end{align*}
where $\hat{f}\in \tilde{\fn}_{\bd},\hat{g}\in \tilde{\fn}_{\bd'},\bd,\bd'\in K_0$ and $\tilde{Z}\in \rP_2(A,\bd+\bd')$. In the first equation, we view $\tilde{X}\mapsto \hat{f}(\tilde{X})\hat{g}(\tilde{X}^*)$ as a $\rG_{\bd}$-invariant support bounded constructible function on $\supp \hat{f}\cap (\supp \hat{g})^*$. In the second equation, $(\hat{f}*\hat{g})(\tilde{Z}),(\hat{g}*\hat{f})(\tilde{Z})$ can be written as integrations, see (\ref{integration form}).
\end{remark}

\begin{theorem}\label{Lie algebra g_2}
The $\bbC$-vector space $\tilde{\fg}$ is a Lie algebra with the bracket $[-,-]$.
\end{theorem}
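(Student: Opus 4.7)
The plan is to verify antisymmetry and the Jacobi identity for the bracket $[-,-]$ on $\tilde{\fg}=\tilde{\fn}\oplus\tilde{\fh}$. Antisymmetry is essentially built into the definition: $[\tilde{h}_{\bd},\tilde{h}_{\bd'}]=0$ is manifest, $[\tilde{h}_{\bd},1_{\hat{\cO}'}]=-[1_{\hat{\cO}'},\tilde{h}_{\bd}]$ is a defining equation, and for nilpotent arguments the $\tilde{\fn}$-component is anti-symmetric because $-\ast-$ enters as a commutator, while the $\tilde{\fh}$-component switches sign because the involution $(-)^{*}$ identifies $(\hat{\cO}\cap\hat{\cO}'^{*})/\rG_{\bd}$ with $(\hat{\cO}'\cap\hat{\cO}^{*})/\rG_{-\bd}$ and sends $\tilde{h}_{\bd}$ to $-\tilde{h}_{\bd}$.

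For the Jacobi identity $J(x,y,z)=[x,[y,z]]+[y,[z,x]]+[z,[x,y]]$, I would split into cases by the number of arguments in $\tilde{\fh}$. The case of three Cartan elements is trivial. If exactly two lie in $\tilde{\fh}$, the identity reduces to commutativity of the scalar action of $\tilde{\fh}$ on $\tilde{\fn}$. If exactly one argument, say $\tilde{h}_{\bd}$, lies in $\tilde{\fh}$ and the other two are $f_{i}\in\tilde{\fn}_{\bd_{i}}$, then one computes that the $\tilde{\fn}$-components of $J$ vanish because $(\tilde{h}_{\bd}|\tilde{h}_{\bd_{2}+\bd_{3}})=(\tilde{h}_{\bd}|\tilde{h}_{\bd_{2}})+(\tilde{h}_{\bd}|\tilde{h}_{\bd_{3}})$ by bilinearity of $(-|-)$, while the $\tilde{\fh}$-components vanish because the Cartan coefficient $c(f_{2},f_{3})$ is non-zero only when $\bd_{2}+\bd_{3}=0$, in which case $\tilde{h}_{\bd_{2}+\bd_{3}}=0$.

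The heart of the proof is the all-nilpotent case, with $f_{i}=1_{\hat{\cO}_{i}}\in\tilde{\fn}_{\bd_{i}}$ for support-bounded constructible subsets $\hat{\cO}_{i}\subset\rP_{2}(A,\bd_{i})$ consisting of indecomposables (it suffices to check on these by linearity and Lemma \ref{Lie bracket supported indecomposable}). I would evaluate $J(f_{1},f_{2},f_{3})_{\tilde{\fn}}(\tilde{W})$ via the integral formula (\ref{integration form}): each term $(f_{i}\ast[f_{j},f_{k}]_{\tilde{\fn}})(\tilde{W})$ becomes a signed sum over pairs of composable distinguished triangles, whose total data encodes a diagram
\begin{equation*}
X_{j}\to Z_{jk}\to X_{k},\qquad X_{i}\to W\to Z_{jk},
\end{equation*}
with $X_{\ell}\in\hat{\cO}_{\ell}$. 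The octahedral axiom, applied to the composition $X_{i}\to W\to Z_{jk}\to X_{k}$, rotates this data into a different pair of triangles realising another cyclic ordering. Following Peng–Xiao and Hubery, this produces a bijection between the triples contributing to $f_{1}\ast(f_{2}\ast f_{3}-f_{3}\ast f_{2})$, $f_{2}\ast(f_{3}\ast f_{1}-f_{1}\ast f_{3})$, and $f_{3}\ast(f_{1}\ast f_{2}-f_{2}\ast f_{1})$, whose signs cancel after accounting for the stabiliser contributions computed by the naive Euler characteristic.

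The main obstacle, and the source of the $\tilde{\fh}$-component of the bracket, is the diagonal locus where $\tilde{W}$ is contractible (equivalently, the middle term of one of the triangles is zero in $\cK_{2}(\cP)$). Here the octahedral-axiom bijection breaks down: the contributions come from $\tilde{X}_{j}\simeq\tilde{X}_{k}^{*}$ with $\bd_{j}+\bd_{k}=0$ and precisely give the Cartan term $-\chi([(\hat{\cO}_{j}\cap\hat{\cO}_{k}^{*})/\rG_{\bd_{j}}])\tilde{h}_{\bd_{j}}$ in $[f_{j},f_{k}]$. The balance I need is that the full $\tilde{\fh}$-contribution to $J(f_{1},f_{2},f_{3})$, which has the shape
\begin{equation*}
\sum_{\mathrm{cyc}}c(f_{j},f_{k})(\tilde{h}_{\bd_{j}}|\tilde{h}_{\bd_{i}})\,f_{i}\;-\;\sum_{\mathrm{cyc}}c(f_{i},[f_{j},f_{k}]_{\tilde{\fn}})\tilde{h}_{\bd_{i}},
\end{equation*}
vanishes. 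The first sum will be matched against the second by relating the dimensions in the definition (\ref{symmetric bilinear form}) of $(-|-)$ with the convolution values of $[f_{j},f_{k}]_{\tilde{\fn}}$ on $(-)^{*}$-related complexes via Lemma \ref{bijection between Ext and Hom} and Lemma \ref{middle term contractible}; the hardest bookkeeping is showing that the two $\Hom$-dimensions in $(\tilde{h}_{\bd_{j}}|\tilde{h}_{\bd_{i}})$ precisely count the fibres of the relevant constructible maps over the contractible locus. Once this dimension match is established, the Jacobi identity collapses to zero, completing the proof.
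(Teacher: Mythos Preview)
Your overall architecture matches the paper's: antisymmetry is immediate, cases (II)--(IV) with one or more Cartan arguments are easy, and (I) is proved via the octahedral axiom plus a separate analysis of a degenerate locus. However, your identification of that degenerate locus is off, and this is where the real work lies.

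You locate the obstruction at ``the diagonal locus where $\tilde{W}$ is contractible''. But $\tilde{W}=\tilde{M}$ is the fixed test point (assumed indecomposable), and nothing interesting happens when it degenerates. The genuine failure of the octahedral cancellation occurs when the \emph{intermediate} object $L$ in the pair of triangles
\[
Y\to L\to X\to Y^*,\qquad Z\to M\to L\to Z^*
\]
splits as $L\simeq M\oplus Z^*$ (i.e.\ the second triangle is split). The paper stratifies $W^{\tilde{M}}_{(\hat{\cO}_1\hat{\cO}_2)\hat{\cO}_3}$ into four pieces $W^{\ue}_{(12)3}(t)$, $t=1,\dots,4$, according to whether $L\simeq M\oplus Z^*$ and whether $L\simeq X\oplus Y$ (with or without $X\simeq Y$). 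Only on the piece $t=1$ does the octahedral argument directly yield $\chi(V^{\ue}_{(12)3}(1))=\chi(\tilde{V}^{\ue}_{(12)3}(1))$ (Proposition~\ref{Case 1}); the pieces $t=2,4$ cancel in pairs (Propositions~\ref{Case 2},~\ref{Case 4}), and the piece $t=3$ (where $L\simeq M\oplus Z^*\simeq X\oplus Y$, $X\not\simeq Y$) is the one that produces the terms $\dim_{\bbC}\Hom_{\cK_2(\cP)}(M,Z^*)$ and $\dim_{\bbC}\Hom_{\cK_2(\cP)}(Z^*,M)$ (Proposition~\ref{Case 3}). These are exactly what match the symmetric form $(\tilde{h}_{\bd_j}|\tilde{h}_{\bd_i})$ via (\ref{symmetric bilinear form}) and give $A_{\tilde{M}}^2=B_{\tilde{M}}$.

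Two further points your sketch does not cover. First, Lemmas~\ref{bijection between Ext and Hom} and~\ref{middle term contractible} concern $\Ext^1$ in $\cC_2(\cP)$ and are used for the \emph{other} Lie algebra $\fg$; they play no role here. The relevant computations are the stabiliser analyses in Propositions~\ref{Case 1}--\ref{Case 4}, each of which shows a certain stabiliser is (bijective to) a vector space so its Euler characteristic is $1$. Second, there is a separate $\tilde{\fh}$-valued term $C$ coming from $[[1_{\hat{\cO}_1},1_{\hat{\cO}_2}]_{\tilde{\fn}},1_{\hat{\cO}_3}]$, namely $(F^{\hat{\cO}_3^*}_{\hat{\cO}_1\hat{\cO}_2}-F^{\hat{\cO}_3^*}_{\hat{\cO}_2\hat{\cO}_1})\tilde{h}_{\bd_1+\bd_2}$ and its cyclic variants; showing $C=0$ requires the rotation-of-triangles identity $F^{\hat{\cO}_3^*}_{\hat{\cO}_1\hat{\cO}_2}=F^{\hat{\cO}_1^*}_{\hat{\cO}_2\hat{\cO}_3}=F^{\hat{\cO}_2^*}_{\hat{\cO}_3\hat{\cO}_1}$, which your proposal does not mention.
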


For $i=1,2,3$, let $\bd_i\in K_0$ and $\hat{\cO}_i\subset \rP_2(A,\bd_i)$ be a $\rG_{\bd_i}$-invariant support-bounded constructible subset consisting of homotopy equivalence classes of indecomposable objects in $\cK_2(\cP)$, we need to prove the following Jacobi identities,
\begin{align}
[[1_{\hat{\cO}_1},1_{\hat{\cO}_2}],1_{\hat{\cO}_3}]-[[1_{\hat{\cO}_1},1_{\hat{\cO}_3}],1_{\hat{\cO}_2}]-[[1_{\hat{\cO}_3},1_{\hat{\cO}_2}],1_{\hat{\cO}_1}]&=0, \tag{I} \\
[[\tilde{h}_{\bd_1},1_{\hat{\cO}_2}],1_{\hat{\cO}_3}]-[[\tilde{h}_{\bd_1},1_{\hat{\cO}_3}],1_{\hat{\cO}_2}]-[[1_{\hat{\cO}_3},1_{\hat{\cO}_2}],\tilde{h}_{\bd_1}]&=0, \tag{II}\\
[[\tilde{h}_{\bd_1},\tilde{h}_{\bd_2}],1_{\hat{\cO}_3}]-[[\tilde{h}_{\bd_1},1_{\hat{\cO}_3}],\tilde{h}_{\bd_2}]-[[1_{\hat{\cO}_3},\tilde{h}_{\bd_2}],\tilde{h}_{\bd_1}]&=0, \tag{III}\\
[[\tilde{h}_{\bd_1},\tilde{h}_{\bd_2}],\tilde{h}_{\bd_3}]-[[\tilde{h}_{\bd_1},\tilde{h}_{\bd_3}],\tilde{h}_{\bd_2}]-[[\tilde{h}_{\bd_3},\tilde{h}_{\bd_2}],\tilde{h}_{\bd_1}]&=0. \tag{IV}
\end{align}
The proof will be given in Subsection \ref{Jacobi identity}.

\subsection{An application of the octahedral axiom}\label{An application of octahedral axiom}\

For $i=1,2,3$, let $\bd_i\in K_0$ and $\hat{\cO}_i\subset \rP_2(A,\bd_i)$ be a $\rG_{\bd_i}$-invariant support-bounded constructible subset consisting of homotopy equivalence classes of indecomposable objects in $\cK_2(\cP)$. Suppose $\hat{\cO}_i=\rG_{\bd_i}.t_{e_i}(\cO_{\ue_i})$ for some $\rG_{\ue_i}$-invariant constructible subset $\cO_{\ue_i}\subset \rP_2^*(A,\ue_i)$ and $\ue_i\in \udim^{-1}(\bd_i)$. For any $\ue'_i\in \udim^{-1}(\bd_i)$ satisfying $\ue_i\leqslant \ue'_i$, set $\cO_{\ue'_i}=t_{\ue_i\ue'_i}(\cO_{\ue_i})$, then we also have $\hat{\cO}_i=\rG_{\bd_i}.t_{e'_i}(\cO_{\ue'_i})$.

We define a subset 
$$\hat{\cL}_{\hat{\cO}_1\hat{\cO}_2}\subset \rP_2(A,\bd_1+\bd_2)$$
consisting of $\tilde{L}$ such that there exists a distinguished triangle $Y\rightarrow L\rightarrow X\rightarrow Y^*$ for some $\tilde{X}\in \hat{\cO}_1,\tilde{Y}\in \hat{\cO}_2$. It is a finite disjoint union of $\rG_{\bd_1+\bd_2}$-invariant support-bounded constructible subsets
\begin{align*}
\hat{\cL}_{\hat{\cO}_1\hat{\cO}_2}=\bigsqcup_{\ue\leqslant \ue_1+\ue_2}\rG_{\bd_1+\bd_2}.t_{\ue}(\cL^{\ue}_{\ue_1\ue_2}),
\end{align*}
where
\begin{equation}\label{index L}
\cL^{\ue}_{\ue_1\ue_2}=\rG_{\ue}.\{(\textrm{Cone}(h)^*)_r\in \rP_2^*(A,\ue)|h\in \Hom_{\cK_2(\cP)}(X,Y^*),X\in \cO_{\ue_1},Y\in \cO_{\ue_2}\}
\end{equation}
is a $\rG_{\ue}$-invariant constructible subset of $\rP_2^*(A,\ue)$, since any distinguished triangle $Y\rightarrow L\rightarrow X\xrightarrow{h} Y^*$ is isomorphic to the triangle $Y\rightarrow (\textrm{Cone}(h)^*)_r\rightarrow X\xrightarrow{h} Y^*$ determined by $h\in \Hom_{\cK_2(\cP)}(X,Y^*)$. Notice that if the subset $\cL^{\ue}_{\ue_1\ue_2}\not=\varnothing$, we have $\ue\in \udim^{-1}(\bd_1+\bd_2)$. For any such $\ue$, the subset $\cL^{\ue}_{\ue_1\ue_2}$ is independent of the choices of $\ue_1\in\udim^{-1}(\bd_1),\ue_2\in\udim^{-1}(\bd_2)$, that is, $\cL^{\ue}_{\ue_1\ue_2}=\cL^{\ue}_{\ue'_1\ue'_2}$.

For any $\tilde{M}\in \rP_2(A, \bd_1+\bd_2+\bd_3)$, we take a representative $M\in \tilde{M}$ and define 
$$W^{\tilde{M}}_{(\hat{\cO}_1\hat{\cO}_2)\hat{\cO}_3}=\bigsqcup_{\tilde{L}\in \hat{\cL}_{\hat{\cO}_1\hat{\cO}_2}}W(\hat{\cO}_1,\hat{\cO}_2;\tilde{L})\times W(\tilde{L},\hat{\cO}_3;\tilde{M}),$$
where $W(\hat{\cO}_1,\hat{\cO}_2;\tilde{L})\times W(\tilde{L},\hat{\cO}_3;\tilde{M})$ consists of morphisms $((f,g,h),(i,j,k))$ in $\cK_2(\cP)$ such that 
\begin{equation}
Y\xrightarrow{f}L\xrightarrow{g}X\xrightarrow{h}Y^*,\ Z\xrightarrow{i}M\xrightarrow{j}L\xrightarrow{k}Z^* \tag{$\triangle\triangle$}
\end{equation}
are distinguished triangles for some $\tilde{X}\in \hat{\cO}_1,\tilde{Y}\in \hat{\cO}_2,\tilde{Z}\in \hat{\cO}_3$. From now on, we use the notation ($\triangle\triangle$) to denote above two distinguished triangles. Since contractible complexes $K_P\oplus K_P^*$ are zero objects in $\cK_2(\cP)$ for any $P\in \cP$, and the morphisms involved them are zero, the set $W^{\tilde{M}}_{(\hat{\cO}_1\hat{\cO}_2)\hat{\cO}_3}$ can be written as 
\begin{equation}
\begin{aligned}\label{union 1}
W^{\tilde{M}}_{(\hat{\cO}_1\hat{\cO}_2)\hat{\cO}_3}=&\bigsqcup_{\ue\leqslant \ue_1+\ue_2}\bigsqcup_{L\in \cL^{\ue}_{\ue_1\ue_2}}W(\cO_{\ue_1},\cO_{\ue_2};L)\times W(L,\cO_{\ue_3};M)
\end{aligned}
\end{equation}
 which is independent of the choices of $\ue_i\in \udim^{-1}(\bd_i)$. Similarly, it is independent of the choices of $M\in \tilde{M}$. It is constructible, by the same argument of constructibility of $W(\cO',\cO'';Z)$ in Subsection \ref{convolution}.

We define a $\rG_{\bd_1}\times \rG_{\bd_2}\times \rG_{\bd_3}\times \rG_{\bd_1+\bd_2}$-action on $W^{\tilde{M}}_{(\hat{\cO}_1\hat{\cO}_2)\hat{\cO}_3}$ as follows. Firstly, we define a $\rG_{\ue_1}\times \rG_{\ue_2}\times \rG_{\ue_3}\times \rG_{\ue}$-action on $\bigsqcup_{L\in \cL^{\ue}_{\ue_1\ue_2}}W(\cO_{\ue_1},\cO_{\ue_2};L)\times W(L,\cO_{\ue_3};M)$. For any $(\alpha,\beta,\gamma,\lambda)\in \rG_{\ue_1}\times \rG_{\ue_2}\times \rG_{\ue_3}\times \rG_{\ue}$ and ($\triangle\triangle)\in W(\cO_{\ue_1},\cO_{\ue_2};L)\times W(L,\cO_{\ue_3};M)$, \begin{align*}
(\alpha,\beta,\gamma,\lambda).((f,g,h),(i,j,k))&=((\lambda f\beta^{-1},\alpha g\lambda^{-1},\beta^*h\alpha^{-1}),(i\gamma^{-1},\lambda j,\gamma^*k\lambda^{-1}))\\
&\in W(\cO_{\ue_1},\cO_{\ue_2};\lambda.L)\times W(\lambda.L,\cO_{\ue_3};M).
\end{align*}
Indeed, there are commutative diagrams
\begin{diagram}[midshaft,size=2em]
Y &\rTo^{f} &L &\rTo^{g} &X &\rTo^{h} &Y^* &Z &\rTo^{i} &M &\rTo^{j} &L &\rTo^{k} &Z^*\\
\dTo^{\beta} & &\dTo^{\lambda} & &\dTo^{\alpha} & &\dTo^{\beta^*} &\dTo^{\gamma} & &\vEq & &\dTo^{\lambda} & &\dTo^{\gamma^*}\\
\beta.Y &\rTo^{\lambda f\beta^{-1}} &\lambda.L &\rTo^{\alpha g\lambda^{-1}} &\alpha.X &\rTo^{\beta^*h\alpha^{-1}} &(\beta.Y)^* &\gamma.Z &\rTo^{i\gamma^{-1}} &M &\rTo^{\lambda j} &\lambda.L &\rTo^{\gamma^*k\lambda^{-1}} &(\gamma.Z)^*
\end{diagram}
where $(\alpha,\beta,\gamma,\lambda)$ induces isomorphisms in $\cC_2(\cP)$, and then their homotopy classes are isomorphisms in $\cK_2(\cP)$ such that 
$$\beta.Y\xrightarrow{\lambda f \beta^{-1}}\lambda.L\xrightarrow{\alpha g\lambda^{-1}}\alpha.X\xrightarrow{\beta^* h \alpha^{-1}}(\beta.Y)^*,\ \gamma.Z\xrightarrow{i \gamma^{-1}}M\xrightarrow{\lambda j}\lambda.L\xrightarrow{\gamma^* k \lambda^{-1}}(\gamma.Z)^*$$
are distinguished triangles. Similarly, we define a $\rG_{\ue'_1}\times \rG_{\ue'_2}\times \rG_{\ue'_3}\times \rG_{\ue'}$-action on $\bigsqcup_{L\in \cL^{\ue'}_{\ue'_1\ue'_2}}W(\cO_{\ue'_1},\cO_{\ue'_2};L)\times W(L,\cO_{\ue'_3};M)$ for any $\ue'_i\in \udim^{-1}(\bd_i), \ue'\in \udim_{-1}(\bd_1+\bd_2)$ satisfying $\ue_i\leqslant \ue'_i,\ue\leqslant \ue'$, and then
\begin{align*}
&[\bigsqcup_{L\in \cL^{\ue}_{\ue_1\ue_2}}W(\cO_{\ue_1},\cO_{\ue_2};L)\times W(L,\cO_{\ue_3};M)/\rG_{\ue_1}\times \rG_{\ue_2}\times \rG_{\ue_3}\times \rG_{\ue}]\\
=&[\bigsqcup_{L\in \cL^{\ue'}_{\ue'_1\ue'_2}}W(\cO_{\ue'_1},\cO_{\ue'_2};L)\times W(L,\cO_{\ue'_3};M)/\rG_{\ue'_1}\times \rG_{\ue'_2}\times \rG_{\ue'_3}\times \rG_{\ue'}]
\end{align*}
is independent of the choices of $\ue_i\in \udim^{-1}(\bd_i), \ue\in \udim^{-1}(\bd_1+\bd_2)$. Moreover, the action is compatible with ind-limits, and so we obtain a $\rG_{\bd_1}\!\times\! \rG_{\bd_2}\!\times\! \rG_{\bd_3}\!\times\! \rG_{\bd_1+\bd_2}$-action on $W^{\tilde{M}}_{(\hat{\cO}_1\hat{\cO}_2)\hat{\cO}_3}$, and denote the quotient by
\begin{equation}
\begin{aligned}\label{quotient union 1}
&V^{\tilde{M}}_{(\hat{\cO}_1\hat{\cO}_2)\hat{\cO}_3}=[W^{\tilde{M}}_{(\hat{\cO}_1\hat{\cO}_2)\hat{\cO}_3}/\rG_{\bd_1}\times \rG_{\bd_2}\times \rG_{\bd_3}\times \rG_{\bd_1+\bd_2}]\\
=&\bigsqcup_{\ue\leqslant \ue_1+\ue_2}[(\bigsqcup_{L\in \cL^{\ue}_{\ue_1\ue_2}}W(\cO_{\ue_1},\cO_{\ue_2};L)\times W(L,\cO_{\ue_3};M))/\rG_{\ue_1}\times\rG_{\ue_2}\times\rG_{\ue_3}\times \rG_{\ue}].
\end{aligned}
\end{equation}

Dually, we define a finite disjoint union of support-bounded constructible subsets
\begin{align*}
\hat{\cL}_{\hat{\cO}_2\hat{\cO}_3}&=\bigsqcup_{\ue'\leqslant \ue_2+\ue_3}\rG_{\bd_2+\bd_3}.t_{\ue'}(\cL^{\ue'}_{\ue_2\ue_3})\subset \rP_2(A,\bd_2+\bd_3),\\
\cL^{\ue'}_{\ue_2\ue_3}=\rG_{\ue'}.\{(\textrm{Cone}&(k')^*)_r\in \rP_2^*(A,\ue)|k'\in \Hom_{\cK_2(\cP)}(Y,Z^*),Y\in \cO_{\ue_2},Z\in \cO_{\ue_3}\},
\end{align*}
and define
\begin{equation}
\begin{aligned}\label{union 2}
W^{\tilde{M}}_{\hat{\cO}_1(\hat{\cO}_2\hat{\cO}_3)}=&\bigsqcup_{\tilde{L}'\in \hat{\cL}_{\hat{\cO}_2\hat{\cO}_3}}W(\hat{\cO}_1,\tilde{L}';\tilde{M})\times W(\hat{\cO}_2,\hat{\cO}_3;\tilde{L}')\\
=&\bigsqcup_{\ue'\leqslant \ue_2+\ue_3}\bigsqcup_{L'\in \cL^{\ue'}_{\ue_2\ue_3}}W(\cO_{\ue_1},L';M)\times W(\cO_{\ue_2},\cO_{\ue_3};L')
\end{aligned}
\end{equation}
for any $\tilde{M}\in \rP_2(A,\bd_1+\bd_2+\bd_3)$, where $W(\hat{\cO}_1,\tilde{L}';\tilde{M})\times W(\hat{\cO}_2,\hat{\cO}_3;\tilde{L}')$ consists of morphisms $((f',g',h'),(i',j',k'))$ in $\cK_2(\cP)$ such that 
\begin{equation}
L'\xrightarrow{f'}M \xrightarrow{g'}X\xrightarrow{h'}L'^*,\ Z\xrightarrow{i'}L'\xrightarrow{j'} Y\xrightarrow{k'}Z^* \tag{$\triangle\triangle'$}
\end{equation}
are distinguished triangles for some $\tilde{X}\in \hat{\cO}_1,\tilde{Y}\in \hat{\cP}_2,\tilde{Z}\in \hat{\cO}_3$. From now on, we use the notation ($\triangle\triangle'$) to denote by above two distinguished triangles. Then we define a $\rG_{\ue_1}\times \rG_{\ue_2}\times \rG_{\ue_3}\times \rG_{\ue'}$-action on $\bigsqcup_{L'\in \cL^{\ue'}_{\ue_2\ue_3}}W(\cO_{\ue_1},L';M)\times W(\cO_{\ue_2},\cO_{\ue_3};L')$. For any $(\alpha,\beta,\gamma,\lambda')\in \rG_{\ue_1}\times \rG_{\ue_2}\times \rG_{\ue_3}\times \rG_{\ue'}$ and $(\triangle\triangle')\in W(\cO_{\ue_1},L';M)\times W(\cO_{\ue_2},\cO_{\ue_3};L')$,
\begin{align*}
(\alpha,\beta,\gamma,\lambda').((f',g',h'),(i',j',k'))\!&=\!((f'\lambda'^{-1},\alpha g',\lambda'^*h'\alpha^{-1}),(\lambda'i'\gamma^{-1},\beta j'\lambda'^{-1},\gamma^*k'\beta^{-1}))\\
&\in W(\cO_{\ue_1},\lambda'.L';M)\times W(\cO_{\ue_2},\cO_{\ue_3};\lambda'.L'),
\end{align*}
that is, there are commutative diagrams
\begin{diagram}[midshaft,size=2em]
L'\! \!&\rTo^{f'} &\!M\!\! &\rTo^{g'} &\!X\!\! &\rTo^{h'} &\!\!L'^* &Z\!\! &\rTo^{i'} &\!L'\!\! &\rTo^{j'} &\!Y\!\!\! &\rTo^{k'} &\!\!Z^*\\
\dTo^{\lambda'} & &\vEq & &\dTo^{\alpha} & &\dTo^{\lambda'^*} &\dTo^\gamma & &\dTo^{\lambda'} & &\dTo^\beta & &\dTo^{\gamma^*}\\
\lambda'.L'\!\! &\rTo^{f'\lambda'^{-1}} &\!M\!\! &\rTo^{\alpha g'} &\!\alpha.X\!\!&\rTo^{\lambda'^*h'\alpha^{-1}} &\!(\lambda'.L')^* &\gamma.Z\!\! &\rTo^{\lambda' i' \gamma^{-1}} &\!\!\lambda'.L'\!\! &\rTo^{\beta j'\lambda'^{-1}} &\!\beta.Y\!\!\!&\rTo^{\gamma^*k'\beta^{-1}} &\!\!(\gamma.Z)^*
\end{diagram}
Then we obtain a $\rG_{\bd_1}\times \rG_{\bd_2}\times \rG_{\bd_3}\times \rG_{\bd_1+\bd_2}$-action on $W^{\tilde{M}}_{\hat{\cO}_1(\hat{\cO}_2\hat{\cO}_3)}$, and denote the quotient of ind-limits by
\begin{equation}
\begin{aligned}\label{quotient union 2}
&V^{\tilde{M}}_{\hat{\cO}_1(\hat{\cO}_2\hat{\cO}_3)}=[W^{\tilde{M}}_{\hat{\cO}_1(\hat{\cO}_2\hat{\cO}_3)}/\rG_{\bd_1}\times \rG_{\bd_2}\times \rG_{\bd_3}\times \rG_{\bd_1+\bd_2}]\\
=&\bigsqcup_{\ue'\leqslant \ue_2+\ue_3}[(\bigsqcup_{L'\in \cL^{\ue'}_{\ue_2\ue_3}}W(\cO_{\ue_1},L';M)\times W(\cO_{\ue_2},\cO_{\ue_3};L'))/\rG_{\ue_1}\times\rG_{\ue_2}\times \rG_{\ue_3}\times \!\rG_{\ue'}]
\end{aligned}
\end{equation}

\begin{proposition}\label{octahedral Euler characteristic}
The stacks $V^{\tilde{M}}_{(\hat{\cO}_1\hat{\cO}_2)\hat{\cO}_3}$ and $V^{\tilde{M}}_{\hat{\cO}_1(\hat{\cO}_2\hat{\cO}_3)}$ are isomorphic, and so
$$\chi(V^{\tilde{M}}_{(\hat{\cO}_1\hat{\cO}_2)\hat{\cO}_3})=\chi(V^{\tilde{M}}_{\hat{\cO}_1(\hat{\cO}_2\hat{\cO}_3)}).$$
\end{proposition}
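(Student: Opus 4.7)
My plan is to prove the stack isomorphism $V^{\tilde{M}}_{(\hat{\cO}_1\hat{\cO}_2)\hat{\cO}_3}\cong V^{\tilde{M}}_{\hat{\cO}_1(\hat{\cO}_2\hat{\cO}_3)}$ by constructing an intermediate ``octahedron stack'' $U^{\tilde{M}}$ that parametrizes full octahedral diagrams, together with two forgetful morphisms, one to each side, and to show each is an equivalence. This is the natural stack-theoretic incarnation of the octahedral axiom, and the Euler characteristic identity will follow from Corollary \ref{fibre naive Euler characteristic}.

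Concretely, I will define $U^{\tilde{M}}$ as the (ind-)constructible set whose points consist of octuples $(X,Y,Z,L,L',M)$ together with morphisms $(f,g,h,i,j,k,f',g',h',i',j',k')$ in $\cK_2(\cP)$ such that both $(\triangle\triangle)$ and $(\triangle\triangle')$ are distinguished triangles and, moreover, the diagrams commute as in the octahedral axiom, namely $f'j = gj$ factors correctly, $g' = g(\text{via }j')$, $j'f=i'$, $hg'=k'(\text{shifted})$, and so on; the precise list of compatibilities is the standard octahedral commutativity. The group $\rG_{\bd_1}\times\rG_{\bd_2}\times\rG_{\bd_3}\times\rG_{\bd_1+\bd_2}\times\rG_{\bd_2+\bd_3}$ acts on this in the natural way. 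There are two forgetful morphisms of stacks
\[
\pi:\,[U^{\tilde M}/\text{group}]\longrightarrow V^{\tilde{M}}_{(\hat{\cO}_1\hat{\cO}_2)\hat{\cO}_3},\qquad \pi':\,[U^{\tilde M}/\text{group}]\longrightarrow V^{\tilde{M}}_{\hat{\cO}_1(\hat{\cO}_2\hat{\cO}_3)},
\]
that drop, respectively, the data $(L',f',g',h',i',j',k')$ or $(L,f,g,h,i,j,k)$. The octahedral axiom applied to the composition $M\xrightarrow{j} L\xrightarrow{g} X$ (respectively $Z\xrightarrow{i'} L'\xrightarrow{j'}Y$) shows that $\pi$ and $\pi'$ are surjective on $\bbC$-points. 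For the fiber analysis, given $((f,g,h),(i,j,k))$, the choice of octahedral completion $L'$ is unique up to isomorphism; more precisely, the fiber of $\pi$ over a fixed point is a torsor under a product of automorphism groups modulo the stabilizer coming from morphisms induced by homotopy, and similarly for $\pi'$.

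The main obstacle will be to verify that these fibers have naive Euler characteristic equal to $1$, so that $\chi$ is preserved under $\pi$ and $\pi'$ by Corollary \ref{fibre naive Euler characteristic}(b). To handle this, I will follow Peng-Xiao's counting strategy (the finite-field analogue explained in the Introduction), transferred to the stack level: the fiber over a fixed $((f,g,h),(i,j,k))$ is identified with a quotient of an affine bundle of pairs $(L',\text{octahedral morphisms})$ by an $\bbC$-vector space of null-homotopic corrections and automorphisms of contractible summands; since affine spaces and special algebraic groups used in these normalisations have $\chi=1$ on the quotient stack level (as in Subsection \ref{naive Euler characteristic} and using $\chi(\bbC^n)=1$ from Proposition \ref{Euler characteristic}(d)), the fiber has $\chi^{\text{na}}=1$. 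A key lemma I will need, analogous to ($\star$) in the proof of Lemma \ref{simple} and to Lemma \ref{bijection between Ext and Hom}, is that the choices of octahedral completions of a composition $gj$ form a torsor under $\Hom_{\cK_2(\cP)}(Y,Z^*)$-type data modulo a free $\bbC^*$-action induced by rescaling the splitting data, whose quotient is contractible. Granting this, both $\pi$ and $\pi'$ preserve $\chi$, and since the total Euler characteristic of $U^{\tilde M}$ matches both targets, the equality $\chi(V^{\tilde{M}}_{(\hat{\cO}_1\hat{\cO}_2)\hat{\cO}_3})=\chi(V^{\tilde{M}}_{\hat{\cO}_1(\hat{\cO}_2\hat{\cO}_3)})$ follows, which is the statement actually used in the subsequent Jacobi identity verification.
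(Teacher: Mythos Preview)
Your intermediate-stack approach is a reasonable idea and is genuinely different from the paper's direct construction, but as written it has a real gap and does not prove the stated proposition.

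The proposition asserts that the stacks themselves are isomorphic, not merely that their naive Euler characteristics agree. You announce that you will show each forgetful morphism $\pi,\pi'$ is an \emph{equivalence}, but the argument you actually give is that the fibers have $\chi^{\mathrm{na}}=1$. These are not the same thing: a representable morphism all of whose fibers have Euler characteristic $1$ need not be an isomorphism (think of any nontrivial affine-space bundle). So even granting your fiber analysis, you would obtain only the Euler characteristic equality, not the stack isomorphism. Moreover, your fiber analysis is itself imprecise. The claim that octahedral completions form ``a torsor under $\Hom_{\cK_2(\cP)}(Y,Z^*)$-type data modulo a free $\bbC^*$-action'' is not substantiated; there is no evident $\bbC^*$-action here, and in a general triangulated category the set of completions of an octahedron over fixed base data is controlled by morphisms into and out of shifted objects in a more delicate way than a single Hom-space torsor. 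You would have to identify this set precisely and match it against the extra $\rG_{\bd_2+\bd_3}$-orbit you are quotienting by, which is exactly the hard part.

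The paper takes a different and more concrete route: it constructs an explicit morphism $\varphi:V^{\tilde M}_{(\hat\cO_1\hat\cO_2)\hat\cO_3}\to V^{\tilde M}_{\hat\cO_1(\hat\cO_2\hat\cO_3)}$ directly. Given $((f,g,h),(i,j,k))$ one sets $k'=kf$ and $L'=\mathrm{Cone}(k')^*_r$, the \emph{radical part} of the shifted cone; this pins down $L'$ as a specific point of some $\rP_2^*(A,\ue')$, not just an isomorphism class, and the remaining morphisms $i',j',f',g',h'$ are given by explicit cone formulas. The substantial work is then to check that this assignment descends to orbits: if $((f,g,h),(i,j,k))$ is moved by $(\alpha,\beta,\gamma,\lambda)$, the resulting $((f'',g'',h''),(i'',j'',k''))$ lies in the $(\alpha',\beta,\gamma',\lambda')$-orbit of $((f',g',h'),(i',j',k'))$ for suitable $\alpha',\gamma',\lambda'$. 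This step uses (TR3) together with Lemma~\ref{isomorphism and homotopy equivalence} (homotopy equivalences between radical complexes are isomorphisms, hence lie in the relevant $\rG_{\ue'}$). The inverse $\psi$ is constructed by the symmetric argument. This yields an honest isomorphism of stacks, from which the Euler characteristic equality follows trivially. If you want to salvage your approach, the cleanest fix is to abandon the intermediate stack and instead use the explicit cone formula $L'=\mathrm{Cone}(kf)^*_r$ to write down $\varphi$ directly; the orbit compatibility is where the content lies.
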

\begin{proof}
We define a morphism $\varphi:V^{\tilde{M}}_{(\hat{\cO}_1\hat{\cO}_2)\hat{\cO}_3}\rightarrow V^{\tilde{M}}_{\hat{\cO}_1(\hat{\cO}_2\hat{\cO}_3)}$ firstly, by (\ref{quotient union 1}), (\ref{quotient union 2}), that is, 
\begin{align*}
\varphi&:\bigsqcup_{\ue\leqslant \ue_1+\ue_2}[(\bigsqcup_{L\in \cL^{\ue}_{\ue_1\ue_2}}W(\cO_{\ue_1},\cO_{\ue_2};L)\times W(L,\cO_{\ue_3};M))/\rG_{\ue_1}\times\rG_{\ue_2}\times\rG_{\ue_3}\times \rG_{\ue}]\\
&\rightarrow \bigsqcup_{\ue'\leqslant \ue_2+\ue_3}[(\bigsqcup_{L'\in \cL^{\ue'}_{\ue_2\ue_3}}W(\cO_{\ue_1},L';M)\times W(\cO_{\ue_2},\cO_{\ue_3};L'))/\rG_{\ue_1}\times\rG_{\ue_2}\times \rG_{\ue_3}\times \!\rG_{\ue'}].
\end{align*}
For any $L\in \cL^{\ue}_{\ue_1\ue_2}$ and $((f,g,h),(i,j,k))\in W(\cO_{\ue_1},\cO_{\ue_2};L)\times W(L,\cO_{\ue_3};M)$ such that
$$Y\xrightarrow{f}L\xrightarrow{g}X\xrightarrow{h}Y^*,\ Z\xrightarrow{i}M\xrightarrow{j}L\xrightarrow{k}Z^*$$
are distinguished triangles, we set 
\begin{align}
&k'=kf\in \Hom_{\cK_2(\cP)}(Y,Z^*), \label{algebraic 1}\\
&L'=\textrm{Cone}(k')_r^*\in \cL_{\ue'}(\cO_{\ue_2},\cO_{\ue_3}),\\
&(Z\xrightarrow{i'}L'\xrightarrow{j'}Y\xrightarrow{k'}Z^*)=(Z\xrightarrow{-\pi\begin{pmatrix}\begin{smallmatrix}0\\1\end{smallmatrix}\end{pmatrix}^*}\textrm{Cone}(k')_r^*\xrightarrow{-\begin{pmatrix}\begin{smallmatrix}1 &0\end{smallmatrix}\end{pmatrix}^*\iota}Y\xrightarrow{k'}Z^*),
\end{align}
where $\ue'\leqslant \ue_2+\ue_3$ and $\pi:\textrm{Cone}(k')^*\rightarrow L',\iota:L'\rightarrow \textrm{Cone}(k')^*$ are natural projection and inclusion respectively, such that $(i',j',k')\in W(\cO_{\ue_2},\cO_{\ue_3};L')$.
By the octahedral axiom (TR4) of $\cK_2(\cP)$, there exists $(f',g',h')\in W(\cO_{\ue_1},L';M)$ such that the following diagram about distinguished triangles commutes
\begin{diagram}[midshaft,size=2em]
Z &\hEq &Z\\
\dTo^{i'} & &\dTo_{i}\\
L' &\rDashto^{f'} &M &\rDashto^{g'} &X &\rDashto^{h'} &L'^*\\
\dTo^{j'} &\square &\dTo_j & &\vEq & &\dTo_{j'^*}\\
Y &\rTo^{f} &L &\rTo^{g} &X &\rTo^{h} &Y^*\\
\dTo^{k'} & &\dTo_k\\
Z^* &\hEq &Z^*,
\end{diagram}
and the middle left square is homotopy cartesian with differential $h'g$, see \cite[Lemma 1.4.3]{Neeman-2001} or \cite[Section 3]{Hubery-2006}, that is, there is a distinguished triangle
$$L'\xrightarrow{\begin{pmatrix}\begin{smallmatrix}f'\\-j'\end{smallmatrix}\end{pmatrix}}M\oplus Y\xrightarrow{\begin{pmatrix}\begin{smallmatrix}j &f\end{smallmatrix}\end{pmatrix}}L \xrightarrow{h'g}L'^*.$$
More precisely, by the proof of (TR4) of $\cK_2(\cP)$, we have
\begin{align}
&f'=s\begin{pmatrix}\begin{smallmatrix}f &\\ &1\end{smallmatrix}\end{pmatrix}\iota:L'\rightarrow\textrm{Cone}(k')^*\rightarrow\textrm{Cone}(k)^*\rightarrow M,\\
&g'=gj:M\rightarrow L\rightarrow X,\\
&h'=\pi^*\begin{pmatrix}\begin{smallmatrix}1 &\\ &k\end{smallmatrix}\end{pmatrix}t^{-1}:X\rightarrow \textrm{Cone}(f)\rightarrow\textrm{Cone}(k')\rightarrow L'^*, \label{algebraic 2}
\end{align}
where $s,t$ are fixed homotopy equivalences such that 
\begin{align*}
(1,s,1)&:(Z\xrightarrow{-\begin{pmatrix}\begin{smallmatrix}0\\1\end{smallmatrix}\end{pmatrix}^*}\textrm{Cone}(k)^*\xrightarrow{-\begin{pmatrix}\begin{smallmatrix}1 &0\end{smallmatrix}\end{pmatrix}^*}L\xrightarrow{k}Z^*)\rightarrow(Z\xrightarrow{i}M\xrightarrow{j}L\xrightarrow{k}Z^*),\\
(1,1,t)&:(Y\xrightarrow{f}L\xrightarrow{\begin{pmatrix}\begin{smallmatrix}0 &1\end{smallmatrix}\end{pmatrix}}\textrm{Cone}(f)\xrightarrow{\begin{pmatrix}\begin{smallmatrix}1 &0\end{smallmatrix}\end{pmatrix}}Y^*)\rightarrow(Y\xrightarrow{f}L\xrightarrow{g}X\xrightarrow{h}Y^*)
\end{align*}
are isomorphisms between distinguished triangles. 

For any $(\alpha,\beta,\gamma,\lambda)\in \rG_{\ue_1}\times \rG_{\ue_2}\times \rG_{\ue_3}\times \rG_{\ue}$, recall that 
\begin{align*}
(\alpha,\beta,\gamma,\lambda).((f,g,h),(i,j,k))&=((\lambda f\beta^{-1},\alpha g\lambda^{-1},\beta^*h\alpha^{-1}),(i\gamma^{-1},\lambda j,\gamma^*k\lambda^{-1}))\\
&\in W(\cO_{\ue_1},\cO_{\ue_2};\lambda.L)\times W(\lambda.L,\cO_{\ue_3};M).
\end{align*}
Similarly, we set 
\begin{align*}
&k''=\gamma^*kf\beta^{-1}\in \Hom_{\cK_2(\cP)}(\beta.Y,(\gamma.Z)^*),\\
&L''=\textrm{Cone}(k'')^*_r\in \cL_{\ue''}(\cO_{\ue_2},\cO_{\ue_3}),\\
&(\gamma.Z\xrightarrow{i''}L''\xrightarrow{j''}\beta.Y\xrightarrow{k''}(\gamma.Z)^*)\!=\!(\gamma.Z\xrightarrow{-\pi'\begin{pmatrix}\begin{smallmatrix}0\\1\end{smallmatrix}\end{pmatrix}^*}\!\textrm{Cone}(k'')_r^*\!\xrightarrow{-\begin{pmatrix}\begin{smallmatrix}1 &0\end{smallmatrix}\end{pmatrix}^*\iota'}\!\beta.Y\xrightarrow{k''}\!(\gamma.Z)^*),
\end{align*}
where $\ue''\leqslant \ue_2+\ue_3$ and $\pi':\textrm{Cone}(k'')^*\rightarrow L'', \iota':L''\rightarrow \textrm{Cone}(k'')^*$ are natural projection and inclusion respectively, such that $(i'',j'',k'')\in W(\cO_{\ue_2},\cO_{\ue_3};L'')$. By the octahedral axiom (TR4) of $\cK_2(\cP)$, there exists $(f'',g'',h'')\in W(\cO_{\ue_1},L'';M)$ such that the following diagram about distinguished triangles commutes
\begin{diagram}[midshaft,size=2em]
\gamma.Z &\hEq &\gamma.Z\\
\dTo^{i''} & &\dTo_{i\gamma^{-1}}\\
L'' &\rDashto^{f''} &M &\rDashto^{g''} &\alpha.X &\rDashto^{h''} &L''^*\\
\dTo^{j''} &\square &\dTo_{\lambda j} & &\vEq & &\dTo_{j''^*}\\
\beta.Y &\rTo^{\lambda f\beta^{-1}} &\lambda.L &\rTo^{\alpha g\lambda^{-1}} &\alpha.X &\rTo^{\beta^*h\alpha^{-1}} &(\beta.Y)^*\\
\dTo^{k''} & &\dTo_{\gamma^*k\lambda^{-1}}\\
(\gamma.Z)^* &\hEq &(\gamma.Z)^*,
\end{diagram}
and the middle left square is homotopy cartesian with differential $h''\alpha g\lambda^{-1}$, that is, there is a distinguished triangle
$$L''\xrightarrow{\begin{pmatrix}\begin{smallmatrix}f''\\-j''\end{smallmatrix}\end{pmatrix}}M\oplus\beta.Y\xrightarrow{\begin{pmatrix}\begin{smallmatrix}\lambda j &\lambda f\beta^{-1}\end{smallmatrix}\end{pmatrix}}\lambda.L \xrightarrow{h''\alpha g\lambda^{-1}}L''^*.$$
By the axiom (TR3) of $\cK_2(\cP)$, there exists $\lambda'\in \Hom_{\cC_2(\cP)}(L',L'')$ such that the following diagram about distinguished triangles commutes in $\cK_2(\cP)$
\begin{diagram}[midshaft,size=2em]
L' &\rTo^{\begin{pmatrix}\begin{smallmatrix}f'\\-j'\end{smallmatrix}\end{pmatrix}} &M\oplus Y &\rTo^{\begin{smallmatrix}\begin{pmatrix}j &f\end{pmatrix}\end{smallmatrix}} &L &\rTo^{h'g} &L'^*\\
\dDashto^{\lambda'} & &\dTo^{\begin{pmatrix}\begin{smallmatrix}1 &\\& \beta\end{smallmatrix}\end{pmatrix}} & &\dTo^{\lambda} & &\dDashto^{\lambda'^*}\\
L'' &\rTo^{\begin{pmatrix}\begin{smallmatrix}f''\\-j''\end{smallmatrix}\end{pmatrix}} &M\oplus\beta.Y &\rTo^{\begin{pmatrix}\begin{smallmatrix}\lambda j&\lambda f\beta^{-1}\end{smallmatrix}\end{pmatrix}} &\lambda.L &\rTo^{h''\alpha g\lambda^{-1}} &L''^*.
\end{diagram}
Since $\begin{pmatrix}\begin{smallmatrix}1 &\\& \beta\end{smallmatrix}\end{pmatrix}$ and $\lambda$ are isomorphisms, $\lambda'$ is a homotopy equivalence. Moreover, since $L',L''$ are radical complexes, by Lemma \ref{isomorphism and homotopy equivalence}, $\lambda'$ is an isomorphism, and so $\ue'=\ue'',\lambda'\in \rG_{\ue'}$ and $L''=\lambda'.L'$. Again, by the axiom (TR3) of $\cK_2(\cP)$, there exists $\alpha'_r\in \Hom_{\cC_2(\cP)}(X,\alpha.X), \gamma'_r\in \Hom_{\cC_2(\cP)}(Z,\gamma.Z)$ such that the following diagrams about distinguished triangles commute in $\cK_2(\cP)$
\begin{equation}\label{two commutative diagram}
\begin{diagram}[midshaft,size=2em]
L' &\rTo^{f'} &M &\rTo^{g'} &X &\rTo^{h'} &L'^* & &Z &\rTo^{i'} &L' &\rTo^{j'} &Y &\rTo^{k'} &Z^*\\
\dTo^{\lambda'} & &\vEq & &\dDashto^{\alpha'_r}_{\alpha'} & &\dTo^{\lambda'^*} & &\dDashto^{\gamma'_r}_{\gamma'} & &\dTo^{\lambda'} & &\dTo^{\beta}& &\dTo^{\gamma'^*_r}\\
\lambda'.L' &\rTo^{f''} &M &\rTo^{g''} &\alpha.X &\rTo^{h''} &(\lambda'.L')^*, & &\gamma.Z &\rTo^{i''} &\lambda'.L' &\rTo^{j''} &\beta.Y &\rTo^{k''} &(\gamma.Z)^*.
\end{diagram}
\end{equation}
Since $\lambda',\beta$ are isomorphisms, $\alpha'_r,\gamma'_r$ are homotopy equivalences. Notice that $X,\alpha.X$ have the same projective dimension vector pair. By Lemma \ref{isomorphism and homotopy equivalence}, $\alpha'_r$ is an isomorphism and so $\alpha'_r\in \rG_{(\ue_1)_r}$, where $(\ue_1)_r$ is the projective dimension vector pair of $X_r,(\alpha.X)_r$. Then there exists $\alpha'=f_{(\ue_1)_r\ue_1}(\alpha'_r)=\alpha'_r\oplus 1\oplus 1\in \rG_{\ue_1}$, see Subsection \ref{Moduli K_2(P)}, such that $\alpha.X=\alpha'.X$ and the left diagram in (\ref{two commutative diagram}) commutes. Similarly, there exists $\gamma'\in \rG_{\ue_3}$ such that $\gamma'.Z=\gamma.Z$ and the right diagram in (\ref{two commutative diagram}) commutes. Hence
\begin{align*}
((f'',g'',h''),(i'',j'',k''))=&((f'\lambda'^{-1},\alpha'g',\lambda'^*h'\alpha'^{-1}),(\lambda'i'\gamma'^{-1},\beta j'\lambda'^{-1},\gamma'^*k'\beta^{-1}))\\
=&(\alpha',\beta,\gamma',\lambda').((f',g',h'),(i',j',k')),
\end{align*}
and so there is a well-defined map sending an orbit to an orbit
$$\rG_{\ue_1}\times \rG_{\ue_2}\times \rG_{\ue_3}\times \rG_{\ue}.((f,g,h),(i,j,k))\mapsto \rG_{\ue_1}\times \rG_{\ue_2}\times \rG_{\ue_3}\times \rG_{\ue'}.((f',g',h'),(i',j',k'))$$
Moreover, by (\ref{algebraic 1})-(\ref{algebraic 2}), the map is algebraic and it induces a morphism $\varphi$.

Dually, by the symmetry, there is a morphism
$\psi:V^{\tilde{M}}_{\hat{\cO}_1(\hat{\cO}_2\hat{\cO}_3)}\rightarrow V^{\tilde{M}}_{(\hat{\cO}_1\hat{\cO}_2)\hat{\cO}_3}$
which is the inverse of $\varphi$, as desired.
\end{proof}

\subsection{Preliminary result about the structure constants}\label{Preliminary result about the structure constants}\

In this subsection, we deal with the structure constants $F$ in (\ref{constant F}), and their relations with $\chi(V^{\tilde{M}}_{(\hat{\cO}_1\hat{\cO}_2)\hat{\cO}_3})=\chi(V^{\tilde{M}}_{\hat{\cO}_1(\hat{\cO}_2\hat{\cO}_3)})$ in Proposition \ref{octahedral Euler characteristic} in preparation for the proof of Jacobi identity. 

With the same notations in Subsection \ref{An application of octahedral axiom}.  Recall that 
\begin{align*}
W^{\tilde{M}}_{(\hat{\cO}_1\hat{\cO}_2)\hat{\cO}_3}=\bigsqcup_{\ue\leqslant \ue_1+\ue_2}\bigsqcup_{L\in \cL^{\ue}_{\ue_1\ue_2}}W(\cO_{\ue_1},\cO_{\ue_2};L)\times W(L,\cO_{\ue_3};M),
\end{align*}
see (\ref{union 1}), where $\cL^{\ue}_{\ue_1\ue_2}$ consists of radical complexes, and for any $L\in \cL^{\ue}_{\ue_1\ue_2}$, the subset $W(\cO_{\ue_1},\cO_{\ue_2};L)\times W(L,\cO_{\ue_3};M)$ consists of $((f,g,h),(i,j,k))$ such that
\begin{equation}
Y\xrightarrow{f}L\xrightarrow{g}X\xrightarrow{h}Y^*,\ Z\xrightarrow{i}M\xrightarrow{j}L\xrightarrow{k}Z^* \tag{$\triangle\triangle$}
\end{equation}
are distinguished triangles for some $X\in \cO_{\ue_1},Y\in \cO_{\ue_2},Z\in \cO_{\ue_3}$. For any $\ue\leqslant \ue_1+\ue_2$, we divide 
\begin{equation}\label{partion W}
\bigsqcup_{L\in \cL^{\ue}_{\ue_1\ue_2}}W(\cO_{\ue_1},\cO_{\ue_2};L)\times W(L,\cO_{\ue_3};M)=\bigsqcup_{t=1}^4W^{\ue}_{(12)3}(t)
\end{equation}
into disjoint union of $\rG_{\ue_1}\times \rG_{\ue_2}\times \rG_{\ue_3}\times \rG_{\ue}$-invariant constructible subsets, where 
\begin{align*}
W^{\ue}_{(12)3}(1)&=\{(\triangle\triangle)|L\not\simeq M\oplus Z^*\},\\
W^{\ue}_{(12)3}(2)&=\{(\triangle\triangle)|L\simeq M\oplus Z^*,L\not\simeq X\oplus Y\},\\
W^{\ue}_{(12)3}(3)&=\{(\triangle\triangle)|L\simeq M\oplus Z^*,L\simeq X\oplus Y,X\not\simeq Y\},\\
W^{\ue}_{(12)3}(4)&=\{(\triangle\triangle)|L\simeq M\oplus Z^*,L\simeq X\oplus Y,X\simeq Y\}.
\end{align*}
Recall that the symbol $\simeq$ means homotopy equivalence, see Subsection \ref{The categories cC_2(cP) and cK_2(cP)}. We denote by $V^{\ue}_{(12)3}(t)=[W^{\ue}_{(12)3}(t)/\rG_{\ue_1}\times \rG_{\ue_2}\times \rG_{\ue_3}\times \rG_{\ue}]$ the corresponding quotient stack, and so $V^{\tilde{M}}_{(\hat{\cO}_1\hat{\cO}_2)\hat{\cO}_3}=\bigsqcup_{\ue\leqslant \ue_1+\ue_2}\bigsqcup^4_{t=1}V^{\ue}_{(12)3}(t)$. Dually, for 
$$W^{\hat{M}}_{\hat{\cO}_1(\hat{\cO}_2\hat{\cO}_3)}=\bigsqcup_{\ue'\leqslant \ue_2+\ue_3}\bigsqcup_{L'\in \cL^{\ue'}_{\ue_2\ue_3}}W(\cO_{\ue_1},L';M)\times W(\cO_{\ue_2},\cO_{\ue_3};L'),$$
see (\ref{union 2}), and for any $\ue'\leqslant \ue_2+\ue_3$, we divide 
$$\bigsqcup_{L'\in \cL^{\ue'}_{\ue_2\ue_3}}W(\cO_{\ue_1},L';M)\times W(\cO_{\ue_2},\cO_{\ue_3};L')=\bigsqcup^4_{t=1}W^{\ue'}_{1(23)}(t)$$
into disjoint union of $\rG_{\ue_1}\times \rG_{\ue_2}\times \rG_{\ue_3}\times \rG_{\ue'}$-invariant constructible subsets, where 
\begin{align*}
W^{\ue'}_{1(23)}(1)&=\{(\triangle\triangle')|L'\not\simeq M\oplus X^*\},\\
W^{\ue'}_{1(23)}(2)&=\{(\triangle\triangle')|L'\simeq M\oplus X^*,L'\not\simeq Y\oplus Z\},\\
W^{\ue'}_{1(23)}(3)&=\{(\triangle\triangle')|L'\simeq M\oplus X^*,L'\simeq Y\oplus Z,Y\not\simeq Z\},\\
W^{\ue'}_{1(23)}(4)&=\{(\triangle\triangle')|L'\simeq M\oplus X^*,L'\simeq Y\oplus Z,Y\simeq Z\}.
\end{align*}
We denote by $V^{\ue'}_{1(23)}(t)=[W^{\ue'}_{1(23)}(t)/\rG_{\ue_1}\times \rG_{\ue_2}\times \rG_{\ue_3}\times \rG_{\ue'}]$ the corresponding quotient stack, and so $V^{\tilde{M}}_{\hat{\cO}_1(\hat{\cO}_2\hat{\cO}_3)}=\bigsqcup_{\ue'\leqslant \ue_2+\ue_3}\bigsqcup^4_{t=1}V^{\ue'}_{1(23)}(t)$.
\begin{lemma}\label{finite subset I}
For any $\ue\leqslant \ue_1+\ue_2$, there exists a finite subset $\cI^{\ue}_{\ue_1\ue_2}\subset \bbC\times \bbC$ such that $\cL^{\ue}_{\ue_1\ue_2}$ can be divided into disjoint union of $\rG_{\ue}$-invariant constructible subsets
$$\cL^{\ue}_{\ue_1\ue_2}=\bigsqcup_{(r,s)\in \cI^{\ue}_{\ue_1\ue_2}}\cL^{\ue}_{\ue_1\ue_2}(r,s)$$
where $\cL^{\ue}_{\ue_1\ue_2}(r,s)=\{L\in \cL^{\ue}_{\ue_1\ue_2}|F^L_{\cO_{\ue_1}\cO_{\ue_2}}=r,F^L_{\cO_{\ue_2}\cO_{\ue_1}}=s\}$.
\end{lemma}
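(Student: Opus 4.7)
The plan is to exhibit both $L \mapsto F^L_{\cO_{\ue_1}\cO_{\ue_2}}$ and $L \mapsto F^L_{\cO_{\ue_2}\cO_{\ue_1}}$ as $\rG_{\ue}$-invariant constructible functions on $\cL^{\ue}_{\ue_1\ue_2}$. Once this is established, the combined map into $\bbC \times \bbC$ is also a $\rG_{\ue}$-invariant constructible function, and by definition such a function takes only finitely many values; taking $\cI^{\ue}_{\ue_1\ue_2}$ to be its image and $\cL^{\ue}_{\ue_1\ue_2}(r,s)$ its level sets immediately yields the desired decomposition into $\rG_{\ue}$-invariant constructible pieces.

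For $\rG_{\ue}$-invariance, fix $\gamma \in \rG_{\ue}$ and $L \in \cL^{\ue}_{\ue_1\ue_2}$. The map $(f,g,h) \mapsto (\gamma f, g\gamma^{-1}, h)$ is a $\rG_{\ue_1} \times \rG_{\ue_2}$-equivariant bijection $W(\cO_{\ue_1}, \cO_{\ue_2}; L) \cong W(\cO_{\ue_1}, \cO_{\ue_2}; \gamma.L)$ because the distinguished-triangle condition $Y \xrightarrow{f} L \xrightarrow{g} X \xrightarrow{h} Y^*$ is preserved under pre/postcomposition with the isomorphism $\gamma$, exactly as in the argument at the end of the proof of Lemma \ref{convolution constructible}. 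Passing to quotients and naive Euler characteristics yields $F^L_{\cO_{\ue_1}\cO_{\ue_2}} = F^{\gamma.L}_{\cO_{\ue_1}\cO_{\ue_2}}$, and the swapped case is identical.

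For constructibility, I would assemble the total space
$$\widetilde{W} = \{ (L,(f,g,h)) \mid L \in \cL^{\ue}_{\ue_1\ue_2},\ (f,g,h) \in W(\cO_{\ue_1}, \cO_{\ue_2}; L) \},$$
viewed as a constructible subset of an ambient product of $\cL^{\ue}_{\ue_1\ue_2}$ with $\cO_{\ue_1} \times \cO_{\ue_2}$ and the relevant $\Hom_{\cK_2(\cP)}$-bundles. Constructibility is the relative version of the argument following $(\ref{W(X,Y;Z)})$, resting on \cite[Proposition 9]{Caldero-Keller-2008} applied uniformly over the base. The natural projection $p \colon \widetilde{W} \to \cL^{\ue}_{\ue_1\ue_2}$ has fiber $W(\cO_{\ue_1}, \cO_{\ue_2}; L)$ over $L$, is $\rG_{\ue_1} \times \rG_{\ue_2}$-equivariant for the trivial action on the base, and descends to a morphism of quotient stacks $\bar p \colon [\widetilde{W}/\rG_{\ue_1} \times \rG_{\ue_2}] \to \cL^{\ue}_{\ue_1\ue_2}$ whose fiber over $L$ is exactly $V(\cO_{\ue_1}, \cO_{\ue_2}; L)$. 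Applying Joyce's pushforward $\textrm{CF}(\bar p)$ from \cite[Theorem 4.9]{Joyce-2006} (just as in the proof of Lemma \ref{convolution constructible}) to the constant function $1$ produces a constructible function on $\cL^{\ue}_{\ue_1\ue_2}$ whose value at $L$ equals $\chi(V(\cO_{\ue_1}, \cO_{\ue_2}; L)) = F^L_{\cO_{\ue_1}\cO_{\ue_2}}$. The same construction with the roles of $\ue_1, \ue_2$ interchanged handles $L \mapsto F^L_{\cO_{\ue_2}\cO_{\ue_1}}$.

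The main technical care is checking that the constructibility argument for $W(X,Y;L)$, which factors through the constructibility of $\Hom_{\cK_2(\cP)}(X, Y^*)_{L^*} \subset \Hom_{\cK_2(\cP)}(X, Y^*)$ via \cite[Proposition 9]{Caldero-Keller-2008}, really does hold uniformly as $L$ varies in a single parameter space. Because $\ue, \ue_1, \ue_2$ are fixed and the mapping-cone construction is algebraic in its inputs, all the Hom-spaces and cone-forming maps fit into a single constructible family of varieties of finite type, so the expected difficulty here is bookkeeping rather than a new geometric input.
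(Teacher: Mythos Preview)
Your proposal is correct and takes essentially the same approach as the paper. The paper's proof is more concise because it observes that $L\mapsto F^L_{\cO_{\ue_1}\cO_{\ue_2}}$ is, by definition, the convolution function $1_{\cO_{\ue_1}}*1_{\cO_{\ue_2}}$ evaluated at $\tilde{L}$, so Lemma~\ref{convolution constructible} applies directly rather than needing to be re-derived inline.
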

\begin{proof}
By Lemma \ref{convolution constructible}, both $1_{\cO_{\ue_1}}*1_{\cO_{\ue_2}}$ and $1_{\cO_{\ue_2}}*1_{\cO_{\ue_1}}$
are $\rG_{\ue}$-invariant constructible functions.  It is clear that they are supported on $\cL^{\ue}_{\ue_1\ue_2}$. Hence the subset $\cI^{\ue}_{\ue_1\ue_2}=(\im 1_{\cO_{\ue_1}}*1_{\cO_{\ue_2}})\times (\im 1_{\cO_{\ue_2}}*1_{\cO_{\ue_1}})\subset \bbC\times \bbC$ is finite, and for any $(r,s)\in \cI^{\ue}_{\ue_1\ue_2}$, the subset $\cL^{\ue}_{\ue_1\ue_2}(r,s)=(1_{\cO_{\ue_1}}*1_{\cO_{\ue_2}})^{-1}(r)\cap (1_{\cO_{\ue_2}}*1_{\cO_{\ue_1}})^{-1}(s)$ is $\rG_{\ue}$-invariant constructible.
\end{proof}

For any $(r,s)\in \cI^{\ue}_{\ue_1\ue_2}$, we fix a complex $L_{r,s}\in \cL^{\ue}_{\ue_1\ue_2}(r,s)$, and then simply denote by $\langle L_{r,s}\rangle=\cL^{\ue}_{\ue_1\ue_2}(r,s)$. We divide 
$$\langle L_{r,s}\rangle=\langle L_{r,s}\rangle_1\sqcup\langle L_{r,s}\rangle_2$$
into disjoint union of $\rG_{\ue}$-invariant constructible subsets, where
\begin{align*}
\langle L_{r,s}\rangle_1&=\{L\in \langle L_{r,s}\rangle|L\not\simeq M\oplus Z^*\ \textrm{for any}\ Z\in \cO_{\ue_3}\},\\
\langle L_{r,s}\rangle_2&=\{L\in \langle L_{r,s}\rangle|L\simeq M\oplus Z^*\ \textrm{for some}\ Z\in \cO_{\ue_3}\}.
\end{align*}
Then $V(\langle L_{r,s}\rangle,\cO_{\ue_3};M),V(\langle L_{r,s}\rangle_1,\cO_{\ue_3};M), V(\langle L_{r,s}\rangle_2,\cO_{\ue_3};M)$ have been defined in Subsection \ref{convolution}. Moreover, we define 
\begin{align*}
\tilde{V}^{\ue}_{(12)3}&=\bigsqcup_{(r,s)\in \cI^{\ue}_{\ue_1\ue_2}}V(\cO_{\ue_1},\cO_{\ue_2};L_{r,s})\times V(\langle L_{r,s}\rangle,\cO_{\ue_3};M),\\
\tilde{V}^{\ue}_{(12)3}(1)&=\bigsqcup_{(r,s)\in \cI^{\ue}_{\ue_1\ue_2}}V(\cO_{\ue_1},\cO_{\ue_2};L_{r,s})\times V(\langle L_{r,s}\rangle_1,\cO_{\ue_3};M),\\
\tilde{V}^{\ue}_{(12)3}(2)&=\bigsqcup_{(r,s)\in \cI^{\ue}_{\ue_1\ue_2}}V(\cO_{\ue_1},\cO_{\ue_2};L_{r,s})\times V(\langle L_{r,s}\rangle_2,\cO_{\ue_3};M),
\end{align*}
and so $\tilde{V}^{\ue}_{(12)3}=\tilde{V}^{\ue}_{(12)3}(1)\sqcup \tilde{V}^{\ue}_{(12)3}(2)$. Dually, for any $(r,s)\in \cI^{\ue'}_{\ue_2\ue_3}$, we fix a complex $L'_{r,s}\in \cL^{\ue'}_{\ue_2\ue_3}(r,s)=\{L'\in \cL^{\ue'}_{\ue_2\ue_3}|F^{L'}_{\cO_{\ue_2}\cO_{\ue_3}}=r,F^{L'}_{\cO_{\ue_3}\cO_{\ue_2}}=s\}$, and then simply denote by $\langle L'_{r,s}\rangle=\cL^{\ue'}_{\ue_2\ue_3}(r,s)$. We divide 
$$\langle L'_{r,s}\rangle=\langle L'_{r,s}\rangle_1\sqcup\langle L'_{r,s}\rangle_2$$
into disjoint union of $\rG_{\ue'}$-invariant constructible subsets, where
\begin{align*}
\langle L'_{r,s}\rangle_1&=\{L'\in \langle L'_{r,s}\rangle|L'\not\simeq M\oplus X^*\ \textrm{for any}\ X\in \cO_{\ue_1}\},\\
\langle L'_{r,s}\rangle_2&=\{L'\in \langle L'_{r,s}\rangle|L'\simeq M\oplus X^*\ \textrm{for some}\ X\in \cO_{\ue_1}\}.
\end{align*}
Moreover, we define
\begin{align*}
\tilde{V}^{\ue'}_{1(23)}&=\bigsqcup_{(r,s)\in \cI^{\ue'}_{\ue_2\ue_3}}V(\cO_{\ue_1},\langle L'_{r,s}\rangle;M)\times V(\cO_{\ue_2},\cO_{\ue_3},L'_{r,s}),\\
\tilde{V}^{\ue'}_{1(23)}(1)&=\bigsqcup_{(r,s)\in \cI^{\ue'}_{\ue_2\ue_3}}V(\cO_{\ue_1},\langle L'_{r,s}\rangle_1;M)\times V(\cO_{\ue_2},\cO_{\ue_3},L'_{r,s}),\\
\tilde{V}^{\ue'}_{1(23)}(2)&=\bigsqcup_{(r,s)\in \cI^{\ue'}_{\ue_2\ue_3}}V(\cO_{\ue_1},\langle L'_{r,s}\rangle_2;M)\times V(\cO_{\ue_2},\cO_{\ue_3},L'_{r,s}),
\end{align*}
and so $\tilde{V}^{\ue'}_{1(23)}=\tilde{V}^{\ue'}_{1(23)}(1)\sqcup\tilde{V}^{\ue'}_{1(23)}(2)$.

\subsubsection{\textbf{Case} $t=1$}\

For $(r,s)\in \cI^{\ue}_{\ue_1\ue_2}$, let $W^{\ue,\langle L_{r,s}\rangle_1}_{(12)3}(1)\subset W^{\ue}_{(12)3}(1)$ be the $\rG_{\ue_1}\times \rG_{\ue_2}\times \rG_{\ue_3}\times \rG_{\ue}$-invariant constructible subset consisting of $(\triangle\triangle)$ such that $L\in \langle L_{r,s}\rangle_1$, and 
$$V^{\ue,\langle L_{r,s}\rangle_1}_{(12)3}(1)=[W^{\ue,\langle L_{r,s}\rangle_1}_{(12)3}(1)/\rG_{\ue_1}\times \rG_{\ue_2}\times \rG_{\ue_3}\times \rG_{\ue}]$$
be the corresponding quotient stack. So
\begin{align*}
W^{\ue}_{(12)3}(1)=\bigsqcup_{(r,s)\in \cI^{\ue}_{\ue_1\ue_2}}W^{\ue,\langle L_{r,s}\rangle_1}_{(12)3}(1),\ V^{\ue}_{(12)3}(1)=\bigsqcup_{(r,s)\in \cI^{\ue}_{\ue_1\ue_2}}V^{\ue,\langle L_{r,s}\rangle_1}_{(12)3}(1).
\end{align*}
Dually, for $(r,s)\in \cI^{\ue'}_{\ue_2\ue_3}$, let $W^{\ue',\langle L'_{r,s}\rangle_1}_{1(23)}(1)\subset W^{\ue'}_{1(23)}(1)$ be the $\rG_{\ue_1}\times \rG_{\ue_2}\times \rG_{\ue_3}\times \rG_{\ue'}$-invariant constructible subset consisting of $(\triangle\triangle')$ such that $L'\in \langle L'_{r,s}\rangle_1$ and then
\begin{align*}
&V^{\ue',\langle L'_{r,s}\rangle_1}_{1(23)}(1)=[W^{\ue',\langle L'_{r,s}\rangle_1}_{1(23)}(1)/\rG_{\ue_1}\times \rG_{\ue_2}\times \rG_{\ue_3}\times \rG_{\ue'}],\\
W^{\ue'}_{1(23)}(1)&=\bigsqcup_{(r,s)\in \cI^{\ue'}_{\ue_2\ue_3}}W^{\ue',\langle L'_{r,s}\rangle_1}_{1(23)}(1),\ V^{\ue'}_{1(23)}(1)=\bigsqcup_{(r,s)\in \cI^{\ue'}_{\ue_2\ue_3}}V^{\ue',\langle L'_{r,s}\rangle_1}_{1(23)}(1).
\end{align*}

\begin{proposition}\label{Case 1}
For any $\ue\leqslant \ue_1+\ue_2$ and $(r,s)\in \cI^{\ue}_{\ue_1\ue_2}$, we have
$$\chi(V^{\ue,\langle L_{r,s}\rangle_1}_{(12)3}(1))=F^{L_{r,s}}_{\cO_{\ue_1}\cO_{\ue_2}}F^M_{\langle L_{r,s}\rangle_1 \cO_{\ue_3}}.$$
As a consequence, we have
$$\chi(V^{\ue}_{(12)3}(1))=\chi(\tilde{V}^{\ue}_{(12)3}(1)).$$
Dually, for any $\ue'\leqslant \ue_2+\ue_3$ and $(r,s)\in \cI^{\ue'}_{\ue_2\ue_3}$, we have
\begin{align*}
\chi(V^{\ue',\langle L'_{r,s}\rangle_1}_{1(23)}(1))&=F^M_{\cO_{\ue_1}\langle L'_{r,s}\rangle_1}F^{L'_{r,s}}_{\cO_{\ue_2} \cO_{\ue_3}},\\ 
\chi(V^{\ue'}_{1(23)}(1))&=\chi(\tilde{V}^{\ue'}_{1(23)}(1)).
\end{align*}
\end{proposition}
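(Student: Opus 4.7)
My plan is to exhibit a constant-fiber projection from $V^{\ue,\langle L_{r,s}\rangle_1}_{(12)3}(1)$ onto $V(\langle L_{r,s}\rangle_1,\cO_{\ue_3};M)$ and then apply Corollary \ref{fibre naive Euler characteristic}(b). First I would form the forgetful morphism
\[
\bar{p}:V^{\ue,\langle L_{r,s}\rangle_1}_{(12)3}(1)\longrightarrow V(\langle L_{r,s}\rangle_1,\cO_{\ue_3};M),\qquad [((f,g,h),(i,j,k))]\longmapsto [(i,j,k)],
\]
induced by the obvious map on the level of $W$'s. This is well-defined as a morphism of stacks because $\rG_{\ue_1}\times\rG_{\ue_2}$ acts only on the left-triangle data $(f,g,h)$ and preserves $L$ and $(i,j,k)$, while $\rG_{\ue_3}\times\rG_{\ue}$ acts compatibly on source and target.

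Next I would identify the naive fiber of $\bar{p}$ over a geometric point $[(i,j,k)]$ with middle object $L\in\langle L_{r,s}\rangle_1$ with the stack $V(\cO_{\ue_1},\cO_{\ue_2};L)$. After fixing $L$ as a representative of its $\rG_{\ue}$-orbit, the remaining data $(f,g,h)$ range over $W(\cO_{\ue_1},\cO_{\ue_2};L)$ modulo the $\rG_{\ue_1}\times\rG_{\ue_2}$-action, while any residual stabilizer of $(L,(i,j,k))$ inside $\rG_{\ue_3}\times\rG_{\ue}$ acts on this fiber through automorphisms of $L$ that can be absorbed into $\rG_{\ue_1}\times\rG_{\ue_2}$ by the rotation rule for distinguished triangles recorded in Subsection \ref{convolution}. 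The non-split hypothesis $L\not\simeq M\oplus Z^{*}$ defining $\langle L_{r,s}\rangle_1$ is precisely what prevents an extra ``splitting'' automorphism of $(i,j,k)$ from polluting this identification.

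By the definition of $\langle L_{r,s}\rangle$ in Lemma \ref{finite subset I}, every such fiber carries the same naive Euler characteristic
\[
\chi(V(\cO_{\ue_1},\cO_{\ue_2};L))=F^{L}_{\cO_{\ue_1}\cO_{\ue_2}}=r=F^{L_{r,s}}_{\cO_{\ue_1}\cO_{\ue_2}},
\]
so Corollary \ref{fibre naive Euler characteristic}(b) applied to $\bar{p}$ gives
\[
\chi(V^{\ue,\langle L_{r,s}\rangle_1}_{(12)3}(1))=F^{L_{r,s}}_{\cO_{\ue_1}\cO_{\ue_2}}\cdot F^{M}_{\langle L_{r,s}\rangle_1\,\cO_{\ue_3}}.
\]
Summing over $(r,s)\in\cI^{\ue}_{\ue_1\ue_2}$ via additivity (Corollary \ref{fibre naive Euler characteristic}(a)) on the disjoint decomposition of Lemma \ref{finite subset I} then yields $\chi(V^{\ue}_{(12)3}(1))=\chi(\tilde{V}^{\ue}_{(12)3}(1))$. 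The dual identity for $V^{\ue'}_{1(23)}(1)$ follows by projecting instead onto $V(\cO_{\ue_2},\cO_{\ue_3};L'_{r,s})$ and invoking $F^{L'}_{\cO_{\ue_2}\cO_{\ue_3}}=r$ with the symmetric bookkeeping.

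The step I expect to be the main obstacle is the fiber identification: one must verify that the $\rG_{\ue}$-stabilizer of a chosen $L$ (coming from the projection) acts on $W(\cO_{\ue_1},\cO_{\ue_2};L)$ through the $\rG_{\ue_1}\times\rG_{\ue_2}$-action by rotation of the left triangle, so that passing to the quotient really produces $V(\cO_{\ue_1},\cO_{\ue_2};L)$ on the nose. This is the point where the condition $L\not\simeq M\oplus Z^{*}$ is essential; once this compatibility is recorded, Corollary \ref{fibre naive Euler characteristic}(b) finishes the computation and the rest is routine bookkeeping.
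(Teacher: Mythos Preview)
Your overall strategy---project to $V(\langle L_{r,s}\rangle_1,\cO_{\ue_3};M)$ and invoke Corollary~\ref{fibre naive Euler characteristic}(b)---is exactly what the paper does. The gap is in your fiber identification. The fiber of $\bar p$ over $[(i,j,k)]$ is \emph{not} $V(\cO_{\ue_1},\cO_{\ue_2};L)$; it is
\[
[\,W(\cO_{\ue_1},\cO_{\ue_2};L)\,/\,\rG_{\ue_1}\times\rG_{\ue_2}\times\Stab(i,j,k)\,],
\]
a further quotient by the stabilizer $\Stab(i,j,k)\subset\rG_{\ue_3}\times\rG_{\ue}$ acting on $(f,g,h)$ via $(\gamma,\lambda).(f,g,h)=(\lambda f,\,g\lambda^{-1},\,h)$. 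Your claim that this extra $\lambda$-action ``can be absorbed into $\rG_{\ue_1}\times\rG_{\ue_2}$ by the rotation rule'' is not justified: for a general $\lambda$ in the stabilizer there is no reason that automorphisms $\alpha$ of $X$ and $\beta$ of $Y$ exist with $\lambda f=f\beta^{-1}$ and $g\lambda^{-1}=\alpha g$. Axiom (TR3) does not produce such a pair for you, because you do not start from a commuting square; you only have the single vertical map $\lambda$ on $L$. So the two quotients need not have the same orbits.

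What the paper actually proves is weaker but sufficient: the natural map
\[
\rho:\ [W(\cO_{\ue_1},\cO_{\ue_2};L)/\rG_{\ue_1}\times\rG_{\ue_2}]\longrightarrow [W(\cO_{\ue_1},\cO_{\ue_2};L)/\rG_{\ue_1}\times\rG_{\ue_2}\times\Stab(i,j,k)]
\]
has every fiber of naive Euler characteristic $1$. The fiber over $[(f,g,h)]$ is $\Stab(i,j,k)/\Stab_{(i,j,k)}(f,g,h)$, and the paper shows this quotient has $\chi=1$ by passing to $\cK_2(\cP)$ and proving that
\[
\overline{S}(i,j,k)=\{(\gamma-1_Z,\lambda-1_L):(\gamma,\lambda)\in\overline{\Stab}(i,j,k)\}\subset\End_{\cK_2(\cP)}(Z)\times\End_{\cK_2(\cP)}(L)
\]
is a \emph{vector subspace} (and likewise for the sub-stabilizer). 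This is where the hypothesis $L\not\simeq M\oplus Z^{*}$ is actually used: it forces the triangle $Z\xrightarrow{i}M\to L\to Z^{*}$ to be non-split, hence $i\neq 0$, so $i(\gamma-1_Z)=0$ together with indecomposability of $Z$ forces $\gamma-1_Z$ to be nilpotent, which is what makes the linear combination $1_Z+(\gamma_1-1_Z)+k(\gamma_2-1_Z)$ invertible. Your invocation of non-splitting to ``prevent a splitting automorphism from polluting the identification'' is the right intuition, but the precise mechanism is this nilpotency/affine-space argument, not an absorption of orbits. You should replace the absorption claim by this Euler-characteristic comparison; the rest of your outline is fine.
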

\begin{proof}
By definition, there is a natural surjective projection
\begin{align*}
&\pi:W^{\ue,\langle L_{r,s}\rangle_1}_{(12)3}(1)\rightarrow W(\langle L_{r,s}\rangle_1,\cO_{\ue_3};M)\\
(Y\xrightarrow{f}L\xrightarrow{g}X&\xrightarrow{h}Y^*,\ Z\xrightarrow{i}M\xrightarrow{j}L\xrightarrow{k}Z^*)\mapsto (Z\xrightarrow{i}M\xrightarrow{j}L\xrightarrow{k}Z^*)
\end{align*}
which induces a morphism
$$\overline{\pi}:V^{\ue,\langle L_{r,s}\rangle_1}_{(12)3}(1)\rightarrow [W(\langle L_{r,s}\rangle_1,\cO_{\ue_3};M)/\rG_{\ue_3}\times \rG_{\ue}]=V(\langle L_{r,s}\rangle_1,\cO_{\ue_3};M),$$
where $\chi(V(\langle L_{r,s}\rangle_1,\cO_{\ue_3};M))=F^M_{\langle L_{r,s}\rangle_1 \cO_{\ue_3}}$. Consider the fiber of $\overline{\pi}$ at the geometric point corresponding to the orbit $\rG_{\ue_3}\times \rG_{\ue}.(Z\xrightarrow{i}M\xrightarrow{j}L\xrightarrow{k}Z^*)$, by definition, the fiber is isomorphic to $[W(\cO_{\ue_1},\cO_{\ue_2};L)/\rG_{\ue_1}\times \rG_{\ue_2}\times \Stab(i,j,k)]$, where 
$$\Stab(i,j,k)=\{(\gamma,\lambda)\in \rG_{\ue_3}\times \rG_{\ue}|\gamma.Z=Z,\lambda.L=L, i\gamma^{-1}=i,\lambda j=j,\gamma^*k\lambda^{-1}=k\}$$ 
is the stabilizer and $\rG_{\ue_1}\times \rG_{\ue_2}\times \Stab(i,j,k)$ acts on $W(\cO_{\ue_1},\cO_{\ue_2};L)$ via 
$$(\alpha,\beta,\gamma,\lambda).(Y\xrightarrow{f}L\xrightarrow{g}X\xrightarrow{h}Y^*)=(\beta.Y\xrightarrow{\lambda f\beta^{-1}}L\xrightarrow{\alpha g\lambda^{-1}}\alpha.X\xrightarrow{\beta^*h\alpha^{-1}}(\beta.Y)^*).$$
We claim that 
\begin{equation}\label{claim}
\begin{aligned}
&\chi([W(\cO_{\ue_1},\cO_{\ue_2};L)/\rG_{\ue_1}\times \rG_{\ue_2}\times \Stab(i,j,k)])\\
=&\chi([W(\cO_{\ue_1},\cO_{\ue_2};L)/\rG_{\ue_1}\times \rG_{\ue_2}])=F^L_{\cO_{\ue_1}\cO_{\ue_2}}=F^{L_{r,s}}_{\cO_{\ue_1}\cO_{\ue_2}}
\end{aligned}
\end{equation}
which is constant. Once the claim holds, applying Corollary \ref{fibre naive Euler characteristic} for $\overline{\pi}$, we obtain $\chi(V^{\ue,\langle L_{r,s}\rangle}_{(12)3})=F^{L_{r,s}}_{\cO_{\ue_1}\cO_{\ue_2}}F^M_{\langle L_{r,s}\rangle_1 \cO_{\ue_3}}$, moreover, 
\begin{align*}
\chi(V^{\ue}_{(12)3}(1))=&\sum_{(r,s)\in \cI^{\ue}_{\ue_1\ue_2}}\chi(V^{\ue,\langle L_{r,s}\rangle_1}_{(12)3}(1))=\sum_{(r,s)\in \cI^{\ue}_{\ue_1\ue_2}}F^{L_{r,s}}_{\cO_{\ue_1}\cO_{\ue_2}}F^M_{\langle L_{r,s}\rangle_1 \cO_{\ue_3}}\\
=&\sum_{(r,s)\in \cI^{\ue}_{\ue_1\ue_2}}\chi(V(\cO_{\ue_1},\cO_{\ue_2};L_{r,s})\times V(\langle L_{r,s}\rangle_1,\cO_{\ue_3};M))=\chi(\tilde{V}^{\ue}_{(12)3}(1)),
\end{align*}
as desired. We prove (\ref{claim}) as follows.

The natural morphism $\rG_{\ue_1}\times \rG_{\ue_2}.(f,g,h)\mapsto \rG_{\ue_1}\times \rG_{\ue_2}\times \Stab(i,j,k).(f,g,h)$ induces a morphism
\begin{align*}
\rho:[W(\cO_{\ue_1},\cO_{\ue_2};L)/\rG_{\ue_1}\times \rG_{\ue_2}]\rightarrow [W(\cO_{\ue_1},\cO_{\ue_2};L)/\rG_{\ue_1}\times \rG_{\ue_2}\times \Stab(i,j,k)]
\end{align*}
whose fiber at the geometric point corresponding to $\rG_{\ue_1}\times \rG_{\ue_2}\times \Stab(i,j,k).(f,g,h)$ is isomorphic to
$$\Stab(i,j,k).(f,g,h)\cong \Stab(i,j,k)/\Stab_{(i,j,k)}(f,g,h),$$
where $\Stab_{(i,j,k)}(f,g,h)=\{(\gamma,\lambda) \in \Stab(i,j,k)|\lambda f=f,g\lambda^{-1}=g\}$ is the stabilizer. We calculate $\chi(\Stab(i,j,k)/\Stab_{(i,j,k)}(f,g,h))$ as follows. By similar argument as in Remark \ref{reduce to radical}, the orbit $\Stab(i,j,k).(f,g,h)$ does not change while we replace $Z$ by $Z_r$, and so we may assume $Z$ is a radical complex. By definition, $L\in \cL^{\ue}_{\ue_1\ue_2}$ is already a radical complex. Consider the conditions in $\Stab(i,j,k)$, note that $\gamma.Z=Z,\lambda.L=L$ imply that $\gamma\in\Stab_{\rG_{\ue_3}}(Z)\cong \Aut_{\cC_2(\cP)}(Z),\lambda\in \Stab_{\rG_{\ue}}(L)\cong \Aut_{\cC_2(\cP)}(L)$, and note that $i\gamma^{-1}=i,\lambda j=j,\gamma^*k\lambda^{-1}=k$ are about morphisms in $\cK_2(\cP)$. By Corollary \ref{automorphism groups coincide}, there is a surjective group homomorphism
\begin{align*}
p:&\Stab(i,j,k)=\{(\gamma,\lambda)\in \Aut_{\cC_2(\cP)}(Z)\times \Aut_{\cC_2(\cP)}(L)|i\gamma^{-1}=i,\lambda j=j,\gamma^*k\lambda^{-1}=k\}\\
\twoheadrightarrow &\overline{\Stab}(i,j,k)
=\{(\gamma,\lambda)\in \Aut_{\cK_2(\cP)}(Z)\times \Aut_{\cK_2(\cP)}(L)|i\gamma^{-1}=i,\lambda j=j,\gamma^*k\lambda^{-1}=k\}
\end{align*}
with kernel $\{(1_Z+\xi,Z_L+\zeta)|\xi\in \Htp(Z,Z),\zeta\in \Htp(L,L)\}$ which is bijective to the vector space $\Htp(Z,Z)\oplus \Htp(L,L)$. There is a natural bijection
\begin{align*}
\theta:&\overline{\Stab}(i,j,k)\\
\rightarrow &\overline{S}(i,j,k)=\{(\gamma-1_Z,\lambda-1_L)\in \End_{\cK_2(\cP)}(Z)\times \End_{\cK_2(\cP)}(L)|(\gamma,\lambda)\in \overline{\Stab}(i,j,k)\}.
\end{align*}
Next, we prove that 
\begin{equation}
\overline{S}(i,j,k)\subset \End_{\cK_2(\cP)}(Z)\times \End_{\cK_2(\cP)}(L)\ \textrm{is a vector subspace.} \tag{$\ast$}
\end{equation}
It is clear that $(0,0)\in \overline{S}(i,j,k)$. For any $(\gamma_1,\lambda_1),(\gamma_2,\lambda_2)\in \overline{\Stab}(i,j,k)$ and $k\in \bbC$, consider the element
\begin{align*}
(\gamma,\lambda)=(1_Z,1_L)+(\gamma_1-1_Z,\lambda_1-1_L)+k(\gamma_2-1_Z,\lambda_2-1_L),
\end{align*}
it is easy to check that $i=i\gamma,\lambda j=j,\gamma^*k=k\lambda$. Since $Z\in \cO_{\ue_3}$ is indecomposable as an object in $\cK_2(\cP)$, the endomorphisms $\gamma_1-1_Z, \gamma_2-1_Z\in \End_{\cK_2(\cP)}(Z)$ are either homotopy equivalences or nilpotent morphisms. Since $L\not\simeq M\oplus Z^*$, the distinguished triangle $Z\xrightarrow{i}M\xrightarrow{j}L\xrightarrow{k}Z^*$ does not split, thus $i\not=0$, and so $i(\gamma_1-1_Z)=i(\gamma_2-1_Z)=0$ imply that $\gamma_1-1_Z, \gamma_2-1_Z$ are nilpotent, and then so is $\gamma_1-1_Z+k(\gamma_2-1_Z)$. Hence $\gamma=1_Z+(\gamma_1-1_Z)+k(\gamma_2-1_Z)\in \Aut_{\cK_2(\cP)}(Z)$, then by the following commutative diagram about distinguished triangles
\begin{diagram}[midshaft,size=2em]
Z &\rTo^{i} &M &\rTo^{j} &L &\rTo^{k} &Z^*\\
\dTo^{\gamma} & &\vEq & &\dTo^{\lambda} & &\dTo^{\gamma^*}\\
Z &\rTo^{i} &M &\rTo^{j} &L &\rTo^{k} &Z^*,
\end{diagram}
we have $\lambda\in \Aut_{\cK_2(\cP)}(L)$ and finish the proof of $(\ast)$. Under the group morphism $p$ and isomorphism $\theta$, the image $\theta p(\Stab_{(i,j,k)}(f,g,h))\subset \overline{S}(i,j,k)$ is a vector subspace. Hence,
$$\chi(\Stab(i,j,k)/\Stab_{(i,j,k)}(f,g,h))=\chi(\overline{S}(i,j,k)/\theta p(\Stab_{(i,j,k)}(f,g,h)))=1$$
is constant. Applying Corollary \ref{fibre naive Euler characteristic} for the morphism $\rho$, the claim (\ref{claim}) holds.

The dual statement can be proved similarly.
\end{proof}

\subsubsection{\textbf{Case} $t=2$}\

In the case $L\simeq M\oplus Z^*\not\simeq X\oplus Y$ for some $Z\in \cO_{\ue_3}$ and any $X\in \cO_{\ue_1},Y\in \cO_{\ue_2}$. Without loss of generality, we assume $L=M\oplus Z^*$ and denote by $\cO_Z=\rG_{\ue_3}.Z\subset \cO_{\ue_3}$.

Consider the constructible set $W(M\oplus Z^*,\cO_Z;M)$ which consists of $(i,j,k)$ such that $\gamma.Z\xrightarrow{i}M\xrightarrow{j}M\oplus Z^*\xrightarrow{k}(\gamma.Z)^*$ is a distinguished triangle, by similar argument as in Remark \ref{reduce to radical}, we may assume $M,Z$ are radical complexes. Then the group $\rG_{\ue_3}\times \Aut_{\cK_2(\cP)}(M\oplus Z^*)$ acts on $W(M\oplus Z^*,\cO_Z;M)$ via
\begin{align*}
&(\gamma',\lambda).(\gamma.Z\xrightarrow{i}M\xrightarrow{j}M\oplus Z^*\xrightarrow{k}(\gamma.Z)^*)\\
=&(\gamma'\gamma.Z\xrightarrow{i\gamma'^{-1}}M\xrightarrow{\lambda j}M\oplus Z^*\xrightarrow{\gamma'^*k\lambda^{-1}}(\gamma'\gamma.Z)^*).
\end{align*}
Note that any distinguished triangle $\gamma.Z\xrightarrow{i}M\xrightarrow{j}M\oplus Z^*\xrightarrow{k}(\gamma.Z)^*$ splits, that is, there exists homotopy equivalences $\gamma',\zeta,\lambda$ such that the following diagram commutes
\begin{diagram}[midshaft,size=2em]
\gamma.Z &\rTo^{i} &M &\rTo^{j=\begin{pmatrix}\begin{smallmatrix}j_1\\j_2\end{smallmatrix}\end{pmatrix}} &M\oplus Z^* &\rTo^{k=\begin{pmatrix}\begin{smallmatrix}k_1&k_2\end{smallmatrix}\end{pmatrix}} &(\gamma.Z)^*\\
\dTo^{\gamma'} & &\dTo^{\zeta} & &\dTo^{\lambda=\begin{pmatrix}\begin{smallmatrix}\lambda_{11}&\lambda_{12}\\\lambda_{21}&\lambda_{22}\end{smallmatrix}\end{pmatrix}} & &\dTo^{\gamma'^*}\\
Z &\rTo^{0} &M &\rTo^{\begin{pmatrix}\begin{smallmatrix}1\\0\end{smallmatrix}\end{pmatrix}} &M\oplus Z^* &\rTo^{\begin{pmatrix}\begin{smallmatrix}0&1\end{smallmatrix}\end{pmatrix}} &Z^*,
\end{diagram}
then 
\begin{align*}
&(\gamma',\begin{pmatrix}\begin{smallmatrix}\zeta^{-1} &\\ &1_{Z^*}\end{smallmatrix}\end{pmatrix}\lambda).(\gamma.Z\xrightarrow{i}M\xrightarrow{j}M\oplus Z^*\xrightarrow{k}(\gamma.Z)^*)\\
=&(Z\xrightarrow{0}M\xrightarrow{\begin{pmatrix}\begin{smallmatrix}1\\0\end{smallmatrix}\end{pmatrix}}M\oplus Z^*\xrightarrow{\begin{pmatrix}\begin{smallmatrix}0 &1\end{smallmatrix}\end{pmatrix}}Z^*).
\end{align*}
Since $\gamma':\gamma.Z\rightarrow Z$ is a homotopy equivalence between radical complexes, by Lemma \ref{isomorphism and homotopy equivalence}, it is an isomorphism, and so $\gamma'\in \rG_{\ue_3}$. Thus there is a unique orbit 
$$\rG_{\ue_3}\times \Aut_{\cK_2(\cP)}(M\oplus Z^*).(Z\xrightarrow{0}M\xrightarrow{\begin{pmatrix}\begin{smallmatrix}1\\0\end{smallmatrix}\end{pmatrix}}M\oplus Z^*\xrightarrow{\begin{pmatrix}\begin{smallmatrix}0 &1\end{smallmatrix}\end{pmatrix}}Z^*).$$ Its stabilizer is  
$$S=\{(\gamma,\begin{pmatrix}\begin{smallmatrix}1_{M} &\eta\\0 &\gamma^*\end{smallmatrix}\end{pmatrix})\in\Aut_{\cC_2(\cP)}(Z)\times \Aut_{\cK_2(\cP)}(M\oplus Z^*)|\eta\in \Hom_{\cK_2(\cP)}(Z^*,M)\}.$$
By Corollary \ref{automorphism groups coincide}, there is a surjective group homomorphism
\begin{align*}
p:S\twoheadrightarrow\! \overline{S}\!=\!\{(\gamma,\begin{pmatrix}\begin{smallmatrix}1_{M} &\eta\\0 &\gamma^*\end{smallmatrix}\end{pmatrix})\in\Aut_{\cK_2(\cP)}(Z)\!\times\! \Aut_{\cK_2(\cP)}(M\oplus Z^*)|\eta\in \Hom_{\cK_2(\cP)}(Z^*,M)\}
\end{align*}
with kernel which is bijective to the vector space $\Htp(Z,Z)$.

Analogue to $V^{\tilde{M}}_{(\hat{\cO}_1\hat{\cO}_2)\hat{\cO}_3}$, the subset $W(\cO_{\ue_1},\cO_{\ue_2};M\oplus Z^*)\times W(M\oplus Z^*,\cO_Z;M)$ has a $\rG_{\ue_1}\times \rG_{\ue_2}\times \rG_{\ue_3}\times \Aut_{\cK_2(\cP)}(M\oplus Z^*)$-action given by
\begin{align*}
&(\alpha,\beta,\gamma',\lambda).(Y\xrightarrow{f}M\oplus Z^*\xrightarrow{g}X\xrightarrow{h}Y^*,\gamma.Z\xrightarrow{i}M\xrightarrow{j}M\oplus Z^*\xrightarrow{k}(\gamma.Z)^*)\\
=&(\beta.Y\xrightarrow{\lambda f\beta^{-1}}M\oplus Z^*\xrightarrow{\alpha g\lambda^{-1}}\alpha.X\xrightarrow{\beta^*h\alpha^{-1}}(\beta.Y)^*,\\
&\ \gamma'\gamma.Z\xrightarrow{i\gamma'^{-1}}M\xrightarrow{\lambda j}M\oplus Z^*\!\xrightarrow{\gamma'^*k\lambda^{-1}}(\gamma'\gamma.Z)^*).
\end{align*}
We denote by $V^M_{(\cO_{\ue_1}\cO_{\ue_2})\cO_Z}$ the corresponding quotient stack.  Dually, the constructible set $W(\cO_Z,M\oplus Z^*;M)$ also has a unique $\rG_{\ue_3}\times \Aut_{\cK_2(\cP)}(M\oplus Z^*)$-orbit with stabilizer $S$. Analogue to $V^{\tilde{M}}_{\hat{\cO}_3(\hat{\cO}_1\hat{\cO}_2)}$, the subset $W(\cO_Z,M\oplus Z^*;M)\times W(\cO_{\ue_1},\cO_{\ue_2};M\oplus Z^*)$ has a $\rG_{\ue_1}\times \rG_{\ue_2}\times \rG_{\ue_3}\times \Aut_{\cK_2(\cP)}(M\oplus Z^*)$-action with quotient stack denoted by $V^M_{\cO_Z(\cO_{\ue_1}\cO_{\ue_2})}$.

\begin{proposition}\label{Case 2}
For any $\ue\leqslant \ue_1+\ue_2$, we have
\begin{align*}
\chi(V^M_{(\cO_{\ue_1}\cO_{\ue_2})\cO_Z})=\chi([W(\cO_{\ue_1},\cO_{\ue_2};M\oplus Z^*)/\rG_{\ue_1}\times \rG_{\ue_2}\times S])=\chi(V^M_{\cO_Z(\cO_{\ue_1}\cO_{\ue_2})}),
\end{align*}
where $\rG_{\ue_1}\times \rG_{\ue_2}\times S$ acts on $W(\cO_{\ue_1},\cO_{\ue_2};M\oplus Z^*)$ via
\begin{align*}
&(\alpha,\beta,\gamma,\begin{pmatrix}\begin{smallmatrix}1_{M} &\eta\\0 &\gamma^*\end{smallmatrix}\end{pmatrix}).(Y\xrightarrow{f}M\oplus Z^*\xrightarrow{g}X\xrightarrow{h}Y^*)\\
=&(\beta.Y\xrightarrow{\begin{pmatrix}\begin{smallmatrix}1_{M} &\eta\\0 &\gamma^*\end{smallmatrix}\end{pmatrix}f\beta^{-1}}M\oplus Z^*\xrightarrow{\alpha g\begin{pmatrix}\begin{smallmatrix}1_{M} &\eta\\0 &\gamma^*\end{smallmatrix}\end{pmatrix}^{-1}}\alpha.X\xrightarrow{\beta^*h\alpha^{-1}}(\beta.Y)^*).
\end{align*}
As a consequence, we have
\begin{equation}\label{V(2)}
\begin{aligned}  
\chi(V^{\ue}_{(12)3}(2))&=\chi(V^{\ue}_{3(12)}(2)),\\
F^{M\oplus Z^*}_{\cO_{\ue_1}\cO_{\ue_2}}&=0.
\end{aligned}
\end{equation}
\end{proposition}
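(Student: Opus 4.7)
The plan is to exploit the single-orbit structure of $W(M\oplus Z^*,\cO_Z;M)$ established just before the proposition, via a fibration argument. First I consider the natural projection $\pi\colon V^M_{(\cO_{\ue_1}\cO_{\ue_2})\cO_Z}\to [W(M\oplus Z^*,\cO_Z;M)/\rG_{\ue_3}\times \Aut_{\cK_2(\cP)}(M\oplus Z^*)]$ sending $((f,g,h),(i,j,k))\mapsto (i,j,k)$. The target is a single orbit with stabilizer $S$, hence isomorphic to $[\mathrm{pt}/S]$, which has naive Euler characteristic $1$ via the trivial geometric quotient. The fiber over the chosen representative is precisely $[W(\cO_{\ue_1},\cO_{\ue_2};M\oplus Z^*)/\rG_{\ue_1}\times \rG_{\ue_2}\times S]$, with $S$ acting on $W$ through its component on the middle term $M\oplus Z^*$ by the formula stated in the proposition. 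Since all fibers are isomorphic, Corollary \ref{fibre naive Euler characteristic}(b) yields the first equality.

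The second equality is proved by a symmetric argument: $W(\cO_Z,M\oplus Z^*;M)$ is also a single $\rG_{\ue_3}\times\Aut_{\cK_2(\cP)}(M\oplus Z^*)$-orbit (the distinguished triangle $Z\to M\oplus Z^*\to M\to Z^*$ splits for the same reason), and I would check that its stabilizer is naturally identified with $S$ and induces the same formula on $W(\cO_{\ue_1},\cO_{\ue_2};M\oplus Z^*)$; projecting $V^M_{\cO_Z(\cO_{\ue_1}\cO_{\ue_2})}$ onto this orbit stack then gives the same fiber stack. The first consequence $\chi(V^{\ue}_{(12)3}(2))=\chi(V^{\ue}_{3(12)}(2))$ follows from the main statement by stratifying both sides over the $\rG_{\ue_3}$-orbit of $Z_r$ and summing Euler characteristics of the matching strata term by term.

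The main obstacle is the vanishing $F^{M\oplus Z^*}_{\cO_{\ue_1}\cO_{\ue_2}}=\chi([W(\cO_{\ue_1},\cO_{\ue_2};M\oplus Z^*)/\rG_{\ue_1}\times\rG_{\ue_2}])=0$. The plan is to exhibit a $\bbC^*$-action with essentially free orbits: for $c\in\bbC^*$ set $\phi_c=\begin{pmatrix}1_M & 0\\ 0 & c\cdot 1_{Z^*}\end{pmatrix}$ and define $c.(f,g,h)=(\phi_c f,\ g\phi_c^{-1},\ h)$. This commutes with the $\rG_{\ue_1}\times\rG_{\ue_2}$-action and so descends to the quotient. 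The non-splitting condition $L\not\simeq X\oplus Y$ (which is what distinguishes Case $(t=2)$ from Cases $(t=3),(t=4)$) forces the $Z^*$-component $f_2\colon Y\to Z^*$ of $f$ to be non-trivial modulo the $\rG_{\ue_2}$-action; a short analysis of the fixed-point equation $\phi_c f=f\beta^{-1}$ then shows that the stabilizer of each orbit is a finite subgroup of $\bbC^*$. Every $\bbC^*$-orbit in $[W/\rG_{\ue_1}\times\rG_{\ue_2}]$ therefore has naive Euler characteristic $\chi(\bbC^*/\mathrm{finite})=0$, and fibering over the $\bbC^*$-orbit space gives the vanishing. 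The delicate step will be verifying the finiteness of stabilizers in the presence of nilpotent contributions to $\Aut_{\cK_2(\cP)}(X)$ and $\Aut_{\cK_2(\cP)}(Y)$ (both indecomposable).
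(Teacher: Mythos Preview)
Your fibration argument for the equalities
\[
\chi(V^M_{(\cO_{\ue_1}\cO_{\ue_2})\cO_Z})=\chi([W(\cO_{\ue_1},\cO_{\ue_2};M\oplus Z^*)/\rG_{\ue_1}\times \rG_{\ue_2}\times S])=\chi(V^M_{\cO_Z(\cO_{\ue_1}\cO_{\ue_2})})
\]
is exactly the paper's: project onto the single-orbit stack $[W(M\oplus Z^*,\cO_Z;M)/\rG_{\ue_3}\times \Aut_{\cK_2(\cP)}(M\oplus Z^*)]\cong\{\text{pt}\}$ with stabilizer $S$, identify the fibre, and repeat on the dual side. Your deduction of $\chi(V^{\ue}_{(12)3}(2))=\chi(V^{\ue}_{3(12)}(2))$ by integrating over $[\cO_{\ue_3}/\rG_{\ue_3}]$ also matches.

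For the vanishing $F^{M\oplus Z^*}_{\cO_{\ue_1}\cO_{\ue_2}}=0$ the paper takes a different and cleaner route. It considers the natural map
\[
\rho:[W(\cO_{\ue_1},\cO_{\ue_2};M\oplus Z^*)/\rG_{\ue_1}\times\rG_{\ue_2}]\longrightarrow[W(\cO_{\ue_1},\cO_{\ue_2};M\oplus Z^*)/\rG_{\ue_1}\times\rG_{\ue_2}\times S],
\]
whose fibre over $(f,g,h)$ is $S/\Stab_S(f,g,h)$. The crucial lemma is that the translated stabilizer $\overline{S}(f,g,h)=\{(\gamma-1_Z,\begin{smallmatrix}0&\eta\\0&\gamma^*-1\end{smallmatrix})\}$ is a \emph{vector subspace} of $\End_{\cK_2(\cP)}(Z)\times\End_{\cK_2(\cP)}(M\oplus Z^*)$: indecomposability of $Z$ forces $\gamma-1_Z$ to be nilpotent or invertible, and invertibility would give $f_2=0$, hence (via the splitting lemma) $L\simeq X\oplus Y$, contradicting the Case $t=2$ hypothesis. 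Thus $\chi(\Stab_S(f,g,h))=1$, so each fibre of $\rho$ has $\chi=\chi(S)=\chi(\Aut_{\cC_2(\cP)}(Z))\cdot\chi(\Hom_{\cK_2(\cP)}(Z^*,M))=0\cdot 1=0$, and Corollary~\ref{fibre naive Euler characteristic} gives the vanishing.

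Your $\bbC^*$-approach is morally a projection of this onto the semisimple part of $\Aut_{\cC_2(\cP)}(Z)$, but it carries a genuine gap. You need the $\bbC^*$-stabilizer of each $\rG_{\ue_1}\times\rG_{\ue_2}$-orbit in $W$ to be finite, i.e.\ to rule out that for every $c\in\bbC^*$ the automorphism $\phi_c$ can be compensated by some $(\alpha_c,\beta_c)\in\Aut_{\cK_2(\cP)}(X)\times\Aut_{\cK_2(\cP)}(Y)$. This is a condition on the quotient, not on $W$ itself, and the nilpotent parts of $\Aut_{\cK_2(\cP)}(X),\Aut_{\cK_2(\cP)}(Y)$ can interact nontrivially with the scaling; merely knowing $f_2\neq 0$ does not by itself force finiteness (for instance, if $f_1\beta_c=f_1$ and $cf_2\beta_c=f_2$ admit a one-parameter family of solutions $\beta_c$). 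Establishing finiteness would in effect require the same nilpotence dichotomy the paper proves for $\gamma-1_Z$, only pushed through the $\rG_{\ue_1}\times\rG_{\ue_2}$-quotient, which is more awkward. The paper's use of the full group $S$ and the vector-space lemma $(\ast\ast)$ is both simpler and complete.
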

\begin{proof}
By definition, there is a natural surjective projection
\begin{align*}
\pi:W(\cO_{\ue_1},\cO_{\ue_2};M\oplus Z^*)\times W(M\oplus Z^*,\cO_Z;M)&\rightarrow W(M\oplus Z^*,\cO_Z;M)\\
((f,g,h),(i,j,k))&\mapsto (i,j,k)
\end{align*} 
which induces a morphism
$$\overline{\pi}:V^M_{(\cO_{\ue_1}\cO_{\ue_2})\cO_Z}\rightarrow [W(M\oplus Z^*,\cO_Z;M)/\rG_{\ue_3}\times \Aut_{\cK_2(\cP)}(M\oplus Z^*)]=\{\textrm{point}\}$$
whose fiber at the unique point is isomorphic to $[W(\cO_{\ue_1},\cO_{\ue_2};M\oplus Z^*)/\rG_{\ue_1}\times \rG_{\ue_2}\times S]$.
Applying Corollary \ref{fibre naive Euler characteristic}, we obtain 
$$\chi(V^M_{(\cO_{\ue_1}\cO_{\ue_2})\cO_Z})=\chi([W(\cO_{\ue_1},\cO_{\ue_2};M\oplus Z^*)/\rG_{\ue_1}\times \rG_{\ue_2}\times S]).$$
Dually, consider the natural surjective projection
$$\pi':W(\cO_Z,M\oplus Z^*;M)\times W(\cO_{\ue_1},\cO_{\ue_2};M\oplus Z^*)\rightarrow W(\cO_Z,M\oplus Z^*;M),$$
by similar argument, we obtain
$$\chi(V^M_{\cO_Z(\cO_{\ue_1}\cO_{\ue_2})})=\chi([W(\cO_{\ue_1},\cO_{\ue_2};M\oplus Z^*)/\rG_{\ue_1}\times \rG_{\ue_2}\times S]).$$
Therefore, $Z\mapsto \chi(V^M_{(\cO_{\ue_1}\cO_{\ue_2})\cO_Z})=\chi(V^M_{\cO_Z(\cO_{\ue_1}\cO_{\ue_2})})$ defines a $\rG_{\ue_3}$-invariant constructible function on $\cO_{\ue_3}$, and so
\begin{align*}
\chi(V^{\ue}_{(12)3}(2))=\int_{[\cO_{\ue_3}/\rG_{\ue_3}]}\chi(V^M_{(\cO_{\ue_1}\cO_{\ue_2})\cO_Z})=\int_{[\cO_{\ue_3}/\rG_{\ue_3}]}\chi(V^M_{\cO_Z(\cO_{\ue_1}\cO_{\ue_2})})=\chi(V^{\ue}_{3(12)}(2)).
\end{align*}

The natural morphism $\rG_{\ue_1}\times \rG_{\ue_2}.(f,g,h)\mapsto \rG_{\ue_1}\times\rG_{\ue_2}\times S.(f,g,h)$ induces a morphism 
$$\rho:[W(\cO_{\ue_1},\cO_{\ue_2};M\oplus Z^*)/\rG_{\ue_1}\times \rG_{\ue_2}]\rightarrow [W(\cO_{\ue_1},\cO_{\ue_2};M\oplus Z^*)/\rG_{\ue_1}\times \rG_{\ue_2}\times S]$$
whose fiber at the geometric point corresponding to the orbit $\rG_{\ue_1}\times \rG_{\ue_2}\times S.(f,g,h)$ is isomorphic to
$$S.(f,g,h)\cong S/\Stab_S(f,g,h),$$
where 
\begin{align*}
\Stab_S(f,g,h)=&\{(\gamma,\begin{pmatrix}\begin{smallmatrix}1_{M} &\eta\\0 &\gamma^*\end{smallmatrix}\end{pmatrix})\in S|\begin{pmatrix}\begin{smallmatrix}1_{M} &\eta\\0 &\gamma^*\end{smallmatrix}\end{pmatrix}f=f,g\begin{pmatrix}\begin{smallmatrix}1_{M} &\eta\\0 &\gamma^*\end{smallmatrix}\end{pmatrix}^{-1}=g\}\\
=&\{(\gamma,\begin{pmatrix}\begin{smallmatrix}1_{M} &\eta\\0 &\gamma^*\end{smallmatrix}\end{pmatrix})\in S|\eta f_2=0,\gamma^*f_2=f_2,g_1\eta+g_2\gamma^*=g_2\}                                                                                                                                                                                                                                                                                                                                                                                                                                                                                                                                                                                                                                                                                                                                                                                                                                                                                                                                                                                                                                                                                                        
\end{align*}
is the stabilizer and $f=\begin{pmatrix}\begin{smallmatrix}f_1\\f_2\end{smallmatrix}\end{pmatrix},g=\begin{pmatrix}\begin{smallmatrix}g_1&g_2\end{smallmatrix}\end{pmatrix}$. We calculate $\chi(\Stab_S(f,g,h))$ as follows. Under the surjective group homomorphism $p:S\twoheadrightarrow \overline{S}$, we denote by the image $p(\Stab_S(f,g,h))=\overline{\Stab}_{\overline{S}}(f,g,h)$, and then $\chi(\Stab_S(f,g,h))=\chi(\overline{\Stab}_{\overline{S}}(f,g,h))$, since the kernel of $p$ is bijective to a vector space. There is a natural bijection
\begin{align*}
\theta:\overline{\Stab}_{\overline{S}}(f,g,h)\rightarrow \overline{S}(f,g,h)
=\{(\gamma-1_{Z},\begin{pmatrix}\begin{smallmatrix}0 &\eta\\0 &\gamma^*-1_{Z^*}\end{smallmatrix}\end{pmatrix})|(\gamma,\begin{pmatrix}\begin{smallmatrix}1_{M} &\eta\\0 &\gamma^*\end{smallmatrix}\end{pmatrix})\in \overline{\Stab}_{\overline{S}}(f,g,h)\}.
\end{align*}
Next, we prove that 
\begin{equation}
\overline{S}(f,g,h)\subset \End_{\cK_2(\cP)}(Z)\times \End_{\cK_2(\cP)}(M\oplus Z^*)\ \textrm{is a vector subspace.} \tag{$\ast\ast$}
\end{equation}
It is clear that $(0,0)\in \overline{S}(f,g,h)$. For any $(\gamma_1,\begin{pmatrix}\begin{smallmatrix}1 &\eta_1\\0 &\gamma_1^*\end{smallmatrix}\end{pmatrix}),(\gamma_2,\begin{pmatrix}\begin{smallmatrix}1 &\eta_2\\0 &\gamma_2^*\end{smallmatrix}\end{pmatrix})\in \overline{\Stab}_{\overline{S}}(f,g,h)$ and $k\in \bbC$, consider the element
$$(\gamma,\begin{pmatrix}\begin{smallmatrix}1 &\eta\\0 &\gamma^*\end{smallmatrix}\end{pmatrix})=(1_{Z},\begin{pmatrix}\begin{smallmatrix}1_{M} &0\\0 &1_{Z^*}\end{smallmatrix}\end{pmatrix})+(\gamma_1-1_{Z},\begin{pmatrix}\begin{smallmatrix}0 &\eta_1\\0 &\gamma_1^*-1_{Z^*}\end{smallmatrix}\end{pmatrix})+k(\gamma_2-1_{Z},\begin{pmatrix}\begin{smallmatrix}0 &\eta_2\\0 &\gamma_2^*-1_{Z^*}\end{smallmatrix}\end{pmatrix}),$$
it is easy to check $\eta f_2=0,\gamma^*f_2=f_2,g_1\eta+g_2\gamma^*=g_2$. Since $Z$ is indecomposable as an object in $\cK_2(\cP)$, the endomorphisms $\gamma_1-1_{Z},\gamma_2-1_{Z}\in \End_{\cK_2(\cP)}(Z)$ are either homotopy equivalences or nilpotent morphisms. If $\gamma_1-1_{Z}$ is an isomorphism, then so is $\gamma_1^*-1_{Z^*}$, and $(\gamma_1^*-1_{Z^*})f_2=0$ implies that $f_2=0$. By \cite[Lemma 2]{Peng-Xiao-2000}, the distinguished triangle $Y\xrightarrow{\begin{pmatrix}\begin{smallmatrix}f_1\\0\end{smallmatrix}\end{pmatrix}}M\oplus Z^*\xrightarrow{\begin{pmatrix}\begin{smallmatrix}g_1&g_2\end{smallmatrix}\end{pmatrix}}X\xrightarrow{h}Y^*$ is isomorphic to the direct sum of distinguished triangles $Y\xrightarrow{f_1}M\xrightarrow{g'}X'\xrightarrow {h'}Y^*$ and $0\rightarrow Z^*\xrightarrow{1_{Z^*}}Z^*\rightarrow 0$ for some $X'$. In particular, $X\simeq X'\oplus Z^*$. Since $X,Z$ are indecomposable as objects in $\cK_2(\cP)$, we have $X\simeq Z^*,X'\simeq 0$, and then $f_1$ is a homotopy equivalence and $Y\simeq M$, which is a contradiction to $L\simeq M\oplus Z^*\not\simeq X\oplus Y$. Thus the endomorphism $\gamma_1-1_{Z}$ is nilpotent, similarly, so is $\gamma_2-1_{Z}$, and then so is $(\gamma_1-1_{Z})+k(\gamma_2-1_{Z})$. Hence we have $\gamma=1_{Z}+(\gamma_1-1_{Z})+k(\gamma_2-1_{Z})\in\Aut_{\cK_2(\cP)}(Z)$ and finish the proof of $(\ast\ast)$, so
$$\chi(\Stab_S(f,g,h))=\chi(\overline{\Stab}_{\overline{S}}(f,g,h))=\chi(\overline{S}(f,g,h))=1$$
is constant. Applying Corollary \ref{fibre naive Euler characteristic} for the natural projection $S\rightarrow S/\Stab_S(f,g,h)$, we obtain 
$$\chi(S.(f,g,h))=\chi(S/\Stab_S(f,g,h))=\chi(S).$$
By the fact that $\chi(\Aut_{\cC_2(\cP)}(Z))=0$ and $\chi(\Hom_{\cK_2(\cP)}(Z^*,M))=1$, see Lemma \ref{Upsilon(Aut)} and Proposition \ref{Euler characteristic}, we have
$$\chi(S)=\chi(\Aut_{\cC_2(\cP)}(Z)\times \Hom_{\cK_2(\cP)}(Z^*,M))=0$$
which is constant. Applying Corollary \ref{fibre naive Euler characteristic} for the morphism $\rho$, we obtain
\begin{align*}
F^{M\oplus Z^*}_{\cO_{\ue_1}\cO_{\ue_2}}=\chi([W(\cO_{\ue_1},\cO_{\ue_2};M\oplus Z^*)/\rG_{\ue_1}\times \rG_{\ue_2}])=0,
\end{align*}
as desired.
\end{proof}

\subsubsection{\textbf{Case} $t=3$}\

In the case $L\simeq M\oplus Z^*\simeq X\oplus Y, X\not\simeq Y$ for some $X\in \cO_{\ue_1},Y\in \cO_{\ue_2},Z\in \cO_{\ue_3}$. Without loss of generality, we assume that $L=M\oplus Z^*$, and denote by $\cO_X=\rG_{\ue_1}.X\subset \cO_{\ue_1},\cO_Y=\rG_{\ue_2}.Y\subset \cO_{\ue_2},\cO_Z=\rG_{\ue_3}.Z\subset \cO_{\ue_3}$.

The constructible set $W(M\oplus Z^*,\cO_Z;M)$ has been studied in 7.5.2. Consider the constructible set $W(\cO_X,\cO_Y;X\oplus Y)$ which has a $\rG_{\ue_1}\times \rG_{\ue_2}$-action, by similar argument as in Remark \ref{reduce to radical}, we may assume $X,Y,Z,M$ are radical complexes. Note that any distinguished triangle $\beta.Y\xrightarrow{f}X\oplus Y\xrightarrow{g}\alpha.X\xrightarrow{h}(\beta.Y)^*$ splits, thus $f=\begin{pmatrix}\begin{smallmatrix}f_1\\f_2\end{smallmatrix}\end{pmatrix}$ is a splitting monomorphism,  $g=\begin{pmatrix}\begin{smallmatrix}g_1 &g_2\end{smallmatrix}\end{pmatrix}$ is a splitting epimorphism and $h=0$. Since $X\not\simeq Y$ are indecomposable, $f_2,g_1$ must be homotopy equivalences, then by Lemma \ref{isomorphism and homotopy equivalence}, they are isomorphisms. Hence, there exists $g_1^{-1}\in \rG_{\ue_1},f_2\in \rG_{\ue_2}$ such that  
\begin{align*}
&(g_1^{-1},f_2).(\beta.Y \xrightarrow{f=\begin{pmatrix}\begin{smallmatrix}f_1\\f_2\end{smallmatrix}\end{pmatrix}} X\oplus Y\xrightarrow{g=\begin{pmatrix}\begin{smallmatrix}g_1 &g_2\end{smallmatrix}\end{pmatrix}} \alpha.X\xrightarrow{0} (\beta.Y)^*)\\
=&(Y \xrightarrow{\begin{pmatrix}\begin{smallmatrix}f_1f_2^{-1}\\1\end{smallmatrix}\end{pmatrix}} X\oplus Y\xrightarrow{\begin{pmatrix}\begin{smallmatrix}1 &g_1^{-1}g_2\end{smallmatrix}\end{pmatrix}} X\xrightarrow{0} Y^*),
\end{align*} 
where $f_1f_2^{-1}+g_1^{-1}g_2=0$ follows from $gf=0$. Therefore, any $\rG_{\ue_1}\times \rG_{\ue_2}$-orbit in $W(\cO_X,\cO_Y;X\oplus Y)$ is of the form
$$\rG_{\ue_1}\times \rG_{\ue_2}.(Y\xrightarrow{\begin{pmatrix}\begin{smallmatrix}\theta\\ 1\end{smallmatrix}\end{pmatrix}} X\oplus Y\xrightarrow{\begin{pmatrix}\begin{smallmatrix}1&-\theta \end{smallmatrix}\end{pmatrix}}X\xrightarrow{0}Y^*),$$
where $\theta\in \Hom_{\cK_2(\cP)}(Y,X)$. It is clear that each stabilizer is trivial. Hence
\begin{equation}\label{V(Xoplus Y)=1}
\begin{aligned}
&V(\cO_X,\cO_Y;X\oplus Y)\cong \Hom_{\cK_2(\cP)}(Y,X),\\
&F^{X\oplus Y}_{\cO_X\cO_Y}=\chi(V(\cO_X,\cO_Y;X\oplus Y))=1.
\end{aligned}
\end{equation}
Similarly, since any distinguished triangle $\beta.Y\xrightarrow{\begin{pmatrix}\begin{smallmatrix}f_1\\ f_2\end{smallmatrix}\end{pmatrix}} M\oplus Z^*\xrightarrow{\begin{pmatrix}\begin{smallmatrix}g_1&g_2 \end{smallmatrix}\end{pmatrix}}\alpha.X\xrightarrow{h}(\beta.Y)^*$ in $W(\cO_X,\cO_Y;M\oplus Z^*)$ splits, we have $h=0$ and either $f_1,g_2$ or $f_2,g_1$ are homotopy equivalences, and so either $Y\simeq M,X\simeq Z^*$ or $Y\simeq Z^*,X\simeq M$.
 
Analogue to Subsection 7.5.2, the subset $W(\cO_X,\cO_Y;M\oplus Z^*)\times W(M\oplus Z^*,\cO_Z;M)$ has a $\rG_{\ue_1}\times \rG_{\ue_2}\times \rG_{\ue_3}\times \Aut_{\cK_2(\cP)}(M\oplus Z^*)$-action with quotient stack denote by $V^M_{(\cO_X\cO_Y)\cO_Z}$. Dually, $W(\cO_Z,M\oplus Z^*;M)\times W(\cO_X,\cO_Y;M\oplus Z^*)$ has a $\rG_{\ue_1}\times \rG_{\ue_2}\times \rG_{\ue_3}\times \Aut_{\cK_2(\cP)}(M\oplus Z^*)$-action with quotient stack denoted by $V^M_{\cO_Z(\cO_X\cO_Y)}$.

\begin{proposition}\label{Case 3}
For any $\ue\leqslant \ue_1+\ue_2$, we have
\begin{align*}
\chi(V^M_{(\cO_X\cO_Y)\cO_Z})=\begin{cases}1, &X\simeq M,Y\simeq Z^*,\\1+\dim_{\bbC}\Hom_{\cK_2(\cP)}(Y,X), &X\simeq Z^*,Y\simeq M.\end{cases}
\end{align*}
Dually, we have
\begin{align*}
\chi(V^M_{\cO_Z(\cO_X\cO_Y)})=\begin{cases}1, &Y\simeq M,X\simeq Z^*,\\1+\dim_{\bbC}\Hom_{\cK_2(\cP)}(Y,X), &Y\simeq Z^*,X\simeq M.\end{cases}
\end{align*}
As a consequence, we have
\begin{equation}
\begin{aligned}\label{chi(V(3))-chi(V(3))}
\sum_{\ue\leqslant \ue_1+\ue_2}(\chi(V^{\ue}_{(12)3}(3))-\chi(V^{\ue}_{3(12)}(3)))=&\int_{[(\hat{\cO}_1^*\cap\hat{\cO}_3)/\rG_{\bd_3}]}\dim_{\bbC}\Hom_{\cK_2(\cP)}(M,Z^*)\cdot 1_{\hat{\cO}_2}(\tilde{M})\\
-&\int_{[(\hat{\cO}_2^*\cap\hat{\cO}_3)/\rG_{\bd_3}]}\dim_{\bbC}\Hom_{\cK_2(\cP)}(Z^*,M)\cdot 1_{\hat{\cO}_1}(\tilde{M}),
\end{aligned}
\end{equation}
where we view $\tilde{Z}\mapsto \dim_{\bbC}\Hom_{\cK_2(\cP)}(M,Z^*)$ and $\tilde{Z}\mapsto dim_{\bbC}\Hom_{\cK_2(\cP)}(Z^*,M)$ as $\rG_{\bd_3}$-invariant support-bounded constructible functions.
\end{proposition}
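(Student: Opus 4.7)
The plan is to follow the template established in the proof of Proposition~\ref{Case 2}. By the Krull--Schmidt theorem for $\cK_2(\cP)$ applied to $L\simeq M\oplus Z^*\simeq X\oplus Y$ with $X\not\simeq Y$ both indecomposable, exactly one of the two matchings $(X\simeq M,Y\simeq Z^*)$ or $(X\simeq Z^*,Y\simeq M)$ holds. Projecting onto the second factor of $W(\cO_X,\cO_Y;M\oplus Z^*)\times W(M\oplus Z^*,\cO_Z;M)$ induces a morphism $\overline{\pi}\colon V^M_{(\cO_X\cO_Y)\cO_Z}\to[W(M\oplus Z^*,\cO_Z;M)/\rG_{\ue_3}\times \Aut_{\cK_2(\cP)}(M\oplus Z^*)]$; by the unique-orbit analysis of Case~$2$ the target is the single-point stack, and Corollary~\ref{fibre naive Euler characteristic} reduces the problem to computing $\chi([W(\cO_X,\cO_Y;M\oplus Z^*)/\rG_{\ue_1}\times \rG_{\ue_2}\times S])$.

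The central step is the orbit analysis on $W(\cO_X,\cO_Y;M\oplus Z^*)$ under the action of $\rG_{\ue_1}\times \rG_{\ue_2}\times S$. Since $L\simeq X\oplus Y$, the argument leading to (\ref{V(Xoplus Y)=1}) shows that every distinguished triangle here splits with zero connecting morphism, so after $\rG_{\ue_1}\times \rG_{\ue_2}$-normalization each orbit is encoded by a single parameter $\theta\in\Hom_{\cK_2(\cP)}(Y,X)$. I then compute how the unipotent generator $\begin{pmatrix}\begin{smallmatrix}1&\eta\\0&\gamma^*\end{smallmatrix}\end{pmatrix}\in S$ redistributes these orbits. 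In the subcase $(X\simeq M,Y\simeq Z^*)$, both $\theta$ and $\eta$ live in the \emph{same} space $\Hom_{\cK_2(\cP)}(Z^*,M)$; a direct block-matrix computation shows $\eta$ absorbs $\theta$ via affine translation, collapsing everything into a single orbit whose stabilizer is bijective to a vector space by a verbatim replay of the nilpotency argument $(\ast\ast)$ from Case~$2$, hence $\chi=1$. In the subcase $(X\simeq Z^*,Y\simeq M)$, the parameter $\theta\in\Hom_{\cK_2(\cP)}(M,Z^*)$ lies in a \emph{different} space from $\eta\in\Hom_{\cK_2(\cP)}(Z^*,M)$, so the $S$-action modifies $\theta$ only by conjugation of invertibles (specifically $\theta\mapsto \gamma^*\theta(1+\eta\theta)^{-1}$, where $\eta\theta$ lies in the radical of $\End(M)$ because $M\not\simeq Z^*$); stratifying by $\theta=0$ versus $\theta\neq 0$ and accounting for the remaining $\bbC^*$-scaling action inherited from the $\bbC^*$-parts of $\rG_{\ue_1}$ and $\rG_{\ue_2}$, the zero stratum contributes $\chi=1$ and the nonzero stratum contributes $\chi(\bbP(\Hom_{\cK_2(\cP)}(Y,X)))=\dim_{\bbC}\Hom_{\cK_2(\cP)}(Y,X)$, giving $\chi=1+\dim_{\bbC}\Hom_{\cK_2(\cP)}(Y,X)$. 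The dual formula for $V^M_{\cO_Z(\cO_X\cO_Y)}$ follows by the same analysis applied to the mirrored distinguished triangle $Z\to L'\to Y$.

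For the consequence, I fiber each $V^{\ue}_{(12)3}(3)$ over $[\cO_{\ue_3}/\rG_{\ue_3}]$ exactly as in Case~$2$ and sum over $\ue\leqslant \ue_1+\ue_2$, converting the result into support-bounded integrals over $[\rP_2(A,\bd_3)/\rG_{\bd_3}]$. The crucial cancellation is that the two subcase labels swap between $(12)3$ and $3(12)$: the subcase ``$X\simeq M,Y\simeq Z^*$'' of $V^{\ue}_{(12)3}(3)$ imposes exactly the same condition on $(X,Y,Z)$ as the subcase ``$Y\simeq Z^*,X\simeq M$'' of $V^{\ue}_{3(12)}(3)$, so the ``$1$'' contributions cancel pairwise in the difference. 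What remains are the two $\dim\Hom$-weighted contributions from the second subcase of each side; after identifying $X\simeq Z^*$ with $\tilde{Z}\in \hat{\cO}_1^*\cap \hat{\cO}_3$ and $Y\simeq M$ with $\tilde{M}\in \hat{\cO}_2$ for the first term (and dually for the second), this recovers precisely the two integrals in (\ref{chi(V(3))-chi(V(3))}). The main anticipated obstacle is the careful verification in subcase~$2$ that the alleged $\bbP^{d-1}$-family of nontrivial orbits is genuinely cut out as described---i.e., that no spurious collapse occurs via the interleaved actions of $S$ and $\rG_{\ue_1}\times \rG_{\ue_2}$---and that each orbit in this family has stabilizer bijective to a vector space, which requires a fresh replay of the nilpotency-versus-isomorphism dichotomy of $(\ast\ast)$ for the new matching of indecomposables.
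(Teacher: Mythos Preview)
Your approach is essentially the same as the paper's: reduce via $\overline{\pi}$ to the single-point base, then analyse $[W(\cO_X,\cO_Y;M\oplus Z^*)/\rG_{\ue_1}\times\rG_{\ue_2}\times S]$ by first passing to the $\rG_{\ue_1}\times\rG_{\ue_2}$-quotient $\cong\Hom_{\cK_2(\cP)}(Y,X)$ and then studying the residual $S$-action. Your treatment of the first subcase ($X\simeq M$, $Y\simeq Z^*$) via the affine absorption $\theta\mapsto\theta+\eta\psi$ is exactly right and matches the paper's single-orbit conclusion. Your handling of the consequence (\ref{chi(V(3))-chi(V(3))})---pairing the ``$1$'' parts across the swapped subcases and leaving only the $\dim\Hom$ contributions---is also correct.

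There is one genuine slip in the second subcase. After the $\rG_{\ue_1}\times\rG_{\ue_2}$-normalisation the parametrisation by $\theta$ is already a bijection with the set of $\rG_{\ue_1}\times\rG_{\ue_2}$-orbits, so there is \emph{no} residual $\bbC^*$-action ``inherited from the $\bbC^*$-parts of $\rG_{\ue_1}$ and $\rG_{\ue_2}$''. The $\bbC^*$-scaling you need lives inside $S$ itself: it is the $\bbC^*$-summand of $\gamma\in\Aut_{\cC_2(\cP)}(Z)=\bbC^*\ltimes(1+\rad\End_{\cC_2(\cP)}(Z))$, acting via $\gamma^*$ in your own formula $\theta\mapsto\gamma^*\theta(1+\eta\theta)^{-1}$. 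The paper makes this explicit by decomposing $S$ into $S_0=\{(\gamma,\begin{smallmatrix}1&0\\0&\gamma^*\end{smallmatrix})\}$ and $S_1=S/S_0$: it first shows $S_0\cong\Aut_{\cC_2(\cP)}(Z)$ acts freely on the nonzero stratum with $\chi$ of the quotient equal to $\chi(\bbC\bbP^{\dim\Hom-1})$, then checks separately that passing from $S_0$ to $S$ (i.e.\ quotienting further by $S_1\cong\Hom_{\cK_2(\cP)}(Z^*,M)$) has vector-space fibres and hence preserves $\chi$. Your one-shot formula is a legitimate alternative route, but you must source the $\bbC^*$ correctly from $\gamma^*$ and then argue that the remaining factor $(1+\eta\theta)^{-1}$ together with the unipotent part $1+\rad\End_{\cC_2(\cP)}(Z)$ of $\gamma$ contributes only vector-space fibres.
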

\begin{proof}
By definition, there is a natural surjective projection
\begin{align*}
\pi:W(\cO_X,\cO_Y;M\oplus Z^*)\times W(M\oplus Z^*,\cO_Z;M)&\rightarrow W(M\oplus Z^*,\cO_Z;M)\\
((f,g,h),(i,j,k))&\mapsto (i,j,k)
\end{align*}
which induces a morphism 
$$\overline{\pi}:V^M_{(\cO_X\cO_Y)\cO_Z}\rightarrow [W(M\oplus Z^*,\cO_Z;M)/\rG_{\ue_3}\times \Aut_{\cK_2(\cP)}(M\oplus Z^*)]=\{\textrm{point}\}$$
whose fiber at the unique point is isomorphic to $[W(\cO_X,\cO_Y;M\oplus Z^*)/\rG_{\ue_1}\times \rG_{\ue_2}\times S]$, where 
$$S=\{(\gamma,\begin{pmatrix}\begin{smallmatrix}1_{M} &\eta\\0 &\gamma^*\end{smallmatrix}\end{pmatrix})\in\Aut_{\cC_2(\cP)}(Z)\times \Aut_{\cK_2(\cP)}(M\oplus Z^*)|\eta\in \Hom_{\cK_2(\cP)}(Z^*,M)\}.$$
Hence $\chi(V^M_{(\cO_X\cO_Y)\cO_Z})=\chi([W(\cO_X,\cO_Y;M\oplus Z^*)/\rG_{\ue_1}\times \rG_{\ue_2}\times S])$.

If $X\simeq M,Y\simeq Z^*$, we fix homotopy equivalences $\varphi:M\rightarrow X,\psi:Y\rightarrow Z^*$. Then for any distinguished triangle $\beta.Y\xrightarrow{\begin{pmatrix}\begin{smallmatrix}f_1\\ f_2\end{smallmatrix}\end{pmatrix}} M\oplus Z^*\xrightarrow{\begin{pmatrix}\begin{smallmatrix}g_1&g_2 \end{smallmatrix}\end{pmatrix}}\alpha.X\xrightarrow{0}(\beta.Y)^*$ in $W(\cO_X,\cO_Y;M\oplus Z^*)$, the morphisms $f_2,g_1$ are homotopy equivalences. By Lemma \ref{isomorphism and homotopy equivalence}, $\varphi,\psi,f_2,g_1$ are isomorphisms. Hence there exists $\varphi g_1^{-1}\in \rG_{\ue_1},\beta^{-1}\in \rG_{\ue_2}$ and $\gamma=\psi^*{\beta^*}^{-1}{f_2^*}^{-1}\in \Aut_{\cC_2(\cP)}(Z),\eta=-f_1f_2^{-1}\in \Hom_{\cK_2(\cP)}(Z^*,M)$ such that
\begin{align*}
&(\varphi g_1^{-1},\beta^{-1},\gamma,\begin{pmatrix}\begin{smallmatrix}1_{M} &\eta\\0 &\gamma^*\end{smallmatrix}\end{pmatrix}).(\beta.Y\xrightarrow{\begin{pmatrix}\begin{smallmatrix}f_1\\ f_2\end{smallmatrix}\end{pmatrix}} M\oplus Z^*\xrightarrow{\begin{pmatrix}\begin{smallmatrix}g_1&g_2 \end{smallmatrix}\end{pmatrix}}\alpha.X\xrightarrow{0}(\beta.Y)^*)\\
=&(Y\xrightarrow{\begin{pmatrix}\begin{smallmatrix}0\\ \psi\end{smallmatrix}\end{pmatrix}}M\oplus Z^*\xrightarrow{\begin{pmatrix}\begin{smallmatrix}\varphi&0 \end{smallmatrix}\end{pmatrix}}X\xrightarrow{0} Y^*).
\end{align*}
Therefore, there is a unique $\rG_{\ue_1}\times \rG_{\ue_2}\times S$-orbit in $W(\cO_X,\cO_Y;M\oplus Z^*)$, and so
$$\chi(V^M_{(\cO_X\cO_Y)\cO_Z})=1.$$

If $X\simeq Z^*,Y\simeq M$, we fix homotopy equivalences $\zeta:Z^*\rightarrow X,\xi:Y\rightarrow M$. Then for any distinguished triangle $\beta.Y\xrightarrow{\begin{pmatrix}\begin{smallmatrix}f_1\\ f_2\end{smallmatrix}\end{pmatrix}} M\oplus Z^*\xrightarrow{\begin{pmatrix}\begin{smallmatrix}g_1&g_2 \end{smallmatrix}\end{pmatrix}}\alpha.X\xrightarrow{0}(\beta.Y)^*$ in $W(\cO_X,\cO_Y;M\oplus Z^*)$, the morphisms $f_1,g_2$ are homotopy equivalences. By Lemma \ref{isomorphism and homotopy equivalence}, $\zeta,\xi,f_1,g_2$ are isomorphisms. Hence there exists $\zeta g_2^{-1}\in \rG_{\ue_1},\xi^{-1}f_1\in \rG_{\ue_2}$ such that  
\begin{align*}
&(\zeta g_2^{-1},\xi^{-1}f_1).(\beta.Y\xrightarrow{\begin{pmatrix}\begin{smallmatrix}f_1\\ f_2\end{smallmatrix}\end{pmatrix}} M\oplus Z^*\xrightarrow{\begin{pmatrix}\begin{smallmatrix}g_1&g_2 \end{smallmatrix}\end{pmatrix}}\alpha.X\xrightarrow{0}(\beta.Y)^*)\\
=&(Y\xrightarrow{\begin{pmatrix}\begin{smallmatrix}\xi\\ f_2f_1^{-1}\xi\end{smallmatrix}\end{pmatrix}} M\oplus Z^*\xrightarrow{\begin{pmatrix}\begin{smallmatrix}\zeta g_2^{-1}g_1&\zeta \end{smallmatrix}\end{pmatrix}}X\xrightarrow{0}Y^*),
\end{align*}
where $\zeta(f_2f_1^{-1}\xi)+(\zeta g_2^{-1}g_1)\xi=0$ follows from $g_1f_1+g_2f_2=0$. Thus the morphism $\theta\mapsto \rG_{\ue_1}\times \rG_{\ue_2}.(Y\xrightarrow{\begin{pmatrix}\begin{smallmatrix}\xi\\ \zeta^{-1}\theta\end{smallmatrix}\end{pmatrix}} M\oplus Z^*\xrightarrow{\begin{pmatrix}\begin{smallmatrix}-\theta\xi^{-1}&\zeta \end{smallmatrix}\end{pmatrix}}X\xrightarrow{0}Y^*)$ induces an isomorphism
\begin{align*}
\rho:\Hom_{\cK_2(\cP)}(Y,X)&\xrightarrow{\cong} [W(\cO_X,\cO_Y;M\oplus Z^*)/\rG_{\ue_1}\times \rG_{\ue_2}].
\end{align*}
Consider the subgroup $S_0=\{(\gamma,\begin{pmatrix}\begin{smallmatrix}1_{M} &0\\0 &\gamma^*\end{smallmatrix}\end{pmatrix})\in S|\gamma\in \Aut_{\cC_2(\cP)}(Z)\}$ of $S$, then $S_0$ acts on $[W(\cO_X,\cO_Y;M\oplus Z^*)/\rG_{\ue_1}\times \rG_{\ue_2}]$ via
\begin{align*}
&(\gamma,\begin{pmatrix}\begin{smallmatrix}1_{M} &0\\0 &\gamma^*\end{smallmatrix}\end{pmatrix}).(\rG_{\ue_1}\times \rG_{\ue_2}.(Y\xrightarrow{\begin{pmatrix}\begin{smallmatrix}\xi\\ \zeta^{-1}\theta\end{smallmatrix}\end{pmatrix}} M\oplus Z^*\xrightarrow{\begin{pmatrix}\begin{smallmatrix}-\theta\xi^{-1}&\zeta \end{smallmatrix}\end{pmatrix}}X\xrightarrow{0}Y^*))\\
=&(\rG_{\ue_1}\times \rG_{\ue_2}.(Y\xrightarrow{\begin{pmatrix}\begin{smallmatrix}\xi\\ \gamma^*\zeta^{-1}\theta\end{smallmatrix}\end{pmatrix}} M\oplus Z^*\xrightarrow{\begin{pmatrix}\begin{smallmatrix}-\theta\xi^{-1}&\zeta\gamma^*\end{smallmatrix}\end{pmatrix}}X\xrightarrow{0}Y^*))
\end{align*}
which implies that $S_0$ acts freely on $\rho(\Hom_{\cK_2(\cP)}(Y,X)\setminus\{0\})$. Under the isomorphism $\rho$, $\Aut_{\cC_2(\cP)}(Z)\cong S_0$ acts freely on $\Hom_{\cK_2(\cP)}(Y,X)\setminus\{0\}$. Since $Z$ is indecomposable, $\Aut_{\cC_2(\cP)}(Z)=\bbC^*\ltimes (1+\rad \End_{\cC_2(\cP)}(Z))$, consider the natural morphism 
$$[(\Hom_{\cK_2(\cP)}(Y,X)\setminus\{0\})/\bbC^*]\rightarrow[(\Hom_{\cK_2(\cP)}(Y,X)\setminus\{0\})/\Aut_{\cC_2(\cP)}(Z)]$$
whose fiber at the geometric point corresponding to $\Aut_{\cC_2(\cP)}(Z).\theta$ is isomorphic to 
$$(1+\rad \End_{\cC_2(\cP)}(Z)).\theta\cong (1+\rad \End_{\cK_2(\cP)}(Z))/\{1\}.$$
It is clear that $(1+\rad \End_{\cK_2(\cP)}(Z))$ is isomorphic to the vector space $\rad \End_{\cK_2(\cP)}(Z)$, and so $\chi((1+\rad \End_{\cK_2(\cP)}(Z)).\theta)=1$ is constant. Applying Corollary \ref{fibre naive Euler characteristic}, we obtain
\begin{align*}
\chi([(\Hom_{\cK_2(\cP)}(Y,X)\setminus\{0\})/\bbC^*])=\chi([(\Hom_{\cK_2(\cP)}(Y,X)\setminus\{0\})/\Aut_{\cC_2(\cP)}(Z)]),
\end{align*}
and so 
\begin{align*}
&\chi([(W(\cO_X,\cO_Y;M\oplus Z^*)\setminus \rho(0))/\rG_{\ue_1}\times \rG_{\ue_2}\times S_0])\\
=&\chi((\Hom_{\cK_2(\cP)}(Y,X)\setminus\{0\})/\Aut_{\cC_2(\cP)}(Z))
=\chi((\Hom_{\cK_2(\cP)}(Y,X)\setminus\{0\})/\bbC^*)\\
=&\chi(\bbC\mathbb{P}^{\dim_{\bbC}\Hom_{\cK_2(\cP)}(Y,X)-1})
=\dim_{\bbC}\Hom_{\cK_2(\cP)}(Y,X),\\
&\chi([W(\cO_X,\cO_Y;M\oplus Z^*)/\rG_{\ue_1}\times \rG_{\ue_2}\times S_0])=1+\dim_{\bbC}\Hom_{\cK_2(\cP)}(Y,X).
\end{align*}
Finally, there is a natural morphism
$$[W(\cO_X,\cO_Y;M\oplus Z^*)/\rG_{\ue_1}\times \rG_{\ue_2}\times S_0]\rightarrow [W(\cO_X,\cO_Y;M\oplus Z^*)/\rG_{\ue_1}\times \rG_{\ue_2}\times S]$$
whose fiber at the geometric point corresponding to the orbit $\rG_{\ue_1}\times \rG_{\ue_2}\times S.(f,g,h)$ is isomorphic to 
$$S_1.(f,g,h)\cong S_1/\Stab_{S_1}(f,g,h),$$
where $S_1=S/S_0=\{(1_Z,\begin{pmatrix}\begin{smallmatrix}1_{M} &\eta\\0 &1_{Z^*}\end{smallmatrix}\end{pmatrix})|\eta\in \Hom_{\cK_2(\cP)}(Z^*,M)\}$ and $\Stab_{S_1}(f,g,h)$ is the stabilizer. It is clear that $S_1$ is isomorphic to the vector space $\Hom_{\cK_2(\cP)}(Z^*,M)$ and $\Stab_{S_1}(f,g,h)$ is isomorphic to a vector subspace, thus $\chi(S_1.(f,g,h))=1$ is constant. Applying Corollary \ref{fibre naive Euler characteristic}, we obtain 
\begin{align*}
\chi(V^M_{(\cO_X\cO_Y)\cO_Z})=&\chi([W(\cO_X,\cO_Y;M\oplus Z^*)/\rG_{\ue_1}\times \rG_{\ue_2}\times S])\\
=&\chi([W(\cO_X,\cO_Y;M\oplus Z^*)/\rG_{\ue_1}\times \rG_{\ue_2}\times S_0])\\
=&1+\dim_{\bbC}\Hom_{\cK_2(\cP)}(Y,X),
\end{align*}
as desired. The dual statement for $\chi(V^M_{\cO_Z(\cO_X\cO_Y)})$ can be proved similarly, and the result for $\sum_{\ue\leqslant \ue_1+\ue_2}\chi(V^{\ue}_{(12)3}(3))-\chi(V^{\ue}_{3(12)}(3))$ follows directly.
\end{proof}

\subsubsection{\textbf{Case} $t=4$}\

In the case $L\simeq M\oplus Z^*\simeq X\oplus Y,X\simeq Y$ for some $X\in \cO_{\ue_1},Y\in \cO_{\ue_2},Z\in \cO_{\ue_3}$. By similar argument as ($\ref{V(2)}$) in Proposition \ref{Case 2}, we have the following proposition.

\begin{proposition}\label{Case 4}
For any $\ue\leqslant \ue_1+\ue_2$, we have 
$$V^M_{(\cO_X\cO_Y)\cO_Z}=V^M_{\cO_Z(\cO_X\cO_Y)}.$$
As a consequence, we have
$$\chi(V^{\ue}_{(12)3}(4))=\chi(V^{\ue}_{3(12)}(4)).$$
\end{proposition}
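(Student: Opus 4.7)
The plan is to run the exact same scheme as in the proof of Proposition~\ref{Case 2}, observing that the input used there — namely, that $L\simeq M\oplus Z^*$ — also holds in Case $t=4$. The extra hypothesis of Case~$4$ (that $L\simeq X\oplus Y$ with $X\simeq Y$) plays no role in the projection and stabilizer analysis, which was the part of Proposition~\ref{Case 2} used to \emph{identify} the two stacks; it only affected the subsequent Euler-characteristic computation of the fibre stack, which we now do not need. So Case~$4$ in fact gives the stronger statement that the two stacks are equal (canonically isomorphic), from which the Euler-characteristic consequence is automatic.

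Concretely, I would first fix $Z\in \cO_{\ue_3}$, $X\in \cO_{\ue_1}$, $Y\in \cO_{\ue_2}$ with $X\simeq Y\simeq Z^*\simeq M$ and, as in Proposition~\ref{Case 2}, consider the two natural projections
\[
\pi:W(\cO_X,\cO_Y;M\oplus Z^*)\times W(M\oplus Z^*,\cO_Z;M)\longrightarrow W(M\oplus Z^*,\cO_Z;M),
\]
\[
\pi':W(\cO_Z,M\oplus Z^*;M)\times W(\cO_X,\cO_Y;M\oplus Z^*)\longrightarrow W(\cO_Z,M\oplus Z^*;M).
\]
The analysis carried out right before Proposition~\ref{Case 2} shows that each of $W(M\oplus Z^*,\cO_Z;M)$ and $W(\cO_Z,M\oplus Z^*;M)$ carries a unique $\rG_{\ue_3}\times \Aut_{\cK_2(\cP)}(M\oplus Z^*)$-orbit (represented by the split triangles), with common stabilizer
\[
S=\Bigl\{(\gamma,\begin{pmatrix}\begin{smallmatrix}1_{M} & \eta\\ 0 & \gamma^*\end{smallmatrix}\end{pmatrix})\in \Aut_{\cC_2(\cP)}(Z)\times \Aut_{\cK_2(\cP)}(M\oplus Z^*)\,\Big|\, \eta\in \Hom_{\cK_2(\cP)}(Z^*,M)\Bigr\}.
\]
This argument uses only that triangles of the form $\gamma.Z\to M\to M\oplus Z^*\to (\gamma.Z)^*$ (resp.\ their duals) split, together with the indecomposability of $Z$; both inputs are available here. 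Hence the induced morphisms $\overline{\pi}$ and $\overline{\pi}'$ each have target the singleton point stack, and their fibres are
\[
[W(\cO_X,\cO_Y;M\oplus Z^*)/\rG_{\ue_1}\times \rG_{\ue_2}\times S],
\]
with $S$ acting through the embedding $S\hookrightarrow \Aut_{\cK_2(\cP)}(M\oplus Z^*)$. This embedding is the same in both constructions, so the two fibre stacks are canonically equal. We therefore get $V^M_{(\cO_X\cO_Y)\cO_Z}=V^M_{\cO_Z(\cO_X\cO_Y)}$, as asserted.

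For the consequence, $\tilde Z\mapsto \chi(V^M_{(\cO_X\cO_Y)\cO_Z})=\chi(V^M_{\cO_Z(\cO_X\cO_Y)})$ is a $\rG_{\ue_3}$-invariant constructible function on the subset of $\cO_{\ue_3}$ cut out by Case~$4$, and integrating it over $[\cO_{\ue_3}/\rG_{\ue_3}]$ (via Corollary~\ref{fibre naive Euler characteristic}) yields $\chi(V^{\ue}_{(12)3}(4))$ and $\chi(V^{\ue}_{3(12)}(4))$ respectively; equality of the integrands gives equality of the integrals, completing the proof. I expect no serious obstacle here — the only thing to be a little careful about is checking that the $S$-action on $W(\cO_X,\cO_Y;M\oplus Z^*)$ obtained from the two projections is literally the same (both coming from restricting the $\Aut_{\cK_2(\cP)}(M\oplus Z^*)$-action on the common middle term), which is routine once unwound.
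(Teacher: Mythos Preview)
Your proposal is correct and follows exactly the approach the paper intends: the paper's own proof is the single line ``By similar argument as (\ref{V(2)}) in Proposition~\ref{Case 2}'', and you have faithfully unpacked that reference. Your observation that the projection-and-stabilizer part of the Case~2 argument already yields a canonical identification of both stacks with $[W(\cO_X,\cO_Y;M\oplus Z^*)/\rG_{\ue_1}\times\rG_{\ue_2}\times S]$ (independent of whether $L\simeq X\oplus Y$) is precisely why the paper can state stack equality here rather than just equality of Euler characteristics, and your integration step for the consequence matches the end of the Case~2 proof verbatim.
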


\subsubsection{Application}\

\begin{proof}[Proof of Lemma \ref{Lie bracket supported indecomposable}]
Note that  for any distinguished triangle $Y\rightarrow L \rightarrow X\xrightarrow{h} Y^*$, the complex $L\simeq 0$ if and only if $h$ is a homotopy equivalence, and so if $L\simeq 0$, then
\begin{align*}
F^L_{\cO_{\ue_1}\cO_{\ue_2}}-F^L_{\cO_{\ue_2}\cO_{\ue_1}}=\ \ &\chi([(\rG_{\bd_1}.t_{\ue_1}(\cO_{\ue_1})\cap(\rG_{\bd_2}.t_{\ue_2}(\cO_{\ue_2}))^*)/\rG_{\bd_1}])\\
-&\chi([(\rG_{\bd_2}.t_{\ue_2}(\cO_{\ue_2})\cap(\rG_{\bd_1}.t_{\ue_1}(\cO_{\ue_1}))^*)/\rG_{\bd_2}])=0.
\end{align*}

If $L$ is decomposable as an object in $\cK_2(\cP)$, suppose $L\simeq M\oplus Z^*$, where $M,Z\not\simeq 0$. If $L\not\simeq X\oplus Y$ for any $X\in \cO_{\ue_1},Y\in \cO_{\ue_2}$, by Proposition \ref{Case 2}, then
$$F^L_{\cO_{\ue_1}\cO_{\ue_2}}=F^L_{\cO_{\ue_2}\cO_{\ue_1}}=0.$$ 
If $L\simeq\! X\oplus Y,X\not\simeq Y$ for some $X\in \cO_{\ue_1},Y\in \cO_{\ue_2}$, by (\ref{V(Xoplus Y)=1}) and Proposition \ref{Case 2}, then
\begin{align*}
F^L_{\cO_X\cO_Y}&=F^L_{\cO_Y\cO_X}=1,\\
F^L_{\cO_{\ue_1}\cO_{\ue_2}}=F^L_{\cO_X\cO_{\ue_2}}+F^L_{(\cO_{\ue_1}\setminus\cO_X)\cO_{\ue_2}}&=(F^L_{\cO_X\cO_Y}+F^L_{\cO_X(\cO_{\ue_2}\setminus\cO_Y)})+0=1,\\
F^L_{\cO_{\ue_2}\cO_{\ue_1}}=F^L_{\cO_Y\cO_{\ue_1}}+F^L_{(\cO_{\ue_2}\setminus\cO_Y)\cO_{\ue_1}}&=(F^L_{\cO_Y\cO_X}+F^L_{\cO_Y(\cO_{\ue_1}\setminus\cO_X)})+0=1.
\end{align*}
If $L\!\simeq X\oplus Y,X\simeq Y$ for some $X\in \cO_{\ue_1},Y\in \cO_{\ue_2}$, it is clear that $F^L_{\cO_X\cO_Y}=F^L_{\cO_Y\cO_X}$. By Proposition \ref{Case 2}, we have
\begin{align*}
F^L_{\cO_{\ue_1}\cO_{\ue_2}}=F^L_{\cO_X\cO_{\ue_2}}+F^L_{(\cO_{\ue_1}\setminus\cO_X)\cO_{\ue_2}}&=(F^L_{\cO_X\cO_Y}+F^L_{\cO_X(\cO_{\ue_2}\setminus\cO_Y)})+0=F^L_{\cO_X\cO_Y},\\
F^L_{\cO_{\ue_2}\cO_{\ue_1}}=F^L_{\cO_Y\cO_{\ue_1}}+F^L_{(\cO_{\ue_2}\setminus\cO_Y)\cO_{\ue_1}}&=(F^L_{\cO_Y\cO_X}+F^L_{\cO_Y(\cO_{\ue_1}\setminus\cO_X)})+0=F^L_{\cO_Y\cO_X}.
\end{align*}
Thus if $L$ is decomposable or $0$ as an object in $\cK_2(\cP)$, then $F^L_{\cO_{\ue_1}\cO_{\ue_2}}-F^L_{\cO_{\ue_2}\cO_{\ue_1}}=0$.
\end{proof}

\begin{corollary}\label{chi(tilde(V)(2))=0}
For any $\ue\leqslant \ue_1+\ue_2$, we have 
$$\chi(\tilde{V}^{\ue}_{(12)3}(2))=\chi(\tilde{V}^{\ue}_{(21)3}(2)).$$
Dually, for any $\ue'\leqslant \ue_2+\ue_3$, we have
$$\chi(\tilde{V}^{\ue'}_{1(23)}(2))=\chi(\tilde{V}^{\ue'}_{1(32)}(2)).$$
\end{corollary}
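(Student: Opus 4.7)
The plan is to expand both Euler characteristics as weighted sums indexed by $\cI^{\ue}_{\ue_1\ue_2}$ and reduce the desired equality to the symmetry $F^L_{\cO_{\ue_1}\cO_{\ue_2}}=F^L_{\cO_{\ue_2}\cO_{\ue_1}}$ for decomposable $L$, which was already extracted during the proof of Lemma \ref{Lie bracket supported indecomposable}. First, using that $\chi(V(\cO_{\ue_1},\cO_{\ue_2};L_{r,s}))=F^{L_{r,s}}_{\cO_{\ue_1}\cO_{\ue_2}}=r$, I would write
$$\chi(\tilde{V}^{\ue}_{(12)3}(2))=\sum_{(r,s)\in \cI^{\ue}_{\ue_1\ue_2}}r\cdot\chi_2(r,s),\qquad \chi_2(r,s):=\chi(V(\langle L_{r,s}\rangle_2,\cO_{\ue_3};M)).$$

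Second, I would identify the swapped index set. By definition, $\cI^{\ue}_{\ue_2\ue_1}=\{(s,r):(r,s)\in\cI^{\ue}_{\ue_1\ue_2}\}$ and $\cL^{\ue}_{\ue_2\ue_1}(s,r)=\cL^{\ue}_{\ue_1\ue_2}(r,s)$; moreover the condition defining $\langle L_{r,s}\rangle_2$ depends only on $\cO_{\ue_3}$ and $M$, so the same $\chi_2(r,s)$ appears on both sides. Therefore
$$\chi(\tilde{V}^{\ue}_{(21)3}(2))=\sum_{(r,s)\in \cI^{\ue}_{\ue_1\ue_2}}s\cdot\chi_2(r,s),$$
and the problem reduces to showing $\sum_{(r,s)}(r-s)\chi_2(r,s)=0$.

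Third, I would argue that $\chi_2(r,s)\neq 0$ forces $r=s$. Nonvanishing of $\chi_2(r,s)$ requires $\langle L_{r,s}\rangle_2$ to be nonempty, so there exists $L\in \langle L_{r,s}\rangle$ with $L\simeq M\oplus Z^*$ for some $Z\in \cO_{\ue_3}$. Since $Z$ is indecomposable, $L$ is either decomposable in $\cK_2(\cP)$ or reduces to a boundary case. The three-case analysis used in the proof of Lemma \ref{Lie bracket supported indecomposable} (drawing on Proposition \ref{Case 2} together with (\ref{V(Xoplus Y)=1}) and the observation that $F^L_{\cO_X\cO_Y}=F^L_{\cO_Y\cO_X}$ in Cases 3 and 4) shows $F^L_{\cO_{\ue_1}\cO_{\ue_2}}=F^L_{\cO_{\ue_2}\cO_{\ue_1}}$ for every such $L$, hence $r=s$ and the sum vanishes. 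The dual identity $\chi(\tilde{V}^{\ue'}_{1(23)}(2))=\chi(\tilde{V}^{\ue'}_{1(32)}(2))$ follows by repeating the argument with $\cO_{\ue_2}$ and $\cO_{\ue_3}$ interchanged and $M\oplus Z^*$ replaced by $M\oplus X^*$.

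The argument is not technically hard; the only mildly delicate point is keeping the reindexing $(r,s)\leftrightarrow(s,r)$ straight and confirming that the boundary cases (for instance $M\simeq 0$, giving $L\simeq Z^*$ indecomposable) do not contribute: in such degenerate situations the distinguished triangle $Y\to L\to X\to Y^*$ splits, $X\simeq Y^*$, and the symmetry of $F^L$ in the two arguments is immediate. Hence the main real work lies not in this corollary but in Propositions \ref{Case 2}--\ref{Case 4} already established.
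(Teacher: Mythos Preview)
Your argument is essentially the paper's own proof: expand $\chi(\tilde V^{\ue}_{(12)3}(2))=\sum_{(r,s)}r\cdot\chi(V(\langle L_{r,s}\rangle_2,\cO_{\ue_3};M))$, observe that $\langle L_{r,s}\rangle_2\neq\varnothing$ yields some $L\simeq M\oplus Z^*$, and invoke the case analysis from the proof of Lemma~\ref{Lie bracket supported indecomposable} to force $r=s$; the surviving diagonal terms then visibly coincide with those for $\chi(\tilde V^{\ue}_{(21)3}(2))$. Your reindexing in step~2 just makes explicit what the paper records as ``similarly''.

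One correction to your side remark: in the boundary case $M\simeq 0$ you claim the triangle $Y\to L\to X\to Y^*$ splits and hence $X\simeq Y^*$, but this is false --- with $L\simeq Z^*$ indecomposable there is no reason for that triangle to split. The paper's proof simply asserts ``$L$ is decomposable'' without isolating this case; in the application to the Jacobi identity the point $\tilde M=0$ is harmless anyway, since all the support-indecomposable functions involved vanish there.
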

\begin{proof}
Recall that $\tilde{V}^{\ue}_{(12)3}=\bigsqcup_{(r,s)\in \cI^{\ue}_{\ue_1\ue_2}}V(\cO_{\ue_1},\cO_{\ue_2};L_{r,s})\times V(\langle L_{r,s}\rangle_2,\cO_{\ue_3};M)$. For any $(r,s)\in \cI^{\ue}_{\ue_1\ue_2}$, if $\langle L_{r,s}\rangle_2\not=\varnothing$, there exists $L\simeq M\oplus Z^*\in \langle L_{r,s}\rangle$ for some $Z\in \cO_{\ue_3}$. Since $L$ is decomposable, by above proof, we have 
$$r=F^{L_{r,s}}_{\cO_{\ue_1}\cO_{\ue_2}}=F^L_{\cO_{\ue_1}\cO_{\ue_2}}=F^L_{\cO_{\ue_2}\cO_{\ue_1}}=F^{L_{r,s}}_{\cO_{\ue_1}\cO_{\ue_2}}=s,$$
and so $$\chi(\tilde{V}^{\ue}_{(12)3})=\sum_{(r,r)\in \cI^{\ue}_{\ue_1\ue_2}}r\chi(V(\langle L_{r,r}\rangle_2,\cO_{\ue_3};M)).$$
Similarly, 
$$\chi(\tilde{V}^{\ue}_{(21)3})=\sum_{(r,r)\in \cI^{\ue}_{\ue_2\ue_1}}r\chi(V(\langle L_{r,r}\rangle_2,\cO_{\ue_3};M)),$$
as desired. The dual statement can be proved similarly.
\end{proof}

\subsection{Jacobi identity}\label{Jacobi identity}\

In this subsection, we prove Theorem \ref{Lie algebra g_2}. We use the same notations in Subsection \ref{An application of octahedral axiom} and \ref{Preliminary result about the structure constants}. Moreover, for convenience, we simply denote by $\cO_i=\cO_{\ue_i}$ for $i=1,2,3$.

\begin{proof}[Proof of Theorem \ref{Lie algebra g_2}] 
Firstly, we prove
\begin{equation}
[[1_{\hat{\cO}_1},1_{\hat{\cO}_2}],1_{\hat{\cO}_3}]-[[1_{\hat{\cO}_1},1_{\hat{\cO}_3}],1_{\hat{\cO}_2}]-[[1_{\hat{\cO}_3},1_{\hat{\cO}_2}],1_{\hat{\cO}_1}]=0.\tag{I}
\end{equation}
By definition, see (\ref{constant F}), Lemma \ref{finite subset I}, (\ref{Lie algebra g_2 formula}) and Subsection \ref{Preliminary result about the structure constants}, we have
\begin{align*}
[1_{\hat{\cO}_1},1_{\hat{\cO}_2}]_{\tilde{\fn}}=&\sum_{\ue\leqslant \ue_1+\ue_2}\sum_{(r,s)\in \cI^{\ue}_{\ue_1\ue_2}}(F^{L_{r,s}}_{\cO_1\cO_2}-F^{L_{r,s}}_{\cO_2\cO_1})1_{\langle L_{r,s}\rangle},\\
[[1_{\hat{\cO}_1},1_{\hat{\cO}_2}]_{\tilde{\fn}},1_{\hat{\cO}_3}]=&[[1_{\hat{\cO}_1},1_{\hat{\cO}_2}]_{\tilde{\fn}},1_{\hat{\cO}_3}]_{\tilde{\fn}}-(F^{\hat{\cO}_3^*}_{\hat{\cO}_1\hat{\cO}_2}-F^{\hat{\cO}_3^*}_{\hat{\cO}_2\hat{\cO}_1})\tilde{h}_{\bd_1+\bd_2},
\end{align*}
where 
\begin{align}
[[1_{\hat{\cO}_1},1_{\hat{\cO}_2}]_{\tilde{\fn}},1_{\hat{\cO}_3}]_{\tilde{\fn}}(\tilde{M})=&\sum_{\ue\leqslant \ue_1+\ue_2}\sum_{(r,s)\in \cI^{\ue}_{\ue_1\ue_2}}(F^{L_{r,s}}_{\cO_1\cO_2}-F^{L_{r,s}}_{\cO_2\cO_1})(F^M_{\langle L_{r,s}\rangle\cO_3}-F^M_{\cO_3\langle L_{r,s}\rangle})\notag\\
\overset{\ref{Preliminary result about the structure constants}}{=}&\sum_{\ue\leqslant\ue_1+\ue_2}(\chi(\tilde{V}^{\ue}_{(12)3})-\chi(\tilde{V}^{\ue}_{(21)3})-\chi(\tilde{V}^{\ue}_{3(12)})+\chi(\tilde{V}^{\ue}_{3(21)})),\notag\\
F^{\hat{\cO}_3^*}_{\hat{\cO}_1\hat{\cO}_2}=&\sum_{\ue\leqslant \ue_1+\ue_2}\sum_{(r,s)\in \cI^{\ue}_{\ue_1\ue_2}}F^{L_{r,s}}_{\cO_1\cO_2}\chi([(\rG_{\bd_1+\bd_2}.t_{\ue}(\langle L_{r,s}\rangle)\cap \hat{\cO}^*_3)/\rG_{\bd_1+\bd_2}]) \label{F=Fchi}
\end{align}
for any $\tilde{M}\in \rP_2(A,\bd_1+\bd_2+\bd_3)$, and $F^{\hat{\cO}_3^*}_{\hat{\cO}_2\hat{\cO}_1}$ is similar. Thus 
\begin{align*}
&[[1_{\hat{\cO}_1},1_{\hat{\cO}_2}],1_{\hat{\cO}_3}]=[([1_{\hat{\cO}_1},1_{\hat{\cO}_2}]_{\tilde{\fn}}-\chi([(\hat{\cO}_1\cap\hat{\cO}_2^*)/\rG_{\bd_1}])\tilde{h}_{\bd_1}),1_{\hat{\cO}_3}]\\
=&[[1_{\hat{\cO}_1},1_{\hat{\cO}_2}]_{\tilde{\fn}},1_{\hat{\cO}_3}]_{\tilde{\fn}}-(F^{\cO_3^*}_{\cO_1\cO_2}-F^{\cO_3^*}_{\cO_2\cO_1})\tilde{h}_{\bd_1+\bd_2}-\chi([(\hat{\cO}_1\cap\hat{\cO}_2^*)/\rG_{\bd_1}])(\tilde{h}_{\bd_1}|\tilde{h}_{\bd_3})1_{\hat{\cO}_3}.
\end{align*}
We denote by
\begin{align*}
A_{\tilde{M}}=\ \ &\sum_{\ue\leqslant\ue_1+\ue_2}(\chi(\tilde{V}^{\ue}_{(12)3})-\chi(\tilde{V}^{\ue}_{(21)3})-\chi(\tilde{V}^{\ue}_{3(12)})+\chi(\tilde{V}^{\ue}_{3(21)}))\\
-&\sum_{\ue'\leqslant\ue_1+\ue_3}(\chi(\tilde{V}^{\ue'}_{(13)2})-\chi(\tilde{V}^{\ue'}_{(31)2})-\chi(\tilde{V}^{\ue'}_{2(13)})+\chi(\tilde{V}^{\ue'}_{2(31)}))\\
-&\sum_{\ue''\leqslant\ue_3+\ue_2}(\chi(\tilde{V}^{\ue''}_{(32)1})-\chi(\tilde{V}^{\ue''}_{(23)1})-\chi(\tilde{V}^{\ue''}_{1(32)})+\chi(\tilde{V}^{\ue''}_{1(23)}))\\
=\ \ &\triangle^{\tilde{M}}_{123}+\triangle^{\tilde{M}}_{231}+\triangle^{\tilde{M}}_{312}-\triangle^{\tilde{M}}_{213}-\triangle^{\tilde{M}}_{321}-\triangle^{\tilde{M}}_{132},\\
B_{\tilde{M}}=\ \ &\chi([(\hat{\cO}_1\cap\hat{\cO}_2^*)/\rG_{\bd_1}])(\tilde{h}_{\bd_1}|\tilde{h}_{\bd_3})1_{\hat{\cO}_3}(\tilde{M})\\
-&\chi([(\hat{\cO}_1\cap\hat{\cO}_3^*)/\rG_{\bd_1}])(\tilde{h}_{\bd_1}|\tilde{h}_{\bd_2})1_{\hat{\cO}_2}(\tilde{M})\\
-&\chi([(\hat{\cO}_3\cap\hat{\cO}_2^*)/\rG_{\bd_3}])(\tilde{h}_{\bd_3}|\tilde{h}_{\bd_1})1_{\hat{\cO}_1}(\tilde{M}),\\
C=\ \ &(F^{\hat{\cO}_3^*}_{\hat{\cO}_1\hat{\cO}_2}-F^{\hat{\cO}_3^*}_{\hat{\cO}_2\hat{\cO}_1})\tilde{h}_{\bd_1+\bd_2}\\
-&(F^{\hat{\cO}_2^*}_{\hat{\cO}_1\hat{\cO}_3}-F^{\hat{\cO}_2^*}_{\hat{\cO}_3\hat{\cO}_1})\tilde{h}_{\bd_1+\bd_3}\\
-&(F^{\hat{\cO}_1^*}_{\hat{\cO}_3\hat{\cO}_2}-F^{\hat{\cO}_1^*}_{\hat{\cO}_2\hat{\cO}_3})\tilde{h}_{\bd_3+\bd_2},
\end{align*}
where 
$$\triangle^{\tilde{M}}_{123}=\sum_{\ue\leqslant\ue_1+\ue_2}\chi(\tilde{V}^{\ue}_{(12)3})-\sum_{\ue''\leqslant\ue_3+\ue_2}\chi(\tilde{V}^{\ue''}_{1(23)})$$
and the other $\triangle^{\tilde{M}}_{pqr}$ are similar, then (I) is equivalent to $A_{\tilde{M}}-B_{\tilde{M}}-C=0$ for any $\tilde{M}\in \rP_2(A,\bd_1+\bd_2+\bd_3)$. By Proposition \ref{Case 1} and Proposition \ref{octahedral Euler characteristic}, we have
\begin{align*}
\triangle^{\tilde{M}}_{123}=\ \ &\sum_{\ue\leqslant\ue_1+\ue_2}(\chi(\tilde{V}^{\ue}_{(12)3}(1))+\chi(\tilde{V}^{\ue}_{(12)3}(2)))-\sum_{\ue''\leqslant\ue_3+\ue_2}(\chi(\tilde{V}^{\ue''}_{1(23)}(1))+\chi(\tilde{V}^{\ue''}_{1(23)}(2)))\\
\overset{\ref{Case 1}}{=}\ \ &\sum_{\ue\leqslant\ue_1+\ue_2}(\chi(V^{\ue}_{(12)3}(1))+\chi(\tilde{V}^{\ue}_{(12)3}(2)))-\sum_{\ue''\leqslant\ue_3+\ue_2}(\chi(V^{\ue''}_{1(23)}(1))+\chi(\tilde{V}^{\ue''}_{1(23)}(2)))\\
\overset{\ref{octahedral Euler characteristic}}{=}\ \ &\sum_{\ue\leqslant\ue_1+\ue_2}\chi(\tilde{V}^{\ue}_{(12)3}(2))-\sum_{\ue''\leqslant\ue_3+\ue_2}\chi(\tilde{V}^{\ue''}_{1(23)}(2)) \tag{$\triangle^1$} \\
+&\ \ \sum^4_{t=2}(\sum_{\ue''\leqslant\ue_3+\ue_2}\chi(V^{\ue''}_{1(23)}(t))-\sum_{\ue\leqslant\ue_1+\ue_2}\chi(V^{\ue}_{(12)3}(t))) \tag{$\triangle^2$}.
\end{align*}
Then we divide $A_{\tilde{M}}=A_{\tilde{M}}^1+A_{\tilde{M}}^2$, where $A_{\tilde{M}}^1$ is the sum of terms similar to $(\triangle^1)$ appearing in all $\triangle^{\tilde{M}}_{pqr}$, and $A_{\tilde{M}}^2$ is the sum of terms similar to $(\triangle^2)$ appearing in all $\triangle^{\tilde{M}}_{pqr}$, that is, 
\begin{align*}
A_{\tilde{M}}^1=&\!\!\!\sum_{\ue\leqslant\ue_1+\ue_2}\!\!\chi(\tilde{V}^{\ue}_{(12)3}(2))-\!\!\sum_{\ue''\leqslant\ue_3+\ue_2}\!\!\chi(\tilde{V}^{\ue''}_{1(23)}(2))
+\!\!\sum_{\ue''\leqslant\ue_2+\ue_3}\!\!\chi(\tilde{V}^{\ue''}_{(23)1}(2))-\!\!\sum_{\ue'\leqslant\ue_1+\ue_3}\!\!\chi(\tilde{V}^{\ue'}_{2(31)}(2))\\
+&\!\!\!\sum_{\ue'\leqslant\ue_3+\ue_1}\!\!\chi(\tilde{V}^{\ue'}_{(31)2}(2))-\!\!\sum_{\ue\leqslant\ue_2+\ue_1}\chi(\tilde{V}^{\ue}_{3(12)}(2))
-\!\!\sum_{\ue\leqslant\ue_2+\ue_1}\chi(\tilde{V}^{\ue}_{(21)3}(2))+\!\!\sum_{\ue'\leqslant\ue_3+\ue_1}\!\!\chi(\tilde{V}^{\ue'}_{2(13)}(2))\\
-&\!\!\!\!\sum_{\ue''\leqslant\ue_3+\ue_2}\!\!\chi(\tilde{V}^{\ue''}_{(32)1}(2))+\!\!\sum_{\ue\leqslant\ue_1+\ue_2}\!\!\chi(\tilde{V}^{\ue}_{3(21)}(2))
-\!\!\sum_{\ue'\leqslant\ue_1+\ue_3}\!\!\chi(\tilde{V}^{\ue'}_{(13)2}(2))+\!\!\sum_{\ue''\leqslant\ue_2+\ue_3}\!\!\chi(\tilde{V}^{\ue''}_{1(32)}(2)),\\
A_{\tilde{M}}^2=&\sum^4_{t=2}\\
((&\!\!\!\!\!\sum_{\ue''\leqslant\ue_3+\ue_2}\!\!\chi(V^{\ue''}_{1(23)}(t))-\!\!\sum_{\ue\leqslant\ue_1+\ue_2}\!\!\chi(V^{\ue}_{(12)3}(t)))
+(\!\!\!\sum_{\ue'\leqslant\ue_1+\ue_3}\!\!\chi(V^{\ue'}_{2(31)}(t))-\!\!\!\sum_{\ue''\leqslant\ue_2+\ue_3}\!\!\chi(V^{\ue''}_{(23)1}(t)))\\
+(&\!\!\!\!\!\sum_{\ue\leqslant\ue_2+\ue_1}\!\!\chi(V^{\ue}_{3(12)}(t))-\!\!\sum_{\ue'\leqslant\ue_3+\ue_1}\!\!\chi(V^{\ue'}_{(31)2}(t)))
-(\!\!\sum_{\ue'\leqslant\ue_3+\ue_1}\!\!\chi(V^{\ue'}_{2(13)}(t))-\!\!\sum_{\ue\leqslant\ue_2+\ue_1}\!\!\chi(V^{\ue}_{(21)3}(t)))\\
-(&\!\!\!\!\!\sum_{\ue\leqslant\ue_1+\ue_2}\!\!\chi(V^{\ue}_{3(21)}(t))-\!\!\sum_{\ue''\leqslant\ue_3+\ue_2}\!\!\chi(V^{\ue''}_{(32)1}(t)))
-(\!\!\!\!\sum_{\ue''\leqslant\ue_2+\ue_3}\!\!\chi(V^{\ue''}_{1(32)}(t))-\!\!\!\!\sum_{\ue'\leqslant\ue_1+\ue_3}\!\!\!\!\chi(V^{\ue'}_{(13)2}(t)))).
\end{align*}
By Corollary \ref{chi(tilde(V)(2))=0}, we have $A_{\tilde{M}}^1=0$. By Proposition \ref{Case 2} and \ref{Case 4}, the sums in $A_{\tilde{M}}^2$ taking over $t=2$ or $t=4$ vanish, and so it remains to deal with the sum  taking over $t=3$. We use (\ref{chi(V(3))-chi(V(3))}) in Proposition \ref{Case 3} to substitute terms in $A_{\tilde{M}}^2$, then we will obtain $A_{\tilde{M}}^2=B_{\tilde{M}}$. More precisely, consider the term $-\chi([(\hat{\cO}_3\cap\hat{\cO}_2^*)/\rG_{\bd_3}])(\tilde{h}_{\bd_3}|\tilde{h}_{\bd_1})1_{\hat{\cO}_1}(\tilde{M})$ in $B_{\tilde{M}}$, notice that if $\hat{\cO}_3\cap\hat{\cO}_2^*=\varnothing$ or $1_{\hat{\cO}_1}(\tilde{M})=0$, this term vanishes. On the other hand, consider those terms in $A^2_{\tilde{M}}$ which contribute $1_{\hat{\cO}_1}(\tilde{M})$, we have the following result and we denote it by $a_11_{\hat{\cO}_1}(\tilde{M})$,
\begin{align*}
&\ \sum_{\ue'\leqslant\ue_1+\ue_3}\!\!\chi(V^{\ue'}_{2(31)}(3))-\!\!\sum_{\ue'\leqslant\ue_3+\ue_1}\!\!\chi(V^{\ue'}_{(31)2}(3))
+\!\!\sum_{\ue\leqslant\ue_2+\ue_1}\!\!\chi(V^{\ue}_{3(12)}(3))-\!\!\sum_{\ue\leqslant\ue_1+\ue_2}\!\!\chi(V^{\ue}_{(12)3}(3))\\
&+\!\!\sum_{\ue'\leqslant\ue_1+\ue_3}\!\!\chi(V^{\ue'}_{(13)2}(3))-\!\!\sum_{\ue'\leqslant\ue_3+\ue_1}\!\!\chi(V^{\ue'}_{2(13)}(3))+\!\!\sum_{\ue\leqslant\ue_2+\ue_1}\!\!\chi(V^{\ue}_{(21)3}(3))-\!\!\sum_{\ue\leqslant\ue_1+\ue_2}\!\!\chi(V^{\ue}_{3(21)}(3))\\
=&(-\!\!\int_{[(\hat{\cO}_3^*\cap\hat{\cO}_2)/\rG_{\bd_2}]}\!\dim_{\bbC}\Hom_{\cK_2(\cP)}(M,Z^*)\!+\!\!\int_{[(\hat{\cO}_2^*\cap\hat{\cO}_3)/\rG_{\bd_3}]}\!\dim_{\bbC}\Hom_{\cK_2(\cP)}(Z^*,M)\\
-\!\!&\int_{[(\hat{\cO}_3^*\cap\hat{\cO}_2)/\rG_{\bd_2}]}\!\!\dim_{\bbC}\Hom_{\cK_2(\cP)}(Z^*,M)\!+\!\!\int_{[(\hat{\cO}_2^*\cap\hat{\cO}_3)/\rG_{\bd_3}]}\!\!\dim_{\bbC}\Hom_{\cK_2(\cP)}(M,Z^*))1_{\hat{\cO}_1}(\tilde{M}),
\end{align*}
where all integrations are about functions in $\tilde{Z}$, see Proposition \ref{Case 3}. Notice that if $\hat{\cO}_3\cap\hat{\cO}_2^*=\varnothing$ or $1_{\hat{\cO}_1}(\tilde{M})=0$, we have $a_11_{\hat{\cO}_1}(\tilde{M})=0$; otherwise, we have $\bd_2+\bd_3=0$, the image of $M$ in the Grothendieck group $K_0$ is $\bd_1+\bd_2+\bd_3=\bd_1$, and $a_11_{\hat{\cO}_1}(\tilde{M})$ equals to
\begin{align*}
&(-\!\!\int_{[(\hat{\cO}_3\cap\hat{\cO}^*_2)/\rG_{\bd_3}]}\!\dim_{\bbC}\Hom_{\cK_2(\cP)}(M,Z)\!+\!\!\int_{[(\hat{\cO}_2^*\cap\hat{\cO}_3)/\rG_{\bd_3}]}\!\dim_{\bbC}\Hom_{\cK_2(\cP)}(Z^*,M)\\
-\!\!&\int_{[(\hat{\cO}_3\cap\hat{\cO}_2^*)/\rG_{\bd_3}]}\!\dim_{\bbC}\Hom_{\cK_2(\cP)}(Z,M)\!+\!\!\int_{[(\hat{\cO}_2^*\cap\hat{\cO}_3)/\rG_{\bd_3}]}\!\dim_{\bbC}\Hom_{\cK_2(\cP)}(M,Z^*))1_{\hat{\cO}_1}(\tilde{M})\\
\overset{(\ref{symmetric bilinear form})}{=}&-\int_{[(\hat{\cO}_2^*\cap\hat{\cO}_3)/\rG_{\bd_3}]}(\tilde{h}_{\bd_1}|\tilde{h}_{\bd_3})1_{\hat{\cO}_1}(\tilde{M})=-\chi([(\hat{\cO}_2^*\cap\hat{\cO}_3)/\rG_{\bd_3}])(\tilde{h}_{\bd_1}|\tilde{h}_{\bd_3})1_{\hat{\cO}_1}(\tilde{M}).
\end{align*}
The other terms in $A_{\tilde{M}}^2$ and $B_{\tilde{M}}$ cancel out in a similar way. It remains to prove $C=0$. Notice that if $\bd_1+\bd_2+\bd_3\not=0$, all $F^{\hat{\cO}_r^*}_{\hat{\cO}_p\hat{\cO}_q}$ appearing in $C$ vanish, then so does $C$. Next, we assume $\bd_1+\bd_2+\bd_3=0$. By definition, we have
\begin{align*}
F^{\hat{\cO}_3^*}_{\hat{\cO}_1\hat{\cO}_2}=&\int_{[\hat{\cO}^*_3/\rG_{\bd_3}]}1_{\hat{\cO}_1}*1_{\hat{\cO}_2}(\tilde{Z}^*)
\overset{(\ref{integration form})}{=}\int_{[\hat{\cO}_3/\rG_{\bd_3}]}\int_{V(\hat{\cO}_1,\hat{\cO}_2;\tilde{Z})}1\\=&\int_{V(\hat{\cO}_1,\hat{\cO}_2;\hat{\cO}_3^*)}1=\chi(V(\hat{\cO}_1,\hat{\cO}_2;\hat{\cO}_3^*)),
\end{align*}
where $V(\hat{\cO}_1,\hat{\cO}_2;\hat{\cO}_3^*)=[W(\hat{\cO}_1,\hat{\cO}_2;\hat{\cO}_3^*)/\rG_{\bd_1}\times \rG_{\bd_2}\times \rG_{\bd_3}]$, and $W(\hat{\cO}_1,\hat{\cO}_2;\hat{\cO}_3^*)$ is a constructible set consisting of distinguished triangles $Y\xrightarrow{f}Z^*\xrightarrow{g}X\xrightarrow{h}Y^*$ for $\tilde{X}\in \hat{\cO}_1,\tilde{Y}\in \hat{\cO}_2,\tilde{Z}\in \hat{\cO}_3$, the group $\rG_{\bd_1}\times \rG_{\bd_2}\times \rG_{\bd_3}$ acts on it via
$$(\alpha,\beta,\gamma).(Y\xrightarrow{f}Z^*\xrightarrow{g}X\xrightarrow{h}Y^*)=(\beta.Y\xrightarrow{\gamma^*f\beta^{-1}}(\gamma.Z)^*\xrightarrow{\alpha g{\gamma^*}^{-1}}\alpha.X\xrightarrow{\beta^*h\alpha^{-1}}(\beta.Y)^*).$$
Similarly, we have $F^{\hat{\cO}_1^*}_{\hat{\cO}_2\hat{\cO}_3}=\chi(V(\hat{\cO}_2,\hat{\cO}_3;\hat{\cO}_1^*))$. The rotation of distinguished triangles
$$(Y\xrightarrow{f}Z^*\xrightarrow{g}X\xrightarrow{h}Y^*)\mapsto (Z\xrightarrow{-g^*}X^*\xrightarrow{-h^*}Y\xrightarrow{f}Z^*)$$
defines an isomorphism $W(\hat{\cO}_1,\hat{\cO}_2;\hat{\cO}_3^*)\cong W(\hat{\cO}_2,\hat{\cO}_3;\hat{\cO}_1^*)$, and then it induces an isomorphism $V(\hat{\cO}_1,\hat{\cO}_2;\hat{\cO}_3^*)\cong V(\hat{\cO}_2,\hat{\cO}_3;\hat{\cO}_1^*)$, and so $F^{\hat{\cO}_3^*}_{\hat{\cO}_1\hat{\cO}_2}=F^{\hat{\cO}_1^*}_{\hat{\cO}_2\hat{\cO}_3}$. Similarly, we have 
\begin{align}\label{F=F}
F^{\hat{\cO}_3^*}_{\hat{\cO}_1\hat{\cO}_2}=F^{\hat{\cO}_1^*}_{\hat{\cO}_2\hat{\cO}_3}=F^{\hat{\cO}_2^*}_{\hat{\cO}_3\hat{\cO}_1},F^{\hat{\cO}_3^*}_{\hat{\cO}_2\hat{\cO}_1}=F^{\hat{\cO}_2^*}_{\hat{\cO}_1\hat{\cO}_3}=F^{\hat{\cO}_1^*}_{\hat{\cO}_3\hat{\cO}_2},
\end{align} 
and so
\begin{align*}
C=F^{\hat{\cO}_3^*}_{\hat{\cO}_1\hat{\cO}_2}(\tilde{h}_{\bd_1+\bd_2}+\tilde{h}_{\bd_1+\bd_3}+\tilde{h}_{\bd_3+\bd_2})-F^{\hat{\cO}_3^*}_{\hat{\cO}_2\hat{\cO}_1}(\tilde{h}_{\bd_1+\bd_2}+\tilde{h}_{\bd_1+\bd_3}+\tilde{h}_{\bd_3+\bd_2})=0,
\end{align*}
as desired. This finishes the proof of (I).

Secondly, we prove 
\begin{align}
[[\tilde{h}_{\bd_1},1_{\hat{\cO}_2}],1_{\hat{\cO}_3}]-[[\tilde{h}_{\bd_1},1_{\hat{\cO}_3}],1_{\hat{\cO}_2}]-[[1_{\hat{\cO}_3},1_{\hat{\cO}_2}],\tilde{h}_{\bd_1}]=0. \tag{II}
\end{align}
By definition, see (\ref{Lie algebra g_2 formula}), the left hand side of (II) equals to 
\begin{align}
(\tilde{h}_{\bd_1}|\tilde{h}_{\bd_2}+\tilde{h}_{\bd_3})[1_{\hat{\cO}_2},1_{\hat{\cO}_3}]-(\tilde{h}_{\bd_1}|\tilde{h}_{\bd_2}+\tilde{h}_{\bd_3})[1_{\hat{\cO}_2},1_{\hat{\cO}_3}]_{\tilde{\fn}}, \tag{II'}
\end{align}
Notice that if $\hat{\cO}_2\cap \hat{\cO}_3^*=\varnothing$, we have $[1_{\hat{\cO}_2},1_{\hat{\cO}_3}]=[1_{\hat{\cO}_2},1_{\hat{\cO}_3}]_{\tilde{\fn}}$; otherwise, we have $\bd_2+\bd_3=0$, and then $\tilde{h}_{\bd_2}+\tilde{h}_{\bd_3}=0$. Hence, (II') always vanishes, as desired. 

Finally,  we prove
\begin{align}
[[\tilde{h}_{\bd_1},\tilde{h}_{\bd_2}],1_{\hat{\cO}_3}]-[[\tilde{h}_{\bd_1},1_{\hat{\cO}_3}],\tilde{h}_{\bd_2}]-[[1_{\hat{\cO}_3},\tilde{h}_{\bd_2}],\tilde{h}_{\bd_1}]&=0, \tag{III}\\
[[\tilde{h}_{\bd_1},\tilde{h}_{\bd_2}],\tilde{h}_{\bd_3}]-[[\tilde{h}_{\bd_1},\tilde{h}_{\bd_3}],\tilde{h}_{\bd_2}]-[[\tilde{h}_{\bd_3},\tilde{h}_{\bd_2}],\tilde{h}_{\bd_1}]&=0. \tag{IV}
\end{align}
By definition, see (\ref{Lie algebra g_2 formula}), they are trivial.
\end{proof}

\section{The isomorphism of two Lie algebras}\label{The isomorphism of two Lie algebras}\

There is a natural functor $\Phi:\cC_2(\cP)\rightarrow \cK_2(\cP)$ which preserves objects and maps morphisms in $\cC_2(\cP)$ to their homotopy classes. In this section, we prove that $\Phi$ together with the isomorphism between Grothendieck groups $\cK(A)\cong K_0$ induces a Lie algebra isomorphism $\varphi:\fg\rightarrow \tilde{\fg}$, where $\fg=\fn\oplus \fh$ is defined in Subsection \ref{Lie algebra spanned by contractible complexes and indecomposable radical complexes} and $\tilde{\fg}=\tilde{\fn}\oplus {\fh}$ is defined in Subsection \ref{Lie algebra spanned by supported-indecomposable function and the Grothendieck group}.

By definition, $\fh$ is a $\bbC$-subspace spanned by $\{h_\alpha|\alpha\in K(\cA)\}$. By Lemma \ref{property of h}, we have $\fh\cong\bbC\otimes_\bbZ K(\cA)$ and 
\begin{equation}\label{[h,h]=0}
[h_\alpha,h_{\alpha'}]=0,
\end{equation}
for any $\alpha,\alpha'\in K(\cA)$, where $K(\cA)$ is the Grothendieck group of $\cA$. By definition, $\tilde{\fh}=\bbC\otimes_\bbZ K_0$ and
\begin{equation}\label{[h,h]'=0}
[\tilde{h}_{\bd},\tilde{h}_{\bd'}]=0,
\end{equation}
for any $\bd,\bd'\in K_0$, where $K_0$ is the Grothendieck group of $\cK_2(\cP)$.

By \cite[Proposition 2.11]{Fu-2012}, there is an isomorphism $K_0\cong K(\cA)$ preserving their symmetric bilinear forms. More precisely, the map
$$X=(X^1,X^0,d^1,d^0)\mapsto \hat{X^0}-\hat{X^1}$$
induces an isomorphism $\kappa^{-1}:K_0\rightarrow K(\cA)$ and a Lie algebra isomorphism 
\begin{align*}
\varphi_{\fh}:\fh&\rightarrow \tilde{\fh}\\
h_{\alpha}&\mapsto \tilde{h}_{\kappa(\alpha)}.
\end{align*}
The symmetric Euler form $(-,-)$ on $K(\cA)$, see (\ref{Euler form and the symmetric Euler form}), can be extended to a bilinear form on $\fh\cong\bbC\otimes_\bbZ K(\cA)$, still denoted by $(-,-)$. Then the isomorphism $\varphi_{\fh}$ preserves bilinear forms $(-,-)$ on $\fh$ and $(-|-)$ on $\tilde{\fh}$, see \ref{symmetric bilinear form}, that is, 
\begin{align}\label{(-,-)=(-,-)}
(h_\alpha,h_{\alpha'})=(\tilde{h}_{\kappa(\alpha)}|\tilde{h}_{\kappa(\alpha')}).
\end{align}

For $\ue\in \bbN I\times \bbN I$, we denote by $P^j=\bigoplus_{i\in I}e^j_iP_i, \alpha=\hat{P}^0-\hat{P}^1\in K(\cA)$ and $\bd=\udim(\ue)\in K_0$, then we have $\bd=\kappa(\alpha)$. Let $\cO\subset \rP_2^{\rad}(A,\ue)$ be a $\rG_{\ue}$-invariant constructible subset consisting of points corresponding to indecomposable radical complexes. The characteristic stack function of the quotient stack $[\cO/\rG_{\ue}]$ and its image in the classical limit are
\begin{align}\label{overline{1}}
&1_{[\cO/\rG_{\ue}]}=[[\cO/\rG_{\ue}]\hookrightarrow \cN^{\rad}\hookrightarrow \cM],\notag\\
&\overline{1}_{[\cO/\rG_{\ue}]}=1\otimes 1_{[\cO/\rG_{\ue}]}\in \fn, 
\end{align}
see Subsection \ref{Lie algebra spanned by contractible complexes and indecomposable radical complexes}. Note that $\cO\subset \rP_2^*(A,\ue)$, thus $\hat{\cO}=\rG_{\bd}.t_{\ue}(\cO)\subset \cP_2(A,\bd)$ is a $\rG_{\bd}$-invariant support-bounded constructible subset consisting of points corresponding to equivalence classes $\tilde{X}=\{X\oplus K_P\oplus K_P^*|P\in \cP\}$ of indecomposable objects in $\cK_2(\cP)$, and so $1_{\hat{\cO}}\in \tilde{\fn}$. Extending the map $\overline{1}_{[\cO/\rG_{\ue}]}\mapsto 1_{\hat{\cO}}$ linearly, we obtain a linear map
$$\varphi_{\fn}:\fn\rightarrow \tilde{\fn}.$$

Hence, we obtain a linear map
\begin{align*}
\varphi=\begin{pmatrix}\varphi_{\fn} &\\ &\varphi_{\fh}\end{pmatrix}:\fg=\fn\oplus \fh\rightarrow \tilde{\fg}=\tilde{\fn}\oplus \tilde{\fh}.
\end{align*}

\begin{theorem}\label{two Lie algebra isomorphism}
The linear map $\varphi:\fg\rightarrow \tilde{\fg}$ is a Lie algebra isomorphism.
\end{theorem}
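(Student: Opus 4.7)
The plan is to split $\varphi$ as the block sum $\varphi_\fh\oplus\varphi_\fn$, verify it is a linear isomorphism on the underlying vector spaces, and then check the identity $\varphi([x,y])=[\varphi(x),\varphi(y)]$ on pairs drawn from the four blocks $\fh\times\fh$, $\fh\times\fn$, $\fn\times\fh$, and $\fn\times\fn$ by matching the explicit formulas (\ref{Lie algebra g formula}) and (\ref{Lie algebra g_2 formula}). The bijectivity of $\varphi_\fh$ comes directly from $\kappa$, and the bijectivity of $\varphi_\fn$ follows from Lemma \ref{decomposition} together with Lemma \ref{isomorphism and homotopy equivalence}: an indecomposable radical complex in $\cC_2(\cP)$ determines, and is determined by, its homotopy equivalence class in $\cK_2(\cP)$, so the characteristic generators $\overline{1}_{[\cO/\rG_\ue]}$ of $\fn$ are in bijection with the support-indecomposable characteristic functions $1_{\hat\cO}$ spanning $\tilde\fn$.

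The brackets on $[\fh,\fh]$ and $[\tilde\fh,\tilde\fh]$ both vanish by (\ref{[h,h]=0}) and (\ref{[h,h]'=0}). On $[\fh,\fn]$ (and by antisymmetry $[\fn,\fh]$), the two formulas give scalar actions with scalars $(\alpha,\hat{P}'^0-\hat{P}'^1)$ and $(\tilde{h}_\bd|\tilde{h}_{\bd'})$, which coincide under $\kappa$ by (\ref{(-,-)=(-,-)}); since $\varphi_\fn(\overline{1}_{[\cO'/\rG_{\ue'}]})=1_{\hat\cO'}$, these blocks match. For the $\fh$-component of $[\fn,\fn]$, the scalars $\chi([(\cO\cap\cO'^*)/\rG_\ue])$ and $\chi([(\hat\cO\cap\hat\cO'^*)/\rG_\bd])$ agree because the natural morphism $[\rP_2^{\rad}(A,\ue)/\rG_\ue]\to[\rP_2(A,\bd)/\rG_\bd]$ is injective on indecomposable points (again by Lemmas \ref{decomposition} and \ref{isomorphism and homotopy equivalence}), while $\varphi_\fh(h_{\hat{P}^0-\hat{P}^1})=\tilde{h}_\bd$.

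The main obstacle is matching the $\fn$-component $[\overline{1}_{[\cO/\rG_\ue]},\overline{1}_{[\cO'/\rG_{\ue'}]}]_\fn$ with $[1_{\hat\cO},1_{\hat\cO'}]_{\tilde\fn}$. I would extract the Bridgeland-side coefficient at an indecomposable-radical orbit $\tilde Z$ from Case $(i=1)$ in the proof of Theorem \ref{Lie algebra g} via the motivic Riedtmann-Peng formula (Proposition \ref{Motivic Riedtmann-Peng} and Remark \ref{special case}); after evaluating the Poincar\'{e} polynomial at $t=-1$ this becomes
\[
g^Z_{\cO\cO'}=\frac{\chi(\Ext^1_{\cC_2(\cP)}(X,Y)_Z)\,\chi(\Aut_{\cC_2(\cP)}(Z))}{\chi(\Hom_{\cC_2(\cP)}(X,Y))\,\chi(\Aut_{\cC_2(\cP)}(X))\,\chi(\Aut_{\cC_2(\cP)}(Y))},
\]
which is the $\bbC$-analogue of the mod $(q-1)$ formula sketched in the introduction. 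The Peng-Xiao coefficient is $F^Z_{\cO\cO'}=\chi(V(\cO,\cO';Z))$ from (\ref{constant F}). I would prove $g^Z_{\cO\cO'}=F^Z_{\cO\cO'}$ in three steps: (a) identify $\Ext^1_{\cC_2(\cP)}(X,Y)_Z\cong\Hom_{\cK_2(\cP)}(X,Y^*)_{Z^*}$ via Lemma \ref{bijection between Ext and Hom}; (b) slice $W(X,Y;Z)$ over $\Hom_{\cK_2(\cP)}(X,Y^*)_{Z^*}$ by the transitive $\Aut_{\cK_2(\cP)}(Z)$-action, check that the stabilizers are bijective to vector spaces, and apply Corollary \ref{fibre naive Euler characteristic} to extract the factor $\chi(\Aut_{\cK_2(\cP)}(Z))$; (c) reduce $\Aut_{\cK_2(\cP)}$ to $\Aut_{\cC_2(\cP)}$ using Corollary \ref{automorphism groups coincide}, whose kernels are affine, and use $\chi(\Hom_{\cC_2(\cP)}(X,Y))=1$ to conclude equality.

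The hard part is step (b) and its counterpart for the $\Aut_{\cK_2(\cP)}(X)\times\Aut_{\cK_2(\cP)}(Y)$-orbits in $W(X,Y;Z)$: one must verify that every relevant stabilizer has the Euler characteristic of a vector space. This is the continuous analogue of the mod $(q-1)$ cancellations in the introduction and is tied to the same free $\bbC^*$-action on $\Ext^1_{\cC_2(\cP)}(X,Y)_Z$ that produced the $(t^2-1)$-divisibility killing the off-diagonal terms in Case $(i=3)$ of the proof of Theorem \ref{Lie algebra g}. Once $g^Z_{\cO\cO'}=F^Z_{\cO\cO'}$ is verified on single orbits, linearity extends it to arbitrary $\rG_\ue$-invariant constructible subsets, and combined with the block-wise matches above this shows that $\varphi$ is a Lie algebra isomorphism.
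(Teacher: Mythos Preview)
Your overall architecture---splitting into the four blocks, matching the $\fh\times\fh$, $\fh\times\fn$ and the $\fh$-part of $\fn\times\fn$ via the identification of bilinear forms, and then isolating the key identity for the $\fn$-part---is exactly the paper's approach, as is the three-step plan (Lemma \ref{bijection between Ext and Hom}, the transitive $\Aut_{\cK_2(\cP)}(Z)$-action on $W(X,Y;Z)_h$ with affine stabilizers, and Corollary \ref{automorphism groups coincide}).

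The gap is in your displayed formula
\[
g^Z_{\cO\cO'}=\frac{\chi(\Ext^1_{\cC_2(\cP)}(X,Y)_Z)\,\chi(\Aut_{\cC_2(\cP)}(Z))}{\chi(\Hom_{\cC_2(\cP)}(X,Y))\,\chi(\Aut_{\cC_2(\cP)}(X))\,\chi(\Aut_{\cC_2(\cP)}(Y))}.
\]
For an indecomposable radical $X$ one has $\Aut_{\cC_2(\cP)}(X)\cong\bbC^*\ltimes(1+\rad\End_{\cC_2(\cP)}(X))$ by Lemma \ref{Upsilon(Aut)}, so $\chi(\Aut_{\cC_2(\cP)}(X))=0$; the same holds for $Y$ and $Z$, and your expression is $0/0$. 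You cannot first separate numerator and denominator in $\Upsilon$ and then set $t=-1$. The paper never writes such a ratio: it keeps the Bridgeland-side coefficient as the \emph{naive} Euler characteristic of a quotient stack, namely
\[
\sum_{m\in M}\sum_{P,Q\in\cP}\chi\bigl([(V_m\times E^1_m)_{Z_r\oplus K_P\oplus K_Q^*}\times\Aut_{\cC_2(\cP)}(Z_r)/G_m]\bigr),
\]
with $\chi^{\mathrm{na}}$ computed via Rosenlicht stratification (Subsection \ref{naive Euler characteristic}), and then builds an explicit morphism $\overline{\zeta}$ from this stack to $[W(\cO,\cO';Z_r)/\rG_{\ue}\times\rG_{\ue'}]$ whose fibers are products of vector spaces with the stabilizers you call ``bijective to vector spaces'' (the paper's $(***)$ and $(***\ *)$), hence of Euler characteristic $1$; Corollary \ref{fibre naive Euler characteristic} then gives (\ref{g=F}). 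Note also the sum over $P,Q$: the middle term of an extension of two indecomposable radicals need not be radical, so one must collect all $Z_r\oplus K_P\oplus K_Q^*$ mapping to the same $\tilde Z$. If you rewrite your step to compare quotient stacks directly rather than ratios of Euler characteristics, your argument becomes the paper's.
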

\begin{proof}
Firstly, we prove that $\varphi$ is a Lie algebra homomorphism. For any $h_\beta,h_{\beta'}\in \fh$, where $\beta,\beta'\in K(\cA)$, we have 
$$\varphi([h_\beta,h_{\beta'}])\overset{(\ref{[h,h]=0})}{=}0\overset{(\ref{[h,h]'=0})}{=}[\tilde{h}_{\kappa(\beta)},\tilde{h}_{\kappa(\beta')}]=[\varphi(h_\beta),\varphi(h_{\beta'})].$$
For any $\ue\in \bbN I\times \bbN I$, we denote by $P^j=\bigoplus_{i\in I}e^j_iP_i$ and $\alpha=\hat{P}^0-\hat{P}^1\in K(\cA)$. For any $\rG_{\ue}$-invariant constructible subset $\cO\subset \rP_2^{\rad}(A,\ue)$ which consists of points corresponding to indecomposable radical complexes, by Lemma \ref{property of h}, we have
\begin{align*}
&\varphi([h_\beta,\overline{1}_{[\cO/\rG_{\ue}]}])\overset{\ref{property of h}}{=}\varphi((\beta,\alpha)\overline{1}_{[\cO/\rG_{\ue}]})=(\beta,\alpha)1_{\hat{\cO}}\\
\overset{(\ref{(-,-)=(-,-)})}{=}&(\tilde{h}_{\kappa(\beta)}|\tilde{h}_{\kappa(\alpha)})1_{\hat{\cO}}\overset{(\ref{Lie algebra g_2 formula})}{=}[\tilde{h}_{\kappa(\beta)},1_{\hat{\cO}}]=[\varphi(h_\beta),\varphi(\overline{1}_{[\cO/\rG_{\ue}]})].
\end{align*} 
Moreover, for any other $\ue'\in \bbN I\times \bbN I$, we denote by $P'^j=\bigoplus_{i\in I}e'^j_iP_i$ and $\alpha'=\hat{P}'^0-\hat{P}'^1\in K(\cA)$. For any $\rG_{\ue'}$-invariant constructible subset $\cO'\subset \rP_2^{\rad}(A,\ue')$ which consists of points corresponding to indecomposable radical complexes, we have
\begin{align*}
&\varphi([\overline{1}_{[\cO/\rG_{\ue}]},\overline{1}_{[\cO'/\rG_{\ue'}]}])\overset{(\ref{Lie algebra g formula})}{=}\varphi([\overline{1}_{[\cO/\rG_{\ue}]},\overline{1}_{[\cO'/\rG_{\ue'}]}]_{\fn}-\chi((\cO\cap\cO'^*)/\rG_{\ue})h_\alpha)\\
=&\varphi([\overline{1}_{[\cO/\rG_{\ue}]},\overline{1}_{[\cO'/\rG_{\ue'}]}]_{\fn})-\chi([(\cO\cap\cO'^*)/\rG_{\ue}])\tilde{h}_{\kappa(\alpha)},\\
&[\varphi(\overline{1}_{[\cO/\rG_{\ue}]}),\varphi(\overline{1}_{[\cO'/\rG_{\ue'}]})]=[1_{\hat{\cO}},1_{\hat{\cO}'}]
\overset{(\ref{Lie algebra g formula})}{=}[1_{\hat{\cO}},1_{\hat{\cO}'}]_{\tilde{\fn}}-\chi([(\hat{\cO}\cap\hat{\cO}'^*)/\rG_{\kappa(\alpha)}])\tilde{h}_{\kappa(\alpha)}.
\end{align*}
By definition, it is clear that $\chi([(\cO\cap\cO'^*)/\rG_{\ue}])=\chi([(\hat{\cO}\cap\hat{\cO}'^*)/\rG_{\kappa(\alpha)}])$, and so 
\begin{align*}
&\varphi([\overline{1}_{[\cO/\rG_{\ue}]},\overline{1}_{[\cO'/\rG_{\ue'}]}])=[\varphi(\overline{1}_{[\cO/\rG_{\ue}]}),\varphi(\overline{1}_{[\cO'/\rG_{\ue'}]})]\\
\Leftrightarrow &\varphi([\overline{1}_{[\cO/\rG_{\ue}]},\overline{1}_{[\cO'/\rG_{\ue'}]}]_{\fn})=[1_{\hat{\cO}},1_{\hat{\cO}'}]_{\tilde{\fn}},
\end{align*}
where $\varphi([\overline{1}_{[\cO/\rG_{\ue}]},\overline{1}_{[\cO'/\rG_{\ue'}]}]_{\fn})$ and $[1_{\hat{\cO}},1_{\hat{\cO}'}]_{\tilde{\fn}}$ are $\rG_{\kappa(\alpha)+\kappa(\alpha')}$-invariant support-bounded constructible functions on $\rP_2(A,\kappa(\alpha)+\kappa(\alpha'))$. By Theorem \ref{Lie algebra g} and Lemma \ref{Lie bracket supported indecomposable}, both of them are support-indecomposable, so it remains to prove 
\begin{equation}\label{varphi(Z)=Z}
\varphi([\overline{1}_{[\cO/\rG_{\ue}]},\overline{1}_{[\cO'/\rG_{\ue'}]}]_{\fn})(\tilde{Z})=[1_{\hat{\cO}},1_{\hat{\cO}'}]_{\tilde{\fn}}(\tilde{Z}),
\end{equation}
where $\tilde{Z}=\{Z_r\oplus K_P\oplus K_P^*|P\in \cP\}$ and $Z_r$ is an indecomposable radical complex. It is enough to prove
\begin{equation}\label{varphi(Z)=F^Z}
\varphi(\overline{1}_{[\cO/\rG_{\ue}]}*\overline{1}_{[\cO'/\rG_{\ue'}]})(\tilde{Z})=1_{\hat{\cO}}*1_{\hat{\cO}'}(\tilde{Z}).
\end{equation}
Once it holds, we have $\varphi(\overline{1}_{[\cO'/\rG_{\ue}]}*\overline{1}_{[\cO/\rG_{\ue}]})(\tilde{Z})=F^{Z_r}_{\cO'\cO}$ similarly, and then (\ref{varphi(Z)=Z}) follows. 

On the one hand, by \textbf{Case $(i=1)$'} in the proof of Theorem \ref{Lie algebra g}, relation (iii) in Definition \ref{relation3}, and $\Upsilon|_{t=-1}=\chi$, the left hand side of (\ref{varphi(Z)=F^Z}) is equal to 
\begin{equation}
\begin{aligned}\label{left constant}
&\sum_{m\in M}\sum_{P,Q\in \cP}\chi([(V_m\times E^1_m)_{Z_r\oplus K_P\oplus K_Q^*}\times \Aut_{\cC_2(\cP)}(Z_r)/G_m\ltimes E^0_m])\\
=&\sum_{m\in M}\sum_{P,Q\in \cP}\chi([(V_m\times E^1_m)_{Z_r\oplus K_P\oplus K_Q^*}\times \Aut_{\cC_2(\cP)}(Z_r)/G_m])
\end{aligned}
\end{equation}
where $(V_m\times E^1_m)_{Z_r\oplus K_P\oplus K_Q^*}\!\subset\!\! (V_m\times E^1_m)_{1,P,Q}$ is a constructible subset consisting of $(v,\xi)$ such that if $v\in V_m\subset \cO\times \cO'$ corresponds to indecomposable radical complexes $X,Y$, then $\Stab_{G_m}(v)\cong \Aut_{\cC_2(\cP)}(X)\times \Aut_{\cC_2(\cP)}(Y)$, the dimensions of $E^1_m\cong \Ext^1_{\cC_2(\cA)}(X,Y)$, $E^0_m\cong \Hom_{\cC_2(\cA)}(X,Y)$ are constant, and the middle term of $\xi\in E^1_m\!\cong\! \Ext^1_{\cC_2(\cA)}(X,Y)$ is isomorphic to $Z_r\oplus K_P\oplus K_Q^*$. Since $E^0_m$ acts trivially, see Proposition \ref{Motivic Riedtmann-Peng}, we can omit it and the condition that the dimension of $E^0_m$ is constant. On the other hand, by definition, see (\ref{constant F}), the right hand side of (\ref{varphi(Z)=F^Z}) is equal to 
\begin{equation}\label{right constant}
F^{Z_r}_{\cO'\cO''}=\chi([W(\cO,\cO';Z_r)/\rG_{\ue}\times \rG_{\ue'}]).
\end{equation}
Define a constructible set and divide it into the disjoint union of constructible subsets
\begin{align*}
\Hom_{\cK_2(\cP)}(\cO,\cO')_{Z_r^*}=&\{(X,Y,h)|X\in \cO,Y\in \cO',h\in \Hom_{\cK_2(\cP)}(X,Y^*)_{Z_r^*}\}\\
=&\bigsqcup_{n\in N}\Hom_{\cK_2(\cP)}(\cO,\cO')_{Z_r^*,n},
\end{align*}
where $\Hom_{\cK_2(\cP)}(\cO,\cO')_{Z_r^*}$ is a bundle over $\cO\times \cO'$ whose fiber at $(X,Y)\in \cO\times \cO'$ is a constructible subset $\Hom_{\cK_2(\cP)}(X,Y^*)_{Z_r^*}\subset\Hom_{\cK_2(\cP)}(X,Y^*)$, and each subset $\Hom_{\cK_2(\cP)}(\cO,\cO')_{Z_r^*,n}$ consists of $(X,Y,h)$ such that the dimension of $\Hom_{\cK_2(\cP)}(X,Y^*)$ is constant. Consider the natural surjective projection
\begin{align*}
\pi:W(\cO,\cO';Z_r)&\rightarrow \Hom_{\cK_2(\cP)}(\cO,\cO')_{Z_r^*}\\
(Y\xrightarrow{f}Z_r\xrightarrow{g}X\xrightarrow{h}Y^*)&\mapsto (X,Y,h),
\end{align*}
we denote by $W(X,Y;Z_r)_h$ its fiber at $(X,Y,h)$. Note that $W(X,Y;Z_r)_h$ is a subset of $\Hom_{\cK_2(\cP)}(Y,Z_r)\times \Hom_{\cK_2(\cP)}(Z_r\times X)$, and $\Aut_{\cK_2(\cP)}(Z_r)$ acts on it via
$$\gamma.(f,g)=(\gamma f,g\gamma^{-1}).$$
Indeed, there is a commutative diagram 
\begin{diagram}[midshaft,size=2em]
Y &\rTo^{f} &Z_r &\rTo^{g} &X &\rTo^{h} &Y^*\\
\vEq & &\dTo^{\gamma} & &\vEq & &\vEq\\
Y &\rTo^{\gamma f} &Z_r &\rTo^{g\gamma^{-1}} &X &\rTo^{h} &Y^*.
\end{diagram}
By the axiom (TR3), the action is transitive, and so there is a unique orbit
$$W(X,Y;Z_r)_h=\Aut_{\cK_2(\cP)}(Z_r).(f,g)\cong \Aut_{\cK_2(\cP)}(Z_r)/\Stab_{\Aut_{\cK_2(\cP)}(Z_r)}(f,g),$$
where $\Stab_{\Aut_{\cK_2(\cP)}(Z_r)}(f,g)=\{\gamma\in \Aut_{\cK_2(\cP)}(Z_r)|\gamma f=f,g\gamma^{-1}=g\}$ is the stabilizer. There is a natural isomorphism
\begin{align*}
\theta:\Stab_{\Aut_{\cK_2(\cP)}(Z_r)}(f,g)\rightarrow &S(f,g)\\
=&\{\gamma-1_{Z_r}\in \End_{\cK_2(\cP)}(Z_r)|\gamma\in \Stab_{\Aut_{\cK_2(\cP)}(Z_r)}(f,g)\}.
\end{align*}
Next, we prove that 
\begin{equation}
S(f,g)\subset \End_{\cK_2(\cP)}(Z_r)\ \textrm{is a vector subspace}. \tag{$***$}
\end{equation}
It is clear that $0\in S(f,g)$. For any $\gamma_1,\gamma_2\in \Stab_{\Aut_{\cK_2(\cP)}(Z_r)}(f,g)$ and $k\in \bbC$, consider the element
$$\gamma=1_{Z_r}+(\gamma_1-1_{Z_r})+k(\gamma_2-1_{Z_r}),$$
it is easy to check that $\gamma f=f,g\gamma=\gamma$. Since $Z_r$ is indecomposable as an object in $\cK_2(\cP)$, the endomorphisms $\gamma_1-1_{Z_r},\gamma_2-1_{Z_r}$ are either homotopy equivalences or nilpotent morphisms. If $\gamma_1-1_{Z_r}$ is a homotopy equivalence, then by $(\gamma_1-1_{Z_r})f=g(\gamma_1-1_{Z_r})=0$, we have $f=g=0$, and so $X\simeq Y^*,Z_r\simeq 0$, a contradiction. Thus $\gamma_1-1_{Z_r}$ is nilpotent, similarly, $\gamma_2-1_{Z_r}$ is nilpotent, and so is $(\gamma_1-1_{Z_r})+k(\gamma_2-1_{Z_r})$. Hence $\gamma\in \Aut_{\cK_2(\cP)}(Z_r)$ and finish the proof of ($***$). Thus we can regard $W(\cO,\cO';Z_r)$ as a bundle on $\Hom_{\cK_2(\cP)}(\cO,\cO')_{Z_r^*}$, whose fibers are isomorphic to quotients of $\Aut_{\cK_2(\cP)}(Z_r)$ by subgroups isomorphic to vector subspaces. 

Now, we can relate (\ref{left constant}) and (\ref{right constant}) as follows. 

\textbf{Step $1$.} Recall that there is a bijection $\Ext^1_{\cC_2(\cA)}(X,Y)\cong \Hom_{\cK_2(\cP)}(X,Y^*)$, see \cite[Lemma 3.3]{Bridgeland-2013}. Moreover, by Lemma \ref{bijection between Ext and Hom}, if $\Ext^1_{\cC_2(\cA)}(X,Y)_{Z_r\oplus K_P\oplus K_Q^*}\not=\varnothing$, then there is a bijection $\Ext^1_{\cC_2(\cA)}(X,Y)_{Z_r\oplus K_P\oplus K_Q^*}\cong \Hom_{\cK_2(\cP)}(X,Y^*)_{Z_r^*}$. Thus we can take $M=N$ such that for each $m\in M$, there is an isomorphism
$$\zeta_0:\bigsqcup_{P,Q\in \cP}(V_m\times E^1_m)_{Z_r\oplus K_P\oplus K_Q^*}\xrightarrow{\cong} \Hom_{\cK_2(\cP)}(\cO,\cO')_{Z_r^*,m},$$
and there is a commutative diagram
\begin{diagram}[midshaft]
\bigsqcup_{m\in M}\bigsqcup_{P,Q\in \cP}(V_m\times E^1_m)_{Z_r\oplus K_P\oplus K_Q^*}\times \Aut_{\cC_2(\cP)}(Z_r) &\rTo^{\pi'} &\bigsqcup_{m\in M}\bigsqcup_{P,Q\in \cP}(V_m\times E^1_m)_{Z_r\oplus K_P\oplus K_Q^*}\\
\dDashto^{\zeta} & &\dTo^{\zeta_0}_{\cong}\\
W(\cO,\cO';Z_r) &\rTo^{\pi} &\Hom_{\cK_2(\cP)}(\cO,\cO')_{Z_r^*},
\end{diagram}
where $\pi'$ is the natural projection which is a principal $\Aut_{\cC_2(\cP)}(Z_r)$-bundle, and 
$$\zeta((v,\xi),\gamma)=(\zeta_0(v,\xi),\overline{p(\gamma)})$$
for any $(v,\xi)\in \bigsqcup_{m\in M}\bigsqcup_{P,Q\in \cP}(V_m\times E^1_m)_{Z_r\oplus K_P\oplus K_Q^*}$ and $\gamma\in \Aut_{\cC_2(\cP)}(Z_r)$, where $p(\gamma)\in \Aut_{\cK_2(\cP)}(Z_r)$ via the homomorphism $p:\Aut_{\cC_2(\cP)}(Z_r)\twoheadrightarrow \Aut_{\cK_2(\cP)}(Z_r)$ in Corollary \ref{automorphism groups coincide}, and $\overline{p(\gamma)}$ is the image of $p(\gamma)$ in the quotient of $\Aut_{\cK_2(\cP)}(Z_r)$ by a subgroup isomorphic to a vector space. Indeed, the quotient is isomorphic to the fiber $\pi^{-1}(\zeta_0(v,\xi))$. Since any fiber of $p$ is also isomorphic to the vector space $\Htp(Z_r,Z_r)$, the fiber of $\zeta$ is isomorphic to a vector space.

\textbf{Step $2$.} It is clear that $G_m\cong \rG_{\ue}\times \rG_{\ue'}$ for any $m\in M$. Moreover, $\zeta$ induces a morphism $G_m.((v,\xi),\gamma)\mapsto \rG_{\ue}\times \rG_{\ue}.\zeta(((v,\xi),\gamma))$, and then induces a morphism
$$\overline{\zeta}:\bigsqcup_{m\in M}\bigsqcup_{P,Q\in \cP}[(V_m\times E^1_m)_{Z_r\oplus K_P\oplus K_Q^*}\times \Aut_{\cC_2(\cP)}(Z_r)/G_m]\rightarrow [W(\cO,\cO';Z_r)/\rG_{\ue}\times \rG_{\ue'}].$$

\textbf{Step $3$.} By Proposition \ref{Motivic Riedtmann-Peng}, the  $\Stab_{G_m}(v)$-action on $E^1_m$ coincide with the $\Aut_{\cC_2(\cP)}(X)\times \Aut_{\cC_2(\cP)}(Y)$-action on $\Ext^1_{\cC_2(\cA)}(X,Y)$ which is free, and so $G_m$ acts freely on $\bigsqcup_{m\in M}\bigsqcup_{P,Q\in \cP}(V_m\times E^1_m)_{Z_r\oplus K_P\oplus K_Q^*}\times \Aut_{\cC_2(\cP)}(Z_r)$. The fiber of $\overline{\zeta}$ at the geometric point corresponding to the orbit $\rG_{\ue}\times\rG_{\ue'}.(Y\xrightarrow{f}Z_r\xrightarrow{g}X\xrightarrow{h}Y^*)$ is isomorphic to the product of a vector space (contributed by the fiber of $\zeta$) and the stabilizer
\begin{align*}
\Stab(f,g,h)\!=&\{(\alpha,\beta)\in\! \rG_{\ue}\times \rG_{\ue'}|\alpha.X=X,\beta.Y=Y,f\beta^{-1}=f,\alpha g=g, \beta^*h\alpha^{-1}=h\}\\
=&\{(\alpha,\beta)\in \!\Aut_{\cC_2(\cP)}(X)\times \Aut_{\cC_2(\cP)}(Y)|f\beta^{-1}=f,\alpha g=g, \beta^*h\alpha^{-1}=h\},
\end{align*}
where we use the fact that $\alpha.X=X$ and $\beta.Y=Y$ imply that $\alpha\in \Stab_{\rG_{\ue}}(X)\cong \Aut_{\cC_2(\cP)}(X)$ and $\beta\in\Stab_{\rG_{\ue'}}(Y)\cong \Aut_{\cC_2(\cP)}(Y)$. Note that the conditions that $f\beta^{-1}=f,\alpha g=g, \beta^*h\alpha^{-1}=h$ are about morphisms in $\cK_2(\cP)$, then by Corollary \ref{automorphism groups coincide}, there is a surjective group homomorphism
\begin{align*}
p:&\Stab(f,g,h)\twoheadrightarrow\\
&\overline{\Stab}(f,g,h)\!=\!\{(\alpha,\beta)\in \!\Aut_{\cK_2(\cP)}(X)\!\times\!\Aut_{\cK_2(\cP)}(Y)|f\beta^{-1}\!=\! f,\alpha g=\!g, \beta^*h\alpha^{-1}=\!h\}
\end{align*}
with kernel which is bijective to the vector space $\Htp(X,X)\oplus \Htp(Y,Y)$. Applying Corollary \ref{fibre naive Euler characteristic}, we obtain
$$\chi(\Stab(f,g,h))=\chi(\overline{\Stab}(f,g,h)).$$
There is a natural bijection from $\overline{\Stab}(f,g,h)$ to
$$
\overline{S}(f,g,h)=\{(\alpha-1_X,\beta-1_Y)\in \End_{\cK_2(\cP)}(X)\times \End_{\cK_2(\cP)}(Y)|(\alpha,\beta)\in \overline{\Stab}(f,g,h)\}.
$$
Next, we prove that
\begin{equation}
\overline{S}(f,g,h)\subset \End_{\cK_2(\cP)}(X)\times \End_{\cK_2(\cP)}(Y)\ \textrm{is a vector subspace.} \tag{$***\ *$}
\end{equation}
It is clear that $(0,0)\in \overline{S}(f,g,h)$. For any $(\alpha_1,\beta_1),(\alpha_2,\beta_2)\in \overline{\Stab}(f,g,h)$ and $k\in \bbC$, consider the element
$$(\alpha,\beta)=(1_X,1_Y)+(\alpha_1-1_X,\beta_1-1_Y)+k(\alpha_2-1_X,\beta_2-1_Y),$$
it is easy to check that $f=f\beta,\alpha g=g, \beta^*h=h\alpha$. Since $X,Y$ are indecomposable as objects in $\cK_2(\cP)$, the endomorphisms $\alpha_1-1_X,\beta_1-1_Y$ are either homotopy equivalences or nilpotent morphisms. If $\alpha_1-1_X$ is a homotopy equivalence, then $(\alpha_1-1_X)g=0$ implies $g=0$, and then $Y\simeq X^*\oplus Z_r$, a contradiction. If $\beta_1-1_Y$ is a homotopy equivalence, then $f(\beta_1-1_Y)=0$ implies $f=0$, and then $X\simeq Y^*\oplus Z_r$, a contradiction. Thus $\alpha_1-1_X,\beta_1-1_Y$ are nilpotent, similarly, $\alpha_2-1_X,\beta_2-1_Y$ are nilpotent, and then so is $(\alpha_1-1_X,\beta_1-1_Y)+k(\alpha_2-1_X,\beta_2-1_Y)$. Hence $(\alpha,\beta)\in \Aut_{\cK_2(\cP)}(X)\times\Aut_{\cK_2(\cP)}(Y)$ and finish the proof of ($***\ *$). So 
$$\chi(\overline{\Stab}(f,g,h))=\chi(\overline{S}(f,g,h))=1.$$
Hence, any fiber of $\overline{\zeta}$ has the same naive Euler characteristic $1$. Applying Corollary \ref{fibre naive Euler characteristic} for $\overline{\zeta}$, we obtain 
\begin{equation}
\begin{aligned}\label{g=F}
&\sum_{m\in M}\sum_{P,Q\in \cP}\chi([(V_m\times E^1_m)_{Z_r\oplus K_P\oplus K_Q^*}\times \Aut_{\cC_2(\cP)}(Z_r)/G_m])\\
=&\chi([W(\cO,\cO';Z_r)/\rG_{\ue}\times \rG_{\ue'}]),
\end{aligned}
\end{equation}
and finish the proof of (\ref{varphi(Z)=F^Z}). Therefore, $\varphi:\fg\rightarrow \tilde{\fg}$ is a Lie algebra homomorphism.

Finally, we have already known that $\varphi_{\fh}:\fh\rightarrow \tilde{\fh}$ is an isomorphism. Note that an object $X\in \cK_2(\cP)$ is indecomposable if and only if its radical part $X_r\in \cC_2(\cP)$ is an indecomposable radical complex, and so $\varphi_{\fn}:\fh\rightarrow \tilde{\fn}$ is also an isomorphism. Therefore, $\varphi:\fg\rightarrow \tilde{\fg}$ is an isomorphism.
\end{proof}

\begin{proposition}
(a) The symmetric bilinear form $(-|-)$ on $\tilde{\fh}$ can be extended to a symmetric bilinear form $(-\mid-)$ on $\tilde{\fg}$ such that\\
(i) $(-|-)$ is invariant, that is, 
\begin{align*}
([\tilde{h}_\bd,1_{\hat{\cO}_1}]\mid1_{\hat{\cO}_2})&=(\tilde{h}_{\bd}\mid[1_{\hat{\cO}_1},1_{\hat{\cO}_2}]),\\
([1_{\hat{\cO}_1},1_{\hat{\cO}_2}]\mid1_{\hat{\cO}_3})&=(1_{\hat{\cO}_1}\mid[1_{\hat{\cO}_2},1_{\hat{\cO}_3}]);
\end{align*}
(ii) $(1_{\hat{\cO}_1}|1_{\hat{\cO}_2})=-\chi([(\hat{\cO}_1\cap\hat{\cO}_2^*)/\rG_{\bd_1}]);$\\
(iii) $(\tilde{\fh}\mid\tilde{\fn})=0$;\\
(iv) $(-|-)|_{\tilde{\fn}\times \tilde{\fn}}$ is non-degenerate\\
for any $\bd\in K_0$ and $\rG_{\bd_i}$-invariant support-bounded constructible subset $\hat{\cO}_i\subset \rP_2(A,\bd_i)$ consisting of points corresponding to equivalence classes of indecomposable objects in $\cK_2(\cP)$, $i=1,2,3$.\\
(b) The symmetric Euler form $(-,-)$ on $\fh\cong \bbC\otimes K(\cA)$ can be extended to a symmetric bilinear form $(-,-)$ on $\fg$ such that\\
(i) $(-,-)$ is invariant, that is, 
\begin{align*}
([h_\alpha, \overline{1}_{[\cO_1/\rG_{\ue_1}]}],\overline{1}_{[\cO_2/\rG_{\ue_2}]})&=(h_\alpha,[\overline{1}_{[\cO_1/\rG_{\ue_1}]},\overline{1}_{[\cO_2/\rG_{\ue_2}]}]),\\
([\overline{1}_{[\cO_1/\rG_{\ue_1}]},\overline{1}_{[\cO_2/\rG_{\ue_2}]}],\overline{1}_{[\cO_3/\rG_{\ue_3}]})&=(\overline{1}_{[\cO_1/\rG_{\ue_1}]},[\overline{1}_{[\cO_2/\rG_{\ue_2}]},\overline{1}_{[\cO_3/\rG_{\ue_3}]}]);
\end{align*}
(ii) $(\overline{1}_{[\cO_1/\rG_{\ue_1}]},\overline{1}_{[\cO_2/\rG_{\ue_2}]})=-\chi([(\cO_1\cap\cO_2^*)/\rG_{\ue_1}]);$\\
(iii) $(\fh,\fn)=0$;\\
(iv) $(-,-)|_{\fn\times \fn}$ is non-degenerate\\
for any $\alpha\in K(\cA)$ and $\rG_{\ue_i}$-invariant constructible subset $\cO_i\subset \rP_2^{\rad}(A,\ue_i)$ consisting of points corresponding to indecomposable radical complexes, $i=1,2,3$.\\
(c) The Lie algebra isomorphism $\varphi:\fg\rightarrow \tilde{\fg}$ is compatible with these two symmetric bilinear forms, that is, $(x,y)=(\varphi(x)\mid\varphi(y))$ for any $x,y\in \fg$.
\end{proposition}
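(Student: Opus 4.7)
The plan is to construct the form on $\tilde\fg$ first, establishing (a); then transport along the Lie algebra isomorphism $\varphi$ of Theorem~\ref{two Lie algebra isomorphism} to obtain the form on $\fg$ and the compatibility (c) simultaneously, from which (b) is immediate; non-degeneracy of the $\fn$-blocks is checked last by an orbit-basis argument. I would define $(-\mid-)$ on $\tilde\fg$ by extending the given form on $\tilde\fh$, declaring $(\tilde\fh\mid\tilde\fn)=0$, and setting $(1_{\hat\cO_1}\mid 1_{\hat\cO_2})=-\chi([(\hat\cO_1\cap\hat\cO_2^*)/\rG_{\bd_1}])$, extended bilinearly so that for $\hat f\in\tilde\fn_{\bd},\,\hat g\in\tilde\fn_{-\bd}$,
$$(\hat f\mid\hat g)=-\!\int_{[\rP_2(A,\bd)/\rG_{\bd}]}\!\!\hat f(\tilde X)\,\hat g(\tilde X^*).$$
Symmetry of the $\tilde\fn$-block is immediate from the involutive isomorphism $(-)^*:[\rP_2(A,\bd)/\rG_{\bd}]\xrightarrow{\cong}[\rP_2(A,-\bd)/\rG_{-\bd}]$; symmetry of the $\tilde\fh$-block is already given. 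This yields (a)(ii) and (a)(iii) by construction. The parallel form $(-,-)$ on $\fg$ is defined using the symmetric Euler form on $\fh\cong\bbC\otimes_{\bbZ}K(\cA)$, $(\fh,\fn)=0$, and $(\overline{1}_{[\cO_1/\rG_{\ue_1}]},\overline{1}_{[\cO_2/\rG_{\ue_2}]})=-\chi([(\cO_1\cap\cO_2^*)/\rG_{\ue_1}])$.

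For the invariance axiom (a)(i), the case with a $\tilde\fh$-factor is a short direct computation: by the $\tilde\fh$-component of~(\ref{Lie algebra g_2 formula}) together with $\tilde\fh\perp\tilde\fn$, both $([\tilde h_{\bd},1_{\hat\cO_1}]\mid 1_{\hat\cO_2})$ and $(\tilde h_{\bd}\mid[1_{\hat\cO_1},1_{\hat\cO_2}])$ reduce to $(\tilde h_{\bd}\mid\tilde h_{\bd_1})\cdot(1_{\hat\cO_1}\mid 1_{\hat\cO_2})$. The triple-$\tilde\fn$ case $([1_{\hat\cO_1},1_{\hat\cO_2}]\mid 1_{\hat\cO_3})=(1_{\hat\cO_1}\mid[1_{\hat\cO_2},1_{\hat\cO_3}])$ collapses via (iii) to the equality of bracket pairings inside $\tilde\fn$, and I would derive this from the more basic associativity of the convolution under the pairing,
$$(\hat f*\hat g\mid\hat h)=(\hat f\mid\hat g*\hat h).$$
Unpacking both sides through~(\ref{integration form}) and definition~(\ref{constant F}) gives $-F^{\hat\cO_3^*}_{\hat\cO_1\hat\cO_2}$ and $-F^{\hat\cO_1^*}_{\hat\cO_2\hat\cO_3}$ respectively for characteristic-function arguments (when $\bd_1+\bd_2+\bd_3=0$; both sides vanish otherwise), and the cyclic identity $F^{\hat\cO_3^*}_{\hat\cO_1\hat\cO_2}=F^{\hat\cO_1^*}_{\hat\cO_2\hat\cO_3}$ established by rotation of distinguished triangles in~(\ref{F=F}) closes the argument; antisymmetrizing in the first two arguments yields the bracket invariance. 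This rotation-of-triangles step is the only substantive obstacle, but it is essentially the same ingredient already used in the Jacobi proof.

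Finally, I would verify the compatibility (c) generator by generator: on $\fh$, the identity $(h_\alpha,h_{\alpha'})=(\tilde h_{\kappa(\alpha)}\mid\tilde h_{\kappa(\alpha')})$ is precisely~(\ref{(-,-)=(-,-)}); on $\fn$, the natural embedding $t_{\ue}:\rP_2^{\rad}(A,\ue)\hookrightarrow\rP_2(A,\bd)$, combined with Lemma~\ref{isomorphism and homotopy equivalence}, furnishes a bijection of $\rG_{\ue}$-orbits in $\cO_1\cap\cO_2^*$ with $\rG_{\bd_1}$-orbits in $\hat\cO_1\cap\hat\cO_2^*$, so the two naive Euler characteristics coincide orbit-by-orbit (a single orbit contributes $1$ on either side). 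Hence $\varphi$ preserves both pairings, and (b) follows at once by transport of (a) along $\varphi$. For non-degeneracy (a)(iv) and (b)(iv), given any nonzero $\hat f\in\tilde\fn$ pick $\tilde X$ with $\hat f(\tilde X)\ne 0$; then the integration formula gives $(\hat f\mid 1_{\rG_{-\bd}.\tilde X^*})=-\hat f(\tilde X)\cdot\chi([\rG_{\bd}.\tilde X/\rG_{\bd}])=-\hat f(\tilde X)\ne 0$, since a single $\rG_{\bd}$-orbit has naive Euler characteristic $1$. The analogous pairing against $-\overline{1}_{[\cO_{[X^*]}/\rG_{\ue}]}$ establishes non-degeneracy on $\fn$, alternatively obtained by transport through $\varphi_\fn^{-1}$.
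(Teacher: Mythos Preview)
Your proposal is correct and follows essentially the same approach as the paper: define the form via (ii)--(iii), handle the $\tilde\fh$-mixed invariance directly from~(\ref{Lie algebra g_2 formula}), reduce the triple-$\tilde\fn$ invariance to the rotation identity~(\ref{F=F}), and verify non-degeneracy by pairing against a single-orbit characteristic function. The only organizational difference is that you obtain (b) by transporting (a) along $\varphi$ after checking (c) on generators, whereas the paper simply says ``(b) can be proved by similar argument, and then (c) is clear''; your route is a legitimate shortcut that avoids rerunning the triangle-rotation computation inside $\fg$.
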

\begin{proof}
(a) We only need to prove the symmetric bilinear form extended linearly by (ii) and (iii) satisfies (i) and (iv). By definition, see (\ref{Lie algebra g_2 formula}), we have
\begin{align*}
([\tilde{h}_{\bd}, 1_{\hat{\cO}_1}]\mid 1_{\hat{\cO}_2})
=&(\tilde{h}_{\bd}\mid\tilde{h}_{\bd_1})(1_{\hat{\cO}_1}\mid 1_{\hat{\cO}_2})\\
=&-(\tilde{h}_{\bd}\mid\tilde{h}_{\bd_1})\chi([(\hat{\cO}_1\cap\hat{\cO}_2^*)/\rG_{\bd_1}]),\\
(\tilde{h}_{\bd}\mid[1_{\hat{\cO}_1},1_{\hat{\cO}_2}])
=&(\tilde{h}_{\bd}\mid[1_{\hat{\cO}_1},1_{\hat{\cO}_2}]_{\tilde{\fn}}-\chi([(\hat{\cO}_1\cap\hat{\cO}_2^*)/\rG_{\bd_1}])\tilde{h}_{\bd_1})\\
=&-(\tilde{h}_{\bd}\mid\tilde{h}_{\bd_1})\chi([(\hat{\cO}_1\cap\hat{\cO}_2^*)/\rG_{\bd_1}]).
\end{align*}
Moreover, by definition and (\ref{F=Fchi}), we have
\begin{align*}
([1_{\hat{\cO}_1},1_{\hat{\cO}_2}]\mid1_{\hat{\cO}_3})
=&([1_{\hat{\cO}_1},1_{\hat{\cO}_2}]_{\tilde{\fn}}-\chi([(\hat{\cO}_1\cap\hat{\cO}_2^*)/\rG_{\bd_1}])\tilde{h}_{\bd_1}\mid1_{\hat{\cO}_3})\\
=&(\sum_{\ue\leqslant \ue_1+\ue_2}\sum_{(r,s)\in \cI^{\ue}_{\ue_1\ue_2}}(F^{L_{r,s}}_{\cO_1\cO_2}-F^{L_{r,s}}_{\cO_2\cO_1})1_{\langle L_{r,s}\rangle}\mid 1_{\hat{\cO}_3})\\
=&\sum_{\ue\leqslant \ue_1+\ue_2}\sum_{(r,s)\in \cI^{\ue}_{\ue_1\ue_2}}\!\!\!\!\!\!(F^{L_{r,s}}_{\cO_1\cO_2}\!-\!F^{L_{r,s}}_{\cO_2\cO_1})\chi([(\rG_{\bd_1+\bd_2}.t_{\ue}(\langle L_{r,s}\rangle)\cap \hat{\cO}^*_3)/\rG_{\bd_1+\bd_2}])\\
=&-F^{\hat{\cO}_3^*}_{\hat{\cO}_1\hat{\cO}_2}+F^{\hat{\cO}_3^*}_{\hat{\cO}_2\hat{\cO}_1}
\end{align*}
Similarly, we have $(1_{\hat{\cO}_1}\mid[1_{\hat{\cO}_2},1_{\hat{\cO}_3}])=-F^{\hat{\cO}_1^*}_{\hat{\cO}_2\hat{\cO}_3}+F^{\hat{\cO}_1^*}_{\hat{\cO}_3\hat{\cO}_2}$, and then (i) follows (\ref{F=F}). 

For any nonzero $\hat{f}=\sum^n_{i=1}c_i1_{\hat{\cO}_i}\in \tilde{\fn}_{\bd}$, where $c_i\in \bbC$ and $\hat{\cO}_i\subset\rP_2(A,\bd)$ is a $\rG_{\bd}$-invariant support-bounded constructible subset, with loss of generality, we assume $c_1\not=0$ and $\hat{\cO}_1\cap\hat{\cO}_i=\varnothing$ for $i\not=1$. We take $\tilde{Z}\in \hat{\cO}_1$ and let $\hat{\cO}_{\tilde{Z}^*}\subset \rP_2(A,-\bd)$ be the $\rG_{-\bd}$-orbit of $\tilde{Z}^*$, then $(\hat{f}\mid 1_{\hat{\cO}_{\tilde{Z}^*}})=c_1\not=0$. This finishes the proof of (iv).

(b) can be proved by similar argument, and then (c) is clear.
\end{proof}

\section{Examples}

In this section, we present two examples.

\subsection{Kac-Moody Lie algebra}\

Consider the case $A=\bbC Q$, where $Q$ is a finite acyclic quiver. In this case, the category $\cA$ is hereditary and $\cR_A=\cD^b(A)/[2]\simeq \cK_2(\cP)$, see \cite[Subsection 7]{Peng-Xiao-1997}. There are two equivalent descriptions of indecomposable objects in $\cR_A\simeq \cK_2(\cP)$. The one is given by \cite[Lemma 5.2]{Happel-1988}, we have
$$\ind \cR_A=\ind\cA\cup(\ind\cA)^*,$$
where $\ind\cA,\ind \cR_A$ are the sets of isomorphism classes of indecomposable objects in $\cA,\cR_A$, respectively, and $\ind\cA, (\ind\cA)^*\subset \ind\cR_A$ via the embedding as stalk complexes. The other one is given by \cite[Lemma 4.2]{Bridgeland-2013}, indecomposable objects in $\cK_2(\cP)$, that is, indecomposable radical complexes in $\cC_2(\cP)$, are of the form 
\begin{diagram}[midshaft,size=2em]
C_M=(P &\pile{\rTo^{f}\\ \lTo_0} &Q), & &C_M^*=(Q &\pile{\rTo^{0}\\ \lTo_{-f}} &P)
\end{diagram}
where $M\in \ind\cA$ and $0\rightarrow P\xrightarrow{f}Q\rightarrow M\rightarrow 0$ is a minimal projective resolution. 

This deduces a triangle decomposition of $\fg\cong \tilde{\fg}$ defined in Subsections \ref{Lie algebra spanned by contractible complexes and indecomposable radical complexes} and \ref{Lie algebra spanned by supported-indecomposable function and the Grothendieck group},
$$\tilde{\fg}=\tilde{\fn}^+\oplus\tilde{\fh}\oplus \tilde{\fn}^-,$$
where $\tilde{\fn}^+$ and $\tilde{\fn}^-\subset\tilde{\fn}$ are the subspaces spanned by constructible functions supported on $\{C_M|M\in \ind \cA\}$ and $\{C_M^*|M\in \ind \cA\}$, respectively, so that $\tilde{\fn}=\tilde{\fn}^+\oplus\tilde{\fn}^-$. It is easy to prove that $\tilde{\fn}^+$ and $\tilde{\fn}^-$ are closed under the Lie bracket. Hence we obtain a triangle decomposition of Lie algebra. Moreover, together with the non-degenerate symmetric bilinear form $(-|-)_{\tilde{\fn}\times \tilde{\fn}}$, the Lie algebra $\tilde{\fg}$ is a generalized Kac-Moody Lie algebra in the sense of \cite{Borcherds-1998}. 

Letting $\cC\tilde{\fg}\subset \tilde{\fg}$ be the Lie subalgebra generated by characteristic functions $1_{\hat{\cO}_X}$ of orbits of exceptional objects $X\in \cR_A\simeq \cK_2(\cP)$, one can prove that $\cC\tilde{\fg}$ is isomorphic to the corresponding derived Kac-Moody Lie algebra $\cG$ determined by the quiver $Q$, see \cite[Subsection 4.7]{Peng-Xiao-2000}.

\subsection{Loop algebra}\

Let $\bbC\bbP^1$ be the complex projective line, $\underline{\lambda}=(\lambda_1,...,\lambda_n)$ be a sequence of distinct points of $\bbC\bbP^1$ and $\underline{p}=(p_1,...,p_n)\in \bbZ_{> 0}^n$ be a sequence of positive integers, then the triple $\mathbb{X}=(\bbC\bbP^1,\underline{\lambda},\underline{p})$ is called a wighted projective line. The category $\textrm{Coh} \mathbb{X}$ of coherent sheaves on $\mathbb{X}$ is a hereditary category, so
$$\ind \cR_{\mathbb{X}}=\ind \textrm{Coh} \mathbb{X}\cup (\ind\textrm{Coh} \mathbb{X})[1],$$
where $\ind \cR_{\mathbb{X}}$ is the set of isomorphism classes of indecomposable objects in the root category $\cR_{\mathbb{X}}=\cD^b(\textrm{Coh} \mathbb{X})/[2]$. For any $\mathbb{X}=(\bbC\bbP^1,\underline{\lambda},\underline{p})$, Ringel introduced the canonical algebra $\Lambda=\Lambda(\underline{\lambda},\underline{p})$ in \cite{Ringel-1984} such that 
$$\cD^b(\textrm{Coh} \mathbb{X})\simeq \cD^b(\Lambda),$$
and so
$$\cR_{\mathbb{X}}\simeq \cR_\Lambda\simeq \cK_2(\cP_\Lambda).$$
For the canonical algebra $\Lambda$, there are $\bbC$-Lie algebras $\fg\cong \tilde{\fg}$ defined in Subsections \ref{Lie algebra spanned by contractible complexes and indecomposable radical complexes} and \ref{Lie algebra spanned by supported-indecomposable function and the Grothendieck group}. 

Associating to $\mathbb{X}=(\bbC\bbP^1,\underline{\lambda},\underline{p})$, there is a star-shaped graph
\begin{diagram}[size=2em]
& & &\overset{(1,1)}{\bullet} &\rLine &\overset{(1,2)}{\bullet} &\rLine &... &\rLine&\overset{(1,p_1-1)}{\bullet}\\
& &\ruLine &\overset{(2,1)}{\bullet} &\rLine &\overset{(2,2)}{\bullet} &\rLine &... &\rLine&\overset{(2,p_2-1)}{\bullet}\\
\Gamma=&\overset{*}{\bullet} &\ruLine(2,1) &\vdots & &\vdots & & & &\vdots\\
&&\luLine(2,1) &\overset{(n,1)}{\bullet} &\rLine &\overset{(n,2)}{\bullet} &\rLine &... &\rLine&\overset{(n,p_n-1)}{\bullet}
\end{diagram}
Let $\cG$ be the Kac-Moody Lie algebra corresponding to the graph $\Gamma$, and $\cL\cG$ be the loop algebra of $\cG$. 

In \cite{Crawley-Boevey-2010}, Crawley-Boevey considered Peng-Xiao's Lie algebra $\tilde{\fg}_{(q-1)}$ for the root category $\cR_{\mathbb{X}_{\bbF_q}}$ of coherent sheaves on $\mathbb{X}_{\bbF_q}$ over a finite field $\bbF_q$. He found a collection of elements in $\tilde{\fg}_{(q-1)}$ satisfying the generating relations of $\cL\cG$, and then proved an analogue of Kac's Theorem for weighted projective lines \cite[Theorem 1]{Crawley-Boevey-2010}. Later, in \cite{Dou-Sheng-Xiao-2011}, Dou-Sheng-Xiao considered $\tilde{\fg}$ and gave an alternative proof over $\bbC$. 

Let $\cC\tilde{\fg}\subset \tilde{\fg}$ be the Lie subalgebra generated by characteristic functions of orbits of elements found by Crawley-Boevey in \cite{Crawley-Boevey-2010}. It is isomorphic to the Lie subalgebra of $\tilde{\fg}$ generated by characteristic functions of orbits of exceptional objects, see \cite[Subsection 8.4]{Ruan-Zhang-2020}. There is a canonical surjective Lie algebra homomorphism $\cC\tilde{\fg}\twoheadrightarrow \cL\cG$ such that $\cC\tilde{\fg}$ and $\cL\cG$ have the same root system.\\

\subsection*{Acknowledgements}\

J. Fang is partially supported by National Key R\&D Program of China (Grant No. 2020YFE0204200). Y. Lan is partially supported by National Natural Science Foundation of China (Grant No. 12288201). J. Xiao is partially supported by National Natural Science Foundation of China (Grant No. 12031007).

\bibliography{mybibfile}{}

\end{document}